\documentclass[11pt]{article}
\usepackage{amsfonts}
\usepackage{amsmath}
\usepackage{amssymb}
\usepackage{amsthm}
\usepackage{bbm}
\usepackage{color}
\usepackage[english]{babel}
\usepackage[latin1]{inputenc}

\paperheight=29.7cm
\paperwidth=21cm
\setlength\textwidth{16cm}
\hoffset=-1in
\setlength\marginparsep{0cm}
\setlength\marginparwidth{0cm}
\setlength\marginparpush{0cm}
\setlength\evensidemargin{2.5cm}
\setlength\oddsidemargin{2.5cm}
\setlength\topmargin{2.5cm}
\setlength\headheight{0cm}
\setlength\headsep{0cm}
\voffset=-1in
\setlength\textheight{24cm}
\setlength{\parindent}{0.5cm}
\setlength{\parskip}{0cm}
\setlength{\unitlength}{1mm}

\renewcommand{\a}{\mathfrak{a}}
\newcommand{\p}{\mathfrak{p}}
\newcommand{\q}{\mathfrak{q}}

\newcommand{\s}{\mathfrak{s}}
\renewcommand{\u}{\mathfrak{u}}
\renewcommand{\v}{\mathfrak{v}}
\newcommand{\goQ}{\mathfrak{Q}}
\newcommand{\1}{\mathbbm{1}}
\newcommand{\C}{\mathbb{C}}
\newcommand{\N}{\mathbb{N}}

\newcommand{\R}{\mathbb{R}}
\renewcommand{\S}{\mathbb{S}}
\newcommand{\T}{\mathbb{T}}

\newcommand{\Z}{\mathbb{Z}}
\newcommand{\boC}{\mathcal{C}}
\newcommand{\boD}{\mathcal{D}}
\newcommand{\boE}{\mathcal{E}}

\newcommand{\boG}{\mathcal{G}}

\newcommand{\boK}{\mathcal{K}}
\newcommand{\boL}{\mathcal{L}}
\newcommand{\boO}{\mathcal{O}}
\newcommand{\boP}{\mathcal{P}}

\newcommand{\boS}{\mathcal{S}}

\newcommand{\boU}{\mathcal{U}}
\newcommand{\boV}{\mathcal{V}}

\newcommand{\eps}{\varepsilon}

\renewcommand{\deg}{{\rm deg}}
\newcommand{\dist}{{\rm dist}}
\renewcommand{\div}{{\rm div}}

\renewcommand{\Im}{{\rm Im}}
\newcommand{\on}{\ {\rm on} \ }
\renewcommand{\Re}{{\rm Re}}

\newcommand{\supp}{{\rm supp}}

\newtheorem{cor}{Corollary}
\newtheorem{claim}{Claim}
\newtheorem{lemma}{Lemma}
\newtheorem{prop}{Proposition}
\newtheorem{step}{Step}
\newtheorem{theorem}{Theorem}
\theoremstyle{definition}
\newtheorem*{acknowledgement}{Acknowledgements}
\newtheorem{case}{Case}
\newtheorem{remark}{Remark}

\begin{document}

\title{Travelling waves for the Gross-Pitaevskii equation II}
\author{
\renewcommand{\thefootnote}{\arabic{footnote}}
Fabrice B\'ethuel \footnotemark[1], Philippe Gravejat \footnotemark[2], Jean-Claude Saut \footnotemark[3]}
\footnotetext[1]{Laboratoire Jacques-Louis Lions, Universit\'e Pierre et Marie Curie, Bo\^ite Courrier 187, 75252 Paris Cedex 05, France. E-mail: bethuel@ann.jussieu.fr}
\footnotetext[2]{Centre de Recherche en Math\'ematiques de la D\'ecision, Universit\'e Paris Dauphine, Place du Mar\'echal De Lattre de Tassigny, 75775 Paris Cedex 16, France. E-mail: gravejat@ceremade.dauphine.fr}
\footnotetext[3]{Laboratoire de Math\'ematiques, Universit\'e Paris Sud, B\^atiment 425, 91405 Orsay Cedex, France. E-mail: Jean-Claude.Saut@math.u-psud.fr}
\date{}
\maketitle

\begin{abstract}
The purpose of this paper is to provide a rigorous mathematical proof of the existence of travelling wave solutions to the Gross-Pitaevskii equation in dimensions two and three. Our arguments, based on minimization under constraints, yield a full branch of solutions, and extend earlier results (see \cite{BethSau1, BetOrSm1, Chiron1}) where only a part of the branch was built. In dimension three, we also show that there are no travelling wave solutions of small energy.
\end{abstract}

%%%%%%%%%%%%%%%%%%%%%%
%%%%%%%%%%%%%%%%%%%%%%
\section{Introduction}
%%%%%%%%%%%%%%%%%%%%%%
%%%%%%%%%%%%%%%%%%%%%%

%%%%%%%%%%%%%%%%%%%%%%%%%%%%%%%%%%%%%
\subsection{Statement of the results}
%%%%%%%%%%%%%%%%%%%%%%%%%%%%%%%%%%%%%

In this paper, we investigate the existence of travelling waves to the Gross-Pitaevskii equation
\renewcommand{\theequation}{GP}
\begin{equation}
\label{GP}
i \partial_t \Psi = \Delta \Psi + \Psi (1 - |\Psi|^2) \on \R^N \times \R,
\end{equation}
in dimensions $N = 2$ and $N = 3$. Travelling waves are solutions to \eqref{GP} of the form
$$\Psi(x,t) = u(x_1 - ct, x_\perp), \ x_\perp = (x_2, \ldots, x_N).$$
Here, the parameter $c \in \R$ corresponds to the speed of the travelling waves (we may restrict to the case $c \geq 0$ using complex conjugation). The equation for the profile $u$ is given by
\renewcommand{\theequation}{TWc}
\begin{equation}
\label{TWc}
i c \partial_1 u + \Delta u + u (1 - |u|^2) = 0.
\end{equation}

The Gross-Pitaevskii equation appears as a relevant model in various areas of physics: non-linear optics, fluid mechanics, Bose-Einstein condensation... (see for instance \cite{Pitaevs1, Gross1, IordSmi1, JoneRob1, JonPuRo1, BethSau2, Berloff1}). At least on a formal level, the Gross-Pitaevskii equation is hamiltonian. The conserved Hamiltonian is a Ginzburg-Landau energy, namely
\renewcommand{\theequation}{\arabic{equation}}
\numberwithin{equation}{section}
\setcounter{equation}{0}
$$E(\Psi) = \frac{1}{2} \int_{\R^N} |\nabla \Psi|^2 + \frac{1}{4} \int_{\R^N} (1 - |\Psi|^2)^2 \equiv \int_{\R^N} e(\Psi).$$
Similarly, the momentum
$$\vec{P}(\Psi) = \frac{1}{2} \int_{\R^N} \langle i \nabla \Psi \ , \Psi - 1 \rangle,$$
is formally conserved. Here, the notation $\langle \ , \ \rangle$ stands for the canonical scalar product of the complex plane $\C$ identified to $\R^2$, i.e.
$$\langle z_1, z_2 \rangle = \Re(z_1) \Re(z_2) + \Im(z_1) \Im(z_2) = \Re \big( \overline{z_1} z_2 \big).$$
We will denote by $p$, the first component of $\vec{P}$, which is hence a scalar.

If $\Psi$ does not vanish, one may write
$$\Psi = \sqrt{\rho} \exp i \Phi,$$
which leads to the hydrodynamic form of the equation
\begin{equation}
\label{HDGP}
\left\{ \begin{array}{ll} \partial_t \rho + \div (\rho v) = 0, \\ \rho (\partial_t v + v. \nabla v) + \nabla \rho^2 = \rho \nabla \left( \frac{\Delta \rho}{\rho} - \frac{|\nabla \rho|^2}{2 \rho^2} \right),\end{array} \right.
\end{equation}
where
$$v = - 2 \nabla \Phi.$$

If one neglects the r.h.s of the second equation, which is often referred to as the quantum pressure, the system \eqref{HDGP} is similar to the Euler equation for a compressible fluid, with pressure law $p(\rho) = \rho^2$. In particular, the speed of sound waves near the constant solution $u = 1$ is given by
$$c_s = \sqrt{2}.$$

Travelling waves of finite energy play an important role in the dynamics of the Gross-Pitaevskii equation. They were considered by Iordanskii and Smirnov \cite{IordSmi1} in dimension three. They were thoroughly analyzed by Jones, Putterman and Roberts \cite{JoneRob1, JonPuRo1} both on a formal and numerical point of view, in dimensions two and three. For the two-dimensional case, they found a branch of solutions with speeds $c$ covering the full subsonic range $(0, \sqrt{2})$, and they conjectured the non-existence of travelling waves for supersonic speeds.

The program developed by Jones, Putterman and Roberts was approached more recently on a rigorous mathematical level. The non-existence of supersonic travelling waves was established in \cite{Graveja2}, and in \cite{Graveja4} for the sonic case that is $c = \sqrt{2}$ in dimension two (existence or non-existence of sonic travelling waves in dimension three remains still open). Therefore, we may assume throughout this paper that
$$0 < c < \sqrt{2} \ {\rm if } \ N = 2, \ {\rm and} \ 0 < c \leq \sqrt{2} \ {\rm if } \ N = 3.$$
The existence problem in dimensions two and three is the main focus of this paper: only part of the formal results provided in the physical literature has been established with rigorous mathematical proofs. It is a classical observation that one may obtain travelling waves by minimizing the energy $E$ keeping the momentum $p$ fixed. In this approach, equation \eqref{TWc} is the Euler-Lagrange equation to this constrained minimization problem. The speed $c$ appears as a Lagrange multiplier, and therefore is not fixed a priori. However an important advantage of this point of view is that it allows to address in a satisfactory way the question of stability. It was already used in \cite{BetOrSm1}, and will be followed here again. For a given $\p \geq 0$, we consider therefore the minimization problem
\begin{equation}
\label{eminent}
E_{\min}(\p) = \inf \{ E(v), v \in W(\R^N), p(v) = \p \},
\end{equation}
where $W(\R^N)$ represents a functional space adapted to the problem. The choice of $W(\R^N)$ has to fulfill two requirements. First, we wish the functionals $E$ and $p$ to be continuous on $W(\R^N)$. Second, all finite energy subsonic solutions to \eqref{TWc} have to belong to $W(\R^N)$. Properties of solutions of \eqref{TWc} have been studied recently extensively, in particular in \cite{Graveja1, Graveja3, Graveja5, Graveja6}. It is proved that any finite energy solution $u$ to \eqref{TWc} has a limit at infinity, which is a number of modulus one, which in view of the symmetry by rotation, we may take, without loss of generality, to be equal to $1$. The decay properties of solutions are summed up in Proposition \ref{superdecay}. In view of these properties, we introduce the space
\begin{equation}
\label{def-V}
V(\R^N) = \{ v: \R^N \mapsto \C, \ {\rm s.t.} \ (\nabla v, \Re(v)) \in L^2(\R^N)^2, \Im(v) \in L^4(\R^N) \ {\rm and} \ \nabla \Re(v) \in L^\frac{4}{3}(\R^N) \},
\end{equation}
and
\begin{equation}
\label{def-W}
W(\R^N) = \{ 1 \} + V(\R^N).
\end{equation}
It can be checked that the quantity $\langle i \partial_1 v, v-1 \rangle$ is integrable for a function $v \in W(\R^N)$, so that the so-called scalar momentum $p(v)$ is well-defined. This is a consequence of the identity
\begin{equation}
\label{identityp}
\langle i \partial_1 v, v-1 \rangle = \partial_1(\Re(v)) \Im(v) - \partial_1(\Im(v)) (\Re(v) - 1),
\end{equation}
and various H\"older's inequalities. Moreover, $W(\R^N)$ has the announced desired properties (see Corollary \ref{bienpose} and Lemma \ref{inject}).

\begin{remark}
If one may lift a map $v \in W(\R^N)$ as $v=\varrho \exp i\varphi$, then it can be shown under some suitable integrability assumptions (see Subsections \ref{nouveaudef} and \ref{propdecay}), that 
\begin{equation}
\label{celemoment}
p(v) = \frac{1}{2} \int_{\R^N} \langle i \partial_1 v \ , v - 1 \rangle = \frac{1}{2} \int_{\R^N} \eta \partial_1 \varphi,
\end{equation}
where, throughout the paper, we set
$$\eta = 1 - \varrho^2.$$
Notice that for maps which may be lifted, with $\varrho\geq \frac{1}{2}$, the last integral makes sense, even if we assume that $v$ only belongs to the energy space
$$\boE(\R^N) = \{ v: \R^N \mapsto \C, \ {\rm s.t.} \ E(v) < + \infty \}.$$
Hence the last integral in \eqref{celemoment} provides another definition for the momentum, which holds for maps which may be lifted, and which we use in most of this paper.
\end{remark}

In dimension two, our main existence theorem is the following.

\begin{theorem}
\label{dim2}
Let $\p > 0$. There exists a non-constant finite energy solution $u_\p \in W(\R^2)$ to equation \eqref{TWc}, with 
$c = c(u_\p)$, such that
$$p(u_\p) \equiv \frac{1}{2} \int_{\R^2} \langle i \partial_1 u_\p, u_\p - 1 \rangle = \p,$$
and such that $u_\p$ is solution to the minimization problem
$$E(u_\p) = E_{\min}(\p) = \inf \{ E(v), v \in W(\R^2), p(v) = \p \}.$$
\end{theorem}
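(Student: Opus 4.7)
The natural approach is the concentration-compactness method of P.-L. Lions applied to a minimizing sequence $(v_n)_{n \in \N}$ in $W(\R^2)$ satisfying $p(v_n) = \p$ and $E(v_n) \to E_{\min}(\p)$. First I would verify that $E_{\min}(\p)$ is finite and strictly positive for every $\p > 0$: finiteness follows by exhibiting an explicit test function (e.g. a smooth compactly supported perturbation of the constant $1$, suitably rescaled anisotropically) with prescribed momentum and finite energy; strict positivity follows from a quantitative bound of $|p(v)|$ in terms of $E(v)$ obtained via H\"older's inequality applied to the identity \eqref{identityp}, which simultaneously provides an a priori bound on $(v_n)$ in the intrinsic seminorm of $V(\R^2)$.

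Next I would apply concentration-compactness to the energy densities $e(v_n)$, which must fall into one of the three standard alternatives. Vanishing is excluded by combining the positivity $E_{\min}(\p) > 0$ with a local estimate showing that if $e(v_n)$ spreads to infinity uniformly on balls of fixed radius, then by \eqref{identityp} and H\"older $p(v_n) \to 0$, a contradiction. The crucial step is to rule out dichotomy, for which one needs the strict subadditivity
\begin{equation*}
E_{\min}(\p) < E_{\min}(\p_1) + E_{\min}(\p_2), \quad \p = \p_1 + \p_2, \quad \p_1, \p_2 > 0.
\end{equation*}
A standard strategy is to show that $\p \mapsto E_{\min}(\p)$ is concave and strictly sublinear (in particular that $E_{\min}(\p) < \sqrt{2}\,\p$ for every $\p > 0$); this is typically achieved by a careful test function construction exploiting anisotropic rescalings tailored to the Gross-Pitaevskii balance between kinetic and potential energy, producing competitors that realise $E_{\min}$ up to arbitrarily small error.

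Once concentration of a suitable translate of $(v_n)$ is established, a weak limit $u_\p \in W(\R^2)$ exists with $p(u_\p) = \p$ and $E(u_\p) \leq E_{\min}(\p)$, so $u_\p$ is a minimizer. I would then apply the Lagrange multiplier theorem: the derivative of the momentum constraint at $u_\p$ is non-zero on $V(\R^2)$ (tested for instance against $\partial_1 u_\p$, since $u_\p$ is non-constant), hence there exists $c \in \R$ such that $u_\p$ solves \eqref{TWc}. Non-triviality of $u_\p$ and the bound $0 < c < \sqrt{2}$ follow from Pohozaev-type identities for \eqref{TWc}, the non-existence results recalled above for $c \geq \sqrt{2}$, and the strict monotonicity of $E_{\min}$.

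The main obstacle, I expect, is the strict subadditivity inequality. It hinges on a delicate ad hoc construction of quasi-optimal competitors: one must build a map with momentum exactly $\p$ whose energy is strictly less than the sum of two scalings of the minimising energy at $\p_1$ and $\p_2$, and in two dimensions the expected profile is strongly anisotropic (elongated in the direction $x_1$), which makes matching energy and momentum to leading order subtle. A secondary difficulty specific to dimension two is the possible presence of vortices: since maps in $W(\R^2)$ need not stay away from zero, one cannot work uniformly in hydrodynamic variables, and both the weak continuity analysis of $p$ and the extraction of a strongly convergent subsequence must be handled directly in the Cartesian decomposition $(\Re v, \Im v)$ provided by $V(\R^2)$.
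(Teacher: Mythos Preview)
Your overall strategy is natural, but it departs from the paper's route and contains a genuine gap at the step you yourself flag as the main obstacle.

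\textbf{The gap.} You propose to rule out dichotomy via the strict subadditivity
\[
E_{\min}(\p) < E_{\min}(\p_1) + E_{\min}(\p_2),\qquad \p_1+\p_2=\p,\ \p_1,\p_2>0,
\]
and you claim this follows from concavity together with the strict sublinearity $E_{\min}(\p)<\sqrt{2}\,\p$. It does not. For a concave function $f$ with $f(0)=0$ one only gets $f(\p_1)+f(\p_2)\ge f(\p)$, with equality precisely when $f$ is \emph{linear} on $[0,\p]$. Strict sublinearity merely says the slope of any such linear piece is $<\sqrt{2}$; it does not exclude linearity. In fact the paper leaves open whether $E_{\min}$ is strictly concave (this is tied to uniqueness of minimizers), so you cannot expect to obtain strict subadditivity by soft arguments.

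\textbf{How the paper circumvents this.} The paper does \emph{not} run concentration--compactness on an arbitrary minimizing sequence in $W(\R^2)$. Instead it solves the constrained problem on expanding tori $\T_n^2$, so that the approximate minimizers $u_\p^n$ are genuine smooth solutions of \eqref{TWc} with uniformly bounded speed (Proposition~\ref{existnn}, using $\Xi(\p)>0$). The concentration--compactness analysis (Theorem~\ref{bigconc-compc}, Proposition~\ref{concentrationcompacite}) then produces limit pieces $u_1,\dots,u_\ell$ that are themselves finite-energy solutions on $\R^2$, each achieving $E_{\min}(\p_i)$. Now if $\ell\ge 2$, Corollary~\ref{subadditif} forces $E_{\min}$ to be linear on $(0,\p)$; but Lemma~\ref{gueri} (proved via the real-analyticity of solutions, Proposition~\ref{analyticity}) says that on an affine segment $E_{\min}$ is \emph{never} achieved, contradicting the fact that each $E_{\min}(\p_i)$ is achieved by $u_i$. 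Thus $\ell=1$. This argument replaces a priori strict subadditivity by a posteriori information on the dichotomy pieces, information available only because the approximating sequence consists of exact solutions. In your direct approach the dichotomy pieces are merely weak limits of truncated minimizing sequences and need not solve \eqref{TWc}, so this device is unavailable.

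\textbf{A secondary point.} Your exclusion of vanishing via ``$E_{\min}(\p)>0$'' is also not the paper's mechanism. The paper uses the discrepancy $\Sigma(v)=\sqrt{2}\,p(v)-E(v)$: since $\Xi(\p)>0$ in dimension two (Lemma~\ref{bornesupn2}, built from the KP solitary-wave scaling), Lemma~\ref{discrepancy} forces $|u_\p^n|$ to dip quantitatively below $1$ somewhere, which is what prevents the limit from being trivial. The bound $\Xi(\p)>0$ (equivalently $E_{\min}(\p)<\sqrt{2}\,\p$ for every $\p>0$) is indeed the heart of the two-dimensional case, but its role is to feed this mechanism and to show $\p_0=0$ in Theorem~\ref{principaltheo}, not to yield strict subadditivity.
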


\begin{remark}
In \cite{BethSau1}, existence of solutions is obtained for small prescribed speeds $c$. Instead of using minimization under constraint, the idea there is to introduce, for given $c$, the Lagrangian
$$F_c(v) = E(v) - c p(v),$$
whose critical points are solutions to \eqref{TWc}, and then to apply a mountain-pass argument. Although it is likely that the solutions obtained in \cite{BethSau1} correspond to the   solutions obtained in Theorem \ref{dim2} for large $\p$, we have no proof of this fact.
\end{remark}

\begin{remark}
Theorem \ref{dim2} shows in particular that there exist travelling wave solutions of arbitrary small energy. This suggests that scattering in the energy space is not likely to hold.
\end{remark}

In dimension three, the existence result is somewhat different.

\begin{theorem}
\label{dim3}
Assume $N = 3$. There exists some constant $\p_0 > 0$ such that:\\
i) For $0 < \p < \p_0$,
$$E_{\min}(\p) = \inf \{ E(v), v \in W(\R^3), p(v) = \p \} = \sqrt{2} \p,$$
and the infimum is not achieved in $W(\R^3)$.\\
ii) For $\p \geq \p_0$, there exists a non-constant finite energy solution $u_\p \in W(\R^3)$ to equation \eqref{TWc}, with 
$c = c(u_\p)$, such that $p(u_\p) = \p$, $E(u_{\p_0}) = E_{\min}(\p_0) = \sqrt{2} \p_0$ and, for $\p > \p_0$,
\begin{equation}
\label{sigmastrict}
E(u_\p) = E_{\min}(\p) = \inf \{ E(v), v \in W(\R^3), p(v) = \p \} < \sqrt{2} \p.
\end{equation}
iii) There exists some positive constant $\boE_0 \leq \sqrt{2} \p_0$, such that there are no non-trivial finite energy solutions $v$ to equation \eqref{TWc} satisfying
$$E(v) < \boE_0.$$
iv) We have
\begin{equation}
\label{trimaran}
\sup \{ c(u_\p), \p \geq \p_0 \} < \sqrt{2}.
\end{equation}
\end{theorem}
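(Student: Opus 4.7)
The plan is to combine sonic-type lower bounds on the momentum-constrained infimum, test-function constructions at both ends of the momentum range, subadditivity, and a concentration--compactness analysis of \eqref{eminent}. Throughout we write $v = \varrho e^{i\varphi}$ whenever this is meaningful and set $\eta = 1 - \varrho^2$.

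\textbf{Step A: non-existence at low energy (part (iii)).} Standard Pohozaev identities for \eqref{TWc} in $\R^3$, combined with Sobolev--Gagliardo--Nirenberg estimates on the space $V(\R^3)$, produce a universal constant $\boE_0 > 0$ such that every non-constant finite-energy solution of \eqref{TWc} satisfies $E(u) \geq \boE_0$, uniformly in $c \in (0, \sqrt{2}]$.

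\textbf{Step B: sonic lower bound and the threshold $\p_0$.} For $v \in W(\R^3)$ admitting a global lift with modulus bounded away from $0$, the hydrodynamic expression \eqref{celemoment} of $p(v)$ combined with the pointwise inequality $\tfrac{1}{2}\varrho^2(\partial_1\varphi)^2 + \tfrac{1}{4}\eta^2 \geq \tfrac{1}{\sqrt{2}}\varrho|\partial_1\varphi||\eta|$ yields $E(v) \geq \sqrt{2}|p(v)|$. Combined with Step~A (which precludes low-energy vortex configurations), this gives $E_{\min}(\p) \geq \sqrt{2}\p$ for $\p < \boE_0/\sqrt{2}$; setting $\p_0 = \inf\{\p > 0 : E_{\min}(\p) < \sqrt{2}\p\}$, we conclude $\p_0 \geq \boE_0/\sqrt{2} > 0$. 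To prove $\p_0 < +\infty$, construct travelling vortex-ring test functions of large radius (in the spirit of Jones--Roberts) satisfying $E < \sqrt{2}p$ by direct computation. The matching upper bound $E_{\min}(\p) \leq \sqrt{2}\p$ for all $\p > 0$ comes from acoustic-type (KP-regime) test functions with $E/p \to \sqrt{2}$ as $p \to 0^+$, propagated by subadditivity (glueing of $\eps$-minimizers placed far apart).

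\textbf{Step C: existence for $\p \geq \p_0$ (part (ii)).} For $\p > \p_0$, so that \eqref{sigmastrict} holds, take a minimizing sequence $(v_n) \subset W(\R^3)$ and apply P.-L.~Lions' concentration--compactness principle to $e(v_n)$. Vanishing is excluded since it would reduce $(v_n)$ to the phase-field regime of Step~B, forcing $E(v_n) - \sqrt{2}p(v_n) \to 0$ and contradicting \eqref{sigmastrict}. Dichotomy is excluded by strict subadditivity $E_{\min}(\p) < E_{\min}(\p_1) + E_{\min}(\p_2)$ for non-trivial splittings $\p = \p_1 + \p_2$, a consequence of the definition of $\p_0$ combined with the same glueing argument. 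Compactness thus holds, producing a minimizer $u_\p \in W(\R^3)$ that solves \eqref{TWc} with Lagrange multiplier $c(u_\p) \in (0, \sqrt{2})$. For the boundary case $\p = \p_0$, pass to the limit along $\p \downarrow \p_0$; Step~A rules out a constant limit.

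\textbf{Step D: parts (i), (iv), and main obstacle.} Part (i) follows from Step~B: the definition of $\p_0$ gives $E_{\min}(\p) \geq \sqrt{2}\p$ on $(0, \p_0)$, and Step~B gives the matching upper bound. Non-achievement: for $\p < \boE_0/\sqrt{2}$ a minimizer would have energy $\sqrt{2}\p < \boE_0$, contradicting Step~A; for $\p \in [\boE_0/\sqrt{2}, \p_0)$ a rigidity argument shows that saturating the sonic bound forces $c = \sqrt{2}$ and a degenerate profile incompatible with \eqref{TWc} and Step~A. Part (iv): if $c(u_{\p_n}) \to \sqrt{2}$ along some sequence, the Pohozaev identities combined with Step~B force $E(u_{\p_n}) - \sqrt{2}p(u_{\p_n}) \to 0$, which via subadditivity propagates to the entire branch $[\p_0, +\infty)$ and contradicts \eqref{sigmastrict}. \textbf{Main obstacle:} the crux is Step~C, namely establishing strict subadditivity in the asymmetric space $W(\R^3)$ (in which $\Re v$, $\Im v$ and $\nabla \Re v$ live in different Lebesgue spaces and $p$ is a signed, non-coercive functional) and preventing momentum leakage at infinity of the minimizing sequence, so that its weak limit is a legitimate competitor with the prescribed momentum; the sonic degeneracy at $\p \leq \p_0$ is what distinguishes the three-dimensional problem from its two-dimensional analogue.
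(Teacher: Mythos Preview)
Your overall architecture is recognizable, but several load-bearing steps either do not work as stated or differ substantially from what the paper actually does, and the gaps are precisely at the points you flag as the ``main obstacle''.

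\textbf{Step B is circular.} The inequality $E(v)\ge\sqrt{2}|p(v)|$ only holds for maps with $|v|\ge\tfrac12$ everywhere. Step~A concerns \emph{solutions} of \eqref{TWc}, not arbitrary competitors in $W(\R^3)$: a test map with a vortex and small energy is perfectly admissible and is not excluded by Step~A. So you cannot deduce $E_{\min}(\p)\ge\sqrt{2}\p$ for small $\p$ this way. The paper never proves this lower bound directly. Instead it first establishes that $E_{\min}$ is \emph{concave} (via a reflection argument, Lemma~\ref{concavite}); concavity plus $E_{\min}(0)=0$ and $E_{\min}(\p)\le\sqrt{2}\p$ make $\Xi(\p)=\sqrt{2}\p-E_{\min}(\p)$ non-decreasing, so $\p_0$ is well-defined as the vanishing set of $\Xi$. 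Positivity of $\p_0$ then follows \emph{a posteriori}: once existence of minimizers is known for $\p>\p_0$, a minimizer at $\p\in(\p_0,\boE_0/\sqrt{2})$ would have energy below $\boE_0$, contradicting Step~A (this is Lemma~\ref{pzeropasnul}). Without concavity your definition of $\p_0$ does not even force $E_{\min}(\p)=\sqrt{2}\p$ on all of $(0,\p_0)$.

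\textbf{Step C: strict subadditivity is asserted, not proved, and the paper does not use it.} You claim $E_{\min}(\p)<E_{\min}(\p_1)+E_{\min}(\p_2)$ for non-trivial splittings, citing ``the definition of $\p_0$ combined with the same glueing argument''. Glueing gives only $E_{\min}(\p_1+\p_2)\le E_{\min}(\p_1)+E_{\min}(\p_2)$; the strict inequality is the whole difficulty and is not available here. The paper takes a different route: concavity gives \emph{weak} subadditivity (Corollary~\ref{subadditif}), and equality forces $E_{\min}$ to be affine on $(0,\p)$. Then Lemma~\ref{gueri}, which uses real-analyticity of solutions (Proposition~\ref{analyticity}) together with a reflection argument, shows that $E_{\min}$ is \emph{not achieved} on any open affine segment. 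Since the concentration-compactness output produces genuine minimizers $u_i$ at each $\p_i$, this rules out $\ell\ge2$. More fundamentally, the paper does \emph{not} run Lions' concentration-compactness on minimizing sequences in $W(\R^3)$ at all: it approximates by minimizers $u_\p^n$ on expanding tori $\T_n^3$, where existence is soft, Pohozaev identities give uniform control on the Lagrange multiplier $c_\p^n$ (Theorem~\ref{bornec}), and elliptic regularity gives $C^k$-bounds independent of $n$. The limiting analysis (Theorems~\ref{bigconc-compc} and~\ref{asympsonic}) is then performed on these \emph{exact} solutions, which sidesteps precisely the momentum-leakage and regularity problems you identify.

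\textbf{Steps A and D.} For (iii), the paper's argument is Fourier-analytic (the explicit $L^2$-norm of the kernel $L_\varepsilon$ in Lemma~\ref{nopetitsolution} and Lemma~\ref{soniccase}), not Pohozaev plus Sobolev--Gagliardo--Nirenberg; it is not clear your route gives a bound uniform as $c\to\sqrt{2}$. For (iv), the paper uses a quantitative lower bound $\varepsilon(v)p(v)\ge K\,E(v)^{-(8\alpha+1)}$ (Lemma~\ref{bornepsfin}, again Fourier-based via Corollary~\ref{tresutile}), applied at $\p_0$ and combined with the monotonicity of $\p\mapsto c(u_\p)$ coming from concavity; your argument via ``$E-\sqrt{2}p\to0$ propagating by subadditivity'' is too vague to conclude. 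Non-achievement on the \emph{entire} interval $(0,\p_0)$, including $[\boE_0/\sqrt{2},\p_0)$, again follows in the paper from Lemma~\ref{gueri}, not from any rigidity of sonic profiles.
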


\begin{remark}
In \cite{BetOrSm1}, the existence of $u_\p$ was already established, but only for large values of the prescribed momentum $\p$. The solutions obtained there are the same as the one provided by Theorem \ref{dim3}. In \cite{Chiron1}, existence of non-trivial finite energy solutions to \eqref{TWc} was established for prescribed small speeds $c$, using a mountain-pass argument. As for the two-dimensional case, it is likely, but unknown, that the solutions obtained in \cite{Chiron1} correspond to the solutions obtained in Theorem \ref{dim2} for large $\p$.
\end{remark}

\begin{remark}
In contrast with the two-dimensional case, statement iii) in Theorem \ref{dim3} shows that there are no small energy travelling wave solutions in dimension three (see Lemma \ref{soniccase} in Subsection \ref{grandfou} for the proof of this statement). This opens the door to a small energy scattering theory. Such a theory has been developed in dimension $N \geq 4$ by Gustafson, Nakanishi and Tsai in \cite{GusNaTs1} (see also \cite{GusNaTs2, Nakanis1}). Very recently those authors have succeeded to present a scattering theory in dimension three (see \cite{GusNaTs3}).
\end{remark}

Besides the existence of minimizers, our analysis yields also properties of the curve $E_{\min}$ as well as of the speed $c(u_\p)$. First we prove that in both dimensions, $E_{\min}$ is a Lipschitz, non-decreasing and concave curve, which lies below the curve $\p \mapsto \sqrt{2} \p$, and tends to $+ \infty$ as $\p \to + \infty$ (see Theorem \ref{proemin} below). In particular, the function $E_{\min}$ is differentiable except possibly for a countable set of values: its derivative, at the points of differentiability is then given by the speed $c(u_\p)$, which satisfies for any $\p > 0$,
$$0 < c(u_\p) < \sqrt{2}.$$
It remains an open problem to determine whether the curve $E_{\min}$ is differentiable or not. This question is related to the problem of uniqueness up to symmetries for the minimizer $u_\p$, which is completely open as well. As a matter of fact, uniqueness for any $\p > 0$ of the minimizer would lead to the differentiability of the full curve, which in turn would provide a full interval of speeds. In dimension two, the spectrum of speeds would then be the interval $(0, \sqrt{2})$. Although we did not work out here a proof, we believe that our compactness result would show that, if at some point $E_{\min}$ were not differentiable, then there are at least two different minimizers with different speeds. In that case, the function $\p \mapsto c(u_\p)$ is not single valued. However, we can prove that it is a decreasing (possibly multivalued) function.

In dimension two, the function $E_{\min}$ has the following graph:

\begin{center}
\begin{picture}(90,50)(0,0)
\linethickness{0.2mm}
\put(10,10){\line(0,1){40}}
\put(10,50){\vector(0,1){0.12}}
\linethickness{0.2mm}
\put(10,10){\line(1,0){80}}
\put(90,10){\vector(1,0){0.12}}
{\color{green}
\linethickness{0.1mm}
\multiput(10,10)(1.64,1.1){37}{\multiput(0,0)(0.16,0.11){5}{\line(1,0){0.16}}}
}
{\color{blue}
\linethickness{0.2mm}
\qbezier(9,10)(9.29,10.17)(12.66,11.95)
\qbezier(12.66,11.95)(16.04,13.72)(19,15)
\qbezier(19,15)(26.49,18.14)(33.79,20.66)
\qbezier(33.79,20.66)(41.09,23.18)(49,25)
\qbezier(49,25)(60.4,27.33)(74.08,28.65)
\qbezier(74.08,28.65)(87.77,29.98)(89,30)
}
\put(5,5){\makebox(0,0)[cc]{$0$}}
\put(5,50){\makebox(0,0)[cc]{$E$}}
\put(90,5){\makebox(0,0)[cc]{$\p$}}
{\color{green}
\put(45,45){\makebox(0,0)[cc]{$E = \sqrt{2} \p$}}
}
{\color{blue}
\put(75,25){\makebox(0,0)[cc]{$E = E_{\min}(\p)$}}
}
\end{picture}
\end{center}

In dimension three, the graph of $E_{\min}$ has the following form:

\begin{center}
\begin{picture}(90,50)(0,0)
\linethickness{0.2mm}
\put(10,10){\line(0,1){40}}
\put(10,50){\vector(0,1){0.12}}
\linethickness{0.2mm}
\put(10,10){\line(1,0){80}}
\put(90,10){\vector(1,0){0.12}}
{\color{green}
\linethickness{0.1mm}
\multiput(10,10)(1.64,1.1){37}{\multiput(0,0)(0.16,0.11){5}{\line(1,0){0.16}}}
}
{\color{blue}
\linethickness{0.1mm}
\multiput(24,10)(0,1.82){6}{\line(0,1){0.91}}
\linethickness{0.1mm}
\multiput(9,20)(2,0){8}{\line(1,0){1}}
}
{\color{blue}
\linethickness{0.2mm}
\multiput(7.4,10)(0.18,0.12){83}{\line(1,0){0.18}}
\linethickness{0.2mm}
\qbezier(22.4,20)(22.84,20.21)(27.9,22.07)
\qbezier(27.9,22.07)(32.96,23.94)(37.4,25)
\qbezier(37.4,25)(51.65,27.69)(68.75,28.86)
\qbezier(68.75,28.86)(85.86,30.03)(88.4,30)
}
{\color{cyan}
\linethickness{0.2mm}
\qbezier(21,20)(19.5,19.4)(19.5,19.4)
\qbezier(19,19.25)(18.9,19.2)(18.9,19.2)
\qbezier(18.4,19)(16.9,18.5)(16.9,18.5)
\qbezier(16.9,18.5)(18.4,19.3)(18.4,19.3)
\qbezier(18.9,19.6)(19,19.65)(19,19.65)
\qbezier(19.5,19.95)(21,20.85)(21,20.85)
\qbezier(21.5,21.2)(21.6,21.25)(21.6,21.25)
\qbezier(22.1,21.6)(23.6,22.55)(23.6,22.55)
\qbezier(24.1,22.85)(24.2,22.9)(24.2,22.9)
\qbezier(24.7,23.2)(26.2,24.15)(26.2,24.15)
\qbezier(26.7,24.45)(26.8,24.55)(26.8,24.55)
\qbezier(27.3,24.85)(28.8,25.75)(28.8,25.75)
\qbezier(29.3,26.1)(29.4,26.2)(29.4,26.2)
\qbezier(29.9,26.45)(31.4,27.4)(31.4,27.4)
\qbezier(31.9,27.75)(32,27.85)(32,27.85)
\qbezier(32.5,28.15)(34,29.1)(34,29.1)
\qbezier(34.5,29.4)(34.6,29.5)(34.6,29.5)
\qbezier(35.1,29.8)(36.6,30.75)(36.6,30.75)
\qbezier(37.1,31.1)(37.2,31.2)(37.2,31.2)
\qbezier(37.7,31.5)(39.2,32.5)(39.2,32.5)
\qbezier(39.7,32.8)(39.8,32.9)(39.8,32.9)
\qbezier(40.3,33.25)(41.8,34.25)(41.8,34.25)
\qbezier(42.3,34.55)(42.4,34.65)(42.4,34.65)
\qbezier(42.9,35)(44.4,35.95)(44.4,35.95)
\qbezier(44.9,36.25)(45,36.35)(45,36.35)
\qbezier(45.5,36.7)(47,37.7)(47,37.7)
\qbezier(47.5,38)(47.6,38.05)(47.6,38.05)
\qbezier(48.1,38.4)(49.6,39.35)(49.6,39.35)
\qbezier(50.1,39.75)(50.2,39.8)(50.2,39.8)
\qbezier(50.7,40.05)(52.2,41.05)(52.2,41.05)
\qbezier(52.7,41.4)(52.8,41.45)(52.8,41.45)
\qbezier(53.3,41.75)(54.8,42.75)(54.8,42.75)
\qbezier(55.3,43.1)(55.4,43.15)(55.4,43.15)
\qbezier(55.9,43.5)(57.4,44.5)(57.4,44.5)
\qbezier(57.9,44.85)(58,44.95)(58,44.95)
\qbezier(58.5,45.2)(60,46.2)(60,46.2)
\qbezier(60.5,46.6)(60.6,46.65)(60.6,46.65)
\qbezier(61.1,46.95)(62.6,47.95)(62.6,47.95)
\qbezier(63.1,48.35)(63.2,48.4)(63.2,48.4)
\qbezier(63.7,48.75)(65.2,49.75)(65.2,49.75)
\qbezier(65.7,50.1)(65.8,50.2)(65.8,50.2)
\linethickness{0.1mm}
\multiput(16.9,10)(0,1.82){5}{\line(0,1){0.91}}
\linethickness{0.1mm}
\multiput(7,18.5)(2,0){5}{\line(1,0){1}}
}
\put(3,5){\makebox(0,0)[cc]{$0$}}
\put(3,50){\makebox(0,0)[cc]{$E$}}
\put(85,5){\makebox(0,0)[cc]{$\p$}}
{\color{green}
\put(55,35){\makebox(0,0)[cc]{$E = \sqrt{2} \p$}}
}
{\color{blue}
\put(75,25){\makebox(0,0)[cc]{$E = E_{\min}(\p)$}}
}
{\color{cyan}
\put(40,45){\makebox(0,0)[cc]{$E = E_{\rm up}(\p)$}}
\put(15,5){\makebox(0,0)[cc]{{\small $\p_b$}}}
\put(-3,17.8){\makebox(0,0)[cc]{{\small $E_b$}}}
}
{\color{blue}
\put(18,5){\makebox(0,0)[cc]{{\small $\p_0$}}}
\put(-3,20.8){\makebox(0,0)[cc]{{\small $E(u_{\p_0})$}}}
}
\end{picture}
\end{center}

Notice that as a consequence of Theorem \ref{dim3}, $E(u_{\p_0}) = \sqrt{2} \p_0$, and that, in view of \eqref{trimaran}, the slope of the curve $E_{\min}$ at the point $(\p_0, \sqrt{2} \p_0)$ is strictly less than $\sqrt{2}$.

Our results are in full agreement with the corresponding figure given in \cite{JoneRob1}. In dimension three, the numerical value found in \cite{JoneRob1} for $\p_0$ is close to $80$. Jones and Roberts have also shown in \cite{JoneRob1}, mainly by numerical means, that in dimension three, the branch of solutions $u_\p$ can be extended past the curve $E = \sqrt{2} \p$. This curve is represented in the $E$-$\p$ diagram above as the curve $E_{\rm up}$. Starting from the point $(\p_0, \sqrt{2} \p_0)$, the curve $E_{\rm up}$ goes down to the left staying above the curve $E = E_{\min}(\p) = \sqrt{2} \p$, until it reaches the bifurcation point $(\p_b, E_b)$. After this bifurcation point, the curve $E_{\rm up}$ goes up to the right, and is asymptotic  from above to the curve $E = \sqrt{2} \p$, as $\p \to + \infty$. We believe that the presence of the bifurcation point $(\p_b, E_b)$ is due to the choice of our representation, and that the curve $\p \mapsto u_\p$ is actually differentiable.

At this stage, there is no mathematical proof of the existence of the upper branch of solutions. The fact that the slope of the curve at the point $(\p_0, \sqrt{2} \p_0)$ is strictly less than $\sqrt{2}$ leaves some hope to use an implicit function theorem to construct the curve $E_{\rm up}$, at least near $(\p_0, \sqrt{2} \p_0)$.

One important point which we have not addressed in this paper is the appearance of vortices (that is zeroes of solutions). It is known that in dimension two, solutions have two vortices for large $\p$ (see \cite{BethSau1}), whereas there are vortex rings in dimension three for large $\p$ (see \cite{BetOrSm1, Chiron1}). Jones, Putterman and Roberts conjectured in \cite{JoneRob1} the existence of some momentum $\p_1$ such that $u_\p$ has vortices for $\p \geq p_1$, and has no vortex otherwise. The numerical value found in \cite{JoneRob1} for $\p_1$ is close to $75$.

The next step in the analysis is to describe some properties of the solutions we obtained in Theorems \ref{dim2} and \ref{dim3}.

\begin{theorem}
\label{symetrie} 
Let $N = 2$ or $N = 3$, $\p > 0$ and assume that $E_{\min}(\p)$ is achieved by $u_\p$. Then $u_\p$ is real-analytic on $\R^N$, and is, up to a translation, axisymmetric. More precisely, there exists a function $\u_\p : \R \times \R_+$ such that
$$u_\p(x) = \u_\p(x_1, |x_\perp|), \ \forall x = (x_1, x_\perp) \in \R^N.$$
\end{theorem}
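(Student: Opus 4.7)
The plan has two largely independent pieces: real-analyticity via elliptic regularity, and axisymmetry via a reflection-gluing argument that exploits the minimality of $u_\p$.

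For analyticity, I rewrite \eqref{TWc} as $-\Delta u_\p = i c \partial_1 u_\p + u_\p(1-|u_\p|^2)$, a semilinear elliptic equation whose nonlinearity is polynomial, hence real-analytic, in $\Re u$ and $\Im u$. Since $u_\p \in W(\R^N)$ and decays at infinity by Proposition \ref{superdecay}, one has in particular $u_\p \in L^\infty(\R^N)$, and a standard Sobolev bootstrap on the equation yields $u_\p \in C^\infty(\R^N)$. Real-analyticity then follows from Morrey's classical theorem on analyticity of smooth solutions to elliptic systems with real-analytic coefficients.

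For axisymmetry, the key observation is that both $E$ and $p$ are invariant under any affine reflection whose mirror hyperplane is parallel to the $x_1$-axis, since $p$ involves only $\partial_1$ and all integrals are over $\R^N$. Fix a unit vector $e \in \{0\} \times \R^{N-1}$ and, for $\lambda \in \R$, set $H_\lambda^+ = \{x : x \cdot e > \lambda\}$. The partial momentum
\[
\lambda \mapsto \frac{1}{2} \int_{H_\lambda^+} \langle i \partial_1 u_\p, u_\p - 1 \rangle
\]
is continuous and takes the values $\p$ at $-\infty$ and $0$ at $+\infty$, so some $\lambda_e$ makes it equal to $\p/2$. Up to first reflecting $u_\p$ across $\{x \cdot e = \lambda_e\}$ (which preserves $E$ and $p$), I may assume that the energy on $H_{\lambda_e}^+$ is at most $E(u_\p)/2$. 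Gluing $u_\p|_{H_{\lambda_e}^+}$ with its reflected copy produces $\tilde u \in W(\R^N)$ with $p(\tilde u) = \p$ and $E(\tilde u) \leq E(u_\p)$. By minimality $E(\tilde u) = E(u_\p)$, so $\tilde u$ is itself a minimizer and therefore solves \eqref{TWc}. Since $u_\p$ and $\tilde u$ are real-analytic solutions of the same elliptic equation that coincide on the open set $H_{\lambda_e}^+$, they agree everywhere, which means that $u_\p$ is invariant under the reflection across $\{x \cdot e = \lambda_e\}$.

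Running this argument for every direction $e$ in the $x_\perp$-hyperplane produces a family of mirror symmetries of $u_\p$. In dimension $N=2$ there is only one such direction, and a translation of $u_\p$ by $(0,-\lambda_e)$ immediately gives $u_\p(x_1,x_2) = u_\p(x_1,-x_2)$, which is the claim. In dimension $N=3$, one first translates so that the mirror hyperplane associated to a reference direction contains the $x_1$-axis, and then must show that the balanced hyperplanes for all other directions $e$ likewise contain the $x_1$-axis; I would do this by exploiting the already-established reflection symmetries to pin down the position of the new balancing hyperplane. The group generated by these reflections then contains reflections across every hyperplane through the $x_1$-axis, hence the full rotation group about it, yielding the axisymmetric form $u_\p(x) = \u_\p(x_1,|x_\perp|)$. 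The main obstacle is precisely this last rigidity step in dimension three: controlling uniformly in $e$, after a single fixed translation, that the balanced hyperplane $\{x \cdot e = \lambda_e\}$ passes through the $x_1$-axis.
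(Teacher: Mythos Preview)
Your reflection argument is essentially the paper's Lemma \ref{honolulu}, and your appeal to Morrey's theorem for analyticity is a legitimate shortcut to what the paper proves directly in Proposition \ref{analyticity}. The genuine gap is exactly where you locate it: in dimension three you have not shown that, after one fixed translation, the balanced hyperplane for \emph{every} transverse direction $e$ contains the $x_1$-axis. Your suggestion to ``exploit the already-established reflection symmetries to pin down the position of the new balancing hyperplane'' is not a proof, and in fact the balancing value $\lambda_e$ need not be unique, so there is no obvious rigidity statement to invoke.

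The paper sidesteps this difficulty entirely with a different idea. It does \emph{not} attempt to show that all mirror hyperplanes share a common axis. Instead, it fixes two directions: $e_2$ and $e_\alpha = \cos\alpha\, e_2 + \sin\alpha\, e_3$, obtains the two reflection symmetries $S_{a_2,e_2}$ and $S_{a_\alpha,e_\alpha}$, and observes that their composition $R_{2\alpha} = S_{a_\alpha,e_\alpha}\circ S_{a_2,e_2}$ is an affine rotation of angle $2\alpha$ about some axis $D$ parallel to $e_1$. Choosing $\alpha$ with $\alpha/\pi$ irrational makes the cyclic group generated by $R_{2\alpha}$ dense in the rotation group about $D$, and continuity of $u_\p$ then gives invariance under every rotation about $D$. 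This two-reflection/irrational-rotation trick is the missing ingredient; it replaces your unproved rigidity step with an elementary group-theoretic density argument.
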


In another direction, the limit $\p \to + \infty$ has already been discussed in \cite{BethSau1, BetOrSm1, Chiron1}, and the analogy with solution of the incompressible Euler equations in fluid dynamics, stressed. In dimension two, we wish to initiate here the rigorous mathematical study of the other end of the $E_{\min}$ curve, namely the asymptotic properties in the limit $\p \to 0$, for which many interesting results have been derived in the physical literature.

In \cite{JoneRob1, JonPuRo1}, it is formally shown that, if $u_c$ is a solution to \eqref{TWc} in dimension two, then, after a suitable rescaling, the function $1 - |u_c|^2$ converges, as the speed $c$ converges to $\sqrt{2}$, to a solitary wave solution to the two-dimensional Kadomtsev-Petviashvili equation \eqref{KP}, which writes
\renewcommand{\theequation}{KP I}
\begin{equation}
\label{KP}
\partial_t u + u \partial_1 u + \partial_1^3 u - \partial_1^{- 1} (\partial_2^2 u) = 0.
\end{equation}
As \eqref{GP}, equation \eqref{KP} is hamiltonian, with Hamiltonian given by
\renewcommand{\theequation}{\arabic{equation}}
\numberwithin{equation}{section}
\setcounter{equation}{8}
\begin{equation}
\label{E-KP}
E_{KP}(u) = \frac{1}{2} \int_{\R^2} (\partial_1 u)^2 + \frac{1}{2} \int_{\R^2} (\partial_1^{-1}(\partial_2 u))^2 - \frac{1}{6} \int_{\R^2} u^3,
\end{equation}
and the $L^2$-norm of $u$ is conserved as well. Solitary-wave solutions $u(x, t) = w(x_1 - \sigma t, x_2)$ may be obtained in dimension two minimizing the Hamiltonian, keeping the $L^2$-norm fixed (see \cite{deBoSau3, deBoSau1}). The equation for the profile $w$ of a solitary wave of speed $\sigma = 1$ is given by
\begin{equation}
\label{SW}
\partial_1 w - w \partial_1 w - \partial_1^3 w + \partial_1^{- 1} (\partial_2^2 w) = 0.
\end{equation}
In contrast with the Gross-Pitaevskii equation, the range of speeds is the positive axis. Indeed, for any given $\sigma > 0$, a solitary wave $w_\sigma$ of speed $\sigma$ is deduced from a solution $w$ to \eqref{SW} by the scaling
$$w_\sigma(x_1, x_2) = \sigma w(\sqrt{\sigma} x_1, \sigma x_2).$$

The correspondence between the two equations is given as follows. Setting $\varepsilon \equiv \sqrt{2 - c^2}$ and $\eta_c \equiv 1 - |u_c|^2$, and performing the change of variables
\begin{equation}
\label{scaling}
w_c(x) = \frac{6}{\eps^2} \eta_c \Big( \frac{x_1}{\varepsilon}, \frac{\sqrt{2} x_2}{\eps^2} \Big),
\end{equation}
it is shown that $w$ approximatively solves \eqref{SW} as $c$ converges to $\sqrt{2}$. Set 
$$S(v) = E_{KP}(v) + \frac{1}{2} \int_{\R^2} v^2.$$
We will term ground-state a solution $w$ to \eqref{SW} which minimizes the action $S$ among all the solutions to \eqref{SW} (see \cite{deBoSau2} for more details). In dimension two, it is shown in \cite{deBoSau3} that $w$ is a ground state if and only if it minimizes the Hamiltonian keeping the $L^2$-norm fixed. The constant $\boS_{KP}$ denotes the action $S(w)$ of the ground-state solutions $w$ to equation \eqref{SW}. In the asymptotic limit $\p \to 0$, the value $E_{\min}(\p)$ relates to the constant $\boS_{KP}$ as the next result shows.

\begin{prop}
\label{T2bis}
Assume $N = 2$.\\
i) There exist some constants $\p_1 > 0$, $K_0$ and $K_1$ such that we have the asymptotic behaviours
\begin{equation}
\label{estimE2}
\frac{48 \sqrt{2}}{\boS_{KP}^2} \p^3 - K_0 \p^4 \leq \sqrt{2} \p - E_{\min}(\p) \leq K_1 \p^3, \forall 0 \leq \p \leq \p_1.
\end{equation}
ii) Let $u_\p$ be as in Theorem \ref{dim2}. Then, there exist some constants $\p_2 > 0$, $K_2 > 0$ and $K_3$ such that
\begin{equation}
\label{c-estim}
K_2 \p^2 \leq \sqrt{2} - c(u_\p) \leq K_3 \p^2, \forall 0 \leq \p < \p_2.
\end{equation}
Moreover, the map $u_\p$ verifies $|u_\p| \geq \frac{1}{2}$, so that we may write $u_\p = \varrho_\p \exp i \varphi_\p$, and we have the estimates
\begin{equation}
\label{p-cube-estim}
\int_{\R^2} \Big( |\nabla \varrho_\p|^2 + |\partial_2 u_\p|^2 \Big)+
\bigg| \int_{\R^2} (1 - \varrho_\p^2) |\nabla \varphi_\p|^2 \bigg| \leq K \p^3,
\end{equation}
and
\begin{equation}
\label{Min-estim}
|u_\p(0)| = \min_{x\in \R^2} |u_\p(x)| \leq 1 - K \p^2,
\end{equation}
where $K$ is some universal constant.
\end{prop}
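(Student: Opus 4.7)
The plan is to obtain \eqref{estimE2} as two matching bounds — an upper bound from a test function built from a Kadomtsev--Petviashvili ground state, and a lower bound via rescaling of the minimizer $u_\p$ — and then to deduce \eqref{c-estim}--\eqref{Min-estim} from this expansion together with the Pohozaev identities for \eqref{TWc}.

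For the upper bound $\sqrt{2}\p - E_{\min}(\p)\le K_1\p^3$ I would invert the scaling \eqref{scaling}: given a KP ground state $w$ with $S(w)=\boS_{KP}$ and a small parameter $\eps>0$, set
$$\eta_\eps(x)=\frac{\eps^2}{6}\,w\Bigl(\eps x_1,\frac{\eps^2}{\sqrt{2}}x_2\Bigr),$$
and define the test function $v_\eps=\sqrt{1-\eta_\eps}\exp(i\varphi_\eps)$ with a phase $\varphi_\eps$ chosen according to the transonic ansatz, so that $(\eta_\eps,\varphi_\eps)$ solves the hydrodynamic form of \eqref{TWc} to leading order in $\eps$. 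Expanding the energy and momentum as $\eps\to 0$, and using \eqref{SW} and the Pohozaev identities for the KP action to reduce integrals of $w$ and its derivatives to multiples of $\boS_{KP}$, one obtains $p(v_\eps)=\eps\,a_{KP}+O(\eps^3)$ and $E(v_\eps)-\sqrt{2}\,p(v_\eps)=-\eps^3\,b_{KP}+O(\eps^5)$ with $b_{KP}/a_{KP}^3=48\sqrt{2}/\boS_{KP}^2$. Picking $\eps=\eps(\p)$ so that $p(v_\eps)=\p$ yields $\eps\sim \p/a_{KP}$ and hence $E_{\min}(\p)\le E(v_\eps)\le \sqrt{2}\p-(48\sqrt{2}/\boS_{KP}^2)\p^3+O(\p^5)$, which is slightly stronger than the right-hand bound in \eqref{estimE2}.

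For the matching lower bound I would apply the same rescaling to the minimizer $u_\p$: set $\eps_\p=\sqrt{2-c(u_\p)^2}$ and $w_\p(y)=(6/\eps_\p^2)\,(1-|u_\p|^2)(y_1/\eps_\p,\,\sqrt{2}\,y_2/\eps_\p^2)$. Rewriting \eqref{TWc} in hydrodynamic form shows that $w_\p$ solves \eqref{SW} up to a remainder that is small when $\eps_\p$ is small, while $E(u_\p)$ and $\p$ can be expressed as explicit combinations of the KP action $S(w_\p)$ and of $\|w_\p\|_{L^2}^2$. The inequality $S(w)\ge \boS_{KP}$ for all non-trivial solutions of \eqref{SW}, upgraded to a quantitative perturbative form for approximate solutions, gives $S(w_\p)\ge \boS_{KP}-O(\eps_\p^2)$, and translating back via the Pohozaev identities for \eqref{TWc} produces the left-hand inequality in \eqref{estimE2}. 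The main obstacle is the compactness of $(w_\p)$ in the topology of the KP minimization problem — one must rule out spreading, splitting, and concentration — and I expect to handle it via the axisymmetry from Theorem \ref{symetrie} together with a concentration-compactness argument parallel to the one used in the proof of Theorem \ref{dim2}.

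The remaining assertions of ii) then follow by post-processing. The speed $c(u_\p)$ is the Lagrange multiplier of the constraint $p(v)=\p$ and, at points of differentiability, equals $E_{\min}'(\p)$; concavity of $E_{\min}$ combined with the two-sided cubic expansion \eqref{estimE2} forces $\sqrt{2}-c(u_\p)=\Theta(\p^2)$ by comparing secant slopes at $\p$ and $2\p$, yielding \eqref{c-estim}. With $c(u_\p)$ and $E(u_\p)$ known to the correct order, the Pohozaev identities for \eqref{TWc} — which linearly couple $\int(\partial_1 u_\p)^2$, $\int(\partial_2 u_\p)^2$, $\int(1-|u_\p|^2)^2$ and $c\,p$, cf.\ \cite{Graveja2} — become a small linear system whose resolution gives $\int|\partial_2 u_\p|^2$, $\int|\nabla\varrho_\p|^2$ and $\bigl|\int(1-\varrho_\p^2)|\nabla\varphi_\p|^2\bigr|$ all of order $O(\p^3)$, which is \eqref{p-cube-estim}; the lift $u_\p=\varrho_\p\exp(i\varphi_\p)$ is legitimate because the $C^0$-approximation $|u_\p|^2\simeq 1-(\eps_\p^2/6)w$ provides $|u_\p|\ge 1/2$ for $\p$ small. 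Finally, the axisymmetry from Theorem \ref{symetrie} places $\min|u_\p|$ on the axis of symmetry; after translating so that this minimum is attained at the origin, the same uniform approximation and the positivity of $\max w$ combined with $\eps_\p^2=\Theta(\p^2)$ give \eqref{Min-estim}.
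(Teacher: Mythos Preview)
Your test-function construction for the bound $E_{\min}(\p)\le \sqrt{2}\p-(48\sqrt{2}/\boS_{KP}^2)\p^3+O(\p^4)$ is exactly what the paper does (Lemma \ref{bornesupn2}); note, however, that you have swapped the labels ``upper'' and ``lower'' --- a test function bounds $E_{\min}$ from above, hence $\Xi(\p)=\sqrt{2}\p-E_{\min}(\p)$ from below, which is the \emph{left} inequality in \eqref{estimE2}, not the right one.

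The real difficulty lies in the other inequality, $\Xi(\p)\le K_1\p^3$, and here your plan has a genuine gap. You propose to rescale the minimizer $u_\p$ via $\eps_\p=\sqrt{2-c(u_\p)^2}$ and pass to the KP limit, but at this point in your argument you do not yet know that $\eps_\p\to 0$ (that is part of \eqref{c-estim}, which you intend to deduce \emph{afterwards} from \eqref{estimE2}), so the rescaling is not even well posed. Even granting $\eps_\p\to 0$, the compactness of $(w_\p)$ in the KP topology is substantial --- the paper explicitly defers this convergence to a separate article. The paper instead proves $\Xi(\p)\le K_1\p^3$ by a short and self-contained route: first $|u_\p|\ge \frac12$ for small $\p$ by the pointwise estimate of Lemma \ref{tarquini20}; then the Fourier identity \eqref{equenfourier1} combined with the explicit computation \eqref{claim0} gives $\varepsilon(u_\p)\le K\,E(u_\p)\le K\p$ (Lemma \ref{bornevarepsilon}); finally the Pohozaev identity in the form \eqref{acapulco} yields $\Xi(\p)=\Sigma(u_\p)\le \varepsilon(u_\p)^2\p/\sqrt{2}\le K\p^3$. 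This argument simultaneously delivers the upper bound in \eqref{c-estim} without any KP analysis, and the lower bound in \eqref{c-estim} then follows from \eqref{acapulco} and the already-established left side of \eqref{estimE2}.

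Your treatment of \eqref{p-cube-estim} via Pohozaev is close in spirit to the paper (which uses \eqref{acapulco} and Lemma \ref{tropbien}). For \eqref{Min-estim}, however, you again rely on the KP approximation, whereas the paper uses only the elementary discrepancy bound of Lemma \ref{discrepancy}: $\min|u_\p|\le 1-\Sigma(u_\p)/(\sqrt{2}\p)=1-\Xi(\p)/(\sqrt{2}\p)\le 1-K\p^2$.
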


In a separate paper, we will provide a rigorous proof of the asymptotic expansion given in \cite{IordSmi1, JonPuRo1}, under specific assumptions on the solutions $u_\p$: these assumptions are in particular verified by the solutions constructed in Theorem \ref{dim2}, thanks in particular to estimates \eqref{p-cube-estim}.

\begin{remark}
If $u_c$ is a solution to \eqref{TWc} in dimension three, then it is also formally shown in \cite{IordSmi1,JoneRob1,JonPuRo1}, that the function $w_c$ defined by
$$w_c(x) = \frac{6}{\eps^2} \bigg( 1 - \Big|v_c \Big( \frac{x_1}{\varepsilon}, \frac{\sqrt{2} x_2}{\eps^2},  \frac{\sqrt{2} x_3}{\eps^2} \Big) \Big|^2 \bigg),$$
converges, as the speed $c$ converges to $\sqrt{2}$, to a solitary wave solution $w$ to the three-dimensional Kadomtsev-Petviashvili equation \eqref{KP}, which writes
$$\partial_t u + u \partial_1 u + \partial_1^3 u - \partial_1^{- 1} (\partial_2^2 u + \partial_3^2 u) = 0.$$
In particular, the equation for the solitary wave $w$ is now written as
$$\partial_1 w - w \partial_1 w - \partial_1^3 w + \partial_1^{- 1} (\partial_2^2 w + \partial_3^2 w) = 0.$$
\end{remark}

\begin{remark}
For $N = 2$ and $N = 3$, the Cauchy problem for \eqref{GP} is known to be well-posed in the energy space $\boE(\R^N)$ (see \cite{Gerard1, Gerard2}), as well as in the space $\{ v \} + H^1(\R^N)$, where $v$ is a finite energy solution to \eqref{TWc} (see \cite{Gallo1}, and also \cite{BethSau1, Goubet1, GusNaTs1}). An important advantage of the space $\{ v \} + H^1(\R^N)$, besides the fact that it is affine, is that the momentum is well-defined (in contrast as mentioned above with the energy space). Moreover, it can be shown that the momentum, defined by \eqref{identityp} in the three-dimensional case, and after an integration by parts, by
$$p(v) = - \int_{\R^N} \partial_1 (\Im(v)) (\Re(v) - 1),$$
in the two-dimensional case, is a conserved quantity. In particular, the fact that $u_\p$ solves the minimization problem \eqref{eminent} strongly suggests that $u_\p$ is orbitally stable. A rigorous proof of the orbital stability of $u_\p$ would require, in addition to solving the Cauchy problem, to obtain compactness properties for minimizing sequences for \eqref{eminent}. We will not tackle this problem here.
\end{remark}

%%%%%%%%%%%%%%%%%%%%%%%%%%%%%%%%%%%%%%%%
\subsection{Some elements in the proofs}
%%%%%%%%%%%%%%%%%%%%%%%%%%%%%%%%%%%%%%%%

The starting point of our proofs is a careful analysis of the curve $p \mapsto E_{\min}(p)$.

\begin{theorem}
\label {proemin}
Let $N = 2$ or $N = 3$. For any $\p, \q \geq 0$, we have the inequality
\begin{equation}
\label{start}
|E_{\min}(\p) - E_{\min}(\q)| \leq \sqrt{2} |\p - \q|, 
\end{equation}
i.e. the real-valued function $\p \mapsto E_{\min}(\p)$ is Lipschitz, with Lipschitz's constant $\sqrt{2}$. Moreover, it is concave and non-decreasing on $\R_+$. Set
\begin{equation}
\label{discrep}
\Xi(\p) = \sqrt{2} \p - E_{\min}(\p).
\end{equation}
The function $\p \mapsto \Xi(\p)$ is nonnegative, convex, continuous, non-decreasing on $\R_+$, and tends to $+ \infty$ as $\p \to + \infty$. In particular, there exists some number $\p_0 \geq 0$ such that $\Xi(\p) = 0$, if $\p \leq \p_0$, and $\Xi(\p) > 0$, otherwise.
\end{theorem}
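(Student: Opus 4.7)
First I would record the trivial observations: the constant map $v \equiv 1$ belongs to $W(\R^N)$ with $E(v) = p(v) = 0$, so $E_{\min}(0) = 0$, and $E_{\min} \geq 0$ on $\R_+$ since $E \geq 0$.

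The next building block is the upper bound $E_{\min}(\p) \leq \sqrt 2 \p$ (equivalent to $\Xi \geq 0$). For each $\p > 0$, I would construct a family of small-amplitude \emph{sonic} test functions modelled on the KP scaling \eqref{scaling}: fix a smooth profile $\psi$ and consider $v_\eps$ of the form $1 + \eps^2 \psi(\eps x_1, \eps^2 x_\perp/\sqrt 2, \ldots)$, with the parameters tuned so that $p(v_\eps) = \p$ while $E(v_\eps) \to \sqrt 2 \p$ as $\eps \to 0$. The same device, used as an additive perturbation via \emph{gluing} (both a test function $v$ and the sonic bubble tend to $1$ at infinity, so their product lies in $W(\R^N)$ with additive $E$ and $p$ up to errors that vanish as the bubble is sent to infinity), yields the one-sided estimate
$$E_{\min}(\p + h) \leq E_{\min}(\p) + \sqrt 2\, h, \qquad \p, h \geq 0,$$
and hence $\Xi$ is non-decreasing. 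The analogous gluing of two $\eps$-minimizers produces subadditivity $E_{\min}(\p + \q) \leq E_{\min}(\p) + E_{\min}(\q)$.

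The delicate step is the \emph{concavity} of $E_{\min}$; I expect this to be the main obstacle. A natural attack exploits an anisotropic scaling specific to the Gross--Pitaevskii structure: for $v \in W(\R^N)$ and $\lambda > 0$, the rescaling $v_\lambda(x_1, x_\perp) := v(\lambda x_1, x_\perp)$ preserves the momentum, $p(v_\lambda) = p(v)$, while
$$E(v_\lambda) = \lambda A(v) + \lambda^{-1} B(v), \quad A(v) := \tfrac12 \int_{\R^N} |\partial_1 v|^2, \quad B(v) := \tfrac12 \int_{\R^N} |\nabla_\perp v|^2 + \tfrac14 \int_{\R^N} (1 - |v|^2)^2,$$
so that $E_{\min}(\p) = \inf\{ 2\sqrt{A(v) B(v)} : p(v) = \p \}$. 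I would then try to deduce concavity by feeding this geometric-mean reformulation a carefully glued combination of $\eps$-minimizers at $\p_1$ and $\p_2$, with the internal $\lambda$'s chosen so that the $A$ and $B$ functionals of the resulting configuration realize the appropriate convex combination at $\lambda \p_1 + (1-\lambda) \p_2$. Should this direct construction stall, a fallback is to exploit the Lagrangian $F_c(v) = E(v) - c p(v)$ mentioned in the first remark: the concave envelope of $E_{\min}$ is then $\inf_c ( c \p + \inf_v F_c(v))$, and one reduces concavity to the absence of a duality gap, which can be checked using the existence of minimizers established later in the paper.

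Once concavity is in hand, the remaining conclusions follow essentially automatically. A concave function on $\R_+$ vanishing at $0$ and staying nonnegative is non-decreasing (a slope turning negative would force the function below zero by concavity). Combined with the one-sided estimate above, monotonicity upgrades to the full Lipschitz bound \eqref{start}. Then $\Xi$, as the difference of an affine function and a concave one, is convex, nonnegative, and continuous (by Lipschitz continuity of $E_{\min}$), and its monotonicity has already been shown. The divergence $\Xi(\p) \to +\infty$ follows from convexity: $\Xi(\p)/\p$ is non-decreasing on $\R_+$ (since $\Xi(0)=0$ and $\Xi$ is convex), so it suffices to exhibit a single $\p^* > 0$ with $\Xi(\p^*) > 0$; vortex-pair test functions in $N=2$ and vortex-ring test functions in $N=3$ (constructed in \cite{BethSau1, BetOrSm1}) have $E/p < \sqrt 2$ for sufficiently large momentum, which delivers such a $\p^*$. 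Finally, $\p_0 := \sup\{\p : \Xi(\p) = 0\}$ has the stated property by the monotonicity and convexity of $\Xi$.
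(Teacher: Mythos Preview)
Your outline is solid on every point except the concavity of $E_{\min}$, where there is a genuine gap.

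First, the direct construction you sketch runs in the wrong direction. Gluing near-minimizers at $\p_1$ and $\p_2$ produces a competitor at $\p_1 + \p_2$ and therefore an \emph{upper} bound for $E_{\min}(\p_1+\p_2)$; this yields subadditivity, not concavity. Concavity is a \emph{lower} bound at the intermediate point, so one must start from a near-minimizer at $\theta\p_1+(1-\theta)\p_2$ and manufacture competitors at $\p_1$ and $\p_2$, not the other way around. The anisotropic rescaling $v\mapsto v(\lambda x_1,x_\perp)$ and the resulting identity $E_{\min}(\p)=\inf_{p(v)=\p}2\sqrt{A(v)B(v)}$ are correct, but they do not help here: the scaling preserves the momentum, so it cannot split a single near-minimizer into pieces with prescribed distinct momenta. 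Second, your fallback via duality is circular in this paper: the existence of minimizers (Theorems~\ref{dim2}, \ref{dim3}, \ref{principaltheo}) is obtained \emph{through} the concavity of $E_{\min}$, by way of Corollary~\ref{subadditif} and Lemma~\ref{gueri}. Even setting aside the circularity, the existence of a constrained minimizer only gives a critical point of $F_c$, not a global minimizer, so the absence of a duality gap is not automatic.

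The paper's argument for concavity (Lemma~\ref{concavite}) is short and completely different from yours: take a near-minimizer $v$ for $(\p+\q)/2$ and reflect it across a hyperplane $\{x_N=a\}$, producing maps $T_a^\pm v\in W(\R^N)$ with $p(T_a^\pm v)=2\int_{\pm(x_N-a)>0}\langle i\partial_1 v,v-1\rangle$ and $E(T_a^\pm v)=2E(v,\{\pm(x_N-a)>0\})$. Since $a\mapsto p(T_a^+ v)$ is continuous and sweeps from $2p(v)=\p+\q$ to $0$, one can choose $a$ so that $p(T_a^+ v)=\p$ and $p(T_a^- v)=\q$; then $E_{\min}(\p)+E_{\min}(\q)\le E(T_a^+ v)+E(T_a^- v)=2E(v)$, and letting $E(v)\downarrow E_{\min}((\p+\q)/2)$ gives midpoint concavity. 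With continuity (from your Lipschitz step) this yields full concavity. The reflection is taken in a direction $x_N$ orthogonal to $x_1$ precisely so that the reflected map stays in $W(\R^N)$ and the half-space momenta add up correctly. This is the missing idea; the rest of your outline then goes through as written.
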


An important consequence of the concavity of the function $E_{\min}(\p)$ is the following

\begin{cor}
\label{subadditif}
The function $E_{\min}$ is subadditive that is, for any non-negative numbers $\p_1, \ldots, \p_\ell$, we have
\begin{equation}
\label{subadditivity}
\sum_{i = 1}^\ell E_{\min}(\p_i) \geq E_{\min}\Big( \sum_{i=1}^\ell \p_i \Big).
\end{equation}
Moreover, if $\ell \geq 2$ and \eqref{subadditivity} is an equality, then the function $E_{\min}$ is linear on $(0, \p)$, where 
$\p \equiv \underset{i = 1}{\overset{\ell}{\sum}} \p_i$.
\end{cor}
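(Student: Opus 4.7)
The plan is to derive the corollary directly from Theorem \ref{proemin}: concavity and monotonicity of $E_{\min}$ on $\R_+$, combined with the elementary fact that $E_{\min}(0) = 0$ (take $v \equiv 1$, which belongs to $W(\R^N)$ and satisfies $E(1) = p(1) = 0$).

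First I would dispose of trivial cases ($\p_i = 0$ for some $i$ can be removed from the sum by monotonicity), then handle $\ell = 2$. Given $\p_1, \p_2 > 0$, set $\p = \p_1 + \p_2$ and $t = \p_1/\p \in (0,1)$, so that $\p_1 = t\p + (1-t) \cdot 0$. Concavity of $E_{\min}$ on $\R_+$ and the identity $E_{\min}(0) = 0$ yield
\begin{equation*}
E_{\min}(\p_1) \geq t \, E_{\min}(\p) + (1-t) E_{\min}(0) = \frac{\p_1}{\p} E_{\min}(\p),
\end{equation*}
and symmetrically $E_{\min}(\p_2) \geq \frac{\p_2}{\p} E_{\min}(\p)$. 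Summing gives \eqref{subadditivity} for $\ell = 2$. The general case follows by a straightforward induction: grouping $\p_1 + \cdots + \p_{\ell-1}$ and applying the two-term inequality, then the induction hypothesis.

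For the equality case with $\ell \geq 2$, it suffices by induction to treat $\ell = 2$ (indeed, if equality holds for $\ell$ terms, then in each step of the induction both the two-term inequality and the inductive inequality must be equalities, so $E_{\min}$ is linear on the largest interval $[0,\p]$). So assume $E_{\min}(\p_1) + E_{\min}(\p_2) = E_{\min}(\p)$ with $\p = \p_1 + \p_2$ and $\p_1, \p_2 > 0$. Then the two inequalities above must both be equalities, so
\begin{equation*}
E_{\min}(\p_1) = t \, E_{\min}(\p) + (1-t) E_{\min}(0), \qquad t = \p_1/\p \in (0,1).
\end{equation*}
I would then invoke the standard fact that if a concave function attains equality in Jensen's inequality at an interior convex combination of two points, it is affine on the segment joining them. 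Hence $E_{\min}$ is affine on $[0,\p]$, and since $E_{\min}(0) = 0$ it is in fact linear on $[0,\p]$, as required.

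There is no real obstacle: the statement is a clean consequence of concavity, monotonicity, and $E_{\min}(0) = 0$. The only mild subtlety is the propagation of equality in the case $\ell > 2$, which is handled cleanly by induction, and the use of the elementary characterization of equality in concavity on a segment, which is classical for concave functions on intervals of $\R$.
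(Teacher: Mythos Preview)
Your argument is correct and follows essentially the same route as the paper: both use concavity together with $E_{\min}(0)=0$ to obtain $E_{\min}(\q) \geq \dfrac{\q}{\p}\,E_{\min}(\p)$ for $0 \le \q \le \p$, and then sum. The only cosmetic difference is that the paper applies this inequality directly to each $\p_i$ and sums (no induction needed), whereas you reduce first to $\ell=2$ and then induct; the paper's version is marginally cleaner but the content is identical.
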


\begin{proof}
Since $E_{\min}(0) = 0$, and since $E_{\min}$ is concave, its graph lies above the segment joining $(0, 0)$ and $(\p, E_{\min}(\p))$. In particular, for any $0 \leq \q \leq \p$, we have
$$E_{\min}(\q) \geq \q \frac{E_{\min}(\p)}{\p}.$$
Specifying this relation for $\p_i$, and adding these inequalities, we obtain \eqref{subadditivity}. If \eqref{subadditivity} is an equality, then necessarily $E_{\min}(\p_i) = \p_i \frac{E_{\min}(\p)}{\p}$, and the graph has to be linear.
\end{proof}

Since the function $E_{\min}$ is Lipschitz, non-decreasing and concave, its left and right derivatives exist for any $\p \geq 0$, are equal except possibly on a countable subset $\goQ$ of $\R_+$, are non-negative and non-increasing, and satisfy the inequality
$$0 \leq \frac{d^+}{d\p} \big( E_{\min}(\p) \big) \leq \frac{d^-}{d\p} \big( E_{\min}(\p) \big) \leq \sqrt{2},$$
where we have set
$$\frac{d^\pm}{d\p} \big( E_{\min}(\p) \big) \equiv \lim_{\Delta \p \to 0^+} \frac{ E_{\min}(\p \pm \Delta \p) - E_{\min}(\p)}{\pm \Delta \p}.$$
Moreover, the derivatives are related to the speed $c(u_\p)$ as follows.

\begin{lemma}
\label{speed}
Let $\p > 0$ and assume that $E_{\min}(\p)$ is achieved by a solution $u_\p$ of \eqref{TWc} of speed $c(u_\p)$. Then we have
\begin{equation}
\label{lrspeed}
\frac{d^+}{d\p} \big( E_{\min}(\p) \big) \leq c(u_\p) \leq \frac{d^-}{d\p} \big( E_{\min}(\p) \big).
\end{equation}
\end{lemma}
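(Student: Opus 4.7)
My plan is the standard Lagrange-multiplier argument. The Euler--Lagrange equation \eqref{TWc} expresses that $u_\p$ is a critical point of the functional $F_c = E - c(u_\p)\, p$; concretely, multiplying \eqref{TWc} by a test variation $\phi \in C_c^\infty(\R^N,\C)$, integrating over $\R^N$, performing one integration by parts (licit because $u_\p$ is smooth by elliptic regularity, or by Theorem \ref{symetrie}), and taking real scalar products yields
$$dE(u_\p)[\phi] := \int_{\R^N} \langle \nabla u_\p, \nabla \phi \rangle - \int_{\R^N} (1-|u_\p|^2) \langle u_\p, \phi\rangle = c(u_\p) \int_{\R^N} \langle i \partial_1 u_\p, \phi\rangle =: c(u_\p) \cdot dp(u_\p)[\phi].$$
The idea is to promote this pointwise identity at the critical point to a one-sided first-order expansion of $E_{\min}$ around $\p$, by constructing admissible competitors whose momentum sweeps a whole neighborhood of $\p$.

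First I would exhibit $\phi_0 \in C_c^\infty(\R^N,\C)$ with $\alpha := dp(u_\p)[\phi_0] \neq 0$. If no such $\phi_0$ existed, then $\partial_1 u_\p$ would vanish identically in the distributional sense; combined with $u_\p - 1 \in V(\R^N)$ and $E(u_\p) < +\infty$, this would force $u_\p \equiv 1$, contradicting $p(u_\p) = \p > 0$. Since $\phi_0 \in V(\R^N)$, the map $u_\p + s\phi_0$ lies in $W(\R^N)$ for every $s \in \R$. The scalar function $g(s) := p(u_\p + s\phi_0)$ is a polynomial in $s$ of degree at most two with $g(0) = \p$ and $g'(0) = \alpha \neq 0$, so the implicit function theorem provides a smooth map $t \mapsto s(t)$ defined near $0$, with $s(0) = 0$, $s'(0) = 1/\alpha$, and $g(s(t)) = \p + t$. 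Set $v_t := u_\p + s(t)\phi_0 \in W(\R^N)$, so that $p(v_t) = \p + t$ by construction.

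The definition of $E_{\min}$ then gives $E_{\min}(\p+t) \leq E(v_t)$. The map $s \mapsto E(u_\p + s\phi_0)$ is a polynomial in $s$ of degree at most four, so Taylor-expanding at $s = 0$ and invoking the key identity yields
$$E(v_t) = E(u_\p) + s(t)\, dE(u_\p)[\phi_0] + O(s(t)^2) = E_{\min}(\p) + s(t) \cdot c(u_\p)\, \alpha + O(t^2) = E_{\min}(\p) + c(u_\p)\, t + O(t^2).$$
Dividing the resulting inequality $E_{\min}(\p+t) - E_{\min}(\p) \leq c(u_\p)\, t + O(t^2)$ by $t > 0$ and letting $t \to 0^+$ gives $\frac{d^+}{d\p}(E_{\min}(\p)) \leq c(u_\p)$; dividing instead by $t < 0$ reverses the inequality and, after letting $t \to 0^-$, produces $\frac{d^-}{d\p}(E_{\min}(\p)) \geq c(u_\p)$. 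I do not foresee any genuine obstacle: the only two items that warrant checking are the smoothness of $u_\p$ needed to integrate by parts (immediate from the ellipticity of \eqref{TWc}) and the admissibility $v_t \in W(\R^N)$ (immediate from $C_c^\infty(\R^N,\C) \subset V(\R^N)$).
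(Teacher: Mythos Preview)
Your proof is correct and follows essentially the same route as the paper's: both exploit the Euler--Lagrange identity $dE(u_\p)=c(u_\p)\,dp(u_\p)$, pick a compactly supported direction along which $dp(u_\p)$ is non-trivial, build a one-parameter family of competitors in $W(\R^N)$ whose momentum sweeps a neighborhood of $\p$, and Taylor-expand $E$ to first order. The only cosmetic differences are that the paper normalizes its test function so that $dp(u_\p)(\psi_1)=1$ and works directly with the parameter $t$ (noting $\s=t+p(\psi_1)t^2$), whereas you invoke the implicit function theorem to reparametrize, and your justification that $dp(u_\p)\neq 0$ (via $\partial_1 u_\p\equiv 0 \Rightarrow u_\p$ constant) is slightly different from the paper's density argument.
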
 

Strict concavity also plays an important role in our argument. In that direction, we have

\begin{lemma}
\label{gueri}
Let $0 \leq \p_1 < \p_2$ and assume the function $E_{\min}$ is affine on $(\p_1, \p_2)$. Then, for any $\p_1 < \p < \p_2$, the infimum $E_{\min}(\p)$ is not achieved in $W(\R^N)$.
\end{lemma}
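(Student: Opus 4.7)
The plan is to argue by contradiction: assume $u_\p \in W(\R^N)$ achieves $E_{\min}(\p)$ for some $\p \in (\p_1, \p_2)$, and let $s$ denote the constant slope of $E_{\min}$ on $(\p_1, \p_2)$. By affinity both one-sided derivatives of $E_{\min}$ at $\p$ equal $s$, so Lemma \ref{speed} forces $c(u_\p) = s \in (0, \sqrt{2})$; in particular $u_\p$ is a genuine subsonic solution of \eqref{TWc} with speed $s$.

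The core of the argument would then be to produce a competitor in $W(\R^N)$ that strictly improves on the affine value at some nearby momentum. The first natural attempt is a concatenation: for $\delta > 0$ small enough that $\p + \delta \in (\p_1, \p_2)$ and a function $w_\delta \in W(\R^N)$ nearly attaining $E_{\min}(\delta)$, set
$$v^R(x) = 1 + \bigl(u_\p(x) - 1\bigr) + \bigl(w_\delta(x - R e_1) - 1\bigr).$$
Using identity \eqref{identityp} and the sharp decay of $u_\p$ and $w_\delta$ at infinity from Proposition \ref{superdecay}, the cross terms in $E(v^R)$ and $p(v^R)$ vanish as $R \to \infty$, so $p(v^R) \to \p + \delta$ and $E(v^R) \to E(u_\p) + E(w_\delta)$. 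Comparing with the affine formula $E_{\min}(\p + \delta) = E(u_\p) + s\delta$, a contradiction would follow as soon as $E(w_\delta) < s\delta$.

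The main obstacle is precisely that concavity of $E_{\min}$ together with $E_{\min}(0) = 0$ force the chord slope $E_{\min}(\delta)/\delta$ to be at least the tangent slope $s$ at $\p$, so the crude concatenation merely saturates the affine prediction. To close the argument one must split cases. If $E_{\min}(\delta) = s\delta$ for all small $\delta$ --- equivalently, by concavity, $E_{\min}$ is linear on $(0, \p_2)$ --- the concatenation produces a minimizing sequence for $E_{\min}(\p + \delta)$ that splits into two well-separated bumps, and the coexistence of this splitting with the putative strong limit $u_\p$ at $\p$ (via a further splitting $\p = \p' + \p''$ in the linear regime) can be exploited through a cut-off and gluing procedure to reach a contradiction. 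The harder case, $E_{\min}(\delta) > s\delta$ strictly for small $\delta$ (corresponding to a nonzero $y$-intercept of the affine piece), is the principal difficulty: here one must replace the far-placed $w_\delta$ by a localized perturbation of $u_\p$ itself, leveraging the axisymmetry and real-analyticity provided by Theorem \ref{symetrie} together with the sharp decay of Proposition \ref{superdecay}; the quantitative verification that such a refined competitor strictly undercuts the affine prediction, against the barrier that the second variation of $E - s\, p$ at $u_\p$ is a priori non-negative by minimality, is the delicate analytic content of the proof.
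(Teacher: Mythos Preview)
Your proposal is not a proof; it is an outline that correctly identifies the obstacles to a concatenation/perturbation approach and then stops. The concatenation attempt is, as you observe, blocked by concavity: $E_{\min}(\delta)/\delta \geq s$ for all small $\delta$, so gluing a far-away bump never strictly undercuts the affine value. Your fallback, a localized perturbation of $u_\p$ itself, runs into the very barrier you name: $u_\p$ is a critical point of $E - s\,p$, so first-order gains vanish, and minimality makes the second variation nonnegative. You give no mechanism to overcome this, and ``the quantitative verification \ldots\ is the delicate analytic content of the proof'' is a placeholder, not an argument. The case split you propose (linear on $(0,\p_2)$ versus strict inequality near $0$) does not help either: in the linear case the splitting of minimizing sequences is consistent with, not contradictory to, the existence of a minimizer at $\p$.

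The paper's argument is entirely different and avoids these obstructions. It recycles the reflection maps $T_a^\pm$ from the proof of concavity (Lemma~\ref{concavite}): given a minimizer $u_\p$ and any $\s$ with $(\p-\s,\p+\s)\subset(\p_1,\p_2)$, choose $a(\s)$ so that $p(T_{a(\s)}^\pm u_\p)=\p\pm\s$. Adding the two energy inequalities gives $E_{\min}(\p-\s)+E_{\min}(\p+\s)\leq 2E_{\min}(\p)$; affinity forces equality, so each $T_{a(\s)}^\pm u_\p$ is a minimizer at $\p\pm\s$, hence a smooth solution of \eqref{TWc}. But a reflected map is $C^1$ across the hyperplane $\{x_N=a(\s)\}$ only if $\partial_N u_\p=0$ there. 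Varying $\s$ gives $\partial_N u_\p=0$ on a slab $\R^{N-1}\times(a_1,a_2)$ with $a_1\neq a_2$. Since $c(u_\p)=s\in(0,\sqrt{2})$ (Lemma~\ref{speed} plus the argument of Lemma~\ref{constantc}), Proposition~\ref{analyticity} makes $u_\p$ real-analytic, so $\partial_N u_\p\equiv 0$ on $\R^N$; then $u_\p$ is independent of $x_N$ and cannot have finite energy unless constant, contradicting $\p>0$.
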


So far, we have not addressed the existence problem for $u_\p$. Notice that as a direct consequence of \eqref{start}, one has
\begin{equation}
\label{start2}
E_{\min}(\p) \leq \sqrt{2}\p.
\end{equation}
Inequality \eqref{start2} corresponds in some sense to a linearization of the equation, or alternatively to an asymptotic situation where only quadratic terms in the functional are kept. To get some feeling for its proof, we consider a map $v \in \{ 1 \} + C_c^\infty(\R^N)$ such that 
$$\delta = \underset{x \in \R^N}{\inf} |v(x)| \geq \frac{1}{2},$$
so that we may write $v = \varrho \exp i \varphi$. To obtain \eqref{start2} we need to construct $v$ so that $E(v) \simeq \sqrt{2} |p(v)|$. In view of formula \eqref{celemoment} for the momentum, we have
\begin{equation}
\label{maininequality0}
|p(v)| = \Big| \frac{1}{2} \int_{\R^N} (1 - \varrho^2) \partial_1 \varphi \Big| \leq \frac{1}{2 \delta} \int_{\R^N} \Big| 1 - \varrho^2 \Big| \Big| \varrho \partial_1 \varphi \Big|.
\end{equation}
Using the inequality $a b \leq \frac{1}{2} (a^2 + b^2)$, we observe therefore that
$$|p(v)| \leq \frac{1}{\sqrt{2} \delta} \bigg( \frac{1}{2} \int_{\R^N} \varrho^2 |\nabla \varphi|^2 + \frac{1}{4} \int_{\R^N} \Big( 1 - \rho^2 \Big)^2 \bigg) \leq \frac{1}{\sqrt{2} \delta} E(v),$$
i.e. $\sqrt{2} \delta |p(v)| \leq E(v)$. To obtain a map such that $E(v) \simeq \sqrt{2} |p(v)|$, we need therefore to have $\delta$ close to $1$, and the inequality $a b \leq \frac{1}{2} (a^2 + b^2)$ close to an equality, that is $a \simeq b$ or in our case
$$\sqrt{2} \partial_1 \varphi \simeq 1 - \varrho^2.$$
This elementary observation is the starting point in the proof of the existence of solutions minimizing $E_{\min}$, in the case $\Xi(\p) > 0$, that is for $\p > \p_0$. As a matter of fact, the discrepancy term
$$\Sigma(v) = \sqrt{2} p(v) - E(v)$$
is central in our analysis. Notice in particular, that
$$\Xi(\p) = \sup \{ \Sigma(v), v \in W(\R^N), \ p(v) = \p \},$$
and that, in view of Theorem \ref{proemin}, a way to formulate the fact that $\p_0 < \p$ is to say that there exists a map $v \in W(\R^N)$ such that
$$\Sigma(v) > 0, \ {\rm and} \ p(v) = \p.$$
In this situation, we have
 
\begin{lemma}
\label{discrepancy}
Let $v \in W(\R^N)$ and assume $p(v) > 0$. Then we have
\begin{equation}
\label{normsup}
\inf_{x \in \R^N} |v(x)| \leq \max \Big\{ \frac{1}{2}, 1 - \frac{\Sigma(v)}{\sqrt{2} p(v)} \Big\}.
\end{equation}
\end{lemma}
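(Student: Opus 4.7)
Set $\delta = \inf_{x \in \R^N} |v(x)|$. The strategy is a straightforward dichotomy on the value of $\delta$. If $\delta < \tfrac{1}{2}$, the inequality \eqref{normsup} is trivial since $\max\{\tfrac{1}{2}, 1 - \Sigma(v)/(\sqrt{2} p(v))\} \geq \tfrac{1}{2} > \delta$. We therefore focus on the case $\delta \geq \tfrac{1}{2}$, in which $v$ does not vanish, so that we may write $v = \varrho \exp i \varphi$ globally on the simply connected domain $\R^N$, with $\varrho \geq \tfrac{1}{2}$; using the representation of $p(v)$ from \eqref{celemoment}, we have
$$p(v) = \frac{1}{2} \int_{\R^N} \eta\, \partial_1 \varphi, \qquad \eta = 1 - \varrho^2.$$

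The core of the argument is the pointwise Young-type inequality that is already outlined around \eqref{maininequality0}. Writing $|\eta|\,|\partial_1 \varphi| = \frac{|\eta|}{\varrho} \cdot \varrho |\partial_1 \varphi| \leq \frac{1}{\delta} |\eta| \cdot \varrho |\partial_1 \varphi|$, and then applying $ab \leq \tfrac{1}{2}(a^2 + b^2)$ to $a = |\eta|/\sqrt{2}$ and $b = \varrho |\partial_1 \varphi|$, I get
$$|\eta|\, \varrho |\partial_1 \varphi| \leq \sqrt{2} \left( \frac{\eta^2}{4} + \frac{\varrho^2 |\partial_1 \varphi|^2}{2} \right) \leq \sqrt{2}\, e(v),$$
since $\varrho^2 |\partial_1 \varphi|^2 \leq \varrho^2 |\nabla \varphi|^2 \leq |\nabla v|^2$. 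Integrating and using $p(v) > 0$,
$$p(v) = |p(v)| \leq \frac{1}{2\delta} \int_{\R^N} |\eta|\, \varrho |\partial_1 \varphi| \leq \frac{1}{\sqrt{2}\,\delta}\, E(v),$$
that is, $E(v) \geq \sqrt{2}\, \delta\, p(v)$.

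Finally, I rearrange this bound in terms of the discrepancy $\Sigma(v) = \sqrt{2} p(v) - E(v)$: the last estimate gives $\Sigma(v) \leq \sqrt{2}(1 - \delta) p(v)$, and dividing by $\sqrt{2} p(v) > 0$ yields $\delta \leq 1 - \Sigma(v)/(\sqrt{2} p(v))$, which together with the trivial case $\delta < \tfrac{1}{2}$ establishes \eqref{normsup}. The only non-routine point is justifying the global lifting and the momentum identity \eqref{celemoment} for a general $v \in W(\R^N)$ with $\inf|v| \geq \tfrac{1}{2}$; this is the main technical hurdle, but it is essentially settled by the integrability framework alluded to in the remark following \eqref{def-W} (and further developed in the decay/regularity sections), since $\varrho \geq \tfrac{1}{2}$ makes $\eta$ and $\nabla \varphi = -\frac{1}{\varrho^2} \langle iv, \nabla v\rangle$ integrable enough for the integration-by-parts going from \eqref{identityp} to \eqref{celemoment} to be valid.
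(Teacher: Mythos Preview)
Your proof is correct and follows essentially the same route as the paper: dichotomy on $\delta = \inf|v|$, global lifting when $\delta \geq \tfrac{1}{2}$, the pointwise bound $|\eta\,\partial_1\varphi| \leq \frac{\sqrt{2}}{\varrho} e(v)$ yielding $\sqrt{2}\delta\, p(v) \leq E(v)$, and then the algebraic rearrangement in terms of $\Sigma(v)$. The paper's proof is slightly more terse (it invokes \eqref{maininequality0} directly and leaves the lifting justification implicit), but the argument is the same.
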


\begin{proof}
Set as above
$$\delta \equiv \inf_{x \in \R^N} |v(x)|.$$
If $\delta \leq \frac{1}{2}$, there is nothing to prove. Otherwise, one may show that $v$ has a lifting, i.e. that we may write
$v = \varrho \exp i \varphi.$
It follows therefore from \eqref{maininequality0} that
$$\sqrt{2} \delta p(v) = \sqrt{2} \delta |p(v)| \leq E(v) = \sqrt{2} p(v) - \Sigma(v),$$
and hence
$$1 - \delta \geq \frac{\Sigma(v)}{\sqrt{2} p(v)},$$
which yields the conclusion.
\end{proof}

Lemma \ref{discrepancy} is the starting point in the analysis of minimizing sequences for \eqref{eminent}. In particular, it shows that their modulus cannot converge uniformly to $1$ in the case $\p_0 < \p$. However, to go beyond that simple observation, one needs to get a better control on minimizing sequences.

Among the analytical difficulties which have already been stressed in \cite{BethSau1,BetOrSm1,Chiron1}, the first one is presumably the lack of compactness of minimizing sequences or Palais-Smale sequences. Working directly with arbitrary Palais-Smale sequences, i.e. approximate solutions to \eqref{TWc} with an additional small $H^{-1}$-term  leads to substantial technical issues, which seem hard to remove. The lack of regularity of the $H^{-1}$-term raises some major problems, for instance concerning regularity of the functions under hand, as well as Pohozaev's type identities which turn out to be crucial in our arguments, in particular in order to bound the Lagrange multiplier
\footnote{This direct approach would have the important advantage to pave the way to the study of the orbital stability of the travelling waves.}.

To overcome the difficulties related to a direct approach, we specify the way minimizing sequences are constructed. There are presumably many ways to proceed. Here, we consider the corresponding minimization problem on expanding tori (as in \cite{BetOrSm1}). This choice has several advantages. First, the torus is compact, so that the existence of minimizers presents no major difficulty. Second, it has no boundary, so that the elliptic theory is essentially the local one and concentration near the boundary is avoided. The torus captures also some of the translation invariance for the problem on $\R^N$. Finally, Pohozaev's identities yield comfortable bounds for the Lagrange multipliers, which provide a uniform control on the ellipticity of \eqref{TWc}. Our strategy to obtain compactness for the sequence of approximate minimizers is then to develop the elliptic theory for the equation on tori, derive several estimates which do not rely on the size of the torus, and then to pass to the limit when the size of the torus tends to infinity.

More precisely, we introduce the flat torus, for $N = 2$ and $N = 3$, defined by
$$\T_n^N \simeq \Omega_n^N \equiv [- \pi n, \pi n]^N,$$
for any $n \in \N^*$ (with opposite faces identified), and the space 
$$X_n^N = H^1(\T_n^N, \C) \simeq H^1_{\rm per}(\Omega_n^N, \C)$$
of $2 n$-periodic $H^1$-functions. We define the energy $E_n$ and the momentum $p_n$ on $X_n^N$ by
$$E_n(v) = \frac{1}{2} \int_{\T_n^N} |\nabla v|^2 + \frac{1}{4} \int_{\T_n^N} (1 - |v|^2)^2 = \int_{\T_n^N} e(v),$$
and
$$p_n(v) = \frac{1}{2} \int_{\T_n^N} \langle i \partial_1 v , v \rangle,$$
which clearly defines a quadratic functional on $X_n^N$, as well as the discrepancy term 
$\Sigma_n(v) = \sqrt{2} p_n(v) - E_n(v)$.
We introduce the set $\Gamma_n^N(\p) $ defined in dimension three by
$\Gamma_n^3(\p) \equiv \{ u \in X_n^3, p_n(u) = \p \},$
whereas in dimension two, its definition is slightly more involved, and is given by
$$\Gamma_n^2(\p) \equiv \{ u \in X_n^2, p_n(u) = \p \} \cap \boS_n^0.$$
The set $\boS_n^0$ corresponds to a topological sector of the energy $E_n$, following the approach of Almeida \cite{Almeida1}. We will define it precisely in definition \eqref{supersecteur} of Subsection \ref{luis}: at this stage, let us just mention that we introduce the set $\boS_n^0$ to have appropriate lifting properties far from the possibly vorticity set. We consider the minimization problem
\renewcommand{\theequation}{{$\boP_n^N(\p)$}}
\begin{equation}
\label{PnNp}
E_{\min}^n(\p) \equiv \underset{u \in \Gamma_n^N(\p) }{\inf} \Big( E_n(u) \Big).
\end{equation}
The constraint is non void, so that it is possible to prove the existence of a minimizer for \eqref{PnNp}.

\begin{prop}
\label{existnn}
Assume $N = 3$, or $N = 2$ and $n \geq \tilde{n}(\p)$, where $\tilde{n}(\p)$ is some integer only depending on $E_{\min}(\p)$. Then, there exists a minimizer $u_\p^n \in \Gamma_n^N(\p) $ for $E_{\min}^n(\p)$, and some constant $c_\p^n \in \R$ such that $u_\p^n$ satisfies (TW$c_\p^n$), i.e.
$$i c_\p^n \partial_1 u_\p^n + \Delta u_\p^n + u_\p^n (1 - |u_\p^n|^2) = 0 \ {\rm on} \ \T_n^N.$$
In particular, $u_\p^n$ is smooth. Moreover if 
\renewcommand{\theequation}{\arabic{equation}}
\numberwithin{equation}{section}
\setcounter{equation}{22}
$$\Xi(\p) > 0,$$
then there exist a constant $K(\p)$, and an integer $n(\p)$, only depending on $\p$ such that
\begin{equation}
\label{supc}
|c_\p^n| \leq K(\p).
\end{equation}
for any $n \geq n(\p)$. In particular, for any $k \in \N$, there exists some constant $K_k(\p)$ only depending on $\p$ and $k$ such that we have
\begin{equation}
\label{fastoche}
\| u_\p^n \|_{C^k(\T_n^N)} \leq K_k(\p).
\end{equation}
\end{prop}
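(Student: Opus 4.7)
The plan is to address in order: (i) existence of a minimizer together with the Euler-Lagrange equation and smoothness; (ii) the uniform bound \eqref{supc} on $c_\p^n$ under the hypothesis $\Xi(\p) > 0$; (iii) the $C^k$ bounds \eqref{fastoche}.

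For (i), I would apply the direct method of the calculus of variations on the compact manifold $\T_n^N$. Any minimizing sequence $(u_j) \subset \Gamma_n^N(\p)$ has $E_n(u_j)$ bounded, hence is bounded in $H^1(\T_n^N)$ (since on the compact torus, control of $\|\nabla u_j\|_{L^2}$ and of $\||u_j|^2 - 1\|_{L^2}$ implies $L^2$-control of $u_j$). By Rellich's theorem, $(u_j)$ is relatively compact in every $L^q(\T_n^N)$ with $q < 2^*$. The momentum $p_n$ is continuous for weak-$H^1$/strong-$L^4$ convergence via \eqref{identityp}, while $E_n$ is weakly lower semicontinuous, so a subsequence converges to a minimizer $u_\p^n$. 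In dimension two, the lower bound $n \geq \tilde n(\p)$ is used both to guarantee nonemptiness of $\Gamma_n^2(\p) = \{p_n = \p\} \cap \boS_n^0$ (by transplanting a competitor from $W(\R^2)$) and to ensure stability of the topological sector $\boS_n^0$ along the relevant convergence. The Lagrange multiplier theorem then provides $c_\p^n \in \R$, since the differential $dp_n(u_\p^n)[w] = \int_{\T_n^N}\langle i\partial_1 u_\p^n, w\rangle$ cannot vanish identically in $w$: otherwise $\partial_1 u_\p^n \equiv 0$, forcing $p_n(u_\p^n) = 0 \neq \p$. Smoothness follows by rewriting the equation as $\Delta u_\p^n = -ic_\p^n \partial_1 u_\p^n - u_\p^n + |u_\p^n|^2 u_\p^n$ and iterating elliptic regularity.

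The delicate task is (ii). The assumption $\Xi(\p) > 0$ furnishes $v^* \in W(\R^N)$ with $p(v^*) = \p$ and $E(v^*) < \sqrt{2}\p - 2\delta$ for some $\delta > 0$; truncating $v^* - 1$ in a large ball and correcting the momentum by a small phase shift produces, for every $n \geq n(\p)$, a competitor $\tilde v_n \in \Gamma_n^N(\p)$ with $E_n(\tilde v_n) < \sqrt{2}\p - \delta$. Hence $E_n(u_\p^n) \leq E_{\min}^n(\p) < \sqrt{2}\p - \delta$ uniformly in $n$. Testing the Euler-Lagrange equation against $\overline{u_\p^n}$ and taking real parts yields the Pohozaev-type torus identity
\begin{equation*}
2c_\p^n\,\p = 2 E_n(u_\p^n) + \frac{1}{2}\int_{\T_n^N}(1 - |u_\p^n|^2)^2 - \int_{\T_n^N}(1 - |u_\p^n|^2).
\end{equation*}
The first two terms on the right are controlled by $\p$ (the second by $4 E_n(u_\p^n) < 4\sqrt{2}\p$). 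The third, the total mass defect, is not automatically controlled by the energy on a torus whose volume tends to infinity, and obtaining an $n$-independent bound on it is the main obstacle. I would handle it by combining further tests of the Euler-Lagrange equation against well-chosen auxiliary functions (cutoffs and linear combinations of $u_\p^n$ and $1$) with a preliminary localization argument showing $1 - |u_\p^n|^2$ is concentrated in a set of diameter depending only on $\p$; the strict energy gap above, together with Lemma \ref{discrepancy} (which forces $|u_\p^n|$ to dip below a threshold strictly less than $1$), are the key inputs. Once $\bigl| \int_{\T_n^N}(1 - |u_\p^n|^2) \bigr| \leq K(\p)$ is secured, the identity above yields \eqref{supc}, and a symmetric argument provides the matching lower bound on $c_\p^n$.

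For (iii), with $|c_\p^n|$ and $E_n(u_\p^n)$ both bounded uniformly in $n$, one reruns the elliptic bootstrap of (i) while tracking constants. These depend only on $c_\p^n$, $\p$, and on Sobolev embedding constants for unit-scale subcubes of $\T_n^N$, hence not on $n$, so \eqref{fastoche} follows for every $k \in \N$.
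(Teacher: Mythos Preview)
Your outline has two genuine gaps, one in part (i) for $N=2$ and a more serious one in part (ii).

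\textbf{Part (i), $N=2$.} The direct method gives only weak $H^1$-convergence of a minimizing sequence, and the topological sector $\boS_n^0$ is \emph{not} preserved under weak convergence. Your phrase ``stability of the topological sector along the relevant convergence'' hides the difficulty rather than resolving it. The paper handles this by invoking Ekeland's variational principle to produce a minimizing sequence $(w_k)$ that is Palais--Smale up to a vanishing error; testing the approximate Euler--Lagrange relation against $u_\p^n$ itself shows $\|\nabla w_k\|_{L^2} \to \|\nabla u_\p^n\|_{L^2}$, hence strong $H^1$-convergence. Since $\boS_n^0 \cap E_{n,\Lambda}$ is closed under strong convergence (Theorem \ref{lemmalulu}), the limit lies in $\boS_n^0$.

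\textbf{Part (ii).} Your identity from testing against $\overline{u_\p^n}$ is correct, but the mass-defect term $\int_{\T_n^N}(1-|u_\p^n|^2)$ is genuinely uncontrolled: Cauchy--Schwarz only gives a bound of order $n^{N/2}$, and nothing in your sketch (neither Lemma \ref{discrepancy}, which bounds $\inf|u_\p^n|$ from above, nor unspecified ``localization arguments'') yields an $n$-independent bound on it. The paper takes a completely different route that avoids this term: it uses the Pohozaev identity on $\Omega_n^N$ (Lemma \ref{pohozaev}), in which $c_\p^n$ multiplies $\int \langle Jv,\zeta_1\rangle$ rather than $\p$. The key technical step (Proposition \ref{lem:vbt}) is to choose an unfolding of the torus so that the boundary terms are $O(E_n(v)/n)$ and so that $\int \langle Jv,\zeta_1\rangle$ differs from $2p_n(v)$ by at most $\sqrt{2}E_n(v)+\delta_0$. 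Combining these, the hypothesis $\Sigma_n(v) \geq \Sigma_0 > 0$ forces $\int \langle Jv,\zeta_1\rangle \gtrsim \Sigma_n(v)$, and Pohozaev then yields $|c_\p^n| \leq K E_n(v)/\Sigma_n(v)$ (Theorem \ref{bornec}). The assumption $\Xi(\p)>0$ enters only to guarantee $\Sigma_n(u_\p^n) \geq \Sigma_0 > 0$ for large $n$, via Corollary \ref{assezdirect}.

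Part (iii) is fine and matches the paper (Lemma \ref{tarquini1}).
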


The last estimate in Proposition \ref{existnn} is a simple consequence of bound \eqref{supc} on $c_\p^n$, combined with standard elliptic estimates. 

In view of the invariance by translation of the problem \eqref{PnNp} on the torus $\T_n^N$, we may assume without loss of generality that the infimum of $|u_\p^n|$ is achieved at the point 0, that is
\begin{equation}
\label{translation}
|u_\p^n(0)| = \min_{x\in \T_n^N} |u_\p^n(x)|.
\end{equation}
On the other hand, the argument of the proof of Lemma \ref{discrepancy} carries over for continuous maps $v \in X_n^2 \cap \boS_n^0$, resp. $v \in X_n^3$, for $n$ sufficiently large, so that
\begin{equation}
\label{rebelote}
\min_{x \in \T_n^N} |v(x)| \leq \sup \Big\{ \frac{1}{2}, 1 - \frac{\Sigma_n(v)}{\sqrt{2} p_n(v)} \Big\},
\end{equation}
if $p_n(v) > 0$, and $n$ is sufficiently large. Indeed, it follows from Lemma \ref{sectorisation}, resp. Lemma \ref{massimo3}, that continuous maps $v \in X_n^2 \cap \boS_n^0$, resp. $v \in X_n^3$, such that $|v| \geq \frac{1}{2}$ on $\T_n^N$, have a lifting on $\T_n^N$, so that the argument of the proof of Lemma \ref{discrepancy} applies without any change. Combining \eqref{translation} and \eqref{rebelote}, we are led to
$$\underset{n \to + \infty}{\limsup} \big( |u_\p^n(0)| \big) \leq \sup \Big\{ \frac{1}{2}, 1 - \frac{\Xi(\p)}{\sqrt{2} \p} \Big\}.$$
If $\Xi(\p) > 0$, then we may use Ascoli's and Rellich's compactness theorems to assert

\begin{prop}
\label{troisfoisrien} 
Let $N = 2$ or $N = 3$, $\p > 0$, and assume that
\begin{equation}
\label{themaincondition}
\Xi(\p) > 0.
\end{equation}
Then, there exists a non-trivial finite energy solution $u_\p$ to \eqref{TWc} such that, passing possible to a subsequence, we have
\begin{equation}
\label{racinededeux}
u_\p^n \underset{n \to + \infty}{\to} u_\p \ {\rm in} \ C^k(K),
\end{equation}
for any $k \in \N$, and any compact set $K$ in $\R^N$.Moreover, we have
$$E(u_\p) \leq E_{\min}(\p),$$
and 
$$|u_\p(0)| \leq \sup \Big\{ \frac{1}{2}, 1 - \frac{\Xi(\p)}{\sqrt{2} \p} \Big\} < 1.$$
\end{prop}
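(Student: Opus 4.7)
The strategy is to pass to the limit $n \to \infty$ in the sequence of torus minimizers $u_\p^n$ produced by Proposition \ref{existnn}, exploiting the translation invariance on $\T_n^N$ and the uniform elliptic estimates already in hand. Three ingredients are essential: the uniform bound \eqref{supc} on the Lagrange multipliers $c_\p^n$, the uniform smoothness bound \eqref{fastoche}, and the fact that $u_\p^n$ is a classical solution of (TW$c_\p^n$). By translating on the torus we normalize so that \eqref{translation} holds for every $n$.

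I would first extract a subsequential limit by a diagonal argument. Fix an exhaustion $(K_m)_{m \geq 1}$ of $\R^N$ by compact sets. For $n$ large enough (depending on $m$), $K_m$ embeds into $\T_n^N$, and \eqref{fastoche} bounds $\|u_\p^n\|_{C^k(K_m)}$ uniformly in $n$, for every $k \in \N$. A standard diagonal extraction over $m$ and $k$, combined with extracting from the bounded real sequence $(c_\p^n)$, produces a subsequence along which $c_\p^n \to c_\p \in \R$ and $u_\p^n \to u_\p$ in $C^k(K)$ for every $k$ and every compact $K \subset \R^N$. Passing to the limit in (TW$c_\p^n$), whose terms are local derivatives converging uniformly on compacts, shows that $u_\p$ is a classical solution of \eqref{TWc} with speed $c_\p$.

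The energy bound and non-triviality come next. Fatou's lemma applied to the nonnegative density $e(u_\p^n)$ yields $\int_K e(u_\p) \leq \liminf_n \int_K e(u_\p^n) \leq \liminf_n E_{\min}^n(\p)$ for each compact $K$, and exhausting $\R^N$ by the $K_m$ gives $E(u_\p) \leq \liminf_n E_{\min}^n(\p)$. I would complete the comparison $\limsup_n E_{\min}^n(\p) \leq E_{\min}(\p)$ by exhibiting, for every $v \in W(\R^N)$ with $p(v) = \p$, a sequence of periodic test functions in $\Gamma_n^N(\p)$ whose energy converges to $E(v)$: truncate $v$ outside a suitably large ball, restore the exact momentum $\p$ by a small localized phase correction, and (in dimension two) arrange to remain in the topological sector $\boS_n^0$. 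This yields $E(u_\p) \leq E_{\min}(\p) < + \infty$. For the pointwise bound on $|u_\p(0)|$, the estimate $\limsup_n |u_\p^n(0)| \leq \sup \{ 1/2, 1 - \Xi(\p)/(\sqrt{2} \p) \}$ is already derived in the paragraph preceding the statement from \eqref{translation}, \eqref{rebelote}, and the comparison just noted, and it transfers to $|u_\p(0)|$ by the $C^0$ part of the local convergence. Since $\Xi(\p) > 0$, this quantity is strictly less than $1$, so $u_\p$ is non-constant, and combined with the finite-energy bound, non-trivial.

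The principal technical obstacle is the periodization step $\limsup_n E_{\min}^n(\p) \leq E_{\min}(\p)$: producing admissible competitors on $\T_n^N$ whose momentum is exactly $\p$ and whose energy converges to $E(v)$ requires balancing the truncation scale, the phase correction used to fix the momentum, and (in dimension two) the sector constraint $\boS_n^0$, all the while keeping the extra energy of order $o(1)$. Once this approximation is in place, the rest of the argument is a routine combination of the Ascoli-Arzel\`a theorem, Fatou's lemma, and the elliptic bootstrap already packaged in \eqref{fastoche}.
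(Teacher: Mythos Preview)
Your proposal is correct and follows essentially the same route as the paper: uniform $C^k$ bounds from Proposition~\ref{existnn}, Ascoli--Arzel\`a plus a diagonal argument (this is Lemma~\ref{localcompacite}), Fatou for $E(u_\p)\le \liminf E_{\min}^n(\p)$, and the comparison $\limsup_n E_{\min}^n(\p)\le E_{\min}(\p)$ together with the pointwise bound already derived before the statement. The only cosmetic difference is in the periodization step: where you propose to truncate and then restore the momentum by a localized phase correction, the paper (Lemma~\ref{compactsupport} and Corollary~\ref{assezdirect}) first approximates in $W(\R^N)$ by maps $1+w$ with $w\in C_c^\infty$, fixes the momentum exactly by the scalar rescaling $w\mapsto \sqrt{\p/p(1+w)}\,w$, and then simply drops $1+w$ onto $\T_n^N$ for $n$ large (the sector condition $\boS_n^0$ being automatic once the support is small relative to $n$); this sidesteps the balancing you flag as the main obstacle.
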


Notice that, provided one is able to verify condition \eqref{themaincondition}, Proposition \ref{troisfoisrien} already provides the existence of non-trivial travelling wave solutions. To go further and establish the statements of Theorems \ref{dim2} and \ref{dim3}, one needs to pass to the limit in the integral quantities $E$ and $p$, the main difficulty being that the domain is not bounded, and therefore we have only weak convergences in $L^2$. The possible failure of convergence is described in the next proposition, which is a classical concentration-compactness result.

\begin{prop}
\label{concentrationcompacite}
Let $N = 2$ or $N = 3$, $\p > 0$, and assume \eqref{themaincondition} is satisfied. Let $u_\p^n$ and $u_\p$ be as in Proposition \ref{troisfoisrien}. Then, there exists an integer $\ell_0$ depending only on $\p$, and $\ell$ points $x_1^n = 0$, $\ldots$, $x_\ell^n$ depending on $n$, $\ell$ finite energy solutions $u_1 = u_\p$, $\ldots$, $u_\ell$ to \eqref{TWc}, and a subsequence of $(u_\p^n)_{n \in \N^*}$ still denoted $(u_\p^n)_{n \in \N^*}$ such that $\ell \leq \ell_0$, 
\begin{equation}
\label{intermarche}
|x_i^n - x_j^n| \to + \infty, \ {\rm as} \ n \to + \infty,
\end{equation} 
for any $i \neq j$, and
\begin{equation}
\label{limitlimit}
u_\p^n( \cdot + x_i^n) \to u_i(\cdot) \ {\rm in} \ C^k(K), \ {\rm as} \ n \to + \infty,
\end{equation}
for any compact set $K \subset \R^N$ and any $k \in \N$. Moreover, we have the identities
$$E_{\min}(\p)= \sum_{i = 1}^\ell E(u_i), \ {\rm and} \ \p = \sum_{i = 1}^\ell p(u_i).$$
The maps $u_i$ are minimizers for $E_{\min}(\p_i)$, where $\p_i = p(u_i)$, and
$$0 < c(u_\p) < \sqrt{2}.$$
\end{prop}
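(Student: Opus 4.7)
The plan is to implement a Lions-type concentration-compactness scheme, extracting bubbles iteratively from the sequence $(u_\p^n)_{n \in \N^*}$. Proposition \ref{troisfoisrien} already supplies the first bubble $u_1 = u_\p$, centered at $x_1^n = 0$. Having isolated bubbles $u_1, \ldots, u_k$ with centers $x_1^n, \ldots, x_k^n$ satisfying \eqref{intermarche}, I would examine the residual energy in regions far from these centers: if it becomes asymptotically negligible, the extraction terminates with $\ell = k$; otherwise, the next center $x_{k+1}^n$ is located at a point of maximal residual concentration, and the uniform $C^m$-bounds \eqref{fastoche} combined with Ascoli's and Rellich's theorems, exactly as in the proof of Proposition \ref{troisfoisrien}, yield along a subsequence a nontrivial finite energy limit $u_{k+1}$ solving \eqref{TWc}.

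Concretely, I would work with the concentration function
$$Q_n^k(R) = \sup \Big\{ \int_{x + [-R, R]^N} e(u_\p^n) \,:\, x \in \T_n^N,\ \dist(x, \{x_1^n, \ldots, x_k^n\}) \geq 3R \Big\}.$$
Either $\lim_{R \to + \infty} \liminf_n Q_n^k(R) = 0$, so the remainder disperses and the procedure stops, or there exists $\alpha > 0$ below which the limit cannot drop, and a sequence $x_{k+1}^n$ captures mass $\geq \alpha/2$ on cubes of some fixed side $R_0$, from which translation produces $u_{k+1}$ and the separation \eqref{intermarche} holds by construction. Termination after finitely many steps follows in dimension three from the uniform lower bound $E(u_i) \geq \boE_0$ provided by part iii) of Theorem \ref{dim3}; in dimension two it is obtained from a combined energy-and-momentum budget argument together with the strict subadditivity information from Corollary \ref{subadditif}, which forbids the accumulation of infinitely many minimizing bubbles with vanishing momenta.

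The main technical obstacle is the verification of the identities
$$E_{\min}(\p) = \sum_{i=1}^\ell E(u_i), \qquad \p = \sum_{i=1}^\ell p(u_i),$$
for which weak $L^2$ convergence alone is insufficient. Fatou's lemma applied on finite unions of cubes centered at the $x_i^n$ gives the upper bound $\sum E(u_i) \leq E_{\min}(\p)$, as well as $E(u_i) < +\infty$. For the matching lower bound, I would establish that once the extraction has terminated, the residual energy on $\T_n^N \setminus \bigcup_i (x_i^n + [-R,R]^N)$ tends to $0$ as $n$ and $R$ go to infinity in the appropriate order; this rests on a Besicovitch-type covering of the residual region using cubes of side $R$ and the termination condition on $Q_n^k$. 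For the momentum identity, the key ingredients are the pointwise formula \eqref{identityp}, the decay properties of finite energy solutions recalled in Proposition \ref{superdecay}, and the $L^4 \times L^{4/3}$ type Hölder control provided by the space $V(\R^N)$, which together show that the momentum density is integrable and genuinely concentrated near the bubble centers, so that no mass leaks in the limit. The uniform bound \eqref{supc} on $c_\p^n$, via standard elliptic estimates, is crucial for turning $C^k_{\rm loc}$ convergence into tail control.

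Once the two decomposition identities are in place, the minimality of each $u_i$ follows immediately: from $E(u_i) \geq E_{\min}(p(u_i))$ and Corollary \ref{subadditif} we obtain
$$E_{\min}(\p) = \sum_{i=1}^\ell E(u_i) \geq \sum_{i=1}^\ell E_{\min}(p(u_i)) \geq E_{\min}\Big(\sum_{i=1}^\ell p(u_i)\Big) = E_{\min}(\p),$$
so equality holds at every step and each $u_i$ achieves $E_{\min}(p(u_i))$. For the speed bounds, $c(u_\p) = 0$ is excluded because $|u_\p(0)| < 1$ by Proposition \ref{troisfoisrien} forces $u_\p$ to be non-constant, whereas a Pohozaev-type identity applied to \eqref{TWc} with $c = 0$ shows that finite energy stationary solutions must be constant. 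The strict upper bound is a consequence of Lemma \ref{speed} together with the concavity of $E_{\min}$:
$$c(u_\p) \leq \frac{d^-}{d\p}\bigl(E_{\min}(\p)\bigr) \leq \frac{E_{\min}(\p)}{\p} = \sqrt{2} - \frac{\Xi(\p)}{\p} < \sqrt{2},$$
where the final strict inequality uses hypothesis \eqref{themaincondition}.
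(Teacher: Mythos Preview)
Your outline has a genuine gap at the step where you claim the residual energy vanishes once bubble extraction terminates. The termination condition $\lim_{R\to\infty}\liminf_n Q_n^k(R)=0$ only says that no single cube of side $2R$ in the residual region carries a fixed amount of energy; it says nothing about the \emph{total} residual energy. The Besicovitch covering you propose needs roughly $(n/R)^N$ cubes to cover $\T_n^N\setminus\bigcup_i(x_i^n+[-R,R]^N)$, so the bound you obtain is of order $(n/R)^N\cdot Q_n^k(R)$, which has no reason to be small as $n\to\infty$. In Lions' abstract framework this is precisely the ``vanishing'' scenario, and nothing you have written rules it out. The paper closes this gap not by a covering argument but by a PDE-specific estimate (Proposition~\ref{concentration}, statement iii)): after locating the finitely many points where $|1-|v^n||\geq\delta$ (Lemma~\ref{covering0}) and lifting $v^n=\varrho^n\exp i\varphi^n$ on the complement, one multiplies the two equations in \eqref{PolTWc} by suitable cut-offs of $\varphi^n$ and $\varrho^n-1$ and integrates by parts to obtain, when $c<\sqrt{2}$,
\[
\int_{\boU_n(2R)} e(v^n)\;\leq\;\frac{K(c_0)}{(2-c_0^2)\,|\ln\kappa|}\,E_n(v^n),
\]
which tends to $0$ as $\kappa\to 0$. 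This is the content of Theorem~\ref{bigconc-compc} and is where the hypothesis $c<\sqrt{2}$ is genuinely used; your scheme does not see this structure.

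Two further points. First, your momentum argument via \eqref{identityp} and the $V(\R^N)$-norms does not apply here: on the torus the momentum is $\frac12\int_{\T_n^N}\langle i\partial_1 v,v\rangle$ without the ``$-1$'', and the relevant control is obtained after lifting in the exterior region through the pointwise bound $|(1-\varrho^2)\partial_1\varphi|\leq 2\sqrt{2}\,e(v)$ of Lemma~\ref{colisee}, together with the exterior energy estimate above. Second, your derivation of $c(u_\p)<\sqrt{2}$ via Lemma~\ref{speed} applies that lemma at $\p$, which presupposes that $u_\p$ minimises $E_{\min}(\p)$; at this stage you only know that $u_1=u_\p$ minimises $E_{\min}(\p_1)$ with $\p_1\leq\p$. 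The argument can be repaired (if $\ell\geq 2$ the equality case of Corollary~\ref{subadditif} forces $E_{\min}$ linear on $(0,\p)$ with slope $E_{\min}(\p)/\p<\sqrt{2}$), but since the decomposition identities themselves depend on the missing residual estimate, this remains circular. In the paper the sonic case $c=\sqrt{2}$ is handled separately, by the defect analysis of Theorem~\ref{asympsonic}, and shown to contradict $\Xi(\p)>0$.
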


Combining Corollary \ref{subadditif}, Lemma \ref{gueri} and Proposition \ref{concentrationcompacite} we obtain

\begin{theorem}
\label{principaltheo}
Assume $N = 2$ or $N = 3$. If $\p > \p_0$, where $\p_0 \geq 0$ is defined in Theorem \ref {proemin}, then $E_{\min}(\p)$ is achieved by the map $u_\p \in W(\R^N)$ constructed in Proposition \ref{troisfoisrien}.
\end{theorem}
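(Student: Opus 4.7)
The plan is to combine Corollary~\ref{subadditif}, Lemma~\ref{gueri} and Proposition~\ref{concentrationcompacite} into a short rigidity argument. Since $\p > \p_0$, Theorem~\ref{proemin} ensures that $\Xi(\p) > 0$, so hypothesis \eqref{themaincondition} is satisfied. Proposition~\ref{troisfoisrien} therefore produces a non-trivial finite energy solution $u_\p \in W(\R^N)$ of \eqref{TWc} arising as a local limit of the torus minimizers $u_\p^n$, and Proposition~\ref{concentrationcompacite} refines this into a full concentration-compactness decomposition: one obtains $\ell \geq 1$ finite energy solutions $u_1 = u_\p, \ldots, u_\ell$, each a minimizer for $E_{\min}(\p_i)$ with $\p_i = p(u_i)$, satisfying
$$\sum_{i=1}^\ell E(u_i) = E_{\min}(\p) \quad \text{and} \quad \sum_{i=1}^\ell p(u_i) = \p.$$
Discarding any trivial component $u_i$ (i.e.\ any unit-modulus constant, for which $\p_i = 0$ and $E(u_i) = 0$) does not affect either identity, so I may assume henceforth that $\p_i > 0$ for every $i$. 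The anchor profile $u_1 = u_\p$ is itself non-trivial thanks to the strict bound $|u_\p(0)| < 1$ provided by Proposition~\ref{troisfoisrien}.

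The core of the argument is to show $\ell = 1$. Assume for contradiction that $\ell \geq 2$. Since each $u_i$ attains $E_{\min}(\p_i)$, one has $E(u_i) = E_{\min}(\p_i)$, and the two identities above combine to give
$$\sum_{i=1}^\ell E_{\min}(\p_i) = E_{\min}\Big( \sum_{i=1}^\ell \p_i \Big),$$
which is exactly equality in the subadditivity inequality \eqref{subadditivity}. The second assertion of Corollary~\ref{subadditif} therefore forces $E_{\min}$ to be linear on $(0, \p)$. Because $\ell \geq 2$ and each $\p_i > 0$, every $\p_i$ lies in the open interval $(0, \p)$. Lemma~\ref{gueri} then asserts that $E_{\min}(\p_i)$ cannot be achieved in $W(\R^N)$, in direct contradiction with the minimality of $u_i$. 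Hence $\ell = 1$, which means that $u_\p$ alone carries the full momentum $\p$ and realizes the infimum $E_{\min}(\p)$. Combined with the upper bound $E(u_\p) \leq E_{\min}(\p)$ already given by Proposition~\ref{troisfoisrien}, this is exactly the statement of the theorem.

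The main delicate point in this scheme, which I have brushed aside above, is the bookkeeping of trivial bubbles: one must check that any component $u_i$ with $\p_i = 0$ is indeed a unimodular constant and can be removed without affecting the remaining analysis. This is essentially automatic, since $u_i$ being a minimizer for $E_{\min}(0) = 0$ forces $E(u_i) = 0$, hence $|u_i| \equiv 1$ and, by the residual symmetry, $u_i \equiv 1$. Beyond this tidying step, the proof is a purely algebraic consequence of the rigidity encoded in the equality case of Corollary~\ref{subadditif} together with the non-attainment statement of Lemma~\ref{gueri}; the heavy analytical lifting---existence of $u_\p$, control of the Lagrange multipliers, and the concentration-compactness decomposition---has already been carried out in the preparatory results invoked here.
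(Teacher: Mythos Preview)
Your proof is correct and follows essentially the same route as the paper: invoke Proposition~\ref{concentrationcompacite} to get the splitting $\p=\sum_i \p_i$, $E_{\min}(\p)=\sum_i E_{\min}(\p_i)$ with each $E_{\min}(\p_i)$ achieved, then use the equality case of Corollary~\ref{subadditif} together with Lemma~\ref{gueri} to rule out $\ell\ge 2$. Your discussion of trivial bubbles is harmless but unnecessary, since the proof of Proposition~\ref{concentrationcompacite} already guarantees $\p_i>0$ (via Lemma~\ref{encorepoho} and $c>0$); the only point you omit is the remark that $u_\p\in W(\R^N)$ after normalizing $v_\infty=1$, which is Corollary~\ref{bienpose}.
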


Theorems \ref{dim2} and \ref{dim3} are then deduced from Theorem \ref{principaltheo} and various aspects of the analysis presented above. We will show in particular, thanks to Proposition \ref{T2bis}, that $\p_0 = 0$ in dimension two, whereas $\p_0 > 0$ in dimension three. This fact accounts for a large part for the differences in the statements of Theorems \ref{dim2} and \ref{dim3}.

%%%%%%%%%%%%%%%%%%%%%%%%%%%%%%%%%
\subsection{Outline of the paper}
%%%%%%%%%%%%%%%%%%%%%%%%%%%%%%%%%

The paper is organized as follows. In the next section, we present some known and also some new properties of finite energy travelling wave solutions. In Section \ref{PropEmin}, we provide properties of $E_{\min}$, in particular, we prove Theorems \ref{symetrie} and \ref{proemin}. In Section \ref{torus}, we study solutions on tori, whereas in Section \ref{Torusinfini}, we study their asymptotic properties on expanding tori. In particular, we prove the concentration-compactness result. In Section \ref{epinfini}, we consider the minimization problem on tori, and we provide the asymptotic limits of the energy and momentum on expanding tori. In Section \ref{proofmain}, we finally complete the proofs of Theorems \ref{dim2}, \ref{dim3} and \ref{principaltheo}.

%%%%%%%%%%%%%%%%%%%%%%%%%%%%%%%%%%%%%%%%%%%%%%%%%%%%%%%%%
%%%%%%%%%%%%%%%%%%%%%%%%%%%%%%%%%%%%%%%%%%%%%%%%%%%%%%%%%
\numberwithin{cor}{section}
\numberwithin{lemma}{section}
\numberwithin{prop}{section}
\numberwithin{remark}{section}
\numberwithin{theorem}{section}
\section{Properties of finite energy solutions on $\R^N$}
%%%%%%%%%%%%%%%%%%%%%%%%%%%%%%%%%%%%%%%%%%%%%%%%%%%%%%%%%
%%%%%%%%%%%%%%%%%%%%%%%%%%%%%%%%%%%%%%%%%%%%%%%%%%%%%%%%%

In this section, we recall some known facts about finite energy solutions to \eqref{TWc} on $\R^N$, and supplement them with some results which enter in our analysis. 

%%%%%%%%%%%%%%%%%%%%%%%%%%%%%%%%
\subsection{Pointwise estimates}
%%%%%%%%%%%%%%%%%%%%%%%%%%%%%%%%

The following results were proved in \cite{Farina1} (see also \cite{Tarquin1}).

\begin{lemma}[\cite{Farina1, Tarquin1}]
\label{tarquini10} 
Let $v$ be a finite energy solution to \eqref{TWc} on $\R^N$. Then, $v$ is a smooth, bounded function on $\R^N$. Moreover, there exist some constants $K(N)$ and $K(c, k, N)$ such that
\begin{equation}
\label {elinfini10}
\Big\| 1 - |v| \Big\|_{L^\infty(\R^N)} \leq \max \Big\{ 1 , \frac{c}{2} \Big\},
\end{equation}
\begin{equation}
\label{elinfinigrad0}
\| \nabla v \|_{L^\infty(\R^N)} \leq K(N) \Big( 1 + \frac{c^2}{4} \Big)^\frac{3}{2}, 
\end{equation}
and more generally,
\begin{equation}
\label{ckestimates0}
\| v \|_{C^k(\R^N)} \leq K(c, k, N), \forall k \in \N.
\end{equation}
\end{lemma}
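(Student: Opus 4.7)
The equation \eqref{TWc} is a semilinear elliptic system with cubic nonlinearity and a bounded first-order drift term. A finite-energy weak solution lies in $H^1_{\rm loc}(\R^N)$ by hypothesis, and the energy bound also gives $(1-|v|^2)\in L^2(\R^N)$, hence $v\in L^4_{\rm loc}(\R^N)$. A standard local elliptic bootstrap applied to $\Delta v=-ic\,\partial_1 v-v(1-|v|^2)$ then yields $v\in C^\infty(\R^N)$. My plan is to exploit this preliminary smoothness to establish the pointwise bound \eqref{elinfini10} via a maximum-principle argument on $\rho=|v|^2$, and then derive the explicit derivative bounds \eqref{elinfinigrad0} and \eqref{ckestimates0} from interior Schauder theory.

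For \eqref{elinfini10}, differentiating $\rho=|v|^2$ twice and substituting \eqref{TWc} gives
$$\Delta\rho=2\,\Re(\bar v\,\Delta v)+2|\nabla v|^2=2c\,\Im(\bar v\,\partial_1 v)-2\rho(1-\rho)+2|\nabla v|^2.$$
If the supremum of $\rho$ is attained at a point $x_0$, then $\Delta\rho(x_0)\le 0$ yields
$$|\nabla v(x_0)|^2+c\,\Im(\bar v\,\partial_1 v)(x_0)\le \rho(x_0)-\rho(x_0)^2.$$
The Cauchy-Schwarz inequality $|\Im(\bar v\,\partial_1 v)|\le\sqrt{\rho}\,|\nabla v|$ and the elementary bound $t^2-c\sqrt{\rho}\,t\ge -c^2\rho/4$ for $t\ge 0$ then yield $-c^2\rho(x_0)/4\le\rho(x_0)-\rho(x_0)^2$, hence $\rho(x_0)\le 1+c^2/4$. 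Therefore $|v|\le\sqrt{1+c^2/4}\le 1+c/2$ on $\R^N$, and since $1-|v|\le 1$ trivially, this proves \eqref{elinfini10}.

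Once the $L^\infty$ bound is in hand, the right-hand side of $\Delta v=-ic\,\partial_1 v-v(1-|v|^2)$ is $L^\infty$-controlled in terms of $c$ and $|\nabla v|$. Interior $W^{2,p}$ estimates applied on unit balls around an arbitrary point, made uniform in the center by the translation invariance of \eqref{TWc}, together with the embedding $W^{2,p}\hookrightarrow C^{1,\alpha}$ for $p$ large, yield the gradient bound \eqref{elinfinigrad0}. The explicit dependence $(1+c^2/4)^{3/2}$ is obtained via a scaling argument that simultaneously normalizes the drift coefficient and the $L^\infty$-norm of $v$, reducing the problem to a universal elliptic estimate on a fixed ball. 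Iterating Schauder estimates then bootstraps $v$ to $C^k$ for every $k$, producing \eqref{ckestimates0}.

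The main technical obstacle is the maximum-principle step, because a priori the supremum of $|v|$ need not be attained on $\R^N$. I would handle this with the standard translation-compactness trick: pick $x_n\in\R^N$ with $|v(x_n)|\to\sup_{\R^N}|v|$, set $v_n=v(\cdot+x_n)$, and use the preliminary $C^2_{\rm loc}$ regularity (uniform in $n$ by translation invariance) to extract a subsequential $C^2_{\rm loc}$-limit $v_\infty$, which solves \eqref{TWc} and satisfies $|v_\infty(0)|=\sup_{\R^N}|v|$. Applying the computation above to $v_\infty$ transfers the bound back to $v$. An alternative is Kato's inequality applied to $(\rho-1-c^2/4)_+$ combined with the integrability $\int(1-\rho)^2<\infty$ provided by the finite-energy assumption.
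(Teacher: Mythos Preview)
Your computation of $\Delta\rho$ and the pointwise conclusion $\rho\le 1+c^2/4$ at an interior maximum is exactly the paper's argument. The paper, however, handles the non-attained supremum more directly: finite energy forces $|v(x)|\to 1$ as $|x|\to\infty$ (this is the content of the cited references and of \eqref{nada}), so the weak maximum principle applies immediately to the differential inequality $\Delta|v|^2+2|v|^2(1+c^2/4-|v|^2)\ge 0$ without any translation-compactness machinery or Kato argument. Your route here is correct but heavier than necessary. For the gradient bound the approaches genuinely diverge: the paper absorbs the drift by the gauge change $w(x)=v(x)\exp(icx_1/2)$, which yields the clean equation $\Delta w+w(1+c^2/4-|w|^2)=0$; from $|w|\le (1+c^2/4)^{1/2}$ one reads off $\|\Delta w\|_{L^\infty}\le\frac{2}{3\sqrt{3}}(1+c^2/4)^{3/2}$ and the interior gradient estimate on unit balls then gives \eqref{elinfinigrad0} with the explicit exponent for free. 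Your ``scaling argument that simultaneously normalizes the drift coefficient and the $L^\infty$-norm'' would also work, but as stated it is vague about how the specific power $3/2$ emerges; the gauge trick makes this transparent. The $C^k$ bootstrap is the same in both.
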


\begin{proof}
In view of \cite{Farina1, Tarquin1} (see also \cite{BethSau1, Graveja3}), a finite energy solution $v$ to equation \eqref{TWc} is a smooth, bounded function on $\R^N$, such that
\begin{equation}
\label{nada}
|v(x)| \to 1, \ {\rm as} \ |x| \to + \infty.
\end{equation}
In particular, we have
$$\| v \|_{L^\infty(\R^N)} \geq 1.$$

In order to prove inequalities \eqref{elinfini10}, \eqref{elinfinigrad0} and \eqref{ckestimates0}, we compute the laplacian of $|v|^2$. By the inequality $a b \leq \frac{1}{2} (a^2 + b^2)$, we have
\begin{align*}
\Delta |v|^2 = 2 \langle v \ , \Delta v \rangle + 2 |\nabla v|^2 = & 2 |\nabla v|^2 - 2 c \langle i \partial_1 v \ , v \rangle - 2 |v|^2 (1 -|v|^2)\\
\geq & 2 |\nabla v|^2 - 2 |\partial_1 v|^2 - \frac{c^2}{2} |v|^2 - 2 |v|^2 (1 -|v|^2),
\end{align*}
so that
\begin{equation}
\label{nadie}
\Delta |v|^2 + 2 |v|^2 \Big( 1 + \frac{c^2}{4} - |v|^2 \Big) \geq 0.
\end{equation}
When $\| v \|_{L^\infty(\R^N)} > 1$, it follows from \eqref{nada} and \eqref{nadie} that we can apply the weak maximum principle to $|v|^2$ to obtain
\begin{equation}
\label{v-linfini}
|v|^2 \leq 1 + \frac{c^2}{4}.
\end{equation}
When $\| v \|_{L^\infty(\R^N)} = 1$, inequality \eqref{v-linfini} is straightforward, so that it holds in any case. In particular, the function $1 - |v|$ satisfies
$$\Big\| 1 - |v| \Big\|_{L^\infty(\R^N)} \leq \max \Big\{ 1, \sqrt{1 + \frac{c^2}{4}} - 1 \Big\} \leq \max \Big\{ 1, \frac{c}{2} \Big\},$$
so that inequality \eqref{elinfini10} is established.

We turn to \eqref{elinfinigrad0}. Consider the function $w$ defined by
\begin{equation}
\label{v-exp}
w(x) = v(x) \exp \Big( i \frac{c}{2} x_1 \Big), \forall x \in \R^N.
\end{equation}
By equation \eqref{TWc}, $w$ satisfies
$$\Delta w + w \Big( 1 + \frac{c^2}{4} - |w|^2 \Big)=0.$$
Letting $x_0 \in \R^N$, and denoting $B(x_0, 1) = \{ y \in \R^N, \ {\rm s.t.} \ |y - x_0| < 1 \}$, the ball with center $x_0$ and radius $1$, it holds from inequality \eqref{v-linfini} that
\begin{equation}
\label{Deltaw-bound}
\| \Delta w \|_{L^\infty(B(x_0, 1))} \leq \frac{2}{3 \sqrt{3}} \Big( 1 + \frac{c^2}{4} \Big)^\frac{3}{2}.
\end{equation}
By standard elliptic theory, there exists some constant $K(N)$ such that
$$|\nabla w(x_0)| \leq K(N) \Big( \| \Delta w \|_{L^\infty(B(x_0, 1))} + \| w \|_{L^\infty(B(x_0, 1))} \Big),$$
so that, by inequalities \eqref{v-linfini} and \eqref{Deltaw-bound}, and definition \eqref{v-exp},
$$|\nabla w(x_0)| \leq 2 K(N) \Big( 1 + \frac{c^2}{4} \Big)^\frac{3}{2}.$$
Hence, by definition \eqref{v-exp} once more,
$$|\nabla v(x_0)| \leq |\nabla w(x_0)| + \frac{c}{2} |v(x_0)| \leq (2 K(N) + 1) \Big( 1 + \frac{c^2}{4} \Big)^\frac{3}{2},$$
which gives inequality \eqref{elinfinigrad0}. Finally, one invokes standard estimates for the laplacian to prove \eqref{ckestimates0}.
\end{proof}

\begin{lemma}[\cite{Tarquin1}]
\label{tarquini20}
Let $r > 0$, and assume that $v$ is a finite energy solution to \eqref{TWc} on $\R^N$. There exists some constant $K(N)$ such that for any $x_0 \in \R^N$,
\begin{equation}
\label {elinfini20}
\Big\| 1 - |v| \Big\|_{L^\infty (B(x_0, \frac{r}{2}))} \leq \max \Big\{ K(N) \Big( 1 + \frac{c^2}{4} \Big)^2 E \big( v, B(x_0, r) \big)^\frac{1}{N+2}, \frac{K(N)}{r^N} E \big( v, B(x_0, r) \big)^\frac{1}{2} \Big\},
\end{equation}
where $E(v, B(x_0, r)) \equiv \int_{B(x_0, r)} e(v)$.
\end{lemma}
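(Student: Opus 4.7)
The basic idea is to combine the global Lipschitz bound on $|v|$ coming from Lemma \ref{tarquini10} with a local lower bound on the potential part of the energy. Lemma \ref{tarquini10} gives $\| \nabla v \|_{L^\infty(\R^N)} \leq L$, where $L := K(N) (1 + c^2/4)^{3/2}$, and since $\big| \nabla |v| \big| \leq |\nabla v|$, the function $|v|$ is $L$-Lipschitz on $\R^N$.

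Denote by $M \equiv \| 1 - |v| \|_{L^\infty(B(x_0, r/2))}$ the quantity to be estimated, and pick $x_1 \in \overline{B(x_0, r/2)}$ at which this supremum is attained. Set
$$\rho_0 \equiv \min \Big\{ \frac{M}{2 L}, \frac{r}{2} \Big\}.$$
By the Lipschitz property, for any $y \in B(x_1, \rho_0)$ one has $\big| |v(y)| - |v(x_1)| \big| \leq L \rho_0 \leq M/2$, so that $\big| 1 - |v(y)| \big| \geq M/2$. A short case analysis on the sign of $1 - |v(x_1)|$ (using that $M$ is bounded by $\max\{1, c/2\}$ from \eqref{elinfini10}) then shows that on $B(x_1, \rho_0)$ one has the pointwise lower bound
$$\big( 1 - |v(y)|^2 \big)^2 \geq \frac{M^2}{C_N},$$
for some dimensional constant $C_N$. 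Since $x_1 \in \overline{B(x_0, r/2)}$ and $\rho_0 \leq r/2$, the ball $B(x_1, \rho_0)$ lies inside $B(x_0, r)$, so integrating and using $E(v, B(x_0, r)) \geq \frac{1}{4} \int_{B(x_0, r)} (1 - |v|^2)^2$ yields
$$E \big( v, B(x_0, r) \big) \geq K_N \, \rho_0^N \, M^2.$$

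It now suffices to distinguish the two possibilities for $\rho_0$. If $\rho_0 = M/(2 L)$, then the bound above becomes $M^{N + 2} \leq K_N' L^N E(v, B(x_0, r))$, and using the explicit value of $L$ gives
$$M \leq K(N) \Big( 1 + \frac{c^2}{4} \Big)^{\frac{3 N}{2 (N + 2)}} E \big( v, B(x_0, r) \big)^{\frac{1}{N + 2}} \leq K(N) \Big( 1 + \frac{c^2}{4} \Big)^2 E \big( v, B(x_0, r) \big)^{\frac{1}{N + 2}},$$
which is the first term of the stated maximum (the exponent $3 N / (2 (N + 2))$ is $\leq 2$ in both $N = 2$ and $N = 3$). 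If instead $\rho_0 = r/2 < M/(2 L)$, then $E(v, B(x_0, r)) \geq K_N r^N M^2$, whence $M \leq K(N) r^{-N/2} E(v, B(x_0, r))^{1/2}$, which is controlled by the second term of the maximum. In either case $M$ is bounded by one of the two terms in \eqref{elinfini20}, concluding the proof.

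The only mildly delicate point is the case analysis at Step 4: one must verify that the lower bound $(1 - |v|^2)^2 \geq M^2/C_N$ on the small ball holds both when $|v(x_1)| \leq 1$ (so that $|v(y)| \leq 1 - M/2$ on $B(x_1, \rho_0)$) and when $|v(x_1)| > 1$ (so that $|v(y)| \geq 1 + M/2$); this uses the a priori bound $M \leq \max\{1, c/2\}$ from Lemma \ref{tarquini10} to prevent $|v|$ from becoming negative in the first sub-case. Everything else is a routine interpolation between the pointwise Lipschitz control and the $L^2$ information carried by the energy.
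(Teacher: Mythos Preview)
Your argument is correct and follows essentially the same route as the paper's proof: pick a point where the deviation from modulus one is maximal, use the Lipschitz bound from Lemma~\ref{tarquini10} to propagate a lower bound on a small ball, and integrate the potential term of the energy there, distinguishing whether the Lipschitz radius or $r/2$ is smaller.

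The only difference is cosmetic. The paper works directly with $\eta = 1 - |v|^2$ rather than $1 - |v|$: since $|\nabla \eta| \leq 2 |v|\,|\nabla v| \leq K(N)(1+c^2/4)^2$, the constant $(1+c^2/4)^2$ appears immediately, and no case analysis on the sign of $1 - |v(x_1)|$ is needed (one simply has $|\eta| \geq |\eta(\overline{x})|/2$ on the small ball, and $\eta^2$ is what sits in the energy). Your version is equally valid; note, incidentally, that your worry about ``preventing $|v|$ from becoming negative'' is vacuous, since $|v| \geq 0$ by definition, and in fact $(1-|v|^2)^2 = (1-|v|)^2(1+|v|)^2 \geq (M/2)^2$ holds on $B(x_1,\rho_0)$ without any appeal to \eqref{elinfini10}. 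Both your proof and the paper's yield the exponent $r^{-N/2}$ in the second alternative, which is sharper than (and hence implies, up to adjusting constants when $r \leq 1$) the $r^{-N}$ written in the statement.
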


\begin{proof}
Let $\eta = 1 - |v|^2$. By Lemma \ref{tarquini10}, the function $\eta$ is smooth on $\R^N$, and satisfies
$$\| \nabla \eta \|_{L^\infty(\R^N)} \leq 2 \| v \nabla v \|_{L^\infty(\R^N)} \leq K(N) \Big( 1 + \frac{c^2}{4} \Big)^2.$$
Let $\overline{x}$ be some point in $\overline{B(x_0, \frac{r}{2})}$ such that 
$$|\eta(\overline{x})| = \underset{y \in B(x_0, \frac{r}{2})} \sup |\eta(y)|.$$
We compute
$$|\eta(y)| \geq |\eta(\overline{x})| - K(N) \Big( 1 + \frac{c^2}{4} \Big)^2 |y - \overline{x}|,$$
so that
$$|\eta(y)| \geq \frac{|\eta(\overline{x})|}{2},$$
for any point $y \in B(\overline {x}, \mu)$ where $\mu= \frac{|\eta(\overline{x})|}{2 K(N) (1 + \frac{c^2}{4})^2}$. Therefore, we are led to
\begin{equation}
\label{inf-E-B}
\begin{split}
E(v , B(x_0, r)) \geq & \frac{1}{16} \int_{B \big( \overline{x}, \min \{ \mu, \frac{r}{2} \} \big)} \eta(\overline{x})^2 dy\\
= & \min \Big\{ \frac{|\eta(\overline{x})|^{N + 2} |B(x_0, 1)|}{2^{N + 4} K(N)^N (1 + \frac{c^2}{4})^{2 N}}, \frac{|\eta(\overline{x})|^2 r^N |B(x_0, 1)|}{2^{N + 4}} \Big\}.
\end{split}
\end{equation}
In conclusion,
$$\| 1 - |v| \|_{L^\infty(B(x_0, 1))} \leq \| \eta \|_{L^\infty(B(x_0, 1))} = |\eta(\overline{x})|,$$
so that inequality \eqref{elinfini20} follows from inequality \eqref{inf-E-B}.
\end{proof}

If we next consider the class of functions
$$\Lambda(\R^N) = \Big\{ v \in C^0(\R^N), E(v) < + \infty \ {\rm and} \ \exists R(v) > 0 \ {\rm s.t.} \ |v(x)| \geq \frac{1}{2}, \forall |x| \geq R(v) \Big\},$$
a rather direct consequence of Lemma \ref{tarquini20} is

\begin{cor}
\label{tarquosi}
Let $v$ be a finite energy solution to \eqref{TWc} on $\R^N$. Then $v$ belongs to $\Lambda(\R^N)$.
\end{cor}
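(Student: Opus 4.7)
The plan is to deduce the corollary directly from Lemma \ref{tarquini10} and Lemma \ref{tarquini20}, with finiteness of the energy as the sole extra ingredient. By Lemma \ref{tarquini10}, any finite energy solution $v$ to \eqref{TWc} is smooth (hence continuous) on $\R^N$, so membership in $C^0(\R^N)$ and the bound $E(v) < + \infty$ are already built into the hypothesis. It only remains to prove the decay statement: there exists $R(v) > 0$ such that $|v(x)| \geq \frac{1}{2}$ whenever $|x| \geq R(v)$.

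The key observation is that since $\int_{\R^N} e(v) < + \infty$, the localized energy $E(v, B(x_0, r))$ must vanish at infinity in $x_0$, for any fixed radius $r$. More precisely, fixing $r = 1$ (the choice is irrelevant), one has
\begin{equation*}
\lim_{|x_0| \to + \infty} E\big( v, B(x_0, 1) \big) = 0,
\end{equation*}
simply because for every $\eps > 0$ one can choose $R_\eps$ so large that $\int_{\R^N \setminus B(0, R_\eps)} e(v) < \eps$, and then $B(x_0, 1) \subset \R^N \setminus B(0, R_\eps)$ as soon as $|x_0| \geq R_\eps + 1$.

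Now I would plug this into the estimate of Lemma \ref{tarquini20} with $r = 1$: for every $x_0 \in \R^N$,
\begin{equation*}
\big\| 1 - |v| \big\|_{L^\infty(B(x_0, 1/2))} \leq \max \Big\{ K(N) \Big( 1 + \tfrac{c^2}{4} \Big)^2 E \big( v, B(x_0, 1) \big)^\frac{1}{N+2}, K(N) E \big( v, B(x_0, 1) \big)^\frac{1}{2} \Big\}.
\end{equation*}
Since the constants $K(N)$ and $c$ are fixed and only the energy factor varies, the right-hand side tends to $0$ as $|x_0| \to + \infty$. Consequently $|v(x)| \to 1$ uniformly as $|x| \to + \infty$, so there exists $R(v) > 0$ with $|v(x)| \geq \frac{1}{2}$ for every $|x| \geq R(v)$. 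Combined with $v \in C^0(\R^N)$ and $E(v) < + \infty$, this gives $v \in \Lambda(\R^N)$.

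There is no real obstacle in this argument; the only point to be careful about is that Lemma \ref{tarquini20} is applied with a \emph{fixed} radius $r$, so the prefactor $K(N)/r^N$ in the second term does not blow up, and the smallness is driven entirely by $E(v, B(x_0, r)) \to 0$, which follows from the absolute continuity of the energy integral.
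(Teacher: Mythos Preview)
Your argument is correct and essentially identical to the paper's own proof: both use finiteness of the energy to make $E(v,B(x_0,1))$ small for $|x_0|$ large, and then invoke Lemma~\ref{tarquini20} with $r=1$ to force $|v|$ close to $1$ outside a large ball. The paper merely chooses an explicit smallness threshold for the local energy rather than stating the full limit $E(v,B(x_0,1))\to 0$, but the logic is the same.
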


\begin{proof}
Indeed, the energy of $v$ being finite, there exists some radius $R(v) > 1$ so that
$$E(v , B(x_0, 1)) \leq \int_{\R^N \setminus B(0, R(v) - 1)} e(v) \leq \frac{1}{\Big( 2 K(N) \big( 1 + \frac{c^2}{4} \big)^2 \Big)^{N + 2}}, \forall |x_0| \geq R(v),$$
where $K(N)$ is the constant in inequality \eqref{elinfini20}. Hence, $v$ belongs to $\Lambda(\R^N)$ in view of \eqref{elinfini20}.
\end{proof}

%%%%%%%%%%%%%%%%%%%%%%%%%%%%%%%%%%%%%%%%%%%%%%%%%%
\subsection{Alternate definitions of the momentum}
\label{nouveaudef}
%%%%%%%%%%%%%%%%%%%%%%%%%%%%%%%%%%%%%%%%%%%%%%%%%%

If $v \in \Lambda(\R^N)$, we may write, for $|x| > R(v)$,
\begin{equation}
\label{lereleve}
v = \varrho \exp i \varphi,
\end{equation}
where $\varphi$ is a real function on $\R^N \setminus B(0, R(v))$ defined modulo a multiple of $2 \pi$. Notice that, if $v$ may be written as in \eqref{lereleve}, then we have
$$\partial_j v = \Big( i \varrho \partial_j \varphi + \partial_j \varrho \Big) \exp i \varphi,$$
so that
\begin{equation}
\label{relevenergy0}
\langle i \partial_1 v , v \rangle = - \varrho^2 \partial_1 \varphi, \ {\rm and } \ e(v) = \frac{1}{2} \Big( |\nabla \varrho|^2 + \varrho^2 |\nabla \varphi|^2 \Big) + \frac{1}{4} \Big( 1 - \varrho^2 \Big)^2.
\end{equation}
In this context, the next elementary observation will be used in several places of our work.

\begin{lemma}
\label{colisee}
Let $\varrho$ and $\varphi$ be $C^1$ scalar functions on a domain $\boU$ in $\R^N$, such that $\varrho$ is positive. Set
$v = \varrho \exp i \varphi$. Then, we have the pointwise bound
$$\Big| (\varrho^2 - 1) \partial_1 \varphi \Big| \leq \frac{\sqrt{2}}{\varrho} e(v).$$
\end{lemma}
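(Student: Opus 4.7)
The plan is to apply the arithmetic-geometric mean inequality $ab \leq \tfrac{1}{2}(a^{2}+b^{2})$ after multiplying and dividing by $\varrho$ to symmetrize the product. First I would rewrite
\[
\bigl|(\varrho^{2}-1)\,\partial_{1}\varphi\bigr| \;=\; \frac{1}{\varrho}\,\Bigl(\tfrac{|1-\varrho^{2}|}{\sqrt{2}}\Bigr)\bigl(\sqrt{2}\,\varrho\,|\partial_{1}\varphi|\bigr),
\]
so that the factors inside the product have the right homogeneity to match the two nonnegative terms in $e(v)$ as written in \eqref{relevenergy0}.

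Next I would set $a=\tfrac{|1-\varrho^{2}|}{\sqrt{2}}$ and $b=\sqrt{2}\,\varrho\,|\partial_{1}\varphi|$, and apply $ab\leq \tfrac12(a^{2}+b^{2})$ to obtain
\[
\tfrac{|1-\varrho^{2}|}{\sqrt{2}}\cdot\sqrt{2}\,\varrho\,|\partial_{1}\varphi| \;\leq\; \tfrac{1}{4}(1-\varrho^{2})^{2}+\varrho^{2}(\partial_{1}\varphi)^{2}.
\]
Since $(\partial_{1}\varphi)^{2}\leq|\nabla\varphi|^{2}$ and $|\nabla\varrho|^{2}\geq 0$, the right-hand side is bounded by $\tfrac{1}{2}(|\nabla\varrho|^{2}+\varrho^{2}|\nabla\varphi|^{2})+\tfrac{1}{4}(1-\varrho^{2})^{2}=e(v)$, using formula \eqref{relevenergy0}. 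Dividing through by $\varrho>0$ yields the stated bound.

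There is essentially no obstacle here: the argument is a one-line AM-GM with the correct weighting, mirroring precisely the computation already carried out in the discussion following \eqref{maininequality0} (where the same inequality was used integrally rather than pointwise). The only minor point to watch is the positivity assumption on $\varrho$, which is needed to divide by $\varrho$ and is part of the hypothesis; no lower bound on $\varrho$ is required because the factor $1/\varrho$ appears on the right-hand side of the claimed estimate.
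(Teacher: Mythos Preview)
Your overall strategy is right, but the weighting you chose is off by a factor of $\sqrt{2}$, and the step where you bound by $e(v)$ fails. With $a=\tfrac{|1-\varrho^{2}|}{\sqrt{2}}$ and $b=\sqrt{2}\,\varrho\,|\partial_{1}\varphi|$, AM--GM gives
\[
ab \;\leq\; \tfrac{1}{4}(1-\varrho^{2})^{2}+\varrho^{2}(\partial_{1}\varphi)^{2},
\]
and you then claim this is $\leq e(v)$. But from \eqref{relevenergy0}, $e(v)=\tfrac{1}{2}|\nabla\varrho|^{2}+\tfrac{1}{2}\varrho^{2}|\nabla\varphi|^{2}+\tfrac{1}{4}(1-\varrho^{2})^{2}$, so your inequality would require $\varrho^{2}(\partial_{1}\varphi)^{2}\leq \tfrac{1}{2}|\nabla\varrho|^{2}+\tfrac{1}{2}\varrho^{2}|\nabla\varphi|^{2}$. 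Take $\varrho$ constant and $\varphi$ depending only on $x_{1}$: this reduces to $\varrho^{2}(\partial_{1}\varphi)^{2}\leq \tfrac{1}{2}\varrho^{2}(\partial_{1}\varphi)^{2}$, which is false whenever $\partial_{1}\varphi\neq 0$. In effect, your choice of $b$ is trying to prove the stronger bound $|(\varrho^{2}-1)\partial_{1}\varphi|\leq e(v)/\varrho$, which does not hold.

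The fix is immediate and is exactly what the paper does: drop the extra $\sqrt{2}$ in $b$. Write instead
\[
|(\varrho^{2}-1)\partial_{1}\varphi|=\frac{\sqrt{2}}{\varrho}\cdot\frac{|1-\varrho^{2}|}{\sqrt{2}}\cdot\varrho|\partial_{1}\varphi|,
\]
apply $ab\leq\tfrac{1}{2}(a^{2}+b^{2})$ with $a=\tfrac{|1-\varrho^{2}|}{\sqrt{2}}$ and $b=\varrho|\partial_{1}\varphi|$, and obtain $\tfrac{1}{4}(1-\varrho^{2})^{2}+\tfrac{1}{2}\varrho^{2}(\partial_{1}\varphi)^{2}\leq e(v)$, which is now a genuine consequence of \eqref{relevenergy0}. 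Multiplying by $\sqrt{2}/\varrho$ gives the lemma.
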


\begin{proof}
Notice that we have by \eqref{relevenergy0},
$$e(v) = \frac{1}{2} \Big( |\nabla \varrho|^2 + \varrho^2 |\nabla \varphi|^2 \Big) + \frac{1}{4} \Big( 1 - \varrho^2 \Big)^2 \geq \frac{1}{2} \Big( \varrho^2 |\partial_1 \varphi|^2 + \frac{1}{2} (1 - \varrho^2)^2 \Big).$$
The conclusion then follows from the inequality $a b \leq \frac{1}{2} (a^2 + b^2)$ applied to $a = \frac{1}{\sqrt{2}} (\varrho^2 - 1)$ and $b = \varrho \partial_1 \varphi$.
\end{proof}
 
The previous observations lead as in \cite{BetOrSm1, Graveja3} to an alternate definition of the momentum on the space $\Lambda(\R^N)$. For that purpose, consider the function
$$g(v) = \langle i \partial_1 v, v \rangle + \partial_1 \big( (1 - \chi) \varphi \big),$$
where $v = \varrho \exp i \varphi$ on $\R^N \setminus B(0, R(v))$, and $\chi$ is an arbitrary smooth function with compact support such
that $\chi \equiv 1$ on $B(0, R(v))$ and $0 \leq \chi \leq 1$. It follows from \eqref{relevenergy0} that

\begin{lemma}
\label{gdev}
If $v$ belongs to $\Lambda(\R^N)$, then $g(v)$ belongs to $L^1(\R^N)$. Moreover the integral
$$\tilde{p}(v) \equiv \frac{1}{2} \int_{\R^N} g(v) = \frac{1}{2} \int_{\R^N} \Big( \langle i \partial_1 v, v \rangle + \partial_1 \big( (1 - \chi) \varphi \big) \Big)$$
does not depend on the choice of $\chi$.
\end{lemma}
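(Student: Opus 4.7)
The plan is to exploit the decomposition $\R^N = B(0,R(v)) \cup (\R^N \setminus B(0,R(v)))$ coming from the definition of $\Lambda(\R^N)$, so that the lifting $v = \varrho \exp i\varphi$ and the identity $\langle i \partial_1 v, v \rangle = -\varrho^2 \partial_1 \varphi$ from \eqref{relevenergy0} can be used in the exterior. Fix $R_1 > R(v)$ large enough that $\supp \chi \subset B(0, R_1)$, and split the analysis into three zones: the ball $B(0,R(v))$ on which $\chi \equiv 1$; the compact annulus $A = B(0, R_1) \setminus B(0, R(v))$ on which $\chi$ transitions; and the exterior $\R^N \setminus B(0,R_1)$ on which $\chi \equiv 0$.

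For the $L^1$ bound, I first observe that on $B(0, R(v))$ the cut-off contribution $(1-\chi)\varphi$ vanishes identically, so $g(v) = \langle i \partial_1 v, v \rangle$, which lies in $L^1$ on this compact ball since $v$ is continuous with $\nabla v \in L^2_{\mathrm{loc}}$ (from $E(v) < +\infty$). On the compact annulus $A$, the lift $\varphi$ is continuous, $\partial_1 \varphi = \langle i v, \partial_1 v \rangle / \varrho^2$ belongs to $L^2$ because $\varrho \geq 1/2$, and $\chi$ is smooth; thus $\langle i\partial_1 v, v\rangle + \partial_1((1-\chi) \varphi) \in L^1(A)$. The decisive estimate is on the exterior: there $g(v) = -\varrho^2 \partial_1 \varphi + \partial_1 \varphi = (1-\varrho^2) \partial_1 \varphi$, and Lemma \ref{colisee} yields the pointwise bound $|g(v)| \leq (\sqrt{2}/\varrho)\, e(v) \leq 2\sqrt{2}\, e(v)$, which is integrable by the finite energy assumption.

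To establish independence of $\chi$, take two admissible cut-offs $\chi_1, \chi_2$. Their difference $\chi_2 - \chi_1$ vanishes on $B(0, R(v))$ and has compact support, so $(\chi_2 - \chi_1)\varphi$ is well-defined globally (extended by zero on $B(0, R(v))$), compactly supported, and belongs to $W^{1,1}(\R^N)$ thanks to the regularity of $\varphi$ in the exterior noted above. Therefore
\[
\frac{1}{2} \int_{\R^N} \bigl( g_{\chi_2}(v) - g_{\chi_1}(v) \bigr) = \frac{1}{2} \int_{\R^N} \partial_1 \bigl( (\chi_2 - \chi_1)\varphi \bigr) = 0
\]
by the fundamental theorem of calculus in the $x_1$ variable combined with Fubini, which proves that $\tilde p(v)$ is independent of the choice of $\chi$.

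The main subtlety is the global well-definedness of the lift $\varphi$ on $\R^N \setminus B(0, R(v))$. In dimension three this is automatic by simple connectedness of the exterior; in dimension two one implicitly uses that the winding number of $v/|v|$ along a large circle vanishes, a known property of finite-energy travelling waves (cf.\ Proposition \ref{superdecay}). Once this is granted, the remainder is an accounting exercise combining the three zones above.
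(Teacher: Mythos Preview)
Your proof is correct and follows essentially the same approach as the paper: the key exterior identity $g(v) = (1-\varrho^2)\partial_1\varphi$ together with Lemma~\ref{colisee} gives the pointwise bound $|g(v)| \leq 2\sqrt{2}\, e(v)$, while local integrability on the compact part and integration by parts for the compactly supported difference $(\chi_2-\chi_1)\varphi$ yield the rest. You are in fact more careful than the paper on two points---the local $L^1$ bound (the paper simply says ``since $v$ is smooth'') and the existence of the lift in dimension two (which does follow for any $v \in \Lambda(\R^2)$, since finite Dirichlet energy forces the winding number at infinity to vanish).
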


\begin{proof}
As a consequence of \eqref{relevenergy0}, we verify that
$$g(v) = (1 - \varrho^2) \partial_1 \varphi, \ {\rm on} \ \R^N \setminus \supp(\chi),$$ 
so that in particular, it follows from Lemma \ref{colisee} that
$$|g(v)| \leq 2 \sqrt{2} e(v) \ {\rm on} \ \R^N \setminus \supp(\chi),$$
and hence, since $v$ is smooth on $\R^N$, and its energy is bounded, the function $g(v)$ is integrable on $\R^N$. The last assertion is a direct consequence of the integration by parts formula.
\end{proof}

%%%%%%%%%%%%%%%%%%%%%%%%%%%%%%
\subsection{Decay at infinity}
\label{propdecay}
%%%%%%%%%%%%%%%%%%%%%%%%%%%%%%

We will use in several places decay properties of solutions which have been established in \cite{Graveja1, Graveja3, Graveja4}. They play, among other things, a central role in our concentration-compactness results.

\begin{prop}[\cite{Graveja1, Graveja3, Graveja4}]
\label{superdecay}
Let $v$ be a finite energy solution to \eqref{TWc}.\\
i) There exists a constant $v_\infty$, such that $|v_\infty| = 1$ and
$$v(x) \to v_\infty, \ {\rm as} \ |x| \to \infty.$$
Without loss of generality, we may assume $v_\infty = 1$.\\
ii) Assume $c(v) < \sqrt{2}$. Then, there exists some constant $K > 0$ depending only on $c(v)$, $E(v)$ and the dimension $N$, such that the following estimates hold for any $x \in \R^N$,
\begin{equation}
\label{decay-sub-cart}
\begin{split}
& |\Im(v(x))| \leq \frac{K}{1 + |x|^{N-1}}, \ |\Re(v(x)) - 1| \leq \frac{K}{1+|x|^N},\\
& |\nabla \Im(v(x))| \leq \frac{K}{1+|x|^N}, \ |\nabla \Re(v(x))| \leq \frac{K}{1+|x|^{N+1}}.
\end{split}
\end{equation}
iii) Assume $N = 3$ and $c(v) = \sqrt{2}$. Then, $\Re(v) - 1$ and $\nabla \Im(v)$ belong to $L^p(\R^3)$, for any $p > \frac{5}{3}$, $\nabla \Re(v)$ belongs to $L^p(\R^3)$, for any $p > \frac{5}{4}$, whereas $\Im(v)$ belongs to $L^p(\R^3)$, for any $p > \frac{15}{4}$.
\end{prop}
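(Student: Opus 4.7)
The plan is to establish the three parts in sequence, the common backbone being the linearisation of \eqref{TWc} around the constant background of modulus one.

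For (i), Corollary \ref{tarquosi} guarantees $|v(x)| \geq 1/2$ outside some ball $B(0,R)$, so one can lift $v = \varrho \exp i\varphi$ on $\R^N \setminus B(0,R)$. Lemma \ref{tarquini20}, combined with the decay of $E(v,B(x_0,1))$ as $|x_0| \to \infty$ (which follows from $E(v)<+\infty$), yields $\varrho(x) \to 1$ as $|x| \to \infty$. Upgrading this to convergence $v \to v_\infty$ with $|v_\infty|=1$ requires showing the lifted phase $\varphi$ has a limit modulo $2\pi$: one combines the uniform $C^1$ bound of Lemma \ref{tarquini10} with an integration-along-rays argument using the equation satisfied by $\varphi$. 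Since \eqref{TWc} is invariant under multiplication by a unimodular constant, we renormalise so that $v_\infty = 1$.

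For (ii), set $v = 1 + a + ib$ and split the real and imaginary parts of \eqref{TWc}:
\begin{equation*}
\Delta a - 2a - c\,\partial_1 b = N_1(a,b), \qquad \Delta b + c\,\partial_1 a = N_2(a,b),
\end{equation*}
where $N_1$ and $N_2$ are polynomials in $(a,b)$ of valuation at least two. Eliminating $b$ (apply $\Delta$ to the first equation and substitute the second) produces the fourth-order scalar equation
\begin{equation*}
\bigl( \Delta^2 - 2 \Delta + c^2 \partial_1^2 \bigr) a = \Delta N_1 + c\,\partial_1 N_2,
\end{equation*}
and a parallel equation for $b$. When $c < \sqrt{2}$, the principal symbol $|\xi|^4 + 2|\xi|^2 - c^2 \xi_1^2$ vanishes only at the origin, so the inverse operator is represented by a convolution kernel whose anisotropic algebraic decay matches exactly the rates appearing in \eqref{decay-sub-cart}. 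The argument, carried out in \cite{Graveja1, Graveja3}, proceeds by bootstrap: the qualitative limit from (i) together with energy and Pohozaev-type identities first yields $L^p$ integrability of $\psi = v-1$, then the convolution representation upgrades these to pointwise bounds for $\Re(v)-1$ and $\Im(v)$, and finally one differentiates the representation to obtain the gradient estimates.

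For (iii), the same linearisation still applies at $c = \sqrt{2}$, $N = 3$, but the symbol $|\xi|^4 + 2|\xi|^2 - 2 \xi_1^2$ degenerates in an anisotropic fashion at the origin (vanishing to order four in $\xi_1$ along $\xi_\perp = 0$). Consequently the kernel ceases to be pointwise integrable at the relevant rate, and one only recovers $L^p$ integrability for $p$ in the ranges stated, via a careful oscillatory-integral analysis as in \cite{Graveja4}; the three-dimensional ambient dimension is what makes these ranges non-empty. The main technical obstacle throughout is precisely this kernel analysis, especially at sonic speed where van der Corput-type refinements are needed to handle the anisotropic degeneracy of the symbol; once the kernel estimates are secured, the passage from the qualitative limit of (i) to the quantitative rates of (ii) and (iii) is routine elliptic iteration.
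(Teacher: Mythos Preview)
The paper does not prove Proposition~\ref{superdecay}; it is quoted from \cite{Graveja1, Graveja3, Graveja4} (the citation appears in the proposition header itself), and the surrounding text merely records its consequences (Corollary~\ref{bienpose}) and a reformulation in polar variables (Remark~\ref{decay-pol}). So there is no ``paper's own proof'' to compare against.

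That said, your sketch is a faithful outline of the strategy actually used in the cited references. The linearisation you write down is exactly the one the present paper employs later in the analyticity proof (equations~\eqref{eq-v1-0}--\eqref{eq-v2} with the nonlinearities $F_1,F_2$ of \eqref{non-lin-F}), and the symbol $|\xi|^4 + 2|\xi|^2 - c^2\xi_1^2$ you identify is precisely the denominator of the kernels $\widehat{H_{j,k}}$, $\widehat{H_{1,j,k}}$, $\widehat{K_{j,k}}$ appearing there and of $L_\varepsilon$ in \eqref{L-epsilon}. One point to flag: in part~(i) your ``integration-along-rays'' justification for the existence of a phase limit is too casual --- the actual argument in \cite{Graveja3} goes through convolution estimates for $\nabla\varphi$ derived from the first equation in \eqref{PolTWc}, not a direct ODE-type integration, and getting a \emph{single} limit (rather than limits along rays) requires the decay of $\nabla\varphi$ to be integrable at infinity, which is itself part of what one is trying to prove. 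This is a genuine bootstrap and your sketch elides where it starts.
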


A first consequence is 
 
\begin{cor}
\label{bienpose}
Let $v$ be a finite energy solution to \eqref{TWc}, and assume $v_\infty = 1$. Then $v$ belongs to $W(\R^N)$.
\end{cor}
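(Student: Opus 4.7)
By definition, showing $v \in W(\R^N)$ amounts to showing that the four conditions defining $V(\R^N)$ hold for $v-1$, namely
\[
\nabla v \in L^2(\R^N),\quad \Re(v)-1 \in L^2(\R^N),\quad \Im(v)\in L^4(\R^N),\quad \nabla \Re(v)\in L^{4/3}(\R^N).
\]
The first of these is immediate: since $E(v)<\infty$, the gradient term $\tfrac12\int_{\R^N}|\nabla v|^2$ alone forces $\nabla v\in L^2(\R^N)$. The remaining three conditions are purely of decay type, and my plan is to derive each of them from the pointwise/integral decay estimates collected in Proposition \ref{superdecay}, handling the two regimes $c(v)<\sqrt 2$ and ($N=3$, $c(v)=\sqrt 2$) separately.

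In the subsonic regime $c(v)<\sqrt 2$, I apply the pointwise bounds \eqref{decay-sub-cart} of Proposition \ref{superdecay} ii). The bound $|\Re(v)-1|\leq K(1+|x|^N)^{-1}$ gives $\Re(v)-1\in L^p(\R^N)$ as soon as $pN>N$, so in particular for $p=2$. The bound $|\Im(v)|\leq K(1+|x|^{N-1})^{-1}$ gives $\Im(v)\in L^p(\R^N)$ for $p(N-1)>N$; for $p=4$ this requires $4(N-1)>N$, i.e.\ $N>4/3$, which holds in both $N=2$ and $N=3$. Finally $|\nabla\Re(v)|\leq K(1+|x|^{N+1})^{-1}$ yields $\nabla \Re(v)\in L^{4/3}(\R^N)$ because $\tfrac{4}{3}(N+1)>N$ for all $N\geq 1$. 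This disposes of the subsonic case.

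In the remaining sonic case $N=3$, $c(v)=\sqrt 2$, I invoke the integrability assertions of Proposition \ref{superdecay} iii) and check that the target exponents lie in the allowed ranges: $2>5/3$ gives $\Re(v)-1\in L^2(\R^3)$; $\tfrac{4}{3}>\tfrac{5}{4}$ gives $\nabla \Re(v)\in L^{4/3}(\R^3)$; and $4>\tfrac{15}{4}$ gives $\Im(v)\in L^4(\R^3)$. All four conditions are then satisfied, so $v-1\in V(\R^N)$ and hence $v\in W(\R^N)$. There is no real obstacle here beyond bookkeeping: the corollary is essentially a repackaging of the decay results already contained in Proposition \ref{superdecay}, and the only mild subtlety is verifying that the sharp thresholds $5/3$, $5/4$ and $15/4$ in the sonic case are strictly below the exponents $2$, $4/3$ and $4$ demanded by the definition of $V(\R^N)$, which they are.
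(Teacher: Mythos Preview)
Your proof is correct and follows exactly the same approach as the paper, which simply states that the corollary ``directly follows from the decay estimates (ii) and (iii) of Proposition \ref{superdecay}'' in view of the definitions of $V(\R^N)$ and $W(\R^N)$. You have merely made the exponent bookkeeping explicit.
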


\begin{proof}
In view of definitions \eqref{def-V} and \eqref{def-W}, Corollary \ref{bienpose} directly follows from the decay estimates (ii) and (iii) of Proposition \ref{superdecay}.
\end{proof}

\begin{remark}
\label{decay-pol}
Since any finite energy solution $v$ to \eqref{TWc} has a limit $v_\infty$ at infinity, we may write $v = \varrho \exp i \varphi$ outside some ball $B(0, R)$, for some $R > 0$, where $\varphi$ is a smooth function on $\R^N \setminus B(0, R)$, which is defined up to an integer multiple of $2 \pi$. Moreover the function $\varphi$ has a limit at infinity $\varphi_\infty$, which we may take equal to $0$, if we assume that $v_\infty = 1$. The statements given in \cite{Graveja1, Graveja3, Graveja4} are actually expressed in terms of the real functions $\varrho$ and $\varphi$ as follows:\\
(i) if $0 \leq c(v) < \sqrt{2}$, there exists some constant $K > 0$ depending only on $c(v)$, $E(v)$ and $N$ such that
\begin{equation}
\label{decay-sub-pol}
\begin{split}
|\varphi(x)| \leq \frac{K}{1+|x|^{N-1}}, & \ |1 - \varrho(x)| \leq \frac{K}{1+|x|^{N}},\\
|\nabla \varphi(x)| \leq \frac{K}{1+|x|^{N}}, & \ |\nabla \varrho(x)| \leq \frac{K}{1+|x|^{N+1}}.
\end{split}
\end{equation}
(ii) if $c(v) = \sqrt{2}$, the function $\varphi$ belongs to $L^p(\R^3 \setminus B(0, R))$, for any $p > \frac{15}{4}$, $\varrho-1$ and $\nabla \varphi$ belong to $L^p(\R^3 \setminus B(0, R))$, for any $p > \frac{5}{3}$, whereas $\nabla \varrho$ belongs to $L^p(\R^3 \setminus B(0, R))$, for any $p > \frac{5}{4}$.

The previous inequalities are easily seen to be equivalent to those given in Proposition \ref{superdecay}. Indeed, we have $v = \varrho \cos(\varphi) + i \varrho \sin(\varphi)$, so that $\Re(v) - 1 = \varrho \cos(\varphi) - 1$ and $\Im(v) = \varrho \sin(\varphi)$, and hence
\begin{align*}
|\Re(v) - 1| & \leq K \big( |\varrho - 1| + \varphi^2 \big), \ |\Im(v)| \leq K |\varphi|,\\
|\nabla \Re(v)| &\leq K \big( |\nabla \varrho| + |\varphi| |\nabla \varphi| \big), \ |\nabla \Im(v)| \leq k \big( |\nabla \varphi| + |\varphi| |\nabla \varrho| \big),
\end{align*}
where $K > 0$ is some constant.
\end{remark}

A remarkable consequence of Proposition \ref{superdecay} is

\begin{prop}
\label{cestlemoment}
Let $v$ be a finite energy solution to \eqref{TWc} on $\R^N$. Then, we have
$$\tilde{p}(v) = p(v).$$
\end{prop}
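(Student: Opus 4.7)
The strategy is to directly compute the difference $2\big(\tilde p(v) - p(v)\big) = \int_{\R^N}\bigl(g(v) - \langle i\partial_1 v, v-1\rangle\bigr)$, rewrite it as the integral of a total $\partial_1$-derivative of a function $h$ that tends to $0$ at infinity, and then invoke Fubini (or equivalently the divergence theorem on expanding balls) to conclude. Both $p(v)$ and $\tilde p(v)$ are well-defined integrals for a finite energy solution, since $v$ belongs to $\Lambda(\R^N)$ by Corollary \ref{tarquosi} and to $W(\R^N)$ by Corollary \ref{bienpose}.

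First I would derive the key algebraic identity. Using $\langle z_1, z_2\rangle = \Re(\overline{z_1} z_2)$, a direct computation shows $\langle i\partial_1 v, 1\rangle = -\partial_1 \Im(v)$, and by bilinearity
$$g(v) - \langle i\partial_1 v, v-1\rangle \ =\ \langle i\partial_1 v, 1\rangle + \partial_1\bigl((1-\chi)\varphi\bigr)\ =\ \partial_1 h, \qquad h \equiv (1-\chi)\varphi - \Im(v).$$
In particular $\partial_1 h$ lies in $L^1(\R^N)$ automatically, as the difference of the two $L^1$ integrands $g(v)$ (Lemma \ref{gdev}) and $\langle i\partial_1 v, v-1\rangle$ (the paragraph following \eqref{identityp}). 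The problem therefore reduces to showing $\int_{\R^N} \partial_1 h = 0$.

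The crux of the proof is then pointwise decay of $h$ at infinity. Outside $\supp(\chi)$ we may write $v = \varrho\exp(i\varphi)$ with $\varphi(x)\to 0$ at infinity (Proposition \ref{superdecay}(i) and Remark \ref{decay-pol}), hence
$$h\ =\ \varphi - \varrho\sin\varphi\ =\ (1-\varrho)\varphi + \varrho\bigl(\varphi - \sin\varphi\bigr).$$
In the subsonic range $0 \leq c(v) < \sqrt 2$, I would substitute the polar decay estimates \eqref{decay-sub-pol}, namely $|1-\varrho(x)| \leq K/(1+|x|^N)$ and $|\varphi(x)| \leq K/(1+|x|^{N-1})$, together with the elementary bound $|\varphi - \sin\varphi| \leq |\varphi|^3/6$; this gives $|h(x)| \leq K'/(1+|x|^{2N-1})$, so in particular $h(x) \to 0$ as $|x|\to\infty$.

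With $\partial_1 h \in L^1(\R^N)$ and $h \to 0$ at infinity, Fubini's theorem yields, for almost every $x_\perp \in \R^{N-1}$,
$$\int_\R \partial_1 h(x_1, x_\perp)\,dx_1\ =\ \lim_{M\to+\infty}\bigl(h(M, x_\perp) - h(-M, x_\perp)\bigr)\ =\ 0,$$
so $\int_{\R^N}\partial_1 h = 0$, and hence $\tilde p(v) = p(v)$. The one point that needs extra care is the sonic case $c(v) = \sqrt 2$ in dimension three, where Proposition \ref{superdecay}(iii) supplies only $L^p$-type integrability rather than pointwise decay. I would handle this either by approximating $v$ with subsonic travelling waves, or more directly by switching to the divergence theorem on $B(0, R)$, rewriting $\int_{B(0,R)}\partial_1 h$ as a surface integral $\int_{\partial B(0,R)} h\,\nu_1\,d\sigma$, and showing that it vanishes along a subsequence $R_n\to\infty$; this last step follows by Fubini in the radial variable together with Hölder's inequality applied to the decomposition of $h$ above, using the $L^p$ bounds of Proposition \ref{superdecay}(iii), to verify that $\int_0^{+\infty}\bigl(\int_{\partial B(0,R)}|h|\,d\sigma\bigr)dR$ is finite.
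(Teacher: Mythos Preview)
Your subsonic argument is correct and is essentially the paper's proof, just executed on lines via Fubini rather than on spheres via the divergence theorem; the key identity $g(v)-\langle i\partial_1 v,v-1\rangle=\partial_1 h$ with $h=(1-\chi)\varphi-\Im(v)$ and the decomposition $h=(1-\varrho)\varphi+\varrho(\varphi-\sin\varphi)$ are exactly what the paper uses (in the equivalent form $\Im(v)-\varphi=(\Im(v)/\varrho-\varphi)+\Im(v)(\varrho-1)/\varrho$).

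The sonic case $c(v)=\sqrt2$, $N=3$, however, has a genuine gap. You assert that $\int_0^{+\infty}\bigl(\int_{\partial B(0,R)}|h|\,d\sigma\bigr)dR<\infty$, i.e.\ $h\in L^1(\R^3)$, and deduce the vanishing of the surface integral along a subsequence from this. But the integrability supplied by Proposition~\ref{superdecay}(iii) and Remark~\ref{decay-pol}(ii) does \emph{not} place $h$ in $L^1$: one has $\varphi\in L^p$ only for $p>15/4$ and $1-\varrho\in L^p$ only for $p>5/3$, so H\"older puts $(1-\varrho)\varphi$ in $L^q$ only for $q>15/13$, and $|\varphi|^3\in L^q$ only for $q>5/4$. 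Altogether $h\in L^q$ for $q>5/4$, and there is no way to reach $q=1$ from these bounds. Your proposed approximation by subsonic waves is also not available without further work.

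The fix, which is precisely the paper's argument, is to exploit $h\in L^q$ for some $q\in(5/4,3/2)$: by the mean value inequality one finds $R'\in[R,2R]$ with $\int_{\partial B(0,R')}|h|^q\,d\sigma\leq K/R$, and then H\"older on the sphere gives $\int_{\partial B(0,R')}|h|\,d\sigma\leq K R^{\,2-3/q}\to 0$ since $q<3/2$. This is close in spirit to what you sketched, but the step you stated (finiteness of the $L^1$ radial integral) is the wrong one; it is the $L^q$ radial integral that is finite, and the extra H\"older on each sphere is what closes the argument.
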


\begin{proof}
Let $R(v) > 0$ be such that $|v| \geq \frac{1}{2}$ on $\R^N \setminus B(0, R(v))$. As in Proposition \ref{superdecay} and Remark \ref{decay-pol}, we may assume without loss of generality that $v_\infty = 1$ and $\varphi_\infty = 0$. If $x$ is sufficiently large, the expansion of the $\sin$ function yields
$$\bigg| \frac{\Im(v(x))}{\varrho(x)} - \varphi(x) \bigg| \leq \frac{|\varphi(x)|^3}{6}.$$
Let $R > R(v)$ be sufficiently large. We have, integrating by parts
$$\int_{B(0, R)} \langle i \partial_1 v, 1 \rangle = - \frac{1}{R} \int_{\partial B(0, R)} \Im(v(x)) x_1 dx, \ {\rm and} \ \int_{B(0, R)} \partial_1 \big( (1 - \chi) \varphi \big) = \frac{1}{R} \int_{\partial B(0, R)} \varphi(x) x_1 dx,$$
so that it follows 
$$\int_{B(0, R)} \Big( \langle i \partial_1 v, v - 1 \rangle - g(v) \Big) = \frac{1}{R} \int_{\partial B(0, R)} \Big( \Im(v(x)) - \varphi(x) \Big) x_1 dx.$$
We write $\Im(v) - \varphi = \big( \frac{\Im(v)}{\varrho} - \varphi \big) + \Im(v) \frac{\varrho - 1}{\varrho}$, so that
\begin{equation}
\label{grouic}
\bigg| \int_{B(0, R)} \Big( \langle i \partial_1 v, v - 1 \rangle - g(v) \Big) \bigg| \leq \int_{\partial B(0, R)} \Big( \frac{|\varphi|^3}{6} + 2 |\Im(v)| |\varrho - 1| \Big).
\end{equation}
We next distinguish two cases.

\begin{case}
{\bf ${\bf N = 2}$ or ${\bf N = 3}$, and ${\bf c(v) < \sqrt{2}}$.} It follows from \eqref{decay-sub-cart} and \eqref{decay-sub-pol} that
$$\frac{|\varphi(x)|^3}{6} + 2 |\Im(v(x))| |\varrho(x) - 1| \leq \frac{K}{1 + |x|^N},$$
so that \eqref{grouic} yields
$$\int_{B(0, R)} \Big| \langle i \partial_1 v, v - 1 \rangle - g(v) \Big| \to 0, \ {\rm as} \ R \to + \infty,$$
which yields the conclusion.
\end{case}

\begin{case}
{\bf ${\bf N = 3}$ and ${\bf c(v) = \sqrt{2}}$.} It follows from Remark \ref{decay-pol} that $\varphi$ belongs to $L^q(\R^3 \setminus B(0, R(v)))$ for any $q > \frac{15}{4}$, and $\varrho - 1$ belongs to $L^q(\R^3 \setminus B(0, R(v)))$ for any $q > \frac{5}{3}$, so that by Proposition \ref{superdecay}, the function $f \equiv \frac{|\varphi|^3}{6} + 2 |\Im(v)| |\varrho - 1|$ belongs to $L^q(\R^3 \setminus B(0, R(v)))$ for any $q > \frac{5}{4}$. Given $R > R(v)$ and $q > \frac{5}{4}$ to be determined later, we may find some $R \leq R' \leq 2 R$, and some constant $K(q)$ only depending on $q$, such that
$$\int_{\partial B(0, R')} f^q \leq \frac{K(q)}{R},$$
so that by H\"older's inequality, we are led to
$$\int_{\partial B(0, R')} f \leq K(q) R^{2 - \frac{3}{q}}.$$
Choosing $q = \frac{4}{3}$, we obtain that $\int_{\partial B(0, R')} f \to 0$, as $R \to + \infty$. This yields by \eqref{grouic}
$$\bigg| \int_{B(0, R')} \Big( \langle i \partial_1 v, v - 1 \rangle - g(v) \Big) \bigg| \to 0, \ {\rm as} \ R \to + \infty,$$
which yields the conclusion again, since the integrand is integrable by Lemma \ref{gdev} and Corollary \ref{bienpose}. 
\end{case}
\end{proof}

%%%%%%%%%%%%%%%%%%%%%%%%%%%%%%%%%%%%%%%
\subsection{Pohozaev's type identities}
%%%%%%%%%%%%%%%%%%%%%%%%%%%%%%%%%%%%%%%

\begin{lemma}
\label{encorepoho}
Let $v$ be a finite energy solution to \eqref{TWc} on $\R^N$, with speed $c = c(v)$. We have the identities
$$E(v) = \int_{\R^N} |\partial_1 v|^2,$$
and for any $2 \leq j \leq N$,
$$E(v) = \int_{\R^N} |\partial_j v|^2 + c(v) p(v).$$
Moreover, if $c(v) > 0$ and $v$ is not constant, then $p(v) > 0$.
\end{lemma}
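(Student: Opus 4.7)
The plan is to derive both identities via Pohozaev-type multipliers $x_j \overline{\partial_j v}$, and then to read off the positivity of $p(v)$ as an algebraic consequence. Concretely, I would multiply \eqref{TWc} by $x_j \overline{\partial_j v}$ (no summation on $j$), take the real part, and integrate over $\R^N$. Standard integrations by parts give
\begin{equation*}
\int_{\R^N} x_j\, \Re\bigl(\Delta v\, \overline{\partial_j v}\bigr) = - \int_{\R^N} |\partial_j v|^2 + \frac{1}{2} \int_{\R^N} |\nabla v|^2,
\end{equation*}
and
\begin{equation*}
\int_{\R^N} x_j\, \Re\bigl(v (1 - |v|^2) \overline{\partial_j v}\bigr) = \frac{1}{4} \int_{\R^N} (1 - |v|^2)^2,
\end{equation*}
while the transport contribution reads $c \int_{\R^N} x_j\, \langle i \partial_1 v, \partial_j v \rangle$. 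For $j = 1$ this last integrand vanishes pointwise since $\langle i z, z\rangle = 0$, and summing the three contributions already yields $E(v) = \int_{\R^N} |\partial_1 v|^2$.

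For $j \geq 2$, the transport term is the only delicate piece, and I would reshape it by two successive integrations by parts. Using $\langle i \partial_1 v, \partial_j v\rangle = \partial_1 \langle i(v-1), \partial_j v\rangle - \langle i(v-1), \partial_1 \partial_j v\rangle$ and integrating in $x_1$, the first piece drops since $x_j$ is independent of $x_1$. Applying next the dual identity $\langle i(v-1), \partial_1 \partial_j v\rangle = \partial_j \langle i(v-1), \partial_1 v\rangle - \langle i\partial_j v, \partial_1 v\rangle$, integrating in $x_j$, and using the antisymmetry $\langle i w_1, w_2\rangle = -\langle i w_2, w_1\rangle$ together with the definition $p(v) = \frac{1}{2}\int_{\R^N} \langle i\partial_1 v, v - 1\rangle$, I expect to find
\begin{equation*}
\int_{\R^N} x_j\, \langle i \partial_1 v, \partial_j v\rangle = - p(v),
\end{equation*}
which combined with the two earlier computations delivers $E(v) = \int_{\R^N} |\partial_j v|^2 + c\, p(v)$ for each $j \geq 2$.

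To extract the sign of $p(v)$, I would sum this second identity over $j = 2, \ldots, N$, use $\int_{\R^N} |\nabla v|^2 = \int_{\R^N}|\partial_1 v|^2 + \sum_{j=2}^N \int_{\R^N}|\partial_j v|^2$ together with the first identity, and replace $\int_{\R^N}|\nabla v|^2$ via the definition of $E(v)$. A short rearrangement then gives
\begin{equation*}
(N-1)\, c\, p(v) = (N-2)\, E(v) + \frac{1}{2} \int_{\R^N} (1 - |v|^2)^2.
\end{equation*}
The right-hand side is non-negative, and it vanishes only if $|v| \equiv 1$ and, when $N = 3$, additionally $E(v) = 0$. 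In either situation, the limit $v \to 1$ at infinity provided by Proposition \ref{superdecay} combined with the phase equation obtained from writing $v = \exp(i \varphi)$ (harmonic phase, decaying to $0$) forces $v \equiv 1$, contradicting the non-triviality of $v$. Hence, whenever $c > 0$ and $v$ is non-constant, the right-hand side is strictly positive and $p(v) > 0$.

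The main obstacle will be to justify the integrations by parts, namely the vanishing of boundary terms at infinity. In the subsonic regime $c(v) < \sqrt{2}$, the sharp pointwise decay rates from Proposition \ref{superdecay} make the typical surface contributions $R \int_{\partial B(0,R)}(|\nabla v|^2 + (1-|v|^2)^2)$ decay as a negative power of $R$, so no issue arises. In the sonic case $c(v) = \sqrt{2}$ with $N = 3$, only $L^p$-integrability of $v - 1$ and $\nabla v$ is available from Proposition \ref{superdecay}, and I would handle the boundary terms as in Proposition \ref{cestlemoment}, via a Fubini-type averaging argument that selects a sequence of radii $R_n \to + \infty$ along which the relevant surface integrals tend to zero.
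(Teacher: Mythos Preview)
Your proposal is correct. The paper's own proof is minimal: it cites \cite{Graveja2} for the Pohozaev identities in the form $E(v) = \int |\partial_j v|^2 + c(v)\,\tilde{p}(v)$ (with the lifted momentum $\tilde{p}$), and then invokes Proposition~\ref{cestlemoment} to replace $\tilde{p}$ by $p$. Your argument is the self-contained version of that same Pohozaev computation, with one genuine variation: you handle the transport term directly via the splitting $\langle i\partial_1 v,\partial_j v\rangle = \partial_1\langle i(v-1),\partial_j v\rangle - \langle i(v-1),\partial_{1j}^2 v\rangle$, so the definition $p(v)=\tfrac12\int\langle i\partial_1 v,v-1\rangle$ appears without passing through $\tilde{p}$. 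The paper's route, by contrast, exploits that the integrand $(1-\varrho^2)\partial_1\varphi$ in $\tilde{p}$ is pointwise controlled by $e(v)$ (Lemma~\ref{colisee}), which makes the boundary analysis in \cite{Graveja2} uniform in $c$; your route trades that for the explicit decay of $v-1$ and $\nabla v$ from Proposition~\ref{superdecay}, which is equally sufficient but requires the case split you outline for $c=\sqrt{2}$. Both approaches need essentially the same decay input to kill the surface terms; yours is a bit more direct but leans more heavily on Proposition~\ref{superdecay}, while the paper's detour through $\tilde{p}$ isolates that dependence in Proposition~\ref{cestlemoment}.
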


\begin{proof}
The first identity was established in \cite{Graveja2}, whereas, concerning the second identity, it was proved there that for any $2 \leq j \leq N$,
$$E(v) = \int_{\R^N} |\partial_j v|^2 + c(v) \tilde{p}(v).$$
The conclusion then follows from Proposition \ref{cestlemoment}.
\end{proof}

Notice that adding the identities in Lemma \ref{encorepoho} we obtain 
\begin{equation}
\label{theverypoho}
\frac{N - 2}{2} \int_{\R^N} |\nabla v|^2 + \frac{N}{4} \int_{\R^N} (1 - |v|^2)^2 - c(v) (N - 1) p(v) = 0.
\end{equation} 

Notice also, that introducing the quantities $\Sigma(v) = \sqrt{2} p(v)- E(v)$ and $\varepsilon(v) = \sqrt{2 - c(v)^2}$, the second identity in Lemma \ref{encorepoho} may be recast as 
\begin{equation}
\label{acapulco}
\int_{\R^N} |\partial_j v|^2 + \Sigma(v) = \Big( \sqrt{2} - \sqrt{2-\varepsilon(v)^2} \Big) p(v) = \frac{\varepsilon(v)^2}{\sqrt{2} + \sqrt{2 - \varepsilon(v)^2}} p(v).
\end{equation}

\begin{cor}
\label{cordepoho}
Let $v$ be a finite energy solution to \eqref{TWc} on $\R^N$, with speed $c = \sqrt{2}$ and such that $\Sigma (v) \geq 0$. Then, v is a constant.
\end{cor}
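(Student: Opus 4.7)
The plan is to exploit the Pohozaev identity \eqref{acapulco} in the critical case $c(v) = \sqrt{2}$, which makes $\varepsilon(v) = 0$ and collapses the right-hand side to zero. Concretely, for $c(v) = \sqrt{2}$ and any $2 \leq j \leq N$ (noting that the assumption $c \leq \sqrt{2}$ forces $N = 3$ here), identity \eqref{acapulco} reads
$$\int_{\R^N} |\partial_j v|^2 + \Sigma(v) = 0.$$
Since $\Sigma(v) \geq 0$ by hypothesis and $\int |\partial_j v|^2 \geq 0$, both terms must vanish. In particular, $\partial_j v \equiv 0$ for every $j \in \{2, \ldots, N\}$, and, because $v$ is smooth by Lemma \ref{tarquini10}, this means $v$ depends only on the variable $x_1$.

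The second step is to use the finite energy assumption to upgrade this to constancy. If $v = v(x_1)$ then the energy density $e(v) = \tfrac{1}{2}|\partial_1 v|^2 + \tfrac{1}{4}(1-|v|^2)^2$ is also a function of $x_1$ alone. By Fubini, the condition $E(v) = \int_{\R^N} e(v) < \infty$ with $N \geq 2$ forces $e(v) \equiv 0$ almost everywhere, hence identically by smoothness. This immediately gives $\partial_1 v \equiv 0$ and $|v| \equiv 1$, so $v$ is a constant of modulus one, as claimed.

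The argument is essentially a one-line application of the Pohozaev identities combined with a Fubini observation, so I do not expect any real obstacle. The only point worth verifying carefully is the applicability of \eqref{acapulco} in the sonic case $c(v) = \sqrt{2}$: Lemma \ref{encorepoho} was stated for any finite energy solution, and the $c = \sqrt{2}$ case in dimension three is covered by Proposition \ref{superdecay} (iii) together with Proposition \ref{cestlemoment}, which guarantee that $p(v) = \tilde p(v)$ is well defined, so no subtlety arises in invoking the identity.
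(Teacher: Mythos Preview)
Your proof is correct and follows exactly the same approach as the paper: invoke identity \eqref{acapulco} with $\varepsilon(v)=0$ to force $\int_{\R^N}|\partial_j v|^2=0$ for each $2\le j\le N$, then use the finiteness of the energy to rule out a nonconstant function of $x_1$ alone. The paper's version is terser but identical in substance; your added remarks on Fubini and on the applicability of Lemma \ref{encorepoho} in the sonic case are fine (though the parenthetical restricting to $N=3$ is not needed for the argument itself).
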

 
\begin{proof}
Since $\varepsilon(v) = 0$ and $\Sigma(v) \geq 0$, identity \eqref{acapulco} implies that, for any $2 \leq j \leq N$,
$$\int_{\R^N} |\partial_j v|^2 = 0,$$
so that $v$ depends only on the $x_1$ variable. Since the energy is finite, this is impossible, unless $v$ is constant.
\end{proof}

Notice more generally that, if $\Sigma(v) > 0$, then identity \eqref{acapulco} gives
$$\int_{\R^N} |\partial_j v|^2 \leq \varepsilon(v) p(v), \forall \, 2 \leq j \leq N.$$
In connection with the previous inequality, the next result gives a more quantitative version of Corollary \ref{cordepoho}.

\begin{lemma}
\label{bonlemme}
Let $v$ be a finite energy solution to \eqref{TWc} on $\R^N$. Then, there exists a constant $K(c) > 0$, possibly depending on $c$, such that
$$\| \eta \|_{L^\infty(\R^N)}^{N + 1} \leq K(c) \int_{\R^N} \Big( \lambda |\partial_j v|^2 + \frac{\eta^2}{\lambda} \Big),$$
for any $2 \leq j \leq N$, and for any $\lambda > 0$. In particular, we have
$$\| \eta \|_{L^\infty(\R^N)}^{N + 1} \leq K(c) \Big( \lambda \Big( \varepsilon(v) p(v) - \Sigma(v) \Big) + \frac{E(v)}{\lambda} \Big).$$
\end{lemma}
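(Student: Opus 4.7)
The plan is to reduce everything to controlling $\int_{\R^N}|\partial_j(\eta^2)|$ from both sides: from below in terms of $\|\eta\|_{L^\infty}^{N+1}$ via a Fubini/slicing argument exploiting the Lipschitz bound on $\eta$, and from above in terms of the RHS via Young's inequality.

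First I would set $M = \|\eta\|_{L^\infty(\R^N)}$ and note two facts. By Lemma \ref{tarquini10}, $v$ is smooth, $\|v\|_{L^\infty}$ and $\|\nabla v\|_{L^\infty}$ are bounded by constants depending only on $c$, hence $\eta = 1-|v|^2$ is Lipschitz with $\|\nabla\eta\|_{L^\infty}\le L(c)$. By Proposition \ref{superdecay}, $\eta(x)\to 0$ as $|x|\to\infty$, so we may pick $\overline x\in\R^N$ with $|\eta(\overline x)|=M$ (if $M=0$ the statement is vacuous). By the Lipschitz bound, $|\eta(y)|\ge M/2$ on the ball $B(\overline x,\mu)$ with $\mu = M/(2L(c))$.

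Next I would argue, for fixed $j\in\{2,\dots,N\}$ and writing $x=(x_j,x_\perp^j)$, that for any $x_\perp^j$ in the $(N-1)$-dimensional ball $B_\perp$ of radius $\mu$ around $\overline x_\perp^j$, the point $(\overline x_j,x_\perp^j)$ lies in $B(\overline x,\mu)$, so $\eta^2(\overline x_j,x_\perp^j)\ge M^2/4$. Since $t\mapsto \eta^2(t,x_\perp^j)$ is smooth and vanishes as $|t|\to\infty$, the fundamental theorem of calculus yields
\begin{equation*}
\int_{\R}|\partial_j(\eta^2)|(t,x_\perp^j)\,dt \;\ge\; \eta^2(\overline x_j,x_\perp^j)\;\ge\;\frac{M^2}{4}.
\end{equation*}
Integrating over $x_\perp^j\in B_\perp$ and using Fubini gives
\begin{equation*}
\int_{\R^N}|\partial_j(\eta^2)|\;\ge\;\frac{M^2}{4}\,|B_\perp|\;=\;K_1(c)\,M^{N+1}.
\end{equation*}

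For the upper bound, I compute $\partial_j(\eta^2)=2\eta\,\partial_j\eta = -4\eta\,\langle v,\partial_j v\rangle$, so $|\partial_j(\eta^2)|\le 4\|v\|_{L^\infty}|\eta||\partial_j v|$. Young's inequality $|\eta||\partial_j v|\le \tfrac{\lambda}{2}|\partial_j v|^2 + \tfrac{1}{2\lambda}\eta^2$ and the $L^\infty$ bound on $v$ from Lemma \ref{tarquini10} then give
\begin{equation*}
\int_{\R^N}|\partial_j(\eta^2)|\;\le\; K_2(c)\int_{\R^N}\Big(\lambda|\partial_j v|^2+\frac{\eta^2}{\lambda}\Big),
\end{equation*}
and combining the two displays produces the first inequality with $K(c)=K_2(c)/K_1(c)$.

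For the \emph{in particular} statement I would invoke \eqref{acapulco}, which gives $\int_{\R^N}|\partial_j v|^2 = \tfrac{\varepsilon(v)^2}{\sqrt2+\sqrt{2-\varepsilon(v)^2}}p(v)-\Sigma(v)$. Since $p(v)\ge 0$ by Lemma \ref{encorepoho} (the case of constant $v$ being trivial), and since $\varepsilon^2/(\sqrt2+\sqrt{2-\varepsilon^2})\le \varepsilon$ for $0\le\varepsilon\le\sqrt2$ (equivalent to $\varepsilon\le\sqrt2+\sqrt{2-\varepsilon^2}$), one has $\int|\partial_j v|^2\le \varepsilon(v)p(v)-\Sigma(v)$. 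Combined with the elementary bound $\int\eta^2\le 4E(v)$, this yields the second inequality. The main analytic step I expect to do carefully is the slicing estimate in the second paragraph, since it must use the Lipschitz bound on $\eta$ (not on $\eta^2$) to produce exactly the exponent $N+1$; if one instead bounded $\eta^2$ directly on $B(\overline x,\mu)$ one obtains the weaker exponent $N+2$.
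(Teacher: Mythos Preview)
Your proof is correct and follows essentially the same route as the paper's: both use the Lipschitz bound on $\eta$ from Lemma~\ref{tarquini10} to find a ball of radius $\sim M/K(c)$ on which $|\eta|\ge M/2$, then slice along the $x_j$-direction, apply the fundamental theorem of calculus together with Young's inequality $2|\eta||\partial_j\eta|\le \lambda(\partial_j\eta)^2+\eta^2/\lambda$, and finally integrate over the $(N-1)$-dimensional cross-section to produce the exponent $N+1$. The only cosmetic difference is that you organise the argument around the pivot quantity $\int_{\R^N}|\partial_j(\eta^2)|$, whereas the paper bounds $\eta(a)^2$ directly on each line; the content is the same, and your handling of the ``in particular'' part via \eqref{acapulco} and $\varepsilon^2/(\sqrt2+\sqrt{2-\varepsilon^2})\le\varepsilon$ is exactly what is intended.
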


\begin{proof}
Set $\eta_\infty = \| \eta \|_{L^\infty(\R^N)}$. We may assume without loss of generality, that $|\eta(0)| = \eta_\infty$. In view of uniform bound \eqref{elinfinigrad0}, there exists some constant $K(c)$ depending only on $c$ such that 
\begin{equation}
\label{bouffonvert}
|\eta(x)| \geq \frac{\eta_{\infty}}{2}, \forall x \in B \Big( 0, \frac{\eta_\infty}{2 K(c)} \Big).
\end{equation}
We next consider for any point $a = (a_1, \ldots, a_{j - 1}, 0, a_{j + 1}, \ldots, a_N)$, the line $D_j(a)$ parallel to the axis $x_j$, that is the set $D_j(a) = \{ \a_j(x) \equiv (a_1, \ldots, a_{j - 1}, x, a_{j + 1}, \ldots, a_N), x \in \R \}$. We claim that
\begin{equation}
\label{greengoblin}
|\eta_\infty|^2 \leq 4 \int_{D_j(a)} \Big( \lambda (\partial_j \eta)^2 + \frac{\eta^2}{\lambda} \Big),
\end{equation}
for any $a = (a_1, \ldots, a_{j - 1}, 0, a_{j + 1}, \ldots, a_N) \in B(0, \frac{\eta_\infty}{2 K(c)})$. Indeed, since $\eta(x) \to 0$, as $|x| \to + \infty$, we have by integration,
\begin{equation}
\begin{split}
\eta(a)^2 & = 2 \int_{- \infty}^0 \partial_j \eta(\a_j(x)) \eta(\a_j(x)) dx\\
& \leq \int_\R \Big( \lambda \big( \partial_j \eta(\a_j(x)) \big)^2 + \frac{\eta(\a_j(x))^2}{\lambda} \Big) dx.
\end{split} 
\end{equation}
Invoking inequality \eqref{bouffonvert}, one derives \eqref{greengoblin}. The conclusion then follows integrating inequality \eqref{bouffonvert} on $a = (a_1, \ldots, a_{j - 1}, 0, a_{j + 1}, \ldots, a_N) \in B(0, \frac{\eta_\infty}{2 K(c)})$.
\end{proof}
 
%%%%%%%%%%%%%%%%%%%%%%%%
\subsection{Analyticity}
%%%%%%%%%%%%%%%%%%%%%%%%

The proofs of Theorem \ref{symetrie} and Lemma \ref{gueri} rely on the following result, which is of independent interest. 

\begin{prop} 
\label{analyticity} 
Let $v$ be a finite energy solution of \eqref{TWc} on $\R^N$, with $0 \leq c < \sqrt{2}$. Then, each component of $v$ is real-analytic on $\R^N$.
\end{prop}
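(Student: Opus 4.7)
The plan is to view (TWc) as a semi-linear elliptic system for the pair $(v_1, v_2) = (\Re v, \Im v)$ whose principal part is the Laplacian (in particular elliptic with constant, hence real-analytic, coefficients) and whose lower-order part depends polynomially on $(v_1, v_2, \nabla v_1, \nabla v_2)$. Explicitly, equation (TWc) becomes the system
\begin{equation*}
\left\{\begin{array}{l}
\Delta v_1 = c\, \partial_1 v_2 - v_1\big(1 - v_1^2 - v_2^2\big),\\[2pt]
\Delta v_2 = -c\, \partial_1 v_1 - v_2\big(1 - v_1^2 - v_2^2\big),
\end{array}\right.
\end{equation*}
i.e. an equation of the form $\Delta V = F(V, \nabla V)$ where $F$ is a polynomial (hence entire) of degree three in $V$ and degree one in $\nabla V$. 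The first-order term $c\,\partial_1$ is a lower-order perturbation which does not affect the ellipticity of the principal part, and which comes with the analytic (in fact constant) coefficient~$c$.

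From Lemma \ref{tarquini10}, we already know that $v \in C^\infty(\R^N)$ with $\|v\|_{C^k(\R^N)} \leq K(c,k,N)$ for every $k \in \N$. Since real-analyticity is a local property, I would fix an arbitrary $x_0 \in \R^N$ and prove analyticity on a neighbourhood of $x_0$. For a semi-linear elliptic system with real-analytic nonlinearity and a smooth solution, this is a classical result, and I would invoke it directly: see for instance Morrey's theorem on analyticity of $C^\infty$ solutions of non-linear elliptic systems with real-analytic data (\emph{Multiple integrals in the calculus of variations}, Chapter~6), or the presentation in Friedman's \emph{Partial Differential Equations}, Chapter~7.

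For a self-contained argument, the scheme is standard: on a fixed ball $B(x_0, R)$, write $V$ as the sum of a harmonic function and the Newtonian potential of $F(V, \nabla V)$, then differentiate the equation iteratively and use interior Cauchy-type estimates for the Laplacian on a shrinking family of concentric balls. One proves by induction on $|\alpha|$ an estimate of the form $\|D^\alpha V\|_{L^\infty(B(x_0, r))} \leq M^{|\alpha|+1}\alpha!$ for some $M,r > 0$, which is exactly the characterization of real-analyticity at $x_0$. The polynomial structure of $F$ is essential here because it renders the combinatorics of computing $D^\alpha F(V,\nabla V)$ tractable via an explicit multinomial expansion (or the Faà di Bruno formula).

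The principal technical obstacle is precisely this bookkeeping step: one must choose the sequence of shrinking radii and the induction constants so that the factor $\alpha!$ survives the combinatorial growth produced by differentiating the cubic nonlinearity and the first-order term. This is classical and independent of the specific Gross--Pitaevskii structure, since Lemma \ref{tarquini10} already provides all the uniform $C^k$ control of $v$ that is needed to initialize and feed the induction. No sub- versus sonic dichotomy enters the argument, so the hypothesis $0 \leq c < \sqrt{2}$ is used only through Lemma \ref{tarquini10}.
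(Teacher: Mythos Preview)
Your argument is correct: the system $\Delta V = F(V,\nabla V)$ with $F$ polynomial is semi-linear elliptic with real-analytic data, Lemma~\ref{tarquini10} supplies the needed $C^\infty$ regularity, and Morrey's theorem (or the standard shrinking-balls induction you outline) then yields local real-analyticity. This is a genuinely different route from the paper's. The paper does not invoke Morrey but instead proves the stronger Theorem~\ref{analyticity0}: it passes to the fourth-order scalar equations \eqref{eq-v1}--\eqref{eq-v2} for $v_1=\Re(v)-1$ and $v_2=\Im(v)$, inverts their linear part via the Fourier multipliers $H_{j,k}$, $H_{1,j,k}$, $K_{j,k}$ (here the hypothesis $c<\sqrt{2}$ enters, to keep the symbol $|\xi|^4+2|\xi|^2-c^2\xi_1^2$ nondegenerate), and then runs an inductive $L^q$ estimate on $\partial^\alpha v_i$ in the spirit of Bona--Li and Kato--Pipes to show that the Taylor series of $v_1,v_2$ converge on a complex strip $\{|\Im z|<\lambda_0\}$ of \emph{uniform} width $\lambda_0$. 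Your approach is shorter and, as you note, does not actually use $c<\sqrt{2}$; the paper's approach is more explicit and yields the stronger uniform complex-analytic extension, which is of independent interest though not strictly needed for the applications to Lemma~\ref{gueri} and Theorem~\ref{symetrie}.
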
 

This is a consequence of the next more general result.

\begin{theorem}
\label{analyticity0}
Let $N \geq 2$ and let $v$ be a finite energy solution to \eqref{TWc} on $\R^N$, with $0 \leq c < \sqrt{2}$. There exists some number $\lambda_0 > 0$, possibly depending on $v$, such that $\Re(v)$ and $\Im(v)$ extend to analytical functions on the cylinder $\boC_{\lambda_0} = \{ z \in \C^N, |\Im(z)| < \lambda_0 \}$.
\end{theorem}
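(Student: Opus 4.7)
The plan is to view the statement as a special case of the classical real-analyticity theorem for bounded smooth solutions of semilinear elliptic systems with analytic nonlinearity, and then to promote this to a holomorphic extension on a strip by observing that the a priori estimates are translation-invariant on $\R^N$. Writing $v = u_1 + i u_2$ with $u_j : \R^N \to \R$, equation \eqref{TWc} becomes the elliptic system
\begin{equation*}
\Delta u_1 = c \partial_1 u_2 - u_1 \big( 1 - u_1^2 - u_2^2 \big), \qquad \Delta u_2 = - c \partial_1 u_1 - u_2 \big( 1 - u_1^2 - u_2^2 \big),
\end{equation*}
whose right-hand side is polynomial (hence entire) in $(u_1, u_2)$, and by Lemma \ref{tarquini10} the solution $U = (u_1, u_2)$ is smooth and bounded on $\R^N$.

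The first step is a uniform Cauchy-type estimate for all derivatives of $U$: there exist constants $K_0 = K_0(c, E(v), N)$ and $M = M(c, E(v), N)$ such that, for every multi-index $\alpha \in \N^N$,
\begin{equation*}
\| \partial^\alpha U \|_{L^\infty(\R^N)} \leq K_0 \, M^{|\alpha|} \, |\alpha|! .
\end{equation*}
I would prove this by induction on $|\alpha|$. Differentiating the system $|\alpha|$ times, using the Leibniz rule on the cubic nonlinearity, and applying standard interior Schauder (or $W^{2,p}$) estimates on unit balls $B(x_0, 1)$ yields, for $|\alpha| \geq 2$,
\begin{equation*}
\| \partial^\alpha U \|_{L^\infty(B(x_0, 1/2))} \leq K \Big( \| \partial^{\alpha - e_1} U \|_{L^\infty(B(x_0, 1))} + \sum \tbinom{\alpha}{\beta_1, \beta_2, \beta_3} \| \partial^{\beta_1} U \| \| \partial^{\beta_2} U \| \| \partial^{\beta_3} U \| + \| \partial^{\alpha - 2 e_j} U \| \Big),
\end{equation*}
where the sums are over multi-indices $\beta_1 + \beta_2 + \beta_3 = \alpha$ with $|\beta_i| < |\alpha|$. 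Comparing with the Cauchy majorant series associated with a holomorphic function of radius of convergence $1/M$, one shows by induction that the recursion closes provided $M$ is taken large enough (depending only on $K$, $c$ and $\|U\|_{L^\infty}$, which are themselves controlled by Lemma \ref{tarquini10}). Translation invariance in $x_0$ is what allows the constants to be independent of the base point.

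In the second step, fix $\lambda_0 \in (0, 1/(eM))$. The bound above implies that for every $x_0 \in \R^N$ the Taylor series of $U$ at $x_0$ converges absolutely on the complex polydisk $\{ z \in \C^N : |z_j - (x_0)_j| < \lambda_0 \text{ for every } j \}$, so defines a holomorphic extension $\widetilde{U}_{x_0}$ of $U$ to that polydisk. Two such extensions agree on the intersection of their polydisks, since they coincide with the analytic function $U$ on the non-empty open subset of $\R^N$ they share. Gluing produces a globally defined holomorphic map on the cylinder $\boC_{\lambda_0} = \{ z \in \C^N : |\Im(z)| < \lambda_0 \}$, whose components extend $\Re(v)$ and $\Im(v)$, as desired.

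The main obstacle is the inductive estimate in the first step. It is a well-known but delicate combinatorial computation: one must control the multinomial coefficients generated by the cubic nonlinearity and the first-order term $c \partial_1 U$ while preserving a geometric growth of the form $M^{|\alpha|} |\alpha|!$. The cleanest route is to introduce the formal majorant series $\Phi(t) = \sum_k a_k t^k / k!$ for the sequence $a_k = \sup_{|\alpha| = k} \| \partial^\alpha U \|_{L^\infty}$, translate the recursion satisfied by $a_k$ into an algebraic inequality for $\Phi$, and compare $\Phi$ with the unique holomorphic solution of the corresponding scalar analytic equation furnished by the implicit function theorem. This is the step where the lower-order term $c \partial_1 U$ has to be absorbed; this presents no essential difficulty since $c < \sqrt{2}$ only enters through the size of the constants and is already controlled by Lemma \ref{tarquini10}.
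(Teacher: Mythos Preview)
Your approach is correct and is a genuinely different route from the paper's. You work directly with the second-order system for $(u_1,u_2)$, invoke local interior elliptic estimates on unit balls, and close the Cauchy bound $\|\partial^\alpha U\|_{L^\infty}\le K_0 M^{|\alpha|}|\alpha|!$ by a majorant-series induction; translation invariance of the problem then makes the radius of convergence uniform in the base point, yielding the strip extension. The paper instead passes to a fourth-order scalar formulation for $v_1=\Re(v)-1$ and $v_2=\Im(v)$, inverts it in Fourier variables, and uses Lizorkin's $L^q$-multiplier theorem to obtain global $L^q$ bounds on $\partial^\alpha\partial_{jk}^2 v_i$ in terms of $\partial^\alpha F_i(v_1,v_2)$; the induction is then carried out in $L^q$, with the trilinear Leibniz sums controlled via an Abel-type combinatorial identity, and Sobolev embedding recovers the $L^\infty$ Cauchy estimates. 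Your argument is the classical Morrey--Friedman picture and is conceptually cleaner; the paper's multiplier route is more explicit about constants and makes visible where the subsonic hypothesis $c<\sqrt{2}$ enters (non-degeneracy of the symbol $|\xi|^4+2|\xi|^2-c^2\xi_1^2$), whereas in your local approach $c$ only affects the size of the lower-order term and the condition $c<\sqrt{2}$ plays no structural role. One small point: with the bound $K_0 M^{|\alpha|}|\alpha|!$ the multinomial theorem gives convergence of the Taylor series on polydisks of radius $<1/(NM)$ rather than $1/(eM)$, but this is immaterial for the conclusion.
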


\begin{proof}
The argument is reminiscent of \cite{BonaLi1, BonaLi2} (see also \cite{KatoPip1, Maris1, Maris2}). The idea is to prove the convergence of the Taylor series of $v$
\begin{equation}
\label{taylor}
T_{v,x}(z) = \sum_{\alpha \in \N^N} \frac{\partial^\alpha v(x)}{\alpha!} (z - x)^\alpha,
\end{equation}
on a complex neighbourhood of an arbitrary point $x \in \R^N$, the required estimates for the derivatives being provided by the partial differential equation, standard $L^q$-multiplier theory, and Sobolev's embedding theorem. We apply here this strategy to the functions $v_1 = \Re(v) - 1$ and $v_2 = \Im(v)$, which satisfy the equations
\begin{align}
\label{eq-v1}
\Delta^2 v_1 - 2 \Delta v_1 + c^2 \partial_1^2 v_1 = & \Delta F_1(v_1, v_2) + c \partial_1 F_2(v_1, v_2),\\
\label{eq-v2}
\Delta^2 v_2 - 2 \Delta v_2 + c^2 \partial_1^2 v_2 = & - c \partial_1 F_1(v_1, v_2) - 2 F_2(v_1, v_2) + \Delta F_2(v_1, v_2),
\end{align}
where the functions $F_1$ and $F_2$ are defined from $\C^2$ to $\C$ by
\begin{equation}
\label{non-lin-F}
F_1(z_1, z_2) = 3 z_1^2 + z_2^2 + z_1^3 + z_1 z_2^2, \ F_2(z_1, z_2) = 2 z_1 z_2 + z_1^2 z_2 + z_2^3.
\end{equation}
Indeed, by equation \eqref{TWc},
\begin{align}
\label{eq-v1-0}
\Delta v_1 - c \partial_1 v_2 - 2 v_1 = & F_1(v_1, v_2),\\
\label{eq-v2-0}
\Delta v_2 + c \partial_1 v_1 = & F_2(v_1, v_2),
\end{align}
so that equation \eqref{eq-v1} is derived applying the differential operator $\Delta$ to equation \eqref{eq-v1-0}, the operator $c \partial_1$ to equation \eqref{eq-v2-0}, and adding the corresponding relations, whereas equation \eqref{eq-v2} is derived applying the differential operator $\Delta - 2$ to equation \eqref{eq-v2-0}, the operator $- c \partial_1$ to equation \eqref{eq-v1-0}, and adding the corresponding relations. Taking the Fourier transforms of equations \eqref{eq-v1} and \eqref{eq-v2} and denoting $H_{j,k}$, $H_{1,j,k}$ and $K_{j,k}$, the kernels defined by
$$\widehat{H_{j,k}}(\xi) = \frac{\xi_j \xi_k |\xi|^2}{|\xi|^4 + 2 |\xi|^2 - c^2 \xi_1^2}, \ \widehat{H_{1,j,k}}(\xi) = \frac{\xi_1 \xi_j \xi_k}{|\xi|^4 + 2 |\xi|^2 - c^2 \xi_1^2}, \ \widehat{K_{j,k}}(\xi) = \frac{\xi_j \xi_k (2 + |\xi|^2)}{|\xi|^4 + 2 |\xi|^2 - c^2 \xi_1^2},$$
for any $1 \leq j, k \leq N$, equations \eqref{eq-v1} and \eqref{eq-v2} may be recast as
\begin{align}
\label{eq-fou-v1}
\partial^2_{jk} v_1 = H_{j,k}*F_1(v_1, v_2) - i c H_{1,j,k} * F_2(v_1, v_2),\\
\label{eq-fou-v2}
\partial^2_{jk} v_2 = i c H_{1,j,k}*F_1(v_1, v_2) + K_{j,k} * F_2(v_1, v_2).
\end{align}
In order to compute $L^q$-estimates of $\partial^2_{jk} v_1$ and $\partial^2_{jk} v_2$, we show that $\widehat{H_{j,k}}$, $\widehat{H_{1,j,k}}$ and $\widehat{K_{j,k}}$ are $L^q$-multipliers for any $1 < q < + \infty$. For that purpose, we use Theorem 8 of \cite{Lizorki1}
\footnote{Estimate \eqref{ibanez} in Theorem \ref{thelizo} is more precisely a consequence of the proof of Theorem 8, and Lemma 6 of \cite{Lizorki1}.}.

\begin{theorem}[\cite{Lizorki1}]
\label{thelizo}
Let $0 \leq \alpha < 1$. Consider a bounded function $\widehat{K}$ in $C^N(\R^N \setminus \{ 0 \})$, and assume that
\begin{equation}
\label{Lizo-assu-1}
\underset{j = 1}{\overset{N}{\Pi}} \xi_j^{\alpha + k_j} \partial_1^{k_1} \ldots \partial_N^{k_N} \widehat{K}(\xi) \in L^\infty(\R^N)
\end{equation}
for any $(k_1, \ldots, k_N) \in \{ 0, 1 \}^N$ such that $k_1 + \ldots + k_N \leq N$. Then, $\widehat{K}$ is a multiplier from $L^q(\R^N)$ to $L^\frac{q}{1 - \alpha q}(\R^N)$ for any $1 < q < \frac{1}{\alpha}$ (with the usual convention $\frac{1}{0} = + \infty$). More precisely, there exists a constant $K(q)$ only depending on $q$, such that the multiplier operator $\boK$ defined by
$$\widehat{\boK(f)}(\xi) = \widehat{K}(\xi) \widehat{f}(\xi),$$
satisfies for any $1 < q < \frac{1}{\alpha}$,
\begin{equation}
\label{ibanez}
\| \boK(f) \|_{L^\frac{q}{1 - \alpha q}(\R^N)} \leq K(q) M(\widehat{K}) \| f \|_{L^q(\R^N)},
\end{equation}
where
\begin{equation}
\label{mignoni}
M(\widehat{K}) \equiv \sup \Big\{ \underset{j = 1}{\overset{N}{\Pi}} |\xi_j|^{\alpha + k_j} \Big| \partial_1^{k_1} \ldots \partial_N^{k_N} \widehat{K}(\xi) \Big|, \xi \in \R^N, (k_1, \ldots, k_N) \in \{ 0, 1 \}^N, k_1 + \ldots + k_N \leq N \Big\}.
\end{equation}
\end{theorem}

By an inductive argument, the kernels $\widehat{H_{j,k}}$, $\widehat{H_{1,j,k}}$ and $\widehat{K_{j,k}}$ satisfy assumption \eqref{Lizo-assu-1} for $\alpha = 0$ and any $(k_1, \ldots, k_N) \in \{ 0, 1 \}^N$ such that $k_1 + \ldots + k_N \leq N$. Therefore, they are $L^q$-multipliers for any $1 < q < + \infty$. This implies

\setcounter{step}{0}
\begin{step}
\label{Lq-estim}
Let $1 \leq j, k \leq N$, $\alpha \in \N^N$ and $1 < q < + \infty$. There exists some positive number $K_1(q)$, possibly depending on $q$, but not on $\alpha$, such that
\begin{equation}
\label{estim-Lq}
\| \partial^\alpha \partial^2_{jk} v_1 \|_{L^q(\R^N)} + \| \partial^\alpha \partial^2_{jk} v_2 \|_{L^q(\R^N)} \leq K_1(q) \Big( \| \partial^\alpha F_1(v_1, v_2) \|_{L^q(\R^N)} + \| \partial^\alpha F_2(v_1, v_2) \|_{L^q(\R^N)} \Big).
\end{equation}
\end{step}

By \cite{Graveja3}, $\partial^\alpha v_1$ and $\partial^\alpha v_2$ belong to $L^q(\R^N)$ for any $\frac{N}{N-1} < q < + \infty$, if $\alpha = 0$, $1 < q < + \infty$, elsewhere. Using the chain rule, it follows that $\partial^\alpha F_1(v_1, v_2)$ and $\partial^\alpha F_2(v_1, v_2)$ are in $L^q(\R^N)$ for any $1 < q < + \infty$. On the other hand, by \eqref{eq-fou-v1} and \eqref{eq-fou-v2},
\begin{align*}
\partial^\alpha \partial^2_{jk} v_1 = & H_{j,k}*\partial^\alpha F_1(v_1, v_2) - i c H_{1,j,k}*\partial^\alpha F_2(v_1, v_2),\\
\partial^\alpha \partial^2_{jk} v_2 = & i c H_{1,j,k}*\partial^\alpha F_1(v_1, v_2) + K_{j,k}*\partial^\alpha F_2(v_1, v_2),
\end{align*}
and inequality \eqref{estim-Lq} follows from the fact that 
$\widehat{H_{j,k}}$, $\widehat{H_{1,j,k}}$ and $\widehat{K_{j,k}}$ are $L^q$-multipliers for any $1 < q < + \infty$. We are now in position to obtain uniform estimates of $\partial^\alpha v_1$ and $\partial^\alpha v_2$.

\begin{step}
\label{unif-estim}
Let $1 \leq j \leq N$, $\alpha \in \N^N$ and $\frac{N}{2} < q < + \infty$. There exist some positive numbers $K_2(q)$ and $K_3(q)$, possibly depending on $q$, but not on $\alpha$, such that
\begin{equation}
\label{estim-unif}
\begin{split}
\| \partial^\alpha v_1 \|_{L^\infty(\R^N)} + \| \partial^\alpha v_2 \|_{L^\infty(\R^N)} \leq & K_2(q) F_q(\alpha),\\
\| \partial^\alpha \partial_j v_1 \|_{L^q(\R^N)} + \| \partial^\alpha \partial_j v_2 \|_{L^q(\R^N)} \leq & K_3(q) F_q(\alpha),
\end{split}
\end{equation}
where we have set
\begin{equation}
\label{F-q-alpha}
F_q(\alpha) = \underset{0 \leq \beta \leq \alpha}{\max} \Big( \| \partial^\beta F_1(v_1, v_2) \|_{L^q(\R^N)} + \| \partial^\beta F_2(v_1, v_2) \|_{L^q(\R^N)} \Big).
\end{equation}
\end{step}

By Sobolev's embedding theorem and inequality \eqref{estim-Lq}, we have
$$\| \partial^\alpha v_1 \|_{L^\infty(\R^N)} \leq K_S(q) \Big( \| \partial^\alpha v_1 \|_{L^q(\R^N)} + \| \partial^\alpha d^2 v_1 \|_{L^q(\R^N)} \Big) \leq 2 K_S(q) K_1(q) F_q(\alpha).$$
Using the same argument for $\partial^\alpha v_2$, the first estimate of \eqref{estim-unif} holds with $K_2(q) = 4 K_S(q) K_1(q)$. On the other hand, using Gagliardo-Nirenberg's inequality and inequality \eqref{estim-Lq}, we are led to
$$\| \partial^\alpha \partial_j v_1 \|_{L^q(\R^N)} \leq K_{GN}(q) \| \partial^\alpha v_1 \|_{L^q(\R^N)}^\frac{1}{2} \| \partial^\alpha d^2 v_1 \|_{L^q(\R^N)}^\frac{1}{2} \leq K_{GN}(q) K_1(q) F_q(\alpha),$$
and the second inequality of \eqref{estim-unif} also holds with $K_3(q) = 2 K_{GN}(q) K_1(q)$.

We now come back to the convergence of the Taylor series $T_{v_1, x}(z)$ and $T_{v_2, x}(z)$, defined in \eqref{taylor}. Using the uniform estimates of Step \ref{unif-estim}, it suffices to prove the convergence of the series
\begin{equation}
\label{Sq-x0}
S_{q,x_0}(z) = \underset{\alpha \in \N^N}{\sum} \frac{F_q(\alpha)}{\alpha !}| z - x_0|^{|\alpha|},
\end{equation}
for $z$ sufficiently close to $x_0$, and some suitable exponent $q$. This follows from the next estimate.

\begin{step}
\label{F-alpha-estim}
Let $\alpha \in \N^N$ and $\frac{N}{2} < q < + \infty$. There exists some positive number $K_4(q)$, possibly depending on $q$, but not on $\alpha$, such that
\begin{equation}
\label{estim-F-alpha}
F_q(\alpha) \leq K_4(q)^{|\alpha|} \alpha^{\tilde{\alpha}},
\end{equation}
where we have set $\tilde{\alpha} = (\max \{ \alpha_1 - 1, 0 \}, \ldots, \max \{ \alpha_N - 1, 0 \})$.
\end{step}

The proof is by induction on $l = |\alpha|$. Inequality \eqref{estim-F-alpha} being valid for $0 \leq l \leq 5$ and any constant $K_4(q)$ sufficiently large, we assume that it holds for any multi-index $\alpha$ such that $|\alpha| \leq l$, and consider some multi-index $\alpha = (\beta_1, \ldots, \beta_j + 1, \ldots, \beta_N)$ such that $|\alpha| = |\beta| + 1 = l + 1$. Using the inductive assumption for $\beta$, we claim that

\begin{claim} 
\label{tri-estim}
Let $(a, b, c) \in \{ 1, 2 \}^3$. Then,
\begin{equation}
\label{estim-tri}
\begin{split}
\| \partial_j \partial^\beta (v_a v_b) \|_{L^q(\R^N)} \leq & 2^{2N+1} K_2(q) K_3(q) K_4(q)^l \beta^{\tilde{\beta}},\\
\| \partial_j \partial^\beta (v_a v_b v_c) \|_{L^q(\R^N)} \leq & 4^{2N+1} K_2(q) K_3(q)^2 K_4(q)^l \beta^{\tilde{\beta}}.
\end{split}
\end{equation}
\end{claim}

We postpone the proof of Claim \ref{tri-estim} and first complete the proof of Step \ref{F-alpha-estim}. By definitions \eqref{non-lin-F} and estimates \eqref{estim-tri},
$$\| \partial_j \partial^\beta F_1(v_1, v_2) \|_{L^q(\R^N)} + \| \partial_j \partial^\beta F_2(v_1, v_2) \|_{L^q(\R^N)} \leq 4^{2 N + 2} K_2(q) K_3(q) (1 + K_3(q)) K_4(q)^l \beta^{\tilde{\beta}},$$
so that choosing $K_4(q) = 4^{2 N + 2} K_2(q) K_3(q) (1 + K_3(q))$, and using the above definition of $\alpha$, we are led to
$$\| \partial^\alpha F_1(v_1, v_2) \|_{L^q(\R^N)} + \| \partial^\alpha F_2(v_1, v_2) \|_{L^q(\R^N)} \leq K_4(q)^{l+1} \beta^{\tilde{\beta}},$$
which finally gives
$$F_q(\alpha) \leq K_4(q)^{l+1} \alpha^{\tilde{\alpha}},$$
and completes the inductive proof of Step \ref{F-alpha-estim}. 

\begin{proof}[Proof of Claim \ref{tri-estim}]
Let $(a, b) \in \{ 1, 2 \}^2$. The chain rule gives
$$\| \partial_j \partial^\beta (v_a v_b) \|_{L^q} \leq \underset{0 \leq \gamma \leq \beta}{\sum} \frac{\beta!}{\gamma! (\beta - \gamma)!} \Big( \| \partial_j \partial^\gamma v_a \|_{L^q} \| \partial^{\beta - \gamma} v_b \|_{L^\infty} + \| \partial^\gamma v_a \|_{L^\infty} \| \partial_j \partial^{\beta - \gamma} v_b \|_{L^q} \Big),$$
so that using estimates \eqref{estim-unif},
$$\| \partial_j \partial^\beta (v_a v_b) \|_{L^q(\R^N)} \leq 2 \underset{0 \leq \gamma \leq \beta}{\sum} \frac{\beta !}{\gamma ! (\beta - \gamma) !} K_2(q) K_3(q) F_q(\gamma) F_q(\beta - \gamma).$$
Hence, by the inductive assumption,
\begin{equation}
\label{bi-estim}
\| \partial_j \partial^\beta (v_a v_b) \|_{L^q(\R^N)} \leq 2 K_2(q) K_3(q) K_4(q)^l \underset{0 \leq \gamma \leq \beta}{\sum} \frac{\beta !}{\gamma ! (\beta - \gamma) !} \gamma^{\tilde{\gamma}} (\beta - \gamma)^{\widetilde{\beta - \gamma}}.
\end{equation}
At this stage, we require the next elementary lemma.

\begin{lemma}
\label{bauges}
Let $\beta \in \N^N \setminus \{ 0 \}$. Then,
\begin{equation}
\label{culoz}
\underset{0 \leq \gamma \leq \beta}{\sum} \frac{\beta !}{\gamma ! (\beta - \gamma) !} \gamma^{\tilde{\gamma}} (\beta - \gamma)^{\widetilde{\beta - \gamma}} \leq 4^N \beta^{\tilde{\beta}}.
\end{equation}
\end{lemma}

\begin{proof}[Proof of Lemma \ref{bauges}]
Assume first that $N = 1$ and $\beta \geq 2$. By Abel's identity (as in \cite{Maris1}),
$$\underset{0 \leq \gamma \leq \beta}{\sum} \frac{\beta !}{\gamma ! (\beta - \gamma) !} \gamma^{\tilde{\gamma}} (\beta - \gamma)^{\widetilde{\beta - \gamma}} = 2 \beta^{\beta - 1} + \underset{1 \leq \gamma \leq \beta - 1}{\sum} \frac{\beta !}{\gamma ! (\beta - \gamma) !} \gamma^{\gamma - 1} (\beta - \gamma)^{\beta - \gamma - 1} = 2 (2 \beta - 1) \beta^{\beta - 2},$$
so that
$$\underset{0 \leq \gamma \leq \beta}{\sum} \frac{\beta !}{\gamma ! (\beta - \gamma) !} \gamma^{\tilde{\gamma}} (\beta - \gamma)^{\widetilde{\beta - \gamma}} \leq 4 \beta^{\beta - 1} = 4 \beta^{\tilde{\beta}}.$$
Since this inequality is straightforward in the cases $\beta = 0$ or $\beta = 1$, \eqref{culoz} is proved for $N = 1$. If $N > 1$, we invoke a little algebra and the one-dimensional case to conclude by the relations
$$\underset{0 \leq \gamma \leq \beta}{\sum} \frac{\beta !}{\gamma ! (\beta - \gamma) !} \gamma^{\tilde{\gamma}} (\beta - \gamma)^{\widetilde{\beta - \gamma}} = \underset{i = 1}{\overset{N}{\prod}} \bigg( \underset{0 \leq \gamma_i \leq \beta_i}{\sum} \frac{\beta_i !}{\gamma_i ! (\beta_i - \gamma_i) !} \gamma_i^{\tilde{\gamma_i}} (\beta_i - \gamma_i)^{\widetilde{\beta_i - \gamma_i}} \bigg) \leq \underset{i = 1}{\overset{N}{\prod}} \Big( \beta_i^{\tilde{\beta_i}} \Big) = 4^N \beta^{\tilde{\beta}}.$$
\end{proof}

Using Lemma \ref{bauges}, we are in position to prove Claim \ref{tri-estim}. Inequalities \eqref{bi-estim} and \eqref{culoz} yields the first inequality of \eqref{estim-tri}. The second follows using the chain rule, uniform estimates \eqref{estim-unif} and Lemma \ref{bauges} twice.
\end{proof}

\begin{step}
\label{taylor-conv}
The Taylor series $T_{v_1, x_0}(z)$ and $T_{v_2, x_0}(z)$ are uniformly convergent with respect to $x_0 \in \R^N$ on the set $\boD(x_0, \lambda_0) = \{ z \in \C^N, |z - x_0| < \lambda_0 \}$, where $\lambda_0 = \frac{e}{K_4(N)}$.
\end{step}

We choose $q = N$. By Stirling's formula, the series $\underset{\alpha \geq 0}{\sum} \frac{K_4(N)^{|\alpha|} \alpha^\alpha}{\alpha !} z^\alpha$ converges on $\boD(0, \lambda_0)$, so that using estimate \eqref{estim-F-alpha}, the series $S_{N, x_0}(z)$ converges for any $z \in \boD(x_0, \lambda_0)$, uniformly with respect to $x_0 \in \R^N$. By assertion \eqref{Sq-x0}, $T_{v_1, x_0}(z)$ and $T_{v_2, x_0}(z)$ converge the same way.

By Step \ref{taylor-conv}, we conclude that there exist some positive number $\lambda_0$ and two analytic functions $V_1$ and $V_2$ on $\boC_{\lambda_0}$, such that $v_1$, respectively $v_2$, are identically equal to $V_1$, respectively $V_2$ on $\R^N$. Therefore, $\Re(v) = 1 + v_1$ and $\Im(v) = v_2$ extend to analytic functions $1 + V_1$ and $V_2$ on $\boC_{\lambda_0}$, which completes the proof of Theorem \ref{analyticity0}.
\end{proof}

%%%%%%%%%%%%%%%%%%%%%%%%%%%%%%%%%%%%%%%
\subsection{Solutions without vortices}
%%%%%%%%%%%%%%%%%%%%%%%%%%%%%%%%%%%%%%%

In this subsection, we consider only solutions $v$ to \eqref{TWc} on $\R^N$ which do not vanish. In particular, we assume throughout that 
\begin{equation}
\label{novortex}
|v| \geq \frac{1}{2},
\end{equation}
so that $v$ may be written as in \eqref{lereleve}, $v = \varrho \exp i \varphi$. Using \eqref{relevenergy0}, the energy writes in the variables $\varrho$ and $\varphi$,
\begin{equation}
\label{PolE}
E(v) = E(\varrho, \varphi) \equiv \frac{1}{2} \int_{\R^N} \bigg( |\nabla \varrho|^2 + \varrho^2 |\nabla \varphi|^2 + \frac{( 1 - \rho^2)^2}{2} \bigg),
\end{equation}
whereas for the momentum, we have
$\langle i \partial_1 v , v \rangle = - \varrho^2 \partial_1 \varphi.$
Therefore, it follows from Proposition \ref{cestlemoment} that
\begin{equation}
\label{Polp}
p(v) = \tilde{p}(v) = \frac{1}{2} \int_{\R^N} \Big( - \varrho^2 \partial_1 \varphi + \partial_1 \big( (1 - \chi) \varphi \big) \Big) =
\frac{1}{2} \int_{\R^N} (1 - \varrho^2) \partial_1 \varphi.
\end{equation}
The system for $\varrho$ and $\varphi$ writes
\begin{equation}
\label{PolTWc}
\left\{ \begin{array}{ll} \frac{c}{2} \partial_1 \varrho^2 + \div \Big( \varrho^2 \nabla \varphi \Big) = 0,\\
c \varrho \partial_1 \varphi - \Delta \varrho - \varrho \Big( 1 - \varrho^2 \Big) + \varrho |\nabla \varphi|^2 = 0. \end{array} \right.
\end{equation}
Notice that the quantity $\eta = 1 - \varrho^2$ satisfies the equation 
\begin{equation}
\label{equeta}
\Delta^2 \eta - 2 \Delta \eta + c^2 \partial_1^2 \eta= - 2 \Delta (|\nabla v|^2 + \eta^2 - c \eta \partial_1 \varphi) - 2 c \partial_1 \div(\eta \nabla \varphi),
\end{equation}
where the l.h.s is linear with respect to $\eta$, whereas the r.h.s is quadratic.

A first elementary remark is
 
\begin{lemma}
\label{elementary}
Let $v$ be a finite energy solution to \eqref{TWc} on $\R^N$ satisfying \eqref{novortex}. Then, we have the identity
\begin{equation}
\label{zeroieme}
c p(v) = \int_{\R^N} \varrho^2 |\nabla \varphi|^2.
\end{equation}
\end{lemma}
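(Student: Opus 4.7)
\bigskip

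\noindent\textbf{Proof plan for Lemma \ref{elementary}.}

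Since $|v|\geq 1/2$ on the simply connected domain $\R^N$, the map $v$ admits a smooth global lifting $v=\varrho\exp(i\varphi)$, and by Remark \ref{decay-pol} we may normalize so that $\varphi\to 0$ at infinity, with the accompanying decay estimates (pointwise when $c<\sqrt{2}$, or $L^p$ when $N=3$ and $c=\sqrt{2}$). Both integrals in the claimed identity are a priori finite: the bound $\varrho^2|\nabla\varphi|^2\leq 2\,e(v)$ comes from \eqref{relevenergy0}, and $|\eta\,\partial_1\varphi|\leq 2\sqrt{2}\,e(v)$ follows from Lemma \ref{colisee} together with $\varrho\geq 1/2$.

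The idea is to exploit the continuity-type equation (first line of \eqref{PolTWc}), rewritten as
$$\div(\varrho^2\nabla\varphi)=\tfrac{c}{2}\partial_1\eta,$$
by multiplying it by the lifted phase $\varphi$ and integrating over a ball $B(0,R)$. Integration by parts on each side produces
$$-\int_{B(0,R)}\varrho^2|\nabla\varphi|^2+\int_{\partial B(0,R)}\varphi\,\varrho^2\,\partial_\nu\varphi
= -\tfrac{c}{2}\int_{B(0,R)}\eta\,\partial_1\varphi+\tfrac{c}{2}\int_{\partial B(0,R)}\varphi\,\eta\,\nu_1,$$
where $\nu$ denotes the outward unit normal. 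If the two boundary terms vanish as $R\to+\infty$, then formula \eqref{Polp} for $p(v)$ yields precisely $\int_{\R^N}\varrho^2|\nabla\varphi|^2=cp(v)$, which is the desired identity.

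The only nontrivial point is the control of the boundary integrals, and I expect this to be the main (mild) obstacle. When $c<\sqrt{2}$, Remark \ref{decay-pol}(i) gives $|\varphi|=O(|x|^{-(N-1)})$, $|\varrho^2\partial_\nu\varphi|=O(|x|^{-N})$ and $|\eta|=O(|x|^{-N})$, so each integrand on $\partial B(0,R)$ decays like $R^{-(2N-1)}$; multiplied by the surface area $\sim R^{N-1}$ this gives an $O(R^{-N})$ contribution which vanishes. When $N=3$ and $c=\sqrt{2}$, pointwise decay is not available, so I would adapt the averaging argument already used in the proof of Proposition \ref{cestlemoment}: using Remark \ref{decay-pol}(ii), H\"older's inequality shows that $\varphi\,\varrho^2\,\partial_\nu\varphi$ and $\varphi\,\eta$ lie in $L^q(\R^3\setminus B(0,R(v)))$ for some $q>1$, and Fubini then produces a sequence $R_k\to+\infty$ along which the two spherical integrals tend to $0$.

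Passing to the limit along such a sequence $R_k$, we obtain
$$\int_{\R^N}\varrho^2|\nabla\varphi|^2=\tfrac{c}{2}\int_{\R^N}\eta\,\partial_1\varphi,$$
and identifying the right-hand side with $c\,p(v)$ via \eqref{Polp} (itself a consequence of Proposition \ref{cestlemoment}) concludes the proof. No tool beyond what has already been deployed for the proof of $\tilde p=p$ is required; the lemma is essentially the same integration by parts performed against $\varphi$ rather than $1$.
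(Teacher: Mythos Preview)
Your proof is correct and follows exactly the paper's approach: multiply the first equation of \eqref{PolTWc} by $\varphi$, integrate by parts, and invoke the decay properties of Remark \ref{decay-pol} to kill the boundary terms, then identify the right-hand side with $c\,p(v)$ via \eqref{Polp}. The paper's proof is a one-line reference to precisely this computation; you have simply spelled out the boundary analysis (including the sonic case via the averaging trick from Proposition \ref{cestlemoment}) in more detail than the paper does.
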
 

\begin{proof}
The identity is obtained multiplying the first equation in \eqref{PolTWc} by $\varphi$ and integrating by parts using the decay properties of Remark \ref{decay-pol}.
\end{proof}
 
Lemma \ref{elementary} has the following remarkable consequence.

\begin{lemma}
\label{remarkable}
Let $v$ be a finite energy solution to \eqref{TWc} on $\R^N$ satisfying \eqref{novortex}. Then,
$$E(v)\leq 7 c(v)^2 \int_{\R^N} \eta^2.$$
\end{lemma}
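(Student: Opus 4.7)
The strategy relies on three ingredients: Lemma~\ref{elementary}, a Cauchy--Schwarz estimate on the momentum exploiting \eqref{novortex}, and---in dimension two only---an auxiliary non-Pohozaev identity obtained by testing the density equation against $1-\varrho$. Under \eqref{novortex} we may lift $v = \varrho \exp i\varphi$ with $\varrho \geq 1/2$. If $c = 0$, then Lemma~\ref{elementary} forces $\nabla\varphi \equiv 0$; the resulting equation $-\Delta\varrho = \varrho(1-\varrho^2)$ has only the constant solution $\varrho \equiv 1$ (multiply by $\varrho - 1$ and integrate by parts), so $E = 0$ and the inequality is trivial. Henceforth assume $c > 0$, hence $p > 0$ by Lemma~\ref{encorepoho}.

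\emph{Momentum bound.} Starting from $p = \tfrac{1}{2}\int \eta\,\partial_1\varphi$ and using $\varrho^{-1} \leq 2$ together with Lemma~\ref{elementary},
$$ 2p \leq \int \frac{|\eta|}{\varrho}\cdot\varrho|\partial_1\varphi| \leq 2\|\eta\|_{L^2}\bigl(\textstyle\int \varrho^2(\partial_1\varphi)^2\bigr)^{1/2} \leq 2\|\eta\|_{L^2}\sqrt{cp},$$
so $p \leq c\int\eta^2$, whence $cp \leq c^2\int\eta^2$. For $N = 3$ this already suffices: the combined Pohozaev identity \eqref{theverypoho} gives $2cp = E + \tfrac{1}{2}\int\eta^2$, and therefore $E \leq 2cp \leq 2c^2\int\eta^2 \leq 7c^2\int\eta^2$.

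\emph{Dimension two.} Identity \eqref{theverypoho} now degenerates to $cp = \tfrac{1}{2}\int\eta^2$, which together with the Pohozaev identity $E = \int |\partial_1 v|^2$ yields only tautological information about $E$. The key is an independent bound on $\int|\nabla\varrho|^2$: multiplying the second equation of \eqref{PolTWc} by $1-\varrho$, using $(1-\varrho)\varrho\eta = -(1-\varrho)^2\varrho(1+\varrho)$, and integrating by parts (valid by the decay of Remark~\ref{decay-pol}), one obtains
$$ \int|\nabla\varrho|^2 + \int (1-\varrho)^2\varrho(1+\varrho) = c\int(1-\varrho)\varrho\,\partial_1\varphi + \int(1-\varrho)\varrho|\nabla\varphi|^2.$$
The second left-hand term being non-negative, it suffices to bound the right-hand side. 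Using $|1-\varrho| = |\eta|/(1+\varrho) \leq \tfrac{2}{3}|\eta|$, Cauchy--Schwarz, and the momentum bound above, the first term is at most $\tfrac{2c}{3}\|\eta\|_{L^2}\sqrt{cp} \leq \tfrac{2}{3}c^2\int\eta^2$. For the second, the pointwise bounds $\|1-\varrho\|_\infty \leq 1/2$ and $\|\varrho\|_\infty \leq \sqrt{1+c^2/4}$ supplied by Lemma~\ref{tarquini10} (recall $c \leq \sqrt 2$), together with $\int|\nabla\varphi|^2 \leq 4cp \leq 4c^2\int\eta^2$, give an estimate of order $3c^2\int\eta^2$. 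Hence $\int|\nabla\varrho|^2 \leq \tfrac{11}{3}c^2\int\eta^2$.

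\emph{Assembly.} Still in $N=2$, the bound $cp = \tfrac{1}{2}\int\eta^2 \leq c^2\int\eta^2$ forces $c^2 \geq 1/2$, so $\tfrac{1}{4}\int\eta^2 \leq \tfrac{1}{2}c^2\int\eta^2$. Writing $E = \tfrac{1}{2}\int|\nabla\varrho|^2 + \tfrac{1}{2}cp + \tfrac{1}{4}\int\eta^2$ and plugging in the three estimates yields $E \leq (\tfrac{11}{6} + \tfrac{1}{2} + \tfrac{1}{2})c^2\int\eta^2 = \tfrac{17}{6}c^2\int\eta^2 \leq 7c^2\int\eta^2$, concluding the proof.

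\emph{Main obstacle.} The Pohozaev identities of Lemma~\ref{encorepoho} exhaust their force in dimension two, giving only $cp = \tfrac{1}{2}\int\eta^2$ without pinning down $E$ itself. The essential new input is the $(1-\varrho)$-tested identity, whose left-hand side recovers the ellipticity of $-\Delta$ while the right-hand side is quadratic in $\eta$ and $\nabla\varphi$, both of which are already controlled by $c\|\eta\|_{L^2}$. The final constant $7$ is clearly not optimal; the argument above produces $17/6$.
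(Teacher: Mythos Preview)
Your argument is correct, but the route differs from the paper's. The paper does not split by dimension and does not invoke Pohozaev at all: after the same momentum/Cauchy--Schwarz step (yielding $\int\varrho^2|\nabla\varphi|^2 \leq 4c^2\int\eta^2$), it multiplies the second equation of \eqref{PolTWc} by $\varrho^2-1=-\eta$ rather than by $1-\varrho$. This produces the identity \eqref{alpha1},
\[
\int\bigl(2\varrho|\nabla\varrho|^2+\varrho\eta^2\bigr)=c\int\varrho\eta\,\partial_1\varphi+\int\varrho\eta|\nabla\varphi|^2,
\]
and the lower bound $\varrho\geq\tfrac12$ together with the crude pointwise bound $\tfrac{\eta}{\varrho}\leq 2$ on the last term suffices to close; no $L^\infty$ input from Lemma~\ref{tarquini10}, no use of $c\leq\sqrt2$, and no dimension-dependent Pohozaev identity is needed. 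Your test function $1-\varrho$ is essentially $-\eta/(1+\varrho)$, so the two identities are close cousins; the paper's choice simply avoids the extra machinery. In exchange, your approach yields noticeably better constants ($2$ for $N=3$ and $17/6$ for $N=2$, versus the paper's uniform $7$). One small gap: you treat $N=2$ and $N=3$ only, whereas the lemma is stated on $\R^N$; your Pohozaev argument for $N=3$ extends verbatim to $N\geq 3$ (giving $E\leq\tfrac{N-1}{N-2}\,cp$), so this is easily patched.
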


\begin{proof}
We have, in view of assumption \eqref{novortex}, Lemma \ref{elementary} and Cauchy-Schwarz's inequality,
\begin{align*}
\int_{\R^N} \varrho^2 |\nabla \varphi |^2 = \frac{c}{2} \int_{\R^N} ( 1 - \varrho^2) \partial_1 \varphi & \leq c \bigg( \int_{\R^N} \eta^2 \bigg)^\frac{1}{2} \bigg( \int_{\R^N} |\nabla \varphi|^2 \bigg)^\frac{1}{2}\\
& \leq 2 c \bigg( \int_{\R^N} \eta^2 \bigg)^\frac{1}{2} \bigg( \int_{\R^N} \varrho^2 |\nabla \varphi|^2 \bigg)^\frac{1}{2}.
\end{align*}
Hence, we deduce
\begin{equation}
\label{premiere}
\int_{\R^N} \varrho^2 |\nabla \varphi|^2 \leq 4 c^2 \int_{\R^N} \eta^2.
\end{equation}
It remains to bound the integral of $|\nabla \varrho|^2$. For that purpose we multiply the second equation in \eqref{PolTWc} by $\varrho^2 - 1$ and integrate by parts on $\R^N$ using the decay properties of Remark \ref{decay-pol}. This yields
\begin{equation}
\label{alpha1}
\int_{\R^N} \bigg( 2 \varrho |\nabla \varrho|^2 + \varrho (1 - \varrho^2)^2 \bigg) = c \int_{\R^N} \varrho (1 - \varrho^2) \partial_1 \varphi + \int_{\R^N} \varrho (1 - \varrho^2) |\nabla \varphi|^2,
\end{equation}
so that
$$\int_{\R^N} \bigg( |\nabla \varrho|^2 + \frac{1}{2} (1 - \varrho^2)^2 \bigg) \leq c \bigg( \int_{\R^N} \varrho^2 |\partial_1 \varphi|^2 \bigg)^\frac{1}{2} \bigg( \int_{\R^N} (1 - \varrho^2)^2 \bigg)^\frac{1}{2} + 2 \int_{\R^N} \varrho^2 |\nabla \varphi|^2,$$
by \eqref{novortex} and Cauchy-Schwarz's inequality. Invoking \eqref{premiere} in order to bound the r.h.s of this identity in terms of the integral of $\eta^2$, we deduce the desired inequality.
\end{proof}

%%%%%%%%%%%%%%%%%%%%%%%%%%%%%%%%%%%%%%%%%%
\subsection{Subsonic vortexless solutions}
%%%%%%%%%%%%%%%%%%%%%%%%%%%%%%%%%%%%%%%%%%

We next specify a little further the analysis assuming that the solution $v$ verifies the additional condition
\begin{equation}
\label{subsoniccase}
0 < c(v) < \sqrt{2}.
\end{equation}
We set, for such a solution
$$\varepsilon(v) = \sqrt{2 - c(v)^2}.$$
We first have the bound 
 
\begin{prop}
\label{minoration}
Let $v$ be a non-trivial finite energy solution to \eqref{TWc} on $\R^N$ satisfying \eqref{subsoniccase}. Then,
$$\Big\| 1 - |v| \Big\|_{L^\infty(\R^N)} \geq \frac{\varepsilon(v)^2}{10}.$$
\end{prop}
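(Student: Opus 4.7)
The plan is to argue by contradiction: suppose $\delta := \|1-|v|\|_{L^\infty(\R^N)} < \varepsilon(v)^2/10$ and derive a contradiction from the Pohozaev-type identity \eqref{alpha1}. By Lemma \ref{tarquini10} and the vortexless hypothesis \eqref{novortex}, $\varrho := |v|$ lies in $[1/2, \sqrt{1+c^2/4}] \subset [1/2, 3/2]$, so in particular $\delta \leq 1/2$ and $v$ admits a global lifting $v = \varrho\exp(i\varphi)$. The pointwise bound $|\eta| = |1-\varrho^2| = (1+\varrho)|1-\varrho| \leq (2+\delta)\delta \leq 3\delta$ will be used repeatedly.

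The identity \eqref{alpha1} reads
$$\int_{\R^N}\bigl(2\varrho|\nabla\varrho|^2 + \varrho\eta^2\bigr) = c\int_{\R^N}\varrho\eta\partial_1\varphi + \int_{\R^N}\varrho\eta|\nabla\varphi|^2,$$
and its left-hand side is at least $(1-\delta)\int_{\R^N}\eta^2$. For the first right-hand side term, Cauchy-Schwarz and Lemma \ref{elementary} give
$$\Bigl|c\int_{\R^N}\varrho\eta\partial_1\varphi\Bigr| \leq c\Bigl(\int_{\R^N}\eta^2\Bigr)^{1/2}\sqrt{cp(v)}.$$
Next, starting from $p(v) = \tfrac12\int_{\R^N}\eta\partial_1\varphi$ (see \eqref{Polp}), Cauchy-Schwarz applied once more and the bound $\int(\partial_1\varphi)^2 \leq (1-\delta)^{-2}\int\varrho^2|\nabla\varphi|^2 = (1-\delta)^{-2}cp(v)$ yield $p(v) \leq \frac{c}{4(1-\delta)^2}\int_{\R^N}\eta^2$, whence the first term is bounded by $\frac{c^2}{2(1-\delta)}\int_{\R^N}\eta^2$. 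The second right-hand side term is treated along the same chain, using $|\eta|\leq 3\delta$:
$$\Bigl|\int_{\R^N}\varrho\eta|\nabla\varphi|^2\Bigr| \leq 3\delta(1+\delta)\int_{\R^N}|\nabla\varphi|^2 \leq \frac{3c^2\delta(1+\delta)}{4(1-\delta)^4}\int_{\R^N}\eta^2.$$

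Because $v$ is non-trivial, Lemma \ref{remarkable} ensures $\int\eta^2 > 0$ (otherwise $E(v)=0$ would force $v$ to be constant), so dividing through by $\int\eta^2$ yields
$$\varepsilon(v)^2 = 2-c^2 \leq \frac{2\delta\bigl[2(1-\delta)^3(2-\delta)+3(1+\delta)\bigr]}{2(1-\delta)^3 + 3\delta(1+\delta)}.$$
To conclude, it remains to verify the elementary polynomial bound that the right-hand side is at most $10\delta$ on $[0,1/2]$. After clearing denominators this reduces to $G(\delta) := 2(1-\delta)^3(3+\delta) + 3(1+\delta)(5\delta-1) \geq 0$, which expands to $G(\delta) = 3 - 4\delta + 27\delta^2 - 2\delta^4$; a direct study of $G'$ shows the minimum on $[0,1]$ is strictly positive (the unique critical point in this range lies near $\delta \simeq 0.074$, where $G$ is $\approx 2.85$). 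Hence $\varepsilon(v)^2 \leq 10\delta < \varepsilon(v)^2$, the sought contradiction. The main obstacle is essentially bookkeeping: tracking the powers of $(1-\delta)$ through the cascade of Cauchy-Schwarz and weighted estimates applied to \eqref{alpha1} so as to reach a clean polynomial inequality in $\delta$; the constant $10$ is convenient but not sharp.
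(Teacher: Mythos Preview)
Your proof is correct and follows essentially the same route as the paper's: both start from identity \eqref{alpha1}, invoke Lemma \ref{elementary} to link $\int\varrho^2|\nabla\varphi|^2$ to $p(v)$, and close the argument by showing that nontriviality forces a lower bound on $\delta$. The only difference is organizational: the paper bounds all pieces against $E(v)$ using the pointwise Lemma \ref{colisee} in place of your Cauchy--Schwarz chain, which yields the simpler linear conclusion $\delta\ge\tfrac{2}{5}(1-c/\sqrt{2})$ and avoids the quartic check, but the two arguments are equivalent in substance.
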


\begin{proof}
Set $\delta = \| 1 - |v| \|_{L^\infty(\R^N)}$. If $\delta \geq \frac{1}{2}$, then the proof is straightforward. Otherwise, assumption \eqref{novortex} is satisfied and going back to identity \eqref{zeroieme}, we observe that by Lemma \ref{colisee},
$$c p(v) = \frac{c}{2} \bigg| \int_{\R^N} (1 - \varrho^2) \partial_1 \varphi \bigg| \leq \frac{c}{\sqrt{2} (1 - \delta)} \int_{\R^N} e(v),$$
so that, using identity \eqref{Polp} and Lemma \ref{elementary},
\begin{equation}
\label{star1}
\int_{\R^N} \varrho^2 |\nabla \varphi|^2 \leq \frac{c}{\sqrt{2} (1 - \delta)} \int_{\R^N} e(v).
\end{equation}
Similarly, we have for the r.h.s of identity \eqref{alpha1}
\begin{equation}
\label{alpha3}
c \bigg| \int_{\R^N} \varrho (1 - \varrho^2) \partial_1 \varphi \bigg| \leq \sqrt{2} c \int_{\R^N} e(v),
\end{equation}
and using \eqref{novortex},
\begin{equation}
\label{alpha4}
\bigg| \int_{\R^N} \varrho (1 - \varrho^2) |\nabla \varphi|^2
\bigg| \leq 6 \delta \int_{\R^N} e(v).
\end{equation}
Combining \eqref{alpha1}, \eqref{alpha3} and \eqref{alpha4}, and using the fact that $\rho \geq 1 - \delta > 0$, we are led to
\begin{equation}
\label{star2}
\int_{\R^N} \bigg( \frac{|\nabla \varrho|^2}{2} + \frac{(1 - \varrho^2)^2}{4} \bigg) \leq \frac{\sqrt{2} c + 6 \delta}{4 (1 - \delta)} \int_{\R^N} e(v).
\end{equation}
Finally, from \eqref{star1} and \eqref{star2}, we derive 
$$\lambda \int_{\R^N} e(v)\leq 0,$$
where we have set $\lambda = 1 - \frac{c}{\sqrt{2} (1 - \delta)} - \frac{3 \delta}{2 (1 - \delta)}$. Since $v$ is non-trivial, its energy is not equal to $0$ so that $\lambda \leq 0$ and $$\delta \geq \frac{2}{5} \bigg( 1 - \frac{c}{\sqrt{2}} \bigg) = \frac{2}{5} \bigg( 1 - \sqrt{1 - \frac{\varepsilon^2}{2}} \bigg) \geq \frac{\varepsilon^2}{10},$$
which is the desired inequality. 
\end{proof}

Combining Lemma \ref{encorepoho} with Lemma \ref{elementary}, we are led to

\begin{lemma}
\label{tropbien}
Let $v$ be a finite energy solution to \eqref{TWc} on $\R^N$ satisfying \eqref{novortex} and \eqref{subsoniccase}. Then,
\begin{equation}
\label{tropbien1}
\Sigma(v) + \frac{1}{N} \int_{\R^N} |\nabla \varrho|^2 = \frac{\varepsilon(v)^2}{\sqrt{2} + c(v)} p(v).
\end{equation}
Moreover, if $N = 2$, then we have
\begin{equation}
\label{tropbien2}
\int_{\R^2} |\nabla \varrho|^2 \Big( 1 + \frac{1}{\varrho^2} \Big) = \int_{\R^2} \eta |\nabla \varphi|^2.
\end{equation}
\end{lemma}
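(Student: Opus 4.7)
The plan is to derive both identities by algebraic manipulation of the polar-form quantities, combining Pohozaev's identity \eqref{theverypoho} with Lemma \ref{elementary} for \eqref{tropbien1}, and by using a well-chosen multiplier on the modulus equation for \eqref{tropbien2}.

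For \eqref{tropbien1}, I would first substitute the polar expansion $|\nabla v|^2 = |\nabla \varrho|^2 + \varrho^2 |\nabla \varphi|^2$ into Pohozaev's identity \eqref{theverypoho}, then use Lemma \ref{elementary} in the form $c(v) p(v) = \int_{\R^N} \varrho^2 |\nabla \varphi|^2$ to eliminate $c p$ from the right-hand side. A short rearrangement yields the auxiliary identity
\[
\int_{\R^N} \varrho^2 |\nabla \varphi|^2 = \frac{N-2}{N} \int_{\R^N} |\nabla \varrho|^2 + \frac{1}{2} \int_{\R^N} \eta^2.
\]
Injecting this into the polar form \eqref{PolE} of the energy and subtracting $c p = \int \varrho^2 |\nabla \varphi|^2$ once more gives, after cancellation of the $\eta^2$ terms, $E(v) - c(v) p(v) = \frac{1}{N} \int_{\R^N} |\nabla \varrho|^2$. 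Since $\Sigma(v) = \sqrt{2} p(v) - E(v)$ and $\sqrt{2} - c = \varepsilon(v)^2/(\sqrt{2} + c)$, identity \eqref{tropbien1} then reads off.

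For \eqref{tropbien2}, I would solve the second equation of \eqref{PolTWc} for $\Delta \varrho = c \varrho \partial_1 \varphi - \varrho \eta + \varrho |\nabla \varphi|^2$, multiply by $\eta/\varrho$ (which is bounded thanks to \eqref{novortex}) and integrate on $\R^2$. The right-hand side becomes $c \int \eta \partial_1 \varphi - \int \eta^2 + \int \eta |\nabla \varphi|^2$. On the left, integration by parts and $\nabla \eta = - 2 \varrho \nabla \varrho$ give
\[
\int_{\R^2} \frac{\eta \, \Delta \varrho}{\varrho} = -\int_{\R^2} \nabla \Big( \frac{\eta}{\varrho} \Big) \cdot \nabla \varrho = \int_{\R^2} |\nabla \varrho|^2 \Big( 2 + \frac{\eta}{\varrho^2} \Big) = \int_{\R^2} |\nabla \varrho|^2 \Big( 1 + \frac{1}{\varrho^2} \Big),
\]
since $2 + \eta/\varrho^2 = 1 + 1/\varrho^2$. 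The first two terms on the right cancel: by \eqref{Polp} we have $c \int \eta \partial_1 \varphi = 2 c p(v)$, whereas the two-dimensional Pohozaev identity \eqref{theverypoho} reduces exactly to $\int \eta^2 = 2 c p(v)$. Only $\int \eta |\nabla \varphi|^2$ survives, giving \eqref{tropbien2}.

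The main technical point — but not really an obstacle — is to ensure that the boundary terms in the integrations by parts vanish and that all integrands are globally integrable. This is handled by the polynomial decay in Remark \ref{decay-pol}: the estimates $|\eta| \leq K(1+|x|)^{-N}$, $|\nabla \varrho| \leq K(1+|x|)^{-N-1}$ and $|\nabla \varphi| \leq K(1+|x|)^{-N}$, combined with $\varrho \geq 1/2$, make every flux over $\partial B(0,R)$ decay at least like $R^{-N-2}$, and every volume integrand decay at least like $(1+|x|)^{-2N}$; this is more than enough in dimension two. No delicate renormalization of $\int \eta \partial_1 \varphi$ is needed, as this quantity is unambiguously defined and equal to $2 p(v)$ via \eqref{Polp} and Proposition \ref{cestlemoment}.
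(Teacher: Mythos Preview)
Your proposal is correct and follows essentially the same route as the paper. For \eqref{tropbien1} the paper adds \eqref{theverypoho} and Lemma \ref{elementary} directly to reach $E(v)-c(v)p(v)=\frac{1}{N}\int|\nabla\varrho|^2$, which is exactly your computation in a slightly different algebraic order; for \eqref{tropbien2} the paper likewise multiplies the second equation of \eqref{PolTWc} by $\eta/\varrho$, integrates by parts, and cancels $c\int\eta\,\partial_1\varphi$ against $\int\eta^2$ using the two-dimensional Pohozaev identity.
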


\begin{proof}
For the proof of equality \eqref{tropbien1}, we first add equality \eqref{theverypoho} and the identity provided in Lemma \ref{elementary}, to obtain
$$\frac{N - 2}{2} \int_{\R^N} |\nabla \varrho|^2 + \frac{N}{2} \int_{\R^N} \varrho^2 |\nabla \varphi|^2 + \frac{N}{4} \int_{\R^N} \eta^2 = c N p(v),$$ 
using the identity $|\nabla v|^2 = |\nabla \varrho|^2 + \varrho^2 |\nabla \varphi|^2$. This yields
$$E(v) - c p(v) = \frac{1}{N} \int_{\R^N} |\nabla \varrho|^2,$$
and equality \eqref{tropbien1} follows from the definitions of $\Sigma(v)$ and $\varepsilon(v)$.

For equality \eqref{tropbien2}, we multiply the second equation in \eqref{PolTWc} by $\frac{\eta}{\varrho}$, and integrate on $\R^N$. This yields
$$\int_{\R^N} \Big( c \eta \partial_1 \varphi - \frac{\Delta \varrho}{\varrho} \eta - \eta^2 + \eta |\nabla \varphi|^2 \Big) = 0.$$
Since by definition $\eta = 1 - \varrho^2$, we obtain, integrating by parts
$$\int_{\R^N} \frac{\Delta \varrho}{\varrho} \eta = - \int_{\R^N} \nabla \varrho \nabla \Big( \frac{1 - \varrho^2}{\varrho} \Big) = \int_{\R^N} |\nabla \varrho|^2 \Big( 1 + \frac{1}{\varrho^2} \Big).$$
On the other hand, for $N = 2$, identities \eqref{theverypoho} and \eqref{Polp} yield 
$$\int_{\R^2} \eta^2 = 2 c p(v) = c \int_{\R^2} \eta \partial_1 \varphi.$$
Conclusion \eqref{tropbien2} follows combining the last three relations. 
\end{proof}

%%%%%%%%%%%%%%%%%%%%%%%%%%%%%%%%%%%%%
\subsection{Use of Fourier transform}
\label{grandfou}
%%%%%%%%%%%%%%%%%%%%%%%%%%%%%%%%%%%%%

We consider for $\xi \in \R^N$, and a function $f$ defined on $\R^N$, its Fourier transform $\widehat{f}(\xi)$ defined by the integral
$$\widehat{f}(\xi) = \int_{\R^N} f(x) e^{-i x.\xi} dx.$$

\begin{lemma}
\label{equenfourier} 
Let $v$ be a finite energy solution to \eqref{TWc} on $\R^N$ satisfying \eqref{novortex}. Then, for any $\xi \in \R^N$, we have
\begin{equation}
\label{equenfourier1}
\bigg( |\xi|^2 + 2 - c^2 \frac{\xi_1^2}{|\xi|^2} \bigg) \widehat{\eta}(\xi) = 2 \widehat{R_0}(\xi) - 2 c \sum_{j = 2}^N \frac{\xi_j^2}{|\xi|^2} \widehat{R_1}(\xi) + 2 c \sum_{j = 2}^N \frac{\xi_1 \xi_j}{|\xi|^2} \widehat{R_j}(\xi),
\end{equation}
where we have set $R_0 = |\nabla v|^2 + \eta^2 \ {\rm and} \ R_j = \eta \partial_j \varphi$.
\end{lemma}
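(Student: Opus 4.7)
The plan is to take equation \eqref{equeta} as the starting point and simply pass to Fourier variables, then divide by $|\xi|^2$. The decay estimates of Proposition \ref{superdecay} (together with the no-vortex assumption \eqref{novortex}, which ensures $\varphi$ and $\nabla\varphi$ are well defined globally after a suitable choice of lift) guarantee that $\eta$, $|\nabla v|^2$, $\eta^2$, and the $R_j = \eta\,\partial_j\varphi$ all lie in $L^1(\R^N)\cap L^2(\R^N)$, so their Fourier transforms are bona fide continuous functions, and all distributional manipulations reduce to identities between continuous functions away from the origin.

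First I would Fourier-transform the left-hand side of \eqref{equeta}, using $\widehat{\Delta f}(\xi)=-|\xi|^2\widehat{f}(\xi)$ and $\widehat{\partial_1^2 f}(\xi)=-\xi_1^2\widehat{f}(\xi)$, which gives
$$\bigl(|\xi|^4+2|\xi|^2-c^2\xi_1^2\bigr)\widehat{\eta}(\xi).$$
Next, for the right-hand side, I would write $-2\Delta(R_0-c R_1)-2c\,\partial_1\mathrm{div}(\eta\nabla\varphi)=-2\Delta(R_0-cR_1)-2c\sum_{j=1}^{N}\partial_1\partial_j R_j$, whose Fourier transform is
$$2|\xi|^2\widehat{R_0}(\xi)-2c|\xi|^2\widehat{R_1}(\xi)+2c\sum_{j=1}^{N}\xi_1\xi_j\widehat{R_j}(\xi).$$
Splitting off the $j=1$ term of the last sum, one obtains $2c\xi_1^2\widehat{R_1}(\xi)+2c\sum_{j=2}^{N}\xi_1\xi_j\widehat{R_j}(\xi)$, and combining with the $-2c|\xi|^2\widehat{R_1}(\xi)$ contribution gives
$$-2c\bigl(|\xi|^2-\xi_1^2\bigr)\widehat{R_1}(\xi)=-2c\sum_{j=2}^{N}\xi_j^2\,\widehat{R_1}(\xi).$$

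Finally, dividing the resulting identity by $|\xi|^2$ (which is permitted for $\xi\neq 0$, and the case $\xi=0$ is covered by continuity once integrability is in hand) yields \eqref{equefourier1} exactly. The only genuine technical point is the justification of the division by $|\xi|^2$: since all the quantities $R_0$, $R_1$, \ldots, $R_N$ are integrable by Lemma \ref{remarkable} together with the decay estimates of Proposition \ref{superdecay}, their Fourier transforms are bounded and continuous, and one checks that the right-hand side of \eqref{equeta} equals $|\xi|^2$ times a continuous function, so there is no singularity at the origin. I do not expect any serious obstacle; the identity is essentially an algebraic rearrangement of the Fourier side of \eqref{equeta}, and the main care is just in checking the integrability of the nonlinear terms $R_j$ to legitimize the Fourier computations.
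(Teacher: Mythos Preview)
Your proposal is correct and follows exactly the paper's own approach: the paper's proof is the single sentence ``It suffices to consider the Fourier transform of \eqref{equeta},'' and you have simply spelled out the algebra (splitting off the $j=1$ term and dividing by $|\xi|^2$) together with the integrability justifications that the paper leaves implicit. Apart from a harmless typo (\verb|equefourier1| for \verb|equenfourier1|), there is nothing to add.
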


\begin{proof}
It suffices to consider the Fourier transform of \eqref{equeta}.
\end{proof}

Using \eqref{equenfourier1}, we deduce in the two-dimensional case,
 
\begin{lemma}
\label{bornevarepsilon}
Let $N = 2$ and let $v$ be a finite energy solution to \eqref{TWc} on $\R^2$ satisfying \eqref{novortex} and \eqref{subsoniccase}. Then, there exists some universal constant $K > 0$ such that
$$\varepsilon(v) \leq K E(v).$$
\end{lemma}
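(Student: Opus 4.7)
The plan is to specialize the Fourier identity \eqref{equenfourier1} of Lemma \ref{equenfourier} to the axis $\xi_2 = 0$. On this axis the symbol $|\xi|^2 + 2 - c^2\xi_1^2/|\xi|^2$ collapses to $\xi_1^2 + \varepsilon^2$, and the two trailing terms on the right-hand side of \eqref{equenfourier1} (which carry the factors $\xi_2^2/|\xi|^2$ and $\xi_1\xi_2/|\xi|^2$) vanish, leaving, in dimension two, the one-dimensional identity
$$(\xi_1^2 + \varepsilon^2)\,\widehat{\eta}(\xi_1, 0) \;=\; 2\,\widehat{R_0}(\xi_1, 0).$$
Setting $F(x_1) := \int_{\R}\eta(x_1, x_2)\,dx_2$ and $G(x_1) := \int_{\R}R_0(x_1, x_2)\,dx_2$, which are well-defined integrable functions on $\R$ thanks to the pointwise decay estimates of Proposition \ref{superdecay}, this is the Fourier transcription of the one-dimensional resolvent equation $(\varepsilon^2 - \partial_1^2) F = 2 G$ on $\R$. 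The key structural observation is that $G \geq 0$ pointwise, since $R_0 = |\nabla v|^2 + \eta^2$.

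Using the Green's kernel $\frac{1}{2\varepsilon} e^{-\varepsilon|\cdot|}$ of $\varepsilon^2 - \partial_1^2$, I obtain the representation $F = \frac{1}{\varepsilon} e^{-\varepsilon|\cdot|} * G$, whence $F \geq 0$, $F \in L^1(\R)$, and the integral identity
$$\varepsilon^2 \int_\R F \;=\; 2 \int_\R G.$$
Fubini gives $\int_\R G = \int_{\R^2} R_0 = \int|\nabla v|^2 + \int \eta^2$, and combining Lemma \ref{encorepoho} with identity \eqref{theverypoho} in $N = 2$ (which yields $\int \eta^2 = 2\,c(v) p(v)$ and $\int |\nabla v|^2 = 2E(v) - c(v) p(v)$), one computes $\int_\R G = 2E(v) + c(v) p(v) \leq 4 E(v)$, the last inequality coming from $c(v)p(v) = \frac{1}{2}\int \eta^2 \leq 2E(v)$. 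Consequently
$$\varepsilon^2 \int_\R F \;\leq\; 8\,E(v).$$

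The conclusion $\varepsilon \leq K E(v)$ would then follow from a matching lower bound of the form $\int_\R F \geq C/E(v)$ for a universal constant $C > 0$. I expect this last step to be the main obstacle: such a bound must grow as $E(v) \to 0$, which rules out elementary concentration arguments and requires a finer analysis combining the non-negativity and exponential decay of $F$, the pointwise lower bound $\|\eta\|_{L^\infty} \geq \varepsilon^2/10$ furnished by Proposition \ref{minoration}, and the uniform Lipschitz control on $v$ provided by Lemma \ref{tarquini10}. After translating the solution so that the extremum of $|\eta|$ sits at the origin, the goal is to quantify $F(0)$ from below via the explicit kernel representation together with the resulting localization of $G$ on the length scale $1/\varepsilon$; multiplied by the natural exponential-decay width $1/\varepsilon$ this would yield the desired lower bound on $\int F$ and close the proof.
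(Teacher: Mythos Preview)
Your reduction to the one-dimensional identity $(\varepsilon^2-\partial_1^2)F=2G$ with $F=\int_\R\eta\,dx_2$ and $G=\int_\R R_0\,dx_2\geq 0$ is correct, as is the resulting equality $\varepsilon^2\int_\R F = 2\int_\R G = 4E(v)+2c\,p(v)$. The gap is exactly where you place it, and it is not closable along the lines you sketch. The tools you propose---Proposition~\ref{minoration} giving $\|\eta\|_{L^\infty}\geq\varepsilon^2/10$, together with the uniform Lipschitz bound of Lemma~\ref{tarquini10}---produce only a lower bound of the form $\int_\R G\geq C\varepsilon^8$, hence $\int_\R F\geq C\varepsilon^6$; plugging this back into $\varepsilon^2\int_\R F\leq 8E(v)$ yields at best $\varepsilon\leq K\,E(v)^{1/8}$, not $\varepsilon\leq K\,E(v)$. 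The ``width $1/\varepsilon$'' heuristic for the kernel $\frac{1}{\varepsilon}e^{-\varepsilon|\cdot|}$ adds nothing beyond the exact relation $\int_\R F=\frac{2}{\varepsilon^2}\int_\R G$ you already have. Any pointwise localization of $\eta$ or $G$ will give lower bounds on $\int_\R F$ in powers of $\varepsilon$, which is circular; an independent bound of the shape $\int_\R F\geq C/E(v)$ would require control of the first moment $\int_{\R^2}\eta$, a signed quantity with no obvious monotonicity or Pohozaev-type identity behind it.

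The paper avoids this entirely by staying in two Fourier variables rather than restricting to $\xi_2=0$. From \eqref{equenfourier1} and $|\widehat{R_j}|\leq\|R_j\|_{L^1}\leq K\,E(v)$, Plancherel gives
\[
\int_{\R^2}\eta^2 \;\leq\; K\,E(v)^2\int_{\R^2}L_\varepsilon(\xi)^2\,d\xi,
\]
and an explicit computation yields $\int_{\R^2}L_\varepsilon^2=\pi/(\sqrt{2}\,\varepsilon)$. One then closes with Lemma~\ref{remarkable}, which says $E(v)\leq 7c^2\int_{\R^2}\eta^2$: combining,
\[
E(v)\;\leq\; \frac{K}{\varepsilon}\,E(v)^2,
\]
hence $\varepsilon\leq K\,E(v)$. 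The crucial difference is that the paper works with $\int\eta^2$, for which Lemma~\ref{remarkable} supplies the needed lower bound $\int\eta^2\geq E(v)/14$ directly, whereas your $\int_\R F=\int_{\R^2}\eta$ has no analogous a priori lower bound in terms of $E(v)$.
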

 
\begin{proof}
We first notice that for any integrable function $f$ on $\R^N$ and any $\xi \in \R^N$, we have in view of the definition of $\widehat{f}(\xi)$,
$|\widehat{f}(\xi)| \leq \| f \|_{L^1(\R^N)},$
so that
\begin{equation}
\| \widehat{R_i} \|_{L^\infty(\R^N)} \leq \| R_i \|_{L^1(\R^N)} \leq K E(v).
\end{equation}
It follows from integrating \eqref{equenfourier1} that we have for any $1 \leq q \leq + \infty$,
$$\int_{\R^N} |\widehat{\eta}(\xi)|^q d\xi \leq K (1 + c^q) \bigg( \int_{\R^N} |L_\varepsilon(\xi)|^q d\xi \bigg) E(v)^q,$$
where we have set, for any $\xi \in \R^N$,
\begin{equation}
\label{L-epsilon}
L_\varepsilon(\xi) = \frac{1}{|\xi|^2 + 2 - c^2 \frac{\xi_1^2}{|\xi|^2}} = \frac{|\xi |^2}{|\xi|^4 + 2| \xi |^2 - c^2 \xi_1^2}.
\end{equation}
In the case $q = 2$, this leads in view of Plancherel's formula to
\begin{equation}
\label{superbe}
\int_{\R^N} \eta(x)^2 dx \leq K (1 + c^2) \bigg( \int_{\R^N} |L_\varepsilon(\xi)|^2 d\xi \bigg) E(v)^2.
\end{equation}
We claim that
\begin{equation}
\label{claim0}
\int_{\R^2} L_\varepsilon(\xi)^2 d\xi = \frac{\pi}{\sqrt{2} \varepsilon(c)}.
\end{equation}
We postpone the proof of the claim, and complete the proof of Lemma \ref{bornevarepsilon}. Combining \eqref{subsoniccase}, \eqref{superbe} and Lemma \ref{remarkable}, we have
$$E(v) \leq 7 c^2 \int_{\R^N} \eta^2 \leq \frac{K}{\varepsilon(c)} E(v)^2,$$
where we used claim \eqref{claim0} for the second inequality. This concludes the proof of Lemma \ref{bornevarepsilon}.
\end{proof}

\begin{proof}[Proof of Claim \eqref{claim0}] 
Introducing polar coordinates, we have 
$$\int_{\R^2} L_{\varepsilon}(\xi)^2 d\xi = \int_0^{+ \infty} \int_0^{2 \pi} \frac{r dr d\theta}{(r^2 + 2 - c^2 \cos^2(\theta))^2} = \frac{1}{2} \int_0^{2 \pi} \frac{d\theta}{2 - c^2 \cos^2(\theta)}.$$
Using the change of variables $t = \tan(\theta)$ and $u = \sqrt{\frac{2}{2 - c^2}} t$, we obtain
\begin{equation*}
\int_{\R^2} L_{\varepsilon}(\xi)^2 d\xi = 2 \int_0^{+ \infty} \frac{dt}{2 - c^2 + 2 t^2} = \sqrt{\frac{2}{2 - c^2}} \int_0^{+ \infty} \frac{du}{1 + u^2} = \frac{\pi}{\sqrt{2} \varepsilon(c)}.
\end{equation*}
\end{proof}
 
In dimension three, the previous analysis leads to a result of a very different nature.
 
\begin{lemma}
\label{nopetitsolution}
Let $v$ be a finite energy solution to \eqref{TWc} on $\R^3$ satisfying \eqref{novortex} and \eqref{subsoniccase}. Then, there exists some universal constant $K > 0$ such that
$$E(v) \geq \boE_0(c) \equiv \frac{K}{c (1 + c^2) \arcsin \Big( \frac{c}{\sqrt{2}} \Big)}.$$
\end{lemma}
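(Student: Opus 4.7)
The plan is to follow exactly the strategy used to prove Lemma \ref{bornevarepsilon} in the two-dimensional case, but to observe that the analogous three-dimensional Fourier integral yields a bounded quantity instead of a blow-up at the sonic speed. This will reverse the direction of the conclusion: instead of an upper bound on $\varepsilon(v)$ in terms of $E(v)$, we get a lower bound on $E(v)$ itself.

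First, I would apply Plancherel's formula to the Fourier identity \eqref{equenfourier1} of Lemma \ref{equenfourier}. As in the proof of Lemma \ref{bornevarepsilon}, the functions $R_0,\ldots,R_N$ all satisfy $\|\widehat{R_i}\|_{L^\infty(\R^3)}\leq \|R_i\|_{L^1(\R^3)}\leq K E(v)$ (using Cauchy--Schwarz and $\varrho\geq 1/2$ for the terms $R_j=\eta\partial_j\varphi$), so that
$$\int_{\R^3}\eta(x)^2\,dx \;\leq\; K(1+c^2)\bigg(\int_{\R^3} L_\varepsilon(\xi)^2\,d\xi\bigg)\, E(v)^2,$$
where $L_\varepsilon$ is the multiplier in \eqref{L-epsilon}.

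The second step is to compute the three-dimensional integral of $L_\varepsilon^2$ explicitly in spherical coordinates adapted to the $\xi_1$ direction. Writing $\xi_1=r\cos\theta$, $|\xi|=r$, one gets
$$\int_{\R^3} L_\varepsilon(\xi)^2\,d\xi \;=\; 2\pi\int_0^{+\infty}\!\!\int_0^\pi \frac{r^2\sin\theta\,d\theta\,dr}{(r^2+2-c^2\cos^2\theta)^2}.$$
The radial integral is $\pi/\big(4\sqrt{2-c^2\cos^2\theta}\big)$ by a standard computation, and the substitution $u=\cos\theta$ then yields
$$\int_{\R^3} L_\varepsilon(\xi)^2\,d\xi \;=\; \frac{\pi^2}{2}\int_{-1}^1\frac{du}{\sqrt{2-c^2u^2}} \;=\; \frac{\pi^2}{c}\,\arcsin\!\Big(\frac{c}{\sqrt 2}\Big).$$
In contrast with dimension two, this quantity remains finite as $c\to\sqrt 2$, which is precisely the source of the non-existence result.

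Finally, combining the previous two steps with Lemma \ref{remarkable}, which asserts $E(v)\leq 7c^2\int_{\R^3}\eta^2$, we deduce
$$E(v)\;\leq\; 7c^2\cdot K(1+c^2)\cdot\frac{\pi^2}{c}\arcsin\!\Big(\frac{c}{\sqrt 2}\Big)\, E(v)^2 \;=\; K'\,c(1+c^2)\arcsin\!\Big(\frac{c}{\sqrt 2}\Big)\, E(v)^2.$$
Dividing by $E(v)>0$ (which is allowed since any non-constant finite energy solution has positive energy) gives the desired lower bound. The only real computation is the evaluation of the Fourier integral; no step looks like a serious obstacle, as all the analytic ingredients have already been developed in the previous subsections.
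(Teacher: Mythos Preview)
Your proof is correct and follows essentially the same approach as the paper: apply inequality \eqref{superbe} in dimension three, compute $\int_{\R^3} L_\varepsilon(\xi)^2\,d\xi = \frac{\pi^2}{c}\arcsin(c/\sqrt{2})$ in spherical coordinates, and conclude via Lemma \ref{remarkable}. The only cosmetic difference is that the paper evaluates the radial integral via an integration by parts in $r$ before integrating in the angular variable, whereas you compute the radial integral directly; both lead to the same value.
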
 

\begin{proof}
The argument is parallel to the argument of Lemma \ref{bornevarepsilon} up to \eqref{superbe}. However, in contrast with \eqref{claim0} we have in dimension three,
\begin{equation}
\label{claim1}
\int_{\R^3} L_\varepsilon(\xi)^2 d\xi = \frac{\pi^2}{c} \arcsin \Big( \frac{c}{\sqrt{2}} \Big).
\end{equation}
We postpone the proof of \eqref{claim1}, and complete the proof of Lemma \ref{nopetitsolution}. By \eqref{superbe} and Lemma \ref{remarkable}, there exists some constant $K > 0$ such that
$$E(v) \leq 7 c^2 \int_{\R^3} \eta^2 \leq K c^2 (1 + c^2) \bigg( \int_{\R^3} L_\varepsilon(\xi)^2 d\xi \bigg) E(v)^2.$$
Lemma \ref{nopetitsolution} follows using \eqref{claim1}.
\end{proof}

\begin{proof}[Proof of Claim \eqref{claim1}]
Introducing spherical coordinates, and using the change of variables $u = \cos (\theta)$, we have 
$$\int_{\R^3} L_\varepsilon(\xi)^2 d\xi = 2 \pi \int_0^{+ \infty} \bigg( \int_0^\pi \frac{r^2 \sin(\theta) d\theta}{(r^2 + 2 - c^2 \cos^2(\theta))^2} \bigg) dr = 4 \pi \int_0^{+ \infty} \bigg( \int_0^1 \frac{r^2 du}{(r^2 + 2 - c^2 u^2)^2} \bigg) dr.$$
An integration by parts with respect to the variable $r$ gives
$$\int_{\R^3} L_\varepsilon(\xi)^2 d\xi = 2 \pi \int_0^{+ \infty} \bigg( \int_0^1 \frac{du}{r^2 + 2 - c^2 u^2} \bigg) dr .$$
Using the change of variables $v = \frac{r}{\sqrt{2 - c^2 u^2}}$ and $w = \frac{c u}{\sqrt{2}}$, we obtain
\begin{align*}
\int_{\R^3} L_\varepsilon(\xi)^2 d\xi = & 2 \pi \int_0^{+ \infty} \bigg( \int_0^1 \frac{du}{\sqrt{2 - c^2 u^2}} \bigg) \frac{dv}{1 + v^2} = \pi^2 \int_0^1 \frac{du}{\sqrt{2 - c^2 u^2}} = \frac{\pi^2}{c} \int_0^\frac{c}{\sqrt{2}} \frac{dw}{\sqrt{1 - w^2}}\\
= &\frac{\pi^2}{c} \arcsin \Big( \frac{c}{\sqrt{2}} \Big).
\end{align*}
\end{proof}

The previous analysis extends to any finite energy subsonic and sonic solutions, providing a proof of iii) in Theorem \ref{dim3}.
 
\begin{lemma}
\label{soniccase}
Let $v$ be a non-trivial finite energy solution to \eqref{TWc} on $\R^3$. Then
$$E(v) \geq \boE_0,$$
where $\boE_0$ is some positive universal constant. 
\end{lemma}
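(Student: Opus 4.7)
The plan is to split into cases according to whether $|v|$ takes values below $\frac{1}{2}$, and according to the value of $c = c(v)$. First, I would dispense with the boundary values of $c$: since $v$ has finite energy and is non-trivial, the case $c=0$ is ruled out by the Pohozaev identity \eqref{theverypoho} in dimension $3$, which reads $\tfrac{1}{2}\int_{\R^3}|\nabla v|^2 + \tfrac{3}{4}\int_{\R^3}(1-|v|^2)^2 = 0$, forcing $v$ to be constant. So I may assume $0 < c \leq \sqrt{2}$ throughout.

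If there exists $x_0 \in \R^3$ with $|v(x_0)| < \frac{1}{2}$, the energy can be bounded from below locally using Lemma \ref{tarquini20}. Applying that lemma at $x_0$ with $r=1$, and using that $\|1-|v|\|_{L^\infty(B(x_0,1/2))}\geq \frac{1}{2}$ together with the uniform bound $(1+c^2/4)^2 \leq 9/4$ (since $c\leq\sqrt{2}$), one obtains
$$\tfrac{1}{2}\leq \max\Big\{\tfrac{9}{4}K(3)\,E(v,B(x_0,1))^{1/5},\ K(3)\,E(v,B(x_0,1))^{1/2}\Big\},$$
which yields $E(v) \geq E(v, B(x_0,1)) \geq \boE_0^{(1)}$ for some purely universal constant $\boE_0^{(1)}>0$.

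In the remaining situation, one has $|v|\geq\tfrac{1}{2}$ on all of $\R^3$, so assumption \eqref{novortex} holds. When $0 < c < \sqrt{2}$, Lemma \ref{nopetitsolution} applies and gives
$$E(v)\geq \boE_0(c) = \frac{K}{c(1+c^2)\arcsin(c/\sqrt{2})}.$$
The function $c\mapsto c(1+c^2)\arcsin(c/\sqrt{2})$ is continuous and increasing on $(0,\sqrt{2}]$, with limiting value $\sqrt{2}\cdot 3\cdot \pi/2 = 3\pi/\sqrt{2}$ at $c=\sqrt{2}$. Hence $\boE_0(c)\geq K\sqrt{2}/(3\pi) =: \boE_0^{(2)}$ uniformly for $c\in(0,\sqrt{2})$. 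Setting $\boE_0 = \min\{\boE_0^{(1)},\boE_0^{(2)}\}$ then handles both cases in the strictly subsonic regime.

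The main obstacle is the sonic case $c=\sqrt{2}$ under assumption \eqref{novortex}, since Lemma \ref{nopetitsolution} was stated only for $c<\sqrt{2}$. My plan is to extend the Fourier argument of Lemma \ref{nopetitsolution} directly to $c=\sqrt{2}$: one checks that Lemma \ref{equenfourier} and Lemma \ref{remarkable} both remain valid in this case, the only delicate point being that the integrations by parts used in \eqref{alpha1} must be justified using the weaker decay of $\varrho-1$ and $\nabla\varphi$ provided by Remark \ref{decay-pol}(ii) (which puts $\varrho-1,\nabla\varphi\in L^p(\R^3\setminus B(0,R))$ for $p>\tfrac{5}{3}$ and $\nabla\varrho\in L^p$ for $p>\tfrac{5}{4}$); these integrabilities are more than enough to make all boundary terms vanish and all integrands live in $L^1$. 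Since the Fourier integral $\int_{\R^3}L_\varepsilon(\xi)^2\,d\xi = \pi^3/(2\sqrt{2})$ at $c=\sqrt{2}$ remains finite (by the explicit formula \eqref{claim1} taken at $c=\sqrt{2}$), the same chain of inequalities leading to Lemma \ref{nopetitsolution} yields $E(v)\geq \boE_0(\sqrt{2})>0$, which is in fact the worst constant and coincides with $\boE_0^{(2)}$ above. This completes the proof.
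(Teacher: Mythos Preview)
Your proof is correct and follows essentially the same route as the paper: reduce to $|v|\geq\tfrac12$ via Lemma~\ref{tarquini20}, apply Lemma~\ref{nopetitsolution} in the subsonic range using the uniform boundedness of $c(1+c^2)\arcsin(c/\sqrt{2})$, and handle $c=\sqrt{2}$ by noting that $\int_{\R^3}L_0(\xi)^2\,d\xi=\pi^3/(2\sqrt{2})<\infty$ so the same Fourier argument goes through. Your explicit treatment of $c=0$ via \eqref{theverypoho} and your remark on justifying the integrations by parts in the sonic case via Remark~\ref{decay-pol}(ii) are correct additions that the paper leaves implicit.
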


\begin{proof}
In view of the results of \cite{Graveja2}, we know that there are no supersonic travelling waves on the whole space that is $0 < c(v) \leq \sqrt{2}$. Moreover, in dimensions $N > 2$, it follows from Lemma \ref{tarquini20} that
$$\| 1 - |v| \|_{L^\infty(\R^N)} \leq K E(v)^\frac{1}{N + 2},$$
so that, choosing possibly a smaller constant $\boE_0$, we may assume that $v$ satisfies \eqref{novortex}, and $v$ may be written as in \eqref{lereleve}, $v = \varrho \exp i \varphi$. If $v$ is subsonic, i.e. $c(v) < \sqrt{2}$, then Lemma \ref{nopetitsolution} yields the conclusion, since the function $c \mapsto c (1 + c^2) \arcsin (\frac{c}{\sqrt{2}})$ is bounded on $(0, \sqrt{2})$. In the sonic case, one observes similarly that
$$\int_{\R^3} L_0(\xi)^2 d\xi = \frac{\pi^3}{2 \sqrt{2}} < + \infty,$$
and the same proof as in Lemma \ref{nopetitsolution} applies to yield the conclusion.
\end{proof}

In the same spirit, but with more involved methods, we may prove 
 
\begin{lemma} 
\label{piupiccolo}
Let $\frac{5}{3} < q < + \infty$, and let $v$ be a finite energy solution to \eqref{TWc} on $\R^3$ satisfying \eqref{novortex}. Then, there exists a constant $K(q)$ only depending on $q$ such that
$$\| \eta \|_{L^q(\R^3)} \leq K(q) E(v)^{\frac{1}{q} + \frac{2}{5}}.$$
\end{lemma}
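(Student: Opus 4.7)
My plan is to combine the Fourier representation of Lemma \ref{equenfourier} with Young's convolution inequality and Calder\'on-Zygmund theory, choosing the convolution exponent so that the natural $L^1 \cap L^\infty$ interpolation of the source terms $R_0, R_1, R_j$ produces exactly the target power $E(v)^{1/q + 2/5}$.

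First, I would rewrite the Fourier equation \eqref{equenfourier1} as
$$\widehat{\eta}(\xi) = L_\varepsilon(\xi) \bigg[ 2 \widehat{R_0}(\xi) - 2 c \sum_{j=2}^3 \frac{\xi_j^2}{|\xi|^2} \widehat{R_1}(\xi) + 2 c \sum_{j=2}^3 \frac{\xi_1 \xi_j}{|\xi|^2} \widehat{R_j}(\xi) \bigg],$$
with $L_\varepsilon$ as in \eqref{L-epsilon}. Inverting the Fourier transform, $\eta$ becomes a sum of convolutions of $L_\varepsilon$ with Riesz-type transforms of the $R_i$'s. Since the homogeneous multipliers $\xi_j \xi_k/|\xi|^2$ are $L^p$-bounded for every $1 < p < +\infty$ (by Theorem \ref{thelizo} with $\alpha = 0$, or directly by Calder\'on-Zygmund theory), Young's convolution inequality will give
$$\|\eta\|_{L^q(\R^3)} \leq K(p) \|L_\varepsilon\|_{L^r(\R^3)} \big( \|R_0\|_{L^p(\R^3)} + \|R_1\|_{L^p(\R^3)} + \|R_2\|_{L^p(\R^3)} + \|R_3\|_{L^p(\R^3)} \big),$$
whenever $1 + 1/q = 1/r + 1/p$ and $1 < p < +\infty$.

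Next I would control the two factors on the right. By Lemma \ref{tarquini10} and Lemma \ref{colisee}, one has $\|R_i\|_{L^\infty} \leq K$ and $|R_i| \leq K\, e(v)$, so $\|R_i\|_{L^1} \leq K E(v)$; log-convexity of $L^p$ norms then yields $\|R_i\|_{L^p(\R^3)} \leq K E(v)^{1/p}$ for every $1 \leq p \leq +\infty$. For the kernel, the pointwise comparison $L_\varepsilon \leq L_0$, which follows from $c^2 \leq 2$, reduces the task to bounding $\|L_0\|_{L^r}$. Using cylindrical coordinates $(\xi_1, |\xi_\perp|)$ and performing the substitutions $\xi_1 = |\xi_\perp| u$ and $t = |\xi_\perp|(u^2+1)$, the integral $\int_{\R^3} L_0^r\, d\xi$ factors as
$$2 \pi \int_{-\infty}^{+\infty} (u^2 + 1)^{r - 3} \, du \cdot \int_0^{+\infty} \frac{t^2 \, dt}{(t^2 + 2)^r},$$
which is finite exactly when $3/2 < r < 5/2$, and in particular for $r = 5/3$.

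Choosing $r = 5/3$ in Young's relation forces $1/p = 1/q + 2/5$, and the constraint $p > 1$ translates precisely into $q > 5/3$. Assembling the estimates then gives
$$\|\eta\|_{L^q(\R^3)} \leq K(q) E(v)^{1/p} = K(q) E(v)^{1/q + 2/5},$$
which is the desired conclusion. The hardest part will be identifying the admissible integrability range for $L_0$ and the forcing of $r = 5/3$: the threshold $q > 5/3$ in the statement reflects exactly the $L^1$ endpoint obstruction of the Calder\'on-Zygmund operators once $r$ is fixed, while the other endpoint $r < 5/2$ (integrability of $L_0$) corresponds to $q < +\infty$ and is not binding. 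The remaining steps are routine consequences of the Calder\'on-Zygmund theory and the pointwise estimates established in the previous subsections.
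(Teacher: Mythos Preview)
There is a genuine gap. Your argument hinges on Young's convolution inequality
\[
\|\eta\|_{L^q} \leq K\, \|\check{L}_\varepsilon\|_{L^r}\, \|F\|_{L^p},
\]
where $\check{L}_\varepsilon$ is the \emph{inverse Fourier transform} of the multiplier $L_\varepsilon$, i.e.\ the physical-space convolution kernel. But the quantity you actually compute is $\|L_\varepsilon\|_{L^r(\R^3_\xi)}$, the $L^r$-norm of the \emph{multiplier itself} in Fourier variables: your cylindrical-coordinate calculation, your substitutions $\xi_1 = |\xi_\perp|u$, $t = |\xi_\perp|(u^2+1)$, and your pointwise comparison $L_\varepsilon(\xi) \leq L_0(\xi)$ all live on the Fourier side. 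These two norms are unrelated in general (Hausdorff--Young goes the wrong direction since $r = 5/3 < 2$), and the pointwise inequality $L_\varepsilon \leq L_0$ on the Fourier side does \emph{not} transfer to a pointwise or norm inequality between $\check{L}_\varepsilon$ and $\check{L}_0$ in physical space. So as written you have not bounded $\|\check{L}_\varepsilon\|_{L^{5/3}(\R^3_x)}$, and the uniform-in-$c$ control you need is missing precisely in the anisotropic regime $c \to \sqrt{2}$.

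The paper bypasses this obstacle by never introducing the physical kernel: it applies Lizorkin's multiplier theorem (Theorem~\ref{thelizo}) directly with $\alpha = \tfrac{2}{5}$, checking via explicit differentiation of $L_\varepsilon$ (formulas \eqref{collins}--\eqref{sooialo}) that the weighted derivatives satisfy $M(L_\varepsilon) \leq K$ uniformly in $c \in [0,\sqrt{2}]$. This yields the $L^{q'} \to L^{5q'/(5-2q')}$ bound \eqref{heymans} for the multiplier operator $\boL_\varepsilon$, and the rest of your argument (Riesz bounds on $\xi_j\xi_k/|\xi|^2$, the estimate $\|R_j\|_{L^{q'}} \leq K E(v)^{1/q'}$) then goes through essentially as you wrote it. If you want to salvage the Young-inequality route, you would have to establish directly that $\check{L}_\varepsilon \in L^{5/3}(\R^3_x)$ uniformly in $c$ --- a nontrivial kernel estimate that the multiplier approach renders unnecessary.
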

 
\begin{proof}
We first recall that there are no non-trivial finite energy supersonic travelling waves on $\R^3$. Hence, we may assume that $0 \leq c \leq \sqrt{2}$. In view of equation \eqref{equenfourier1}, we have
\begin{equation}
\label{eqmultfou}
\widehat{\eta}(\xi) = L_\varepsilon(\xi) \bigg( 2 \widehat{R_0}(\xi) - 2 c \sum_{j = 2}^3 \frac{\xi_j^2}{|\xi|^2} \widehat{R_1}(\xi) + 2 c \sum_{j = 2}^3 \frac{\xi_1 \xi_j}{|\xi|^2} \widehat{R_j}(\xi) \bigg),
\end{equation}
so that the proof reduces to estimate the r.h.s of \eqref{eqmultfou} by using multipliers theory developed in \cite{Lizorki1}. Indeed, using Lemma \ref{tarquini10}, and the fact that $0 \leq c \leq \sqrt{2}$, we first notice that there exists some universal constant $K$ such that
$$|R_j| \leq K e(v).$$
Invoking Lemma \ref{tarquini10} and the fact that $0 \leq c \leq \sqrt{2}$ once more, we have for any $1 < q < + \infty$,
$$\| R_j \|_{L^q(\R^3)} \leq K \bigg( \int_{\R^3} e(v)^q \bigg)^\frac{1}{q} \leq K E(v)^\frac{1}{q}.$$
On the other hand, it follows from standard Riesz-operator theory (see \cite{Stein1}) that the functions $\xi \mapsto \frac{\xi_j \xi_k}{|\xi|^2}$ are $L^q$-multipliers for any $1 < q < + \infty$ and any $1 \leq j, k \leq 3$. Hence, there exists some constant $K(q)$, possibly depending on $q$, so that the function $F$ defined by
\begin{equation}
\label{chabal}
\widehat{F}(\xi) = 2 \widehat{R_0}(\xi) - 2 c \sum_{j = 2}^3 \frac{\xi_j^2}{|\xi|^2} \widehat{R_1}(\xi) + 2 c \sum_{j = 2}^3 \frac{\xi_1 \xi_j}{|\xi|^2} \widehat{R_j}(\xi),
\end{equation}
belongs to $L^q(\R^3)$ for any $1 < q < + \infty$, and satisfies
\begin{equation}
\label{dominici}
\| F \|_{L^q(\R^3)} \leq K(q) \sum_{j = 0}^3 \| R_j \|_{L^q(\R^3)} \leq K(q) E(v)^\frac{1}{q}.
\end{equation}
Finally, in view of Theorem \ref{thelizo}, $L_\varepsilon$ is a multiplier from $L^q(\R^3)$ to $L^\frac{5 q}{5 - 2 q}(\R^3)$ for any $1 < q < \frac{5}{2}$. More precisely, denoting $\boL_\varepsilon$, the multiplier operator given by
\begin{equation}
\label{jauzion}
\widehat{\boL_\varepsilon(f)}(\xi) = L_\varepsilon(\xi) \widehat{f}(\xi),
\end{equation}
there exists some constant $K(q)$ possibly depending on $q$ but not on $c$, such that, for any $1 < q < \frac{5}{2}$,
\begin{equation}
\label{heymans}
\| \boL_\varepsilon(f) \|_{L^\frac{5 q}{5 - 2 q}(\R^3)} \leq K(q) \| f \|_{L^q(\R^3)}, \forall f \in L^q(\R^3).
\end{equation}
We postpone the proof of claim \eqref{heymans}, and complete the proof of Lemma \ref{piupiccolo}. Indeed, it follows from \eqref{eqmultfou}, \eqref{chabal} and \eqref{jauzion} that
$$\eta = \boL_\varepsilon(F).$$
Therefore, by \eqref{dominici} and \eqref{heymans}, we have
$$\| \eta \|_{L^\frac{5 q'}{5 - 2 q'}(\R^3)} \leq K(q') E(v)^\frac{1}{q'},$$
for any $1 < q' < \frac{5}{2}$. Letting $q = \frac{5 q'}{5 - 2 q'}$ that is $\frac{1}{q'} = \frac{1}{q} + \frac{2}{5}$, this ends the proof of Lemma \ref{piupiccolo}.
\end{proof}

\begin{proof}[Proof of Claim \eqref{heymans}]
Claim \eqref{heymans} is a consequence of Theorem \ref{thelizo} applied to $L_\varepsilon$. Indeed, we may check that $L_\varepsilon$ satisfies the assumptions of Theorem \ref{thelizo}, i.e. that the quantity
$$M(L_\varepsilon) \equiv \sup \Big\{ \underset{j = 1}{\overset{3}{\Pi}} |\xi_j|^{\alpha + k_j} \Big| \partial_1^{k_1} \partial_2^{k_2} \partial_3^{k_3} L_\varepsilon(\xi) \Big|, \xi \in \R^3, (k_1, k_2, k_3) \in \{ 0, 1 \}^3, k_1 + k_2 + k_3 \leq 3 \Big\},$$
is finite for some suitable choice of $\alpha$. This follows from the next computation of some derivatives of $L_\varepsilon$,
\begin{equation}
\label{collins}
\partial_j L_\varepsilon(\xi) = \frac{2 \xi_j}{(|\xi|^4 + 2 |\xi|^2 - c^2 \xi_1^2)^2} \Big( - |\xi|^4 - c^2 \xi_1^2 + c^2 \delta_{1,j} |\xi|^2 \Big),
\end{equation}
for any $1 \leq j \leq 3$,
\begin{equation}
\label{mccaw}
\begin{split}
\partial^2_{jk} L_\varepsilon(\xi) = & \frac{4 \xi_j \xi_k}{(|\xi|^4 + 2 |\xi|^2 - c^2 \xi_1^2)^3} \Big( 2 |\xi|^6 + c^2 \big( 6 \xi_1^2 |\xi|^2 + 4 \xi_1^2 - (\delta_{1,j} + \delta_{1,k}) (3 |\xi|^4 + 2 |\xi|^2 + c^2 \xi_1^2) \big) \Big),
\end{split}
\end{equation}
for any $1 \leq j \neq k \leq 3$, and
\begin{equation}
\label{sooialo}
\begin{split}
\partial^3_{123} L_\varepsilon(\xi) = \frac{16 \xi_1 \xi_2 \xi_3}{(|\xi|^4 + 2 |\xi|^2 - c^2 \xi_1^2)^4} \Big( - & 3 |\xi|^8 + c^2 \big( 6 |\xi|^6 -18 \xi_1^2 |\xi|^4 + 8 |\xi|^4 + 6 (c^2 - 4) \xi_1^2 |\xi|^2 - 3 c^2 \xi_1^4\\
+ & 4 |\xi|^2 + 4 c^2 \xi_1^2 - 12 \xi_1^2 \big) \Big).
\end{split}
\end{equation}
Considering some multi-index $(k_1, k_2, k_3) \in \{ 0, 1 \}^3$ such that $k_1 + k_2 + k_3 \leq 3$, it follows from \eqref{L-epsilon}, \eqref{collins}, \eqref{mccaw} and \eqref{sooialo}, and the fact that $0 \leq c \leq \sqrt{2}$, that there exists some universal constant $K$ such that, for any $|\xi| \geq 1$,
\begin{equation}
\label{carter}
\underset{j = 1}{\overset{3}{\Pi}} |\xi_j|^{\alpha + k_j} \Big| \partial_1^{k_1} \partial_2^{k_2} \partial_3^{k_3} L_\varepsilon(\xi) \Big| \leq \frac{K}{|\xi|^{2 - 3 \alpha }} \leq K,
\end{equation}
provided $\alpha \leq \frac{2}{3}$. On the other hand, if $|\xi| \leq 1$, we deduce from \eqref{L-epsilon}, \eqref{collins}, \eqref{mccaw} and \eqref{sooialo}, that there exists some universal constant $K$ such that
$$\big| \partial_1^{k_1} \partial_2^{k_2} \partial_3^{k_3} L_\varepsilon(\xi) \big| \leq K \frac{|\xi_1|^{k_1} |\xi_2|^{k_2} |\xi_3|^{k_3}}{(|\xi|^4 + 2 |\xi|^2 - c^2 \xi_1^2)^{1 + k_1 + k_2 + k_3}} \Big( \big( 1 - k_1 \big) |\xi|^2 + k_1 \big( |\xi|^4 + 2 |\xi|^2 - c^2 \xi_1^2 \big) \Big).$$
Therefore, denoting $\xi = \rho \sigma$ where $\rho \geq 0$ and $\sigma = (\sigma_1, \sigma_\perp) \in \S^{N-1}$, we are led to
\begin{equation}
\label{jack}
\begin{split}
\prod_{j = 1}^3 |\xi_j|^{\alpha + k_j} \Big| \partial_1^{k_1} \partial_2^{k_2} \partial_3^{k_3} L_\varepsilon(\xi) \Big| & \leq K \frac{\rho^{2 ( 1 + k_2 + k_3) + 3 \alpha} |\sigma_\perp|^{2 (\alpha + k_2 + k_3)}}{\rho^{2 (1 + k_2 + k_3)} (\rho^2 + 2 |\sigma_\perp|^2)^{1 + k_2 + k_3}} \\ & \leq K \max \{ \rho, |\sigma_\perp| \}^{5 \alpha - 2 } \leq K,
\end{split}
\end{equation}
provided that $\alpha \geq \frac{2}{5}$. Using \eqref{carter} and \eqref{jack}, and choosing $\alpha = \frac{2}{5}$, the quantity $M(L_\varepsilon)$ is bounded by some constant $K$ not depending on $\varepsilon$, so that, by Theorem \ref{thelizo}, $L_\varepsilon$ is a multiplier from $L^q(\R^3)$ to $L^\frac{5 q}{5 - 2 q}(\R^3)$ for any $1 < q < \frac{5}{2}$. Hence, inequality \eqref{heymans} follows from \eqref{ibanez} and \eqref{mignoni}.
\end{proof}

We will use the following consequence.

\begin{cor}
\label{tresutile} 
There exists some constants $K > 0$ and $\alpha > 0$ such that
$$\| 1 - |v| \|_{L^\infty(\R^3)} \geq \frac{K}{E(v)^\alpha}.$$
\end{cor}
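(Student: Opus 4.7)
The plan is to establish the lower bound by a case distinction according to whether $v$ admits a vortex, the vortexless case being the substantive one. Throughout I implicitly assume $v$ is non-trivial (otherwise $E(v) = 0$ and the right-hand side is not defined), so that Lemma \ref{soniccase} supplies the universal lower bound $E(v) \geq \boE_0 > 0$; I also use $c(v) \leq \sqrt{2}$ from the non-existence of supersonic travelling waves.

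If $\inf_{\R^3}|v| < \tfrac{1}{2}$, then $\|1-|v|\|_{L^\infty(\R^3)} \geq \tfrac{1}{2}$ trivially, and choosing $K \leq \boE_0^{\alpha}/2$ gives $K/E(v)^\alpha \leq K/\boE_0^\alpha \leq \tfrac{1}{2} \leq \|1-|v|\|_{L^\infty}$ for any fixed $\alpha > 0$. The real work lies in the vortexless regime $|v| \geq \tfrac{1}{2}$, where both Lemma \ref{piupiccolo} and Lemma \ref{remarkable} become available.

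In that regime I would fix $q \in (\tfrac{5}{3},2)$, say $q = \tfrac{9}{5}$, and sandwich three inequalities: the upper bound
$$\|\eta\|_{L^q(\R^3)}^q \leq K(q)^q\, E(v)^{1+2q/5}$$
from Lemma \ref{piupiccolo}, the elementary interpolation $\int_{\R^3}\eta^2 \leq \|\eta\|_{L^\infty}^{2-q}\|\eta\|_{L^q}^q$, and the reverse energy bound $E(v) \leq 7c(v)^2\int_{\R^3}\eta^2 \leq 14\int_{\R^3}\eta^2$ supplied by Lemma \ref{remarkable}. Chaining these three inputs yields
$$E(v) \leq 14\, K(q)^q\, \|\eta\|_{L^\infty(\R^3)}^{2-q}\, E(v)^{1+2q/5},$$
whence $\|\eta\|_{L^\infty(\R^3)} \geq K_2\, E(v)^{-\alpha}$ with $\alpha = 2q/(5(2-q))$ (equal to $\tfrac{18}{5}$ for $q=\tfrac{9}{5}$). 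Passing from $\eta = 1-|v|^2$ back to $1-|v|$ is immediate via $|\eta| = (1+|v|)|1-|v||$ together with the uniform bound $|v| \leq \sqrt{1+c(v)^2/4} \leq \sqrt{3/2}$ from Lemma \ref{tarquini10}.

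Merging the two cases by taking $\alpha = \tfrac{18}{5}$ and $K = \min\{K_2/(1+\sqrt{3/2}),\ \boE_0^{\alpha}/2\}$ completes the argument. No step is especially delicate here: the analytically non-trivial content (the $L^q$-multiplier estimates and Lizorkin's theorem) has already been folded into Lemma \ref{piupiccolo}, so the present corollary is essentially a packaging step and the main obstacle has been dealt with upstream.
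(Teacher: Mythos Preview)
Your proof is correct and follows essentially the same approach as the paper: both split into the vortex/vortexless cases (the paper uses the slightly different but equivalent dichotomy $\|1-|v|\|_{L^\infty} \gtrless \tfrac{1}{2}$), then chain Lemma~\ref{remarkable}, the interpolation $\int\eta^2 \leq \|\eta\|_{L^\infty}^{2-q}\|\eta\|_{L^q}^q$, and Lemma~\ref{piupiccolo}. The only cosmetic differences are your choice $q=\tfrac{9}{5}$ (the paper takes $q=\tfrac{7}{4}$, giving $\alpha=\tfrac{14}{5}$) and the passage from $\eta$ to $1-|v|$ via $|v|\leq\sqrt{3/2}$ rather than $\varrho\leq\tfrac{3}{2}$.
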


\begin{proof}
If $\| 1 - |v| \|_{L^\infty(\R^3)} \geq \frac{1}{2}$, then we are done in view of Lemma \ref{soniccase}, for $K = \frac{\boE_0^\alpha}{2}$ and any $\alpha > 0$. Therefore, we may assume $\| 1 - |v| \|_{L^\infty(\R^3)} \leq \frac{1}{2}$. In that case, using Lemmas \ref{remarkable} and \ref{piupiccolo}, and the fact that $0 \leq c \leq \sqrt{2}$, we write for any $\frac{5}{3} < q < 2$,
$$E(v) \leq 7 c^2 \| \eta \|_{L^2(\R^3)}^2 \leq 14 \| \eta \|_{L^q(\R^3)}^q \| \eta \|_{L^\infty(\R^3)}^{2 - q} \leq K(q) \| \eta \|_{L^\infty(\R^3)}^{2 - q} E(v)^{1 + \frac{2 q}{5}}.$$
Hence,
$$\| 1 - |v| \|_{L^\infty(\R^3)} \geq \frac{\| \eta \|_{L^\infty(\R^3)}}{1 + \| \varrho \|_{L^\infty(\R^3)}} \geq \frac{2}{5}\| \eta \|_{L^\infty(\R^3)} \geq \frac{K(q)}{E(v)^\frac{2 q}{5(2 - q)}}.$$
We conclude, choosing for instance $q = \frac{7}{4}$ and $\alpha = \frac{14}{5}$.
\end{proof}

Combining the previous result with Lemma \ref{bonlemme}, we obtain the following bound for $\varepsilon(v)$.
 
\begin{lemma}
\label{bornepsfin}
Let $v$ be a finite energy solution to \eqref{TWc} on $\R^3$, such that $\Sigma(v) > 0$. Then, there exists some constant $K(c)$ depending only on $c$, and some universal constant $\alpha > 0$, such that
$$\varepsilon(v) p(v) \geq \frac{K(c)}{E(v)^{8 \alpha + 1}}.$$
\end{lemma}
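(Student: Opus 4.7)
The plan is to combine Lemma~\ref{bonlemme} (specialised to $N=3$) with the lower bound on $\|1-|v|\|_{L^\infty(\R^3)}$ provided by Corollary~\ref{tresutile}. The starting inequality reads
$$\|\eta\|_{L^\infty(\R^3)}^{4} \leq K(c)\Big(\lambda\big(\varepsilon(v) p(v) - \Sigma(v)\big) + \frac{E(v)}{\lambda}\Big), \qquad \forall \lambda > 0.$$
Since $\Sigma(v) > 0$, I may drop the $-\lambda\Sigma(v)$ term (this can only weaken the upper bound) to obtain the cleaner estimate $\|\eta\|_{L^\infty}^{4} \leq K(c)(\lambda\,\varepsilon(v) p(v) + E(v)/\lambda)$.

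Before optimising in $\lambda$, I would briefly justify that $\varepsilon(v)\, p(v) > 0$, so that the choice of $\lambda$ below is legitimate. Indeed, $\Sigma(v) > 0$ forces $v$ to be non-constant; Corollary~\ref{cordepoho} then rules out $c(v) = \sqrt{2}$, ensuring $\varepsilon(v) > 0$; and the identity $\Sigma(v) = \sqrt{2}\, p(v) - E(v) > 0$ forces $p(v) > 0$. The choice $\lambda = \sqrt{E(v)/(\varepsilon(v) p(v))}$ then minimises the right-hand side and yields
$$\|\eta\|_{L^\infty(\R^3)}^{4} \leq 2\, K(c)\, \sqrt{E(v)\, \varepsilon(v)\, p(v)}.$$

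For the matching lower bound on $\|\eta\|_{L^\infty}$, the key pointwise identity is $|\eta| = |1-|v||\,(1+|v|) \geq |1-|v||$, so that $\|\eta\|_{L^\infty(\R^3)} \geq \|1-|v|\|_{L^\infty(\R^3)}$. Corollary~\ref{tresutile} then supplies a universal exponent $\alpha > 0$ and constant $K > 0$ such that
$$\|\eta\|_{L^\infty(\R^3)} \geq \frac{K}{E(v)^{\alpha}}.$$

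Finally, chaining the two inequalities and squaring gives
$$\frac{K^{8}}{E(v)^{8\alpha}} \leq 4\, K(c)^{2}\, E(v)\, \varepsilon(v)\, p(v),$$
which rearranges to the desired estimate $\varepsilon(v)\, p(v) \geq K(c)/E(v)^{8\alpha + 1}$ after absorbing universal constants into $K(c)$. I do not expect a genuine obstacle here: the scheme is essentially a one-parameter interpolation between the two ends of Lemma~\ref{bonlemme}, and the main care is simply verifying the positivity conditions that legitimise the optimisation in $\lambda$ and converting the $L^\infty$-bound on $1-|v|$ into an $L^\infty$-bound on $\eta$.
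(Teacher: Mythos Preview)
Your proof is correct and follows essentially the same route as the paper: both combine Lemma~\ref{bonlemme} (dropping the $-\lambda\Sigma(v)$ term) with the lower bound on $\|\eta\|_{L^\infty}$ coming from Corollary~\ref{tresutile}, then choose $\lambda$ appropriately. The only cosmetic difference is that you optimise in $\lambda$ via AM--GM whereas the paper picks the specific value $\lambda = 2E(v)^{4\alpha+1}/K(c)$ to absorb the $E(v)/\lambda$ term; both lead to the same bound, and your additional justification that $\varepsilon(v)p(v) > 0$ is a welcome bit of care the paper leaves implicit.
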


\begin{proof}
In view of Lemma \ref{bonlemme}, we have
$$\lambda \varepsilon(v) p(v) + \frac{E(v)}{\lambda} \geq K(c) \| \eta \|_{L^\infty(\R^3)}^4 \geq \frac{K(c)}{E(v)^{4 \alpha}}, \forall \lambda > 0,$$
where we have made use of Corollary \ref{tresutile} for the last inequality. The choice $\lambda = \frac{2 E(v)^{4 \alpha + 1}}{K(c)}$ yields the desired result.
\end{proof}

%%%%%%%%%%%%%%%%%%%%%%%%%%%%%%%%%%%%%%%%%%%%%%%%%%%%
%%%%%%%%%%%%%%%%%%%%%%%%%%%%%%%%%%%%%%%%%%%%%%%%%%%%
\section{ Properties of the function $E_{\min}(\p)$}
\label{PropEmin}
%%%%%%%%%%%%%%%%%%%%%%%%%%%%%%%%%%%%%%%%%%%%%%%%%%%%
%%%%%%%%%%%%%%%%%%%%%%%%%%%%%%%%%%%%%%%%%%%%%%%%%%%%

The main purpose of this section is to provide the proofs to Theorem \ref{proemin}, Lemma \ref{speed} and Lemma \ref{gueri}, as well as the proof of Theorem \ref{symetrie}. 

%%%%%%%%%%%%%%%%%%%%%%%%%%%%%%%%%%%%%%%%%%%
\subsection{Proof of Theorem \ref{proemin}}
%%%%%%%%%%%%%%%%%%%%%%%%%%%%%%%%%%%%%%%%%%%

We begin this subsection with a number of elementary observations. 

\begin{lemma} 
\label{inject}
For $N = 2$ and $N = 3$, we have the inclusion $W(\R^N) \subset \boE(\R^N)$. Moreover, the functions $E$ and $p$ are continuous on $W(\R^N)$.
\end{lemma}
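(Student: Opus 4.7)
The plan is to write $v = 1 + w$ with $w \in V(\R^N)$ and reduce the two claims to a handful of Sobolev/Hölder estimates on the polynomial expansion of $1-|v|^2$ and on the bilinear formula \eqref{identityp} for the momentum. The gradient piece of $E$ is immediate since $\nabla v = \nabla w \in L^2$. For the potential piece, expand
$$(1-|v|^2)^2 = \big(2\Re(w)+|w|^2\big)^2 = 4\Re(w)^2 + 4\Re(w)|w|^2 + |w|^4,$$
which is a polynomial in $\Re(w),\Im(w)$ of degrees $2,3,4$. The terms $\Re(w)^2$ and $\Im(w)^4$ are controlled by the definition of $V$; the remaining monomials require $\Re(w)\in L^4$ and are then handled by elementary Hölder interpolation between $\Re(w)\in L^2\cap L^4$ and $\Im(w)\in L^4$.

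The key point is therefore to establish $\Re(w)\in L^4(\R^N)$. In dimension $N=2$, the condition $\nabla\Re(w)\in L^{4/3}$ combined with $\Re(w)\in L^2$ gives, by the Sobolev embedding $W^{1,4/3}(\R^2)\hookrightarrow L^{4}(\R^2)$ (or equivalently the Gagliardo–Nirenberg interpolation), $\Re(w)\in L^4$. In dimension $N=3$, I would use instead $\nabla w\in L^2$ and the homogeneous Sobolev embedding $\dot H^1(\R^3)\hookrightarrow L^6(\R^3)$: since $\Re(w)\in L^2$ and $\Im(w)\in L^4$, no constant can survive at infinity, so both components belong to $L^6$, and interpolation with $L^2$ (resp.\ $L^4$) yields $\Re(w),\Im(w)\in L^4$. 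For the momentum, identity \eqref{identityp} gives
$$\langle i\partial_1 v, v-1\rangle = \partial_1\Re(w)\,\Im(w) - \partial_1\Im(w)\,\Re(w),$$
and Hölder yields integrability via $\|\partial_1\Re(w)\|_{L^{4/3}}\|\Im(w)\|_{L^4}$ and $\|\partial_1\Im(w)\|_{L^2}\|\Re(w)\|_{L^2}$, each bounded by $\|w\|_V^2$ after equipping $V$ with the natural norm
$$\|w\|_V = \|\nabla w\|_{L^2}+\|\Re(w)\|_{L^2}+\|\Im(w)\|_{L^4}+\|\nabla\Re(w)\|_{L^{4/3}}.$$

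For continuity, observe that $E$ and $p$, once reduced to the expressions above, are finite sums of multilinear forms in the variables $(\nabla w,\Re(w),\Im(w),\nabla\Re(w))$ integrated over $\R^N$. All the Sobolev/interpolation embeddings used in Step 1 are continuous linear maps, so if $w_n\to w$ in $V$ then also $\Re(w_n)\to\Re(w)$ in every $L^q$ with $q\in[2,4]$ (in $N=2$) or $q\in[2,6]$ (in $N=3$), $\Im(w_n)\to\Im(w)$ in $L^4$ (and in $L^6$ in $N=3$), and $\nabla w_n\to\nabla w$ in $L^2$, $\nabla\Re(w_n)\to\nabla\Re(w)$ in $L^{4/3}$. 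Each monomial/bilinear contribution to $E(v_n)-E(v)$ and $p(v_n)-p(v)$ can then be bounded by Hölder's inequality times the $V$-distance $\|w_n-w\|_V$ times a power of $(\|w_n\|_V+\|w\|_V)$, yielding the desired continuity.

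The main obstacle is the dimension $N=3$ argument for $\Re(w)\in L^4$: one cannot get $L^4$ directly from $\nabla\Re(w)\in L^{4/3}$ (Sobolev only gives $L^{12/5}$), so one must genuinely combine $\nabla w\in L^2$ with the $L^2$-decay of $\Re(w)$ to invoke $\dot H^1\hookrightarrow L^6$, being careful that the additive constant in the homogeneous Sobolev inequality vanishes for the real and imaginary parts separately. Once this interpolation step is in place, the remaining verifications are routine applications of Hölder and the triangle inequality.
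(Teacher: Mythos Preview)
Your proof is correct and follows essentially the same approach as the paper: the same polynomial expansion of $(1-|1+w|^2)^2$, the same H\"older pairing for the momentum via \eqref{identityp}, and the same reduction of continuity to multilinearity. The paper's proof is considerably terser---it simply bounds $(1-|1+w|^2)^2 \le 8\Re(w)^2 + 4\Re(w)^4 + 4\Im(w)^4$ and asserts integrability---so your explicit justification of $\Re(w)\in L^4$ via Sobolev/Gagliardo--Nirenberg (especially the $\dot H^1\hookrightarrow L^6$ route in $N=3$, which is indeed necessary since $\nabla\Re(w)\in L^{4/3}$ alone only reaches $L^{12/5}$) fills in a step the paper leaves implicit.
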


\begin{proof}
Concerning the momentum $p$, we have already seen that, in view of \eqref{identityp} and H\"older's inequality, it is well-defined and continuous on $W(\R^N)$. For the energy $E$, we start with the identity
\begin{equation}
\label{lequalite}
(1 - |1 + w|^2)^2 = 4 \Re(w)^2 + 4 \Re(w) |w|^2 + |w|^4,
\end{equation}
for any $w \in V(\R^N)$, so that
$$(1 - |1 + w|^2)^2 \leq 8 \Re(w)^2 + 4 \Re(w)^4 + 4 \Im(w)^4,$$
and the l.h.s of this identity belongs to $L^1(\R^N)$, whenever $w$ belongs to $V(\R^N)$. Hence, $W(\R^N)$ is included in $\boE(\R^N)$, and the $L^1$-norm of the l.h.s of \eqref{lequalite} being continuous on $V(\R^N)$, $E$ is also continuous in $W(\R^N)$. 
\end{proof}

\begin{lemma}
\label{compactsupport} 
Assume $N = 2$ or $N = 3$ and let $v = 1 + w$ be in $W(\R^N)$. There exists a sequence of maps $(w_n)_{n \in \N}$ in $C_c^\infty(\R^N)$ such that $w_n \to w$ in $V(\R^N)$, as $n \to + \infty$,
$$p(v_n) = p(v), \ {\rm and} \ E(v_n) \to E(v), \ {\rm as} \ n \to + \infty.$$
In particular, given any $\p \geq 0$, there exists a sequence of maps $(w_n)_{n \in N}$ in $C_c^\infty(\R^N)$ such that 
$$p(1 + w_n) = \p, \ {\rm and} \ E(1 + w_n) \to E_{\min}(\p), \ {\rm as} \ n \to + \infty,$$
so that
\begin{equation}
\label{eminsmooth}
E_{\min}(\p) = \inf \{ E(1 + v), v \in C_c^\infty(\R^N), p(1 + v) = \p \}.
\end{equation}
\end{lemma}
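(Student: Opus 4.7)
The plan is a two-step procedure: first approximate $w$ in $V(\R^N)$ by smooth compactly supported maps, then correct by a small far-away bump that restores the exact value of $p$. Continuity of $E$ and $p$ on $W(\R^N)$, established in Lemma \ref{inject} (and, for $p$, through identity \eqref{identityp} and H\"older's inequality applied to $L^{4/3}\cdot L^4$ and $L^2\cdot L^2$), then yields the energy convergence and the transition to the infimum formulation \eqref{eminsmooth}.

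For the density in $V(\R^N)$, I would combine a smooth cut-off $\chi_R$ supported in $B(0,2R)$ with $\chi_R\equiv 1$ on $B(0,R)$ and $|\nabla\chi_R|\leq C/R$, together with a standard mollification. The delicate contribution is $\Re(w)\,\nabla\chi_R$ in the $L^{4/3}$-norm of $\nabla\Re(\chi_R w)$: H\"older on the annulus $R\leq|x|\leq 2R$ (of volume of order $R^N$), using $\Re(w)\in L^2$, yields a bound of order $R^{N/3-4/3}$, which vanishes as $R\to+\infty$ in both dimensions $N=2$ and $N=3$; the other pieces behave routinely. This produces $\tilde w_n\in C_c^\infty(\R^N)$ with $\tilde w_n\to w$ in $V(\R^N)$, and hence $a_n:=\p-p(1+\tilde w_n)\to 0$ by continuity of $p$.

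To restore the exact momentum, I fix once and for all some $\phi\in C_c^\infty(\R^N,\C)$ with $\alpha:=p(1+\phi)\neq 0$; a concrete example is $\phi(x)=f(x_1)g(x_\perp)+i\,x_1\,f(x_1)g(x_\perp)$ with real nonzero $f\in C_c^\infty(\R)$ and $g\in C_c^\infty(\R^{N-1})$, for which integration by parts yields $\alpha=-\tfrac12\|f\|_{L^2}^2\|g\|_{L^2}^2$. Two features of $\phi\mapsto p(1+\phi)$ on $C_c^\infty$-perturbations are crucial: it is quadratic ($p(1+\lambda\phi)=\lambda^2\alpha$ for $\lambda\in\R$), and it changes sign under the reflection $x_1\mapsto -x_1$, so the reflected map $\phi^*$ satisfies $p(1+\phi^*)=-\alpha$. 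Pick $\lambda_n=\sqrt{|a_n|/|\alpha|}\to 0$ and translation vectors $y_n\in\R^N$ with $|y_n|$ large enough that $\supp\phi(\cdot-y_n)$ is disjoint from $\supp\tilde w_n$, then set $\phi_n=\lambda_n\phi(\cdot-y_n)$ when $\sign(a_n)=\sign(\alpha)$ and $\phi_n=\lambda_n\phi^*(\cdot-y_n)$ otherwise. The disjointness of supports gives the additivity $p(1+\tilde w_n+\phi_n)=p(1+\tilde w_n)+p(1+\phi_n)=\p$. Setting $w_n:=\tilde w_n+\phi_n\in C_c^\infty(\R^N)$, and using that translation is isometric in each $L^p$-norm defining $V(\R^N)$ while $\lambda_n\to 0$, one gets $\phi_n\to 0$ in $V(\R^N)$, whence $w_n\to w$ in $V(\R^N)$ and $E(1+w_n)\to E(v)$ by continuity of $E$.

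Formula \eqref{eminsmooth} is then immediate: the inclusion $\{1+v:v\in C_c^\infty(\R^N),\ p(1+v)=\p\}\subset\{v\in W(\R^N):p(v)=\p\}$ gives the $\geq$ direction, while applying the preceding construction to an arbitrary minimizing sequence for $E_{\min}(\p)$ in $W(\R^N)$ and performing a diagonal extraction gives the reverse inequality. The main obstacle is the momentum correction: since $p$ must be preserved \emph{exactly}, density alone is not enough, and the trick is to exploit the fact that $p(1+\phi)$ is quadratic and sign-flexible on $C_c^\infty$-perturbations, which allows the corrective bump to be simultaneously small in every $V$-norm and to cancel the momentum error exactly.
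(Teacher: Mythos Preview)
Your argument is correct. The density step and the continuity of $E$ and $p$ are handled as in the paper (which in fact just quotes density without giving your cut-off details), and your additive bump correction, based on the quadratic identity $p(1+\lambda\phi)=\lambda^2\,p(1+\phi)$ together with the reflection $x_1\mapsto -x_1$ to flip the sign, restores the exact momentum while keeping $\phi_n\to 0$ in $V(\R^N)$.

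The paper takes a slightly different route for the correction step. When $p(v)\neq 0$ it does not add a bump at all: it simply rescales the approximants multiplicatively, setting $w_n=\sqrt{p(v)/p(\tilde v_n)}\,\tilde w_n$, which forces $p(1+w_n)=p(v)$ directly by the same quadratic homogeneity you exploit. Only in the degenerate case $p(v)=0$ does the paper fall back on an additive bump, and there it invokes the forthcoming Lemma~\ref{quasilineaire} rather than your explicit $\phi$. Your version has the virtue of treating both cases uniformly and being self-contained (no forward reference); the paper's multiplicative scaling is a touch slicker in the generic case since it avoids enlarging the support. Either way the conclusion and the derivation of \eqref{eminsmooth} are the same.
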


\begin{proof}
In view of continuity properties stated in Lemma \ref{inject} and the density of $C_c^\infty(\R^N)$ into $V(\R^N)$, given any $w \in V(\R^N)$, there exists a sequence of maps $(\tilde{w}_n)_{n \in N}$ such that
$$p(\tilde{v}_n) \to p(v), \ {\rm and} \ E(\tilde{v_n}) \to E(v), \ {\rm as} \ n \to + \infty,$$
where we have set $\tilde{v}_n = 1 + \tilde{w}_n$, and $v = 1 + w$. In order to prove the first assertion, we distinguish two cases. First, if $p(v) \neq 0$, then we may assume without loss of generality that $p(v) > 0$, so that by continuity, we have $p(\tilde{v}_n) > 0$, for $n$ sufficiently large. In this case, we set
$$w_n = \sqrt{\frac{p(v)}{p(\tilde{v}_n)}} \tilde{w}_n, \ {\rm and} \ v_n = 1 + w_n,$$
so that $p(v_n)=p(v)$, whereas $w_n \to w$ in $V(\R^N)$, as $n \to + \infty$, which yields the conclusion.

The case $p(v) = 0$ (which is actually not the most relevant one for our discussion) is treated by an approximation argument. Indeed, in this case, we may assume that $p(\tilde{v}_n) \neq 0$ for $n$ sufficiently large (otherwise, up to a subsequence, the conclusion holds for $w_n = \tilde{w}_n$). For given $\delta > 0$, we may construct (see Lemma \ref{quasilineaire} below) a map $f_\delta \in C_c^\infty(\R^N)$ such that $p(1 + f_\delta) = \delta$, $E(1 + f_\delta) \leq 2 |\delta|$ and $\| f_\delta \|_{V(\R^N)} \leq K \sqrt{|\delta|}$ for some universal constant $K$. Denoting $\check{f_\delta}(x_1, x_\perp) = f_\delta(-x_1, x_\perp)$, this construction is also possible for any $\delta < 0$. We then consider the map $w_n = \tilde{w}_n + f_{\delta_n}(\cdot - a_n)$, where $\delta_n = - p(1 + \tilde{w}_n) \to 0$, as $n \to + \infty$, and the point $a_n \in \R^N$ is chosen sufficiently large so that the supports of $\tilde{w}_n$ and $f_{\delta_n}(\cdot - a_n)$ do not intersect. Denoting $v_n = 1 + w_n$, we have $p(v_n) = p(\tilde{w}_n) + p(f_{\delta_n}) = 0$, $|E(v_n) - E(v)| \leq |E(\tilde{v}_n) - E(v)| + |E(1 + f_{\delta_n})| \to 0$, and $\| w_n - w \|_{V(\R^N)} \leq \| \tilde{w}_n - w \|_{V(\R^N)} + \| f_{\delta_n} \|_{V(\R^N)} \to 0$, as $n \to + \infty$, which completes the proof of the first assertion.

The two last assertions follow, once it is proved that
$$\Gamma^N(\p) = \{ w \in W(\R^N), \ {\rm s.t.} \ p(w) = \p \}$$
is not empty. This is again a consequence of Lemma \ref{quasilineaire} below.
\end{proof}

As a rather direct consequence of Lemma \ref{compactsupport}, we have

\begin{cor}
\label{assezdirect}
Let $\p > 0$. Then,
$$\limsup_{n \to +\infty} \big( E_{\min}^n(\p) \big) \leq E_{\min}(\p).$$
\end{cor}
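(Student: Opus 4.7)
The plan is to exhibit, for each $\eps > 0$, a test function on $\T_n^N$ (for $n$ large) whose energy is at most $E_{\min}(\p) + \eps$ and whose torus-momentum equals $\p$. The natural candidates are furnished by Lemma \ref{compactsupport}: by \eqref{eminsmooth} I may pick $w \in C_c^\infty(\R^N)$ with $p(1 + w) = \p$ and $E(1 + w) \leq E_{\min}(\p) + \eps$. For $n$ large enough, $\supp(w) \subset (-\pi n, \pi n)^N$ strictly, so the map $v = 1 + w$ descends to a smooth periodic function $\tilde v$ on $\T_n^N$. My goal is to show $\tilde v \in \Gamma_n^N(\p)$ and $E_n(\tilde v) = E(v)$, after which
$$E_{\min}^n(\p) \leq E_n(\tilde v) = E(v) \leq E_{\min}(\p) + \eps,$$
and passing first to $\limsup_{n \to +\infty}$ and then letting $\eps \to 0$ yields the statement.

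The check that $E_n(\tilde v) = E(v)$ is immediate, since $\nabla \tilde v = \nabla w$ and $1 - |\tilde v|^2 = -2\Re(w) - |w|^2$ are both supported inside $\supp(w) \subset (-\pi n, \pi n)^N$, so the integrals on $\T_n^N$ and $\R^N$ coincide. For the momentum, I would expand
$$p_n(\tilde v) = \frac{1}{2} \int_{\T_n^N} \langle i \partial_1 \tilde v, 1 \rangle + \frac{1}{2} \int_{\T_n^N} \langle i \partial_1 \tilde v, \tilde v - 1 \rangle.$$
The first integral equals $-\frac12 \int_{\T_n^N} \partial_1 \Im(w)$, which vanishes by periodicity (equivalently, because $w$ has compact support strictly inside the fundamental domain). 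The second integral coincides with $\frac12 \int_{\R^N} \langle i \partial_1 v, v - 1 \rangle = p(v) = \p$, again because the integrand is supported inside the torus. Thus $p_n(\tilde v) = \p$.

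For $N = 3$ this finishes the argument since $\Gamma_n^3(\p)$ imposes only the momentum constraint. For $N = 2$ there is the extra requirement $\tilde v \in \boS_n^0$; this is the step I expect to be slightly delicate, but not a serious obstacle. The sector $\boS_n^0$ is defined (following Almeida, as indicated before definition \eqref{supersecteur}) to capture the topologically trivial class, namely the one containing the constant map $1$. Since $\tilde v$ equals $1$ on the complement of a set compactly contained in the interior of $\Omega_n^2$, it is connected in $X_n^2$ to the constant map $1$ by the straight-line homotopy $s \mapsto 1 + s w$, which stays in any neighbourhood of $1$ for $s$ small and avoids the modulus-zero set exactly where $w$ does; a standard deformation argument (or direct appeal to the characterization of $\boS_n^0$ given in Lemma \ref{sectorisation}) then places $\tilde v$ in $\boS_n^0$. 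Combining these observations gives $\tilde v \in \Gamma_n^2(\p)$ for all $n$ sufficiently large, concluding the proof.
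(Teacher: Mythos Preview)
Your approach is essentially the paper's: take a compactly supported competitor from \eqref{eminsmooth} and view it as a periodic map on $\T_n^N$ for $n$ large. Your checks that $E_n(\tilde v)=E(v)$ and $p_n(\tilde v)=\p$ are correct and in fact more explicit than the paper's.

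The one weak spot is your justification that $\tilde v\in\boS_n^0$ in dimension two. The straight-line homotopy $s\mapsto 1+sw$ need not avoid zeros, and even if it did, membership in $\boS_n^0$ is not established via connectivity arguments in $X_n^2$ but only via continuity of the degree on the sublevel sets $E_{n,\Lambda}$ (Theorem~\ref{lemmalulu}); you would have to check the homotopy stays in a fixed sublevel set, which you have not done. Your appeal to Lemma~\ref{sectorisation} is also misplaced: that lemma gives a lifting \emph{assuming} $v\in\boS_n^0$, it does not characterise the sector. The correct (and much simpler) argument is the one the paper uses: verify the definition \eqref{supersecteur} directly. Since $\tilde v\equiv 1$ outside $B(0,R)$, for every $r$ with $|r|>R$ the restriction of $\tilde v$ to $I_{n,r}^j$ is the constant $1$, hence continuous, nonvanishing, and of degree $0$. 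The set $\{|r|>R\}\subset[-\pi n,\pi n]$ has measure $2\pi n-2R\geq 2\pi(n-n^{3/4})$ as soon as $n\geq (R/\pi)^{4/3}$, so $\tilde v\in T_n^{0,0}=\boS_n^0$.
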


\begin{proof}
In view of identity \eqref{eminsmooth}, given any $\delta > 0,$ there exists a map $v = 1 + w \in \{1 \} + C_c^\infty(\R^N)$ such that
$$E_{\min}(\p) \leq E(v) \leq E_{\min}(\p) + \delta, \ {\rm and} \ p(v) = \p.$$
Since $w$ has compact support in some ball $B(0, R)$, for some radius $R > 0$, the restriction of $w$ to the set $\Omega_n^N$ vanishes on the boundary $\partial \Omega_n^N$, provided $\pi n > R$, and hence defines a map in $H^1(\T_n^N)$. Adding to $w$ the constant function $1$, we have similarly $v \in H^1(\T_n^N, \C)$. Moreover, in the two-dimensional case, if $n \geq \big( \frac{R}{\pi} \big)^\frac{4}{3}$, then $w \in \boS_n^0$ (see definition \eqref{supersecteur} in Subsection \ref{luis} below). This implies
$$E(v) \geq E_{\min}^n(\p), \ \forall n \geq \Big( \frac{R}{\pi} \Big)^\frac{4}{3}.$$
Hence, 
$$E_{\min}^n(\p) \leq E_{\min}(\p) + \delta,$$
and the conclusion follows letting $\delta$ tends to zero.
\end{proof}

Next, we have

\begin{lemma}
\label{quasilineaire}
Let $N \geq 2$ and $\s >0$ be given. There exists a sequence of non-constant maps $(\gamma_n)_{n \in \N}$ in $\{ 1 \} + C_c^\infty(\R^N)$ such that
$$p(\gamma_n) = \s, \| \gamma_n \|_{W(\R^N)} \leq K \sqrt{\s}, \ {\rm and} \ E(\gamma_n) \to \sqrt{2} \s, \ {\rm as} \ n \to + \infty,$$
where $K$ is some universal constant. In particular, $E_{\min}(\p)\leq \sqrt{2} \p$, for any $\p \geq 0$, and the map $\p \mapsto \Xi(\p)$ is non-negative.
\end{lemma}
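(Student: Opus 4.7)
The idea is to implement the heuristic from Section 1.2: for $v = \varrho \exp(i\varphi)$ with $\inf \varrho$ close to $1$, near-equality in the pointwise bound $\sqrt{2}|p(v)| \inf \varrho \leq E(v)$ forces $\sqrt{2}\partial_1 \varphi \simeq 1-\varrho^2$. We will construct $\gamma_n$ enforcing this relation exactly. Fix non-trivial $\chi \in C_c^\infty(\R,\R)$ and $\psi \in C_c^\infty(\R^{N-1},\R)$, choose an exponent $\alpha > 3/(N-1)$, and for large integers $n$ and a parameter $\epsilon_n > 0$ to be determined, set
$$\varphi_n(x_1,x_\perp) := \epsilon_n \, \chi(x_1/n)\,\psi(x_\perp/n^\alpha), \quad \varrho_n := \sqrt{1 - \sqrt{2}\,\partial_1 \varphi_n}, \quad \gamma_n := \varrho_n \exp(i\varphi_n).$$
Since $\sqrt{2}\|\partial_1 \varphi_n\|_{L^\infty} = O(\epsilon_n/n)$ tends to $0$, for $n$ large $\varrho_n$ is smooth and $\geq 1/2$; outside the compact rescaled support, $\varrho_n \equiv 1$ and $\varphi_n \equiv 0$, so $\gamma_n \in \{1\} + C_c^\infty(\R^N)$ and is non-constant.

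Using $1 - \varrho_n^2 = \sqrt{2}\,\partial_1 \varphi_n$, formula \eqref{celemoment} gives
$$p(\gamma_n) = \frac{\sqrt{2}}{2} \int_{\R^N} (\partial_1 \varphi_n)^2 = \frac{\epsilon_n^2\, n^{\alpha(N-1)-1}}{\sqrt{2}} \|\chi'\|_{L^2}^2 \|\psi\|_{L^2}^2,$$
and we fix $\epsilon_n$ so that $p(\gamma_n) = \s$, which yields $\epsilon_n = O(\sqrt{\s}\, n^{-(\alpha(N-1)-1)/2})$. Substituting $\varrho_n^2 = 1 - \sqrt{2}\partial_1\varphi_n$ in the polar formula \eqref{PolE} of the energy and expanding,
$$E(\gamma_n) = \sqrt{2}\,\s - \frac{\sqrt{2}}{2} \int (\partial_1 \varphi_n)^3 + \frac{1}{2} \sum_{j=2}^N \int \varrho_n^2 (\partial_j \varphi_n)^2 + \frac{1}{2} \int |\nabla \varrho_n|^2.$$
Each remainder vanishes as $n \to \infty$ by direct rescaling: the cubic term is $O(\epsilon_n^3 n^{\alpha(N-1)-2})$, the perpendicular term is $O(\epsilon_n^2 n^{1+\alpha(N-3)}) = O(n^{2-2\alpha})$, and, since $|\nabla \varrho_n| \leq \sqrt{2}\,|\nabla \partial_1 \varphi_n|$, the last term is $O(n^{-2})$. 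Hence $E(\gamma_n) \to \sqrt{2}\,\s$.

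For the $W$-norm bound, write $w_n = \gamma_n - 1$ and $\eta_n = \sqrt{2}\partial_1\varphi_n$. The energy bound directly gives $\|\nabla w_n\|_{L^2}$ and $\|\eta_n\|_{L^2}$ of order $\sqrt{\s}$. From the pointwise Taylor expansion $|w_n|^2 = \varphi_n^2 + O(\eta_n^2 + \eta_n \varphi_n^2 + \varphi_n^4)$ together with the scaling identity $\int \varphi_n^4 = O(\s^2 n^{3 - \alpha(N-1)})$, the condition $\alpha(N-1) > 3$ yields $\|w_n\|_{L^4}^4 = O(\s^2 n^{-\delta})$ for some $\delta > 0$, hence $\|w_n\|_{L^4}^2 = O(\sqrt{\s})$ for $n$ large. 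Combining with $2\Re w_n = -\eta_n - |w_n|^2$ gives $\|\Re w_n\|_{L^2} = O(\sqrt{\s})$; similarly $\|\Im w_n\|_{L^4} \leq \|\varphi_n\|_{L^4} = O(\sqrt{\s})$, while $\|\nabla \Re w_n\|_{L^{4/3}}$ is controlled via H\"older applied to $\nabla \varrho_n \simeq -\nabla \partial_1 \varphi_n / \sqrt{2}$ and $\varphi_n \nabla \varphi_n$. Thus $\|\gamma_n\|_{W(\R^N)} \leq K\sqrt{\s}$ for $n$ large, with $K$ depending only on the fixed choices of $\chi$, $\psi$, $\alpha$.

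Since each $\gamma_n$ is admissible for the infimum defining $E_{\min}(\p)$, the final claims $E_{\min}(\p) \leq \sqrt{2}\p$ and $\Xi(\p) \geq 0$ for all $\p \geq 0$ are immediate from $E(\gamma_n) \to \sqrt{2}\,\s$. The main technical obstacle is the uniform $W$-norm estimate, which is what forces the strict inequality $\alpha(N-1) > 3$, rather than merely $\alpha > 1$ which would already suffice for energy convergence.
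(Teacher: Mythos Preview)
Your construction follows essentially the same route as the paper's---an anisotropic rescaling of a fixed compactly supported profile, with phase $\varphi_n$ and modulus chosen so that $1-\varrho_n^2 \simeq \sqrt{2}\,\partial_1\varphi_n$---and the momentum and energy computations are correct. However, there is a genuine error in the $W$-norm estimate: the strict inequality $\alpha(N-1) > 3$ you impose makes $\|\nabla\Re(w_n)\|_{L^{4/3}}$ diverge.

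Indeed, $\nabla\Re(w_n)$ contains the term $(\cos\varphi_n)\nabla\varrho_n$, and since $\varrho_n^2 = 1 - \sqrt{2}\,\partial_1\varphi_n$, one has $|\nabla\varrho_n| \sim |\nabla\partial_1\varphi_n|/\sqrt{2}$. A direct rescaling gives
\[
\|\partial_1^2\varphi_n\|_{L^{4/3}}^{4/3} = C\,\epsilon_n^{4/3}\, n^{\alpha(N-1)-5/3},
\]
and substituting $\epsilon_n^{4/3} \sim C\,\s^{2/3}\,n^{2(1-\alpha(N-1))/3}$ yields $\|\partial_1^2\varphi_n\|_{L^{4/3}}^{4/3} \sim C\,\s^{2/3}\, n^{(\alpha(N-1)-3)/3}$. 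When $\alpha(N-1) > 3$ this blows up as $n\to\infty$, so no bound of the form $K\sqrt{\s}$ can hold; H\"older against the support gives exactly the same exponent and does not help. The two constraints are in direct tension: your $\|\Im w_n\|_{L^4}$ bound (via $\|\varphi_n\|_{L^4}^4 \sim \s^2 n^{3-\alpha(N-1)}$) requires $\alpha(N-1)\geq 3$, while the $\|\nabla\Re w_n\|_{L^{4/3}}$ bound requires $\alpha(N-1)\leq 3$. The paper resolves this by taking exactly $\alpha = 3/(N-1)$, at which both pieces scale precisely like $\sqrt{\s}$. Your final diagnosis is therefore backwards: the $W$-norm does not force a strict inequality, it forces the equality $\alpha(N-1)=3$.
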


\begin{proof}
Recall that if $v = \varrho \exp i \varphi \in \{ 1 \} + C_c^\infty(\R^N)$, then the energy and momentum write
$$E(v) = \frac{1}{2} \int_{\R^N} \Big( |\nabla \varrho|^2 + |\nabla \varphi|^2 + \frac{\eta^2}{2} \Big) - \frac{1}{2} \int_{\R^N} \eta |\nabla \varphi|^2, \ {\rm and} \ p(v) = \frac{1}{2} \int_{\R^N} \eta \partial_1 \varphi.$$
As mentioned in the introduction, if one keeps only the quadratic terms, minimizing the energy for fixed momentum amounts to have $\sqrt{2} \partial_1 \varphi \simeq \eta$. For this simplified problem, the infimum is not achieved, and for minimizing sequences, transverse derivatives tend to zero, as well as the modulus. In view of this observation, we take an arbitrary map $\varphi \in \C_c^\infty(\R^N)$, and we construct by scaling and multiplication by a scalar a sequence which has the properties announced in the statements of Lemma \ref{quasilineaire}. Inspired by the scaling \eqref{scaling}, we introduce three parameters $\alpha > 1$, $\lambda > 1$ and $0 < \mu < 1$, and consider the map $\Gamma = \rho \exp i \Phi$, where the phase $\Phi$ is given by $\Phi(x) = \sqrt{2} \mu \varphi \big( \frac{x_1}{\lambda}, \frac{x_\perp}{\lambda^\alpha} \big)$ and the modulus $\rho$ by $\rho(x) = 1 - \frac{\mu}{\lambda} \partial_1 \varphi \big( \frac{x_1}{\lambda}, \frac{x_\perp} {\lambda^\alpha} \big)$. Notice in particular that, if $\mu \to 0$ and $\lambda \to + \infty$, then 
$$\sqrt{2} \partial_1 \Phi = 2(1 - \rho) \simeq 1 - \rho^2 \equiv \eta,$$
and that the transverse derivatives are of lower order. Next, we compute
\begin{equation}
\label{nallet}
\begin{split}
\int_{\R^N} |\partial_1 \Gamma|^2 = & \mu^2 \lambda^{(N - 1) \alpha - 1} \bigg( 2 \int_{\R^N} \Big( 1 - \frac{\mu}{\lambda} \partial_1 \varphi \Big)^2 (\partial_1 \varphi)^2 + \frac{1}{\lambda^2} \int_{\R^N} (\partial^2_1 \varphi)^2 \bigg),\\
\int_{\R^N} |\nabla_\perp \Gamma|^2 = & \mu^2 \lambda^{(N - 3) \alpha + 1} \bigg( 2 \int_{\R^N} \Big( 1 - \frac{\mu}{\lambda} \partial_1 \varphi \Big)^2 |\nabla_\perp \varphi|^2 + \frac{1}{\lambda^2} \int_{\R^N} |\nabla_\perp \partial_1 \varphi|^2 \bigg),\\
\frac{1}{4} \int_{\R^N} \eta^2 = & \mu^2 \lambda^{(N - 1) \alpha - 1} \int_{\R^N} \Big( 1 - \frac{\mu}{2 \lambda} \partial_1 \varphi \Big)^2 (\partial_1 \varphi)^2,
\end{split}
\end{equation}
whereas
\begin{equation}
\label{thion}
p(\Gamma) = \sqrt{2} \mu^2 \lambda^{(N - 1) \alpha - 1} \int_{\R^N} \Big(1 - \frac{\mu}{2\lambda} \partial_1 \varphi \Big) (\partial_1 \varphi)^2.
\end{equation}
For given $n \in \N$, we choose $\lambda = n$, and determine the parameter $\mu$ so that $p(\Gamma) = \s$. In particular, this choice leads to
$$\mu \sim \frac{\sqrt{\s}}{2^\frac{1}{4} \| \partial_1 \varphi \|_{L^2(\R^N)}} n^\frac{1 - (N-1) \alpha}{2}, \ {\rm as} \ n \to + \infty,$$
so that $\mu \to 0$ and $\frac{\mu}{\lambda} \to 0$, as $n \to + \infty$. In view of \eqref{nallet} and \eqref{thion}, choosing $\gamma_n = \Gamma$ (with the particular choices of $\lambda$ and $\mu$ above), we are led to
$$p(\gamma_n) = \s, \ {\rm and} \ E(\gamma_n) \sim 2 \mu^2 \lambda^{(N - 1) \alpha - 1} \int_{\R^N} (\partial_1 \varphi)^2 \sim \sqrt{2} \s, \ {\rm as} \ n \to + \infty,$$
for any $\alpha > 1$. In order to complete the proof of Lemma \ref{quasilineaire}, we now turn to the norm of the function $\gamma_n$ in $W(\R^N)$. Using the fact that $\frac{\mu}{\lambda} \to 0$, as $n \to + \infty$, we compute
$$|\Re(\gamma_n) - 1| \leq K_\varphi (|\rho - 1| + \Phi^2), |\Im(\gamma_n)| \leq K_\varphi |\Phi|, \ {\rm and} \ |\nabla \Re(\gamma_n)| \leq K_\varphi (|\nabla \rho| + |\Phi| |\nabla \Phi|),$$
where $K_\varphi$ is some constant possibly depending on $\varphi$, but not on $\alpha$ and $n$. Hence, we are led to
\begin{align*}
\int_{\R^N} |\Re(\gamma_n) - 1|^2 \leq & K_\varphi \mu^2 \lambda^{(N - 1) \alpha - 1} \bigg( \int_{\R^N} (\partial_1 \varphi)^2 + \mu^2 \lambda^2 \int_{\R^N} \varphi^4 \bigg),\\
\int_{\R^N} |\Im(\gamma_n)|^4 \leq & K_\varphi \mu^4 \lambda^{(N - 1) \alpha + 1} \int_{\R^N} \varphi^4,\\
\int_{\R^N} |\partial_1 \Re(\gamma_n)|^\frac{4}{3} \leq & K_\varphi \mu^\frac{4}{3} \lambda^{(N - 1) \alpha - \frac{5}{3}} \bigg( \int_{\R^N} (\partial^2_1 \varphi)^\frac{4}{3} + \mu^\frac{4}{3} \lambda^\frac{4}{3} \int_{\R^N} |\varphi|^\frac{4}{3} |\partial_1 \varphi|^\frac{4}{3} \bigg),\\
\int_{\R^N} |\nabla_\perp \Re(\gamma_n)|^\frac{4}{3} \leq & K_\varphi \mu^\frac{4}{3} \lambda^{(N - \frac{7}{3}) \alpha - \frac{1}{3}} \bigg( \int_{\R^N} |\nabla_\perp \partial_1 \varphi|^\frac{4}{3} + \mu^\frac{4}{3} \lambda^\frac{4}{3} \int_{\R^N} |\varphi|^\frac{4}{3} |\nabla_\perp \varphi|^\frac{4}{3} \bigg),
\end{align*}
so that, assuming that $\alpha = \frac{3}{N-1}$,
$$\| \gamma_n \|_{W(\R^N)} \sim K_\varphi \sqrt{\s}, \ {\rm as} \ n \to + \infty.$$
This concludes the proof of Lemma \ref{quasilineaire}. Indeed, the last assertions of this lemma are direct consequences of the definitions \eqref{eminent} and \eqref{discrep} of $E_{\min}$ and $\Xi$.
\end{proof}

\begin{lemma}
\label{lipschitzattitude}
We have, for any $\p, \q \geq 0$,
\begin{equation}
\label{yachvili}
|E_{\min}(\p) - E_{\min}(\q)| \leq \sqrt{2} |\p - \q|.
\end{equation}
In particular the function $\p \mapsto E_{\min}(\p)$ is Lipschitz continuous on $\R_+$, with Lipschitz's constant $\sqrt{2}$, and the function $p \mapsto \Xi(\p)$ is non-negative, non-decreasing and continuous on $\R_+$.
\end{lemma}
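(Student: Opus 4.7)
The plan is to prove the one-sided estimate
$$E_{\min}(\p + \s) \leq E_{\min}(\p) + \sqrt{2}\,|\s|, \qquad \forall\, \p \geq 0, \ \forall\, \s \geq -\p,$$
from which \eqref{yachvili} follows by applying the estimate once with the pair $(\p, \s)$ and once with $(\p+\s, -\s)$. Once the Lipschitz continuity of $E_{\min}$ is established, the stated properties of $\Xi$ follow at once: non-negativity is Lemma \ref{quasilineaire}, continuity is inherited from $E_{\min}$, and for $0 \leq \q \leq \p$ the bound $E_{\min}(\p) - E_{\min}(\q) \leq \sqrt{2}(\p - \q)$ rewrites as $\Xi(\p) \geq \Xi(\q)$.

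To prove the one-sided estimate I use the standard device of gluing a near-minimizer and a small bump carrying the excess momentum. Fix $\delta > 0$. By the density identity \eqref{eminsmooth} of Lemma \ref{compactsupport}, there exists $v = 1 + w$ with $w \in C_c^\infty(\R^N)$, $p(v) = \p$ and $E(v) \leq E_{\min}(\p) + \delta$. By Lemma \ref{quasilineaire} applied to the value $|\s|$, there exist non-constant maps $\gamma_n = 1 + g_n \in \{1\} + C_c^\infty(\R^N)$ with $p(\gamma_n) = |\s|$ and $E(\gamma_n) \to \sqrt{2}\,|\s|$ as $n \to + \infty$. When $\s < 0$, I replace $\gamma_n$ by its reflection $\check{\gamma}_n(x_1, x_\perp) = \gamma_n(-x_1, x_\perp)$: the energy is preserved but the momentum changes sign to $-|\s| = \s$.

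Pick then $a_n \in \R^N$ with $|a_n|$ large enough that $\supp w$ and $\supp g_n(\cdot - a_n)$ are disjoint, and set
$$\tilde{v}_n = 1 + w + g_n(\cdot - a_n) \in \{1\} + C_c^\infty(\R^N) \subset W(\R^N).$$
By construction, $\tilde{v}_n$ coincides with $v$ on $\supp w$, with $\gamma_n(\cdot - a_n)$ on $\supp g_n(\cdot - a_n)$, and with the constant $1$ elsewhere (where the integrands of $E$ and $p$ vanish identically). Together with translation invariance this yields the additivity
$$E(\tilde{v}_n) = E(v) + E(\gamma_n), \qquad p(\tilde{v}_n) = p(v) + p(\gamma_n) = \p + \s,$$
so $\tilde{v}_n$ is admissible for $E_{\min}(\p + \s)$. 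Letting $n \to + \infty$ and then $\delta \to 0$ proves the desired estimate.

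The only point that needs any verification is the additivity of $E$ and $p$ under this disjoint-support gluing, but this is immediate from \eqref{lequalite} and \eqref{identityp}: the quartic term $(1 - |1 + w + g_n(\cdot - a_n)|^2)^2$ and the bilinear form $\langle i \partial_1 \tilde{v}_n, \tilde{v}_n - 1 \rangle$ each split cleanly into two pieces, since every cross product between $w$ and $g_n(\cdot - a_n)$ vanishes pointwise. I do not anticipate any substantial obstacle beyond this routine bookkeeping.
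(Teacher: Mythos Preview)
Your proof is correct and follows essentially the same approach as the paper: glue a smooth compactly supported near-minimizer for $E_{\min}(\p)$ with a bump from Lemma \ref{quasilineaire} carrying momentum $|\s|$ (reflected in $x_1$ when $\s<0$), using disjoint supports to get additivity of $E$ and $p$. The paper separates the two inequalities $E_{\min}(\q)\leq E_{\min}(\p)+\sqrt{2}(\q-\p)$ and $E_{\min}(\p)\leq E_{\min}(\q)+\sqrt{2}(\q-\p)$ for $\q\geq\p$, while you package both into a single one-sided estimate with signed $\s$, but the argument is the same.
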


\begin{proof} 
We may assume without loss of generality that $\q \geq \p$. We show first that
\begin{equation}
\label{polystyrene}
E_{\min}(\q) \leq E_{\min}(\p) + \sqrt{2} (\q - \p).
\end{equation}
For that purpose, let $\delta > 0$ be given, and consider a map $v_\delta = 1 + w_\delta$, where $w_\delta \in C_c^\infty(\R^N)$, such that
$$p(v_\delta) = \p, \ {\rm and} \ E (v_\delta) \leq E_{\min}(\p) + \frac{\delta}{2}.$$
The existence of such a map $v_\delta$ follows from identity \eqref{eminsmooth} in Lemma \ref{compactsupport}. Set $\s = \q - \p$ and let $f_\delta$ be in $C_c^\infty(\R^N)$ such that $p(1 + f_\delta) = \s$, and $E(1 + f_\delta) \leq \sqrt{2} \s+ \frac{\delta}{2}$. The existence of such a map $f_\delta$ follows from Lemma \ref{quasilineaire}. We set
$$v = 1 + w_\delta + f_\delta(\cdot - a_\delta),$$
where $a_\delta \in \R^N$ is chosen so that the support of $w_\delta$ and $f_\delta(\cdot - a_\delta)$ do not intersect. In particular, we have
$$E(v) = E(v_\delta) + E(1 + f_\delta), \ {\rm and} \ p(v) = p(v_\delta) + p(1 + f_\delta) = \p + \s = \q.$$
It follows that
$$E_{\min}(\q) \leq E(v) = E(v_\delta) + \sqrt{2} \s+ \frac{\delta}{2} \leq E_{\min}(\p) + \sqrt{2}(\q - \p) + \delta,$$
which yields \eqref{polystyrene} in the limit $\delta \to 0$. Next we turn to the inequality
\begin{equation}
\label{polystyrene2}
E_{\min}(\p) \leq E_{\min}(\q) + \sqrt{2} (\q - \p).
\end{equation}
We similarly consider a map $\tilde{v}_\delta = 1 + \tilde{w}_\delta$, where $\tilde{w}_\delta \in C_c^\infty( \R^N)$, such that
$$p(\tilde{v}_\delta) = \q, \ {\rm and} \ E (\tilde{v}_\delta) \leq E_{\min}(\q) + \frac{\delta}{2}.$$
We set 
$$\tilde{v} = 1 + \tilde{w}_\delta + \check{f}_\delta(\cdot - b_\delta),$$
where the map $\check{f}_\delta$ is defined as in the proof of Lemma \ref{compactsupport}, and where $b_\delta$ is chosen so that the support of $\tilde{w}_\delta$ and $\check{f}_\delta(\cdot - b_\delta)$ do not intersect. Notice that we have
$$E \big( 1 + \check{f}_\delta(\cdot - b_\delta) \big) = E(1 + f_\delta) \leq \sqrt{2} \s + \frac{\delta}{2},$$
and
$$p \big(1 + \check{f}_\delta(\cdot - b_\delta) \big) = - p(1 + f_\delta) = - \s,$$
so that $p(\tilde{v}) = p(\tilde{v}_\delta) - \s = \p$. Hence we have
$$E_{\min}(\p) \leq E(\tilde{v}) = E(\tilde{v}_\delta) + E \big( 1 + \check{f}_\delta(\cdot - b_\delta) \big) \leq E_{\min}(\q) + \sqrt{2} \s + \delta,$$
and the conclusion \eqref{polystyrene2} follows letting $\delta$ tends to $0$. This completes the proof of Lemma \ref{lipschitzattitude}, the last assertion being a consequence of \eqref{yachvili}.
\end{proof} 

\begin{lemma}
\label{concavite}
Let $\p, \q \geq 0$. Then,
$$E_{\min} \Big( \frac{\p + \q}{2} \Big) \geq \frac{E_{\min}(\p) + E_{\min}(\q)}{2}.$$
\end{lemma}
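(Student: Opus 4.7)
The plan is to establish the midpoint-concavity inequality by constructing, from a near-minimizer at momentum $(\p+\q)/2$, a pair of admissible test configurations with momenta $\p$ and $\q$ whose combined energy is close to $2\,E_{\min}((\p+\q)/2)$. Fix $\delta>0$ and use Lemma \ref{compactsupport} to select $v^* = 1 + w^* \in \{1\} + C_c^\infty(\R^N)$ with $p(v^*) = (\p+\q)/2$ and $E(v^*) \leq E_{\min}((\p+\q)/2) + \delta$. The goal is to produce $v_\p, v_\q \in W(\R^N)$ satisfying $p(v_\p) = \p$, $p(v_\q) = \q$, and
$$E(v_\p) + E(v_\q) \leq 2\,E(v^*) + o_\delta(1);$$
coupled with $E_{\min}(\p) + E_{\min}(\q) \leq E(v_\p) + E(v_\q)$ and $\delta \to 0$, this gives the claim.

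The simplest candidate construction uses the wave packets of Lemma \ref{quasilineaire}: pick sequences $(f_n)$ and $(\check f_n)$ in $C_c^\infty(\R^N)$ with $p(1+f_n) = (\q-\p)/2$, $p(1+\check f_n) = -(\q-\p)/2$, and $E(1+f_n), E(1+\check f_n) \to \sqrt{2}(\q-\p)/2$, and set $v_\p = 1 + w^* + \check f_n(\cdot - b_n)$, $v_\q = 1 + w^* + f_n(\cdot - a_n)$ on pairwise disjoint supports. Disjoint-support additivity of $E$ and $p$ gives the correct marginal momenta and
$$E(v_\p) + E(v_\q) = 2\,E(v^*) + E(1+f_n) + E(1+\check f_n) \longrightarrow 2\,E(v^*) + \sqrt{2}(\q-\p).$$
This only recovers the Lipschitz bound of Lemma \ref{lipschitzattitude} and overshoots midpoint concavity by exactly the wave-packet cost $\sqrt{2}(\q-\p)$, so the naive attempt does not suffice.

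The main obstacle is the cancellation of this leading-order $\sqrt{2}(\q-\p)$ term. I expect this requires replacing the additive wave-packet correction by an anisotropic rescaling of $v^*$ itself: setting $v^*_\pm(x_1, x_\perp) = v^*(x_1/\lambda_\pm, x_\perp/\mu_\pm)$, the momentum scales as $\mu_\pm^{N-1}(\p+\q)/2$, so the choices $\mu_-^{N-1} = 2\p/(\p+\q)$ and $\mu_+^{N-1} = 2\q/(\p+\q)$ produce the required momenta, while $\lambda_\pm$ is optimized to minimize the rescaled energy. The sharp estimate $E(v^*_+) + E(v^*_-) \leq 2\,E(v^*) + o_\delta(1)$ rests on fine variational information: a direct computation shows that the rescaling alone leaves a positive second-order remainder controlled by the potential $\int(1-|v^*|^2)^2$, so the argument will likely require either a small compensating wave packet whose quartic self-interaction absorbs this remainder, or an approximation through the torus minimizers supplied by Proposition \ref{existnn}, for which the Pohozaev identity $\int_{\R^N} |\partial_1 v^*|^2 = E(v^*)$ of Lemma \ref{encorepoho} holds exactly and forces the amplitude-phase balance that makes the rescaling energy-neutral to leading order. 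Taking $\delta \to 0$ at the end then yields the midpoint-concavity inequality.
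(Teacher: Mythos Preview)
Your proposal does not complete the proof. You correctly diagnose that the additive wave-packet construction only recovers the Lipschitz bound, and you are right that anisotropic rescaling of $v^*$ leaves an uncontrolled remainder from the potential term $\int (1-|v^*|^2)^2$. But the proposed fixes are circular or speculative: invoking Pohozaev identities for $v^*$ requires $v^*$ to be an actual solution of \eqref{TWc}, not merely a near-minimizer, and the existence of such solutions is established in the paper \emph{after} the concavity of $E_{\min}$ (concavity feeds into Corollary~\ref{subadditif} and Lemma~\ref{gueri}, which are used in the proof of Theorem~\ref{principaltheo}). Approximating through torus minimizers has the same circularity issue, and the ``compensating wave packet'' idea is left entirely unspecified.

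The missing idea is a reflection construction that costs no energy at all. Given a near-minimizer $v$ with $p(v)=(\p+\q)/2$, reflect it across a hyperplane $\{x_N=a\}$: define $T_a^+v$ to equal $v$ on $\{x_N\geq a\}$ and to equal $v$ composed with the reflection on $\{x_N<a\}$, and define $T_a^-v$ symmetrically. Then
\[
E(T_a^+v)+E(T_a^-v)=2E(v,\{x_N\geq a\})+2E(v,\{x_N\leq a\})=2E(v),
\]
and likewise $p(T_a^+v)+p(T_a^-v)=2p(v)$, since the reflection is in a direction \emph{transverse} to $\partial_1$. The map $a\mapsto p(T_a^+v)$ is continuous, equals $0$ at $a=+\infty$ and $2p(v)=\p+\q$ at $a=-\infty$, so by the intermediate value theorem one may choose $a$ with $p(T_a^+v)=\p$ and hence $p(T_a^-v)=\q$. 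This immediately gives
\[
E_{\min}(\p)+E_{\min}(\q)\leq E(T_a^+v)+E(T_a^-v)=2E(v)\leq 2E_{\min}\Big(\frac{\p+\q}{2}\Big)+2\delta,
\]
and letting $\delta\to 0$ finishes the proof. The point is that reflection redistributes the existing energy and momentum without any additive cost, which is exactly what your rescaling and wave-packet approaches cannot achieve.
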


\begin{proof}
The main idea is to construct comparison maps using a reflexion argument. For that purpose, for a map $f \in W(\R^N)$, and $a \in \R$, we consider the map $T_a^\pm f$ defined by $T_a^\pm f = f \circ P_a^\pm$, where $P_a^+$ (resp. $P_a^-$) restricted to the set $\Gamma_a^+ = \{ x = (x_1, \ldots, x_N) \in \R^N, x_N \geq a \}$ (resp. the set $\Gamma_a^- = \{ x = (x_1, \ldots, x_N) \in \R^N, x_N \leq a \}$) is the identity, whereas its restriction to the set $\Gamma_a^-$ (resp. $\Gamma_a^+$) is the symmetry with respect to the hyperplane of equation $x_n = a$. In coordinates, this reads as
\begin{align*}
T_a^+ f(x_1, \ldots, x_N) = & f(x_1, \ldots, x_N) \ {\rm if} \ x_N \geq a,\\
T_a^+ f(x_1, \ldots, x_N) = & f(x_1, \ldots, 2 a - x_N) \ {\rm if} \ x_N \leq a.
\end{align*}
One similarly defines $T_a^- f$, reversing the inequalities at the end of each line. We verify that $T_a^\pm f$ belongs to $W(\R^N)$, and that
\begin{equation}
\label{eq:tincteur}
E(T_a^\pm f) = 2E(f, \Gamma_a^\pm), \ {\rm and} \ p(T_a^\pm f) = 2 \bigg( \frac{1}{2} \int_{\Gamma_a^\pm} \langle i \partial_1 f, f - 1 \rangle \bigg).
\end{equation}
We also notice that the function $a \mapsto p(T_a^+ f)$ is continuous and, by Lebesgues's theorem, tends to zero, as $a \to + \infty$, and to $2 p(f)$, as $a \to - \infty$. Therefore, it follows by continuity that, for every $\alpha \in (0, p(f))$, there exists a number $a \in \R$ such that
\begin{equation}
\label{depart0}
p(T_a^+ f) = 2 \alpha, \ {\rm and} \ p(T_a^- f) = 2 (p(f) - \alpha).
\end{equation}
Next, we consider, for any $\p, \q \geq 0$ and any $\delta > 0$, a map $v \in W(\R^N)$ such that
$$p(v) = \frac{\p + \q}{2}, \ {\rm and} \ E(v) \leq E_{\min} \Big( \frac{\p + \q}{2} \Big) + \frac{\delta}{2}.$$
Invoking \eqref{depart0} for $f = v$ and $\alpha = \frac{p}{2}$, we may find some $a \in \R$ such that
$$p(T_a^+ v) = \p, \ {\rm and} \ p(T_a^- v) = \q.$$
It then follows from \eqref{eq:tincteur} that
$$E_{\min}(\p) \leq E(T_a^+ v) \leq 2 E(v, \Gamma_a^+),$$
and
$$E_{\min}(\q) \leq E(T_a^- v ) \leq 2 E(v, \Gamma_a^-).$$
Adding these relations, we obtain
$$E_{\min}(\p) + E_{\min}(\q) \leq 2 E(v, \Gamma_a^-) + 2 E(v, \Gamma_a^+) = 2 E(v) \leq 2 E_{\min} \Big( \frac{\p + \q}{2} \Big) + \delta.$$
The conclusion follows, letting $\delta$ tends to $0$.
\end{proof}

\begin{cor}
\label{concdecroit}
The function $\p \mapsto E_{\min}(\p)$ is concave and non-decreasing on $\R_+$.
\end{cor}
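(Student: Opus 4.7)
The plan is to deduce both assertions directly from the two preceding lemmas, without introducing new constructions. For concavity I would rely on Lemma \ref{concavite} together with the continuity provided by Lemma \ref{lipschitzattitude}; for monotonicity I would combine concavity with two trivial bounds on $E_{\min}$.

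For the concavity statement, Lemma \ref{concavite} gives midpoint concavity, i.e.
$$E_{\min}\Big(\tfrac{\p+\q}{2}\Big) \geq \tfrac{1}{2}\big(E_{\min}(\p)+E_{\min}(\q)\big), \quad \forall \p,\q \geq 0.$$
Lemma \ref{lipschitzattitude} ensures $E_{\min}$ is continuous on $\R_+$. It is then a classical exercise to iterate midpoint concavity to obtain
$$E_{\min}\big(\lambda \p + (1-\lambda) \q\big) \geq \lambda E_{\min}(\p) + (1-\lambda) E_{\min}(\q),$$
first for every dyadic rational $\lambda \in [0,1]$, and then for every $\lambda \in [0,1]$ by density and continuity. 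This yields full concavity on $\R_+$.

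For the fact that $E_{\min}$ is non-decreasing, I would use two elementary observations together with the concavity just established: first, $E_{\min}(0)=0$, as witnessed by the constant map $v\equiv 1$, which belongs to $W(\R^N)$ and satisfies $E(1)=p(1)=0$; second, $E_{\min}\geq 0$ on $\R_+$, since the energy functional $E$ is non-negative. Suppose, for contradiction, that $E_{\min}(\p)>E_{\min}(\q)$ for some $0 \leq \p < \q$, and set $s = (E_{\min}(\p)-E_{\min}(\q))/(\q-\p)>0$. Concavity forces, for every $r \geq \q$,
$$E_{\min}(r) \leq E_{\min}(\p) - s(r-\p),$$
so $E_{\min}(r) \to -\infty$ as $r \to +\infty$, contradicting $E_{\min}\geq 0$.

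There is really no obstacle here: the substantive analytic work was carried out in Lemmas \ref{concavite} and \ref{lipschitzattitude}, and the present statement is a soft consequence. The only minor care needed is to check that $E_{\min}(0)=0$ is genuinely achieved (which is immediate from $v\equiv 1$), since without this basepoint the concavity argument would not prevent $E_{\min}$ from being identically $+\infty$ somewhere or from being decreasing.
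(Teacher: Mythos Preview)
Your proof is correct and follows essentially the same approach as the paper: midpoint concavity (Lemma~\ref{concavite}) plus continuity (Lemma~\ref{lipschitzattitude}) gives concavity, and concavity together with non-negativity on $\R_+$ gives monotonicity. The paper states these two standard implications in a single sentence without elaboration, whereas you spell out the dyadic-density argument and the ``concave function that decreases must go to $-\infty$'' contradiction; your remark about $E_{\min}(0)=0$ is harmless but not actually needed for the monotonicity argument you give, since $E_{\min}\geq 0$ alone suffices.
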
 

\begin{proof}
Continuous functions $f$ satisfying the inequality
$$f \Big( \frac{\p + \q}{2} \Big) \geq \frac{f(\p) + f(\q)}{2}$$
are concave. Similarly, concave non-negative functions on $\R_+$ are non-decreasing, so that, in view of Lemmas \ref{lipschitzattitude} and \ref{concavite}, $E_{\min}$ is concave and non-decreasing on $\R_+$.
\end{proof}

\begin{proof}[Proof of Theorem \ref{proemin} completed]
Combining Lemma \ref{lipschitzattitude} and Corollary \ref{concdecroit}, all the statements in Theorem \ref{proemin} are proved, except the fact that $\Xi(\p)$ tends to $+ \infty$, as $\p \to +\infty$ (the existence of $\p_0$ being a consequence of the properties of $\Xi$). This fact is a direct consequence of the vortex solutions constructed in dimension two in \cite{BethSau1}, and of the vortex ring solutions constructed in dimension three in \cite{BetOrSm1, Chiron1}. As a matter of fact, these results show that
\begin{equation}
\label{E-infini-2}
E_{\min}(\p) \leq 2 \pi \ln(\p) + K, \ {\rm as} \ \p \to + \infty,
\end{equation}
in case $N = 2$, respectively
\begin{equation}
\label{E-infini-3}
E_{\min}(\p) \sim \pi \sqrt{\p} \ln(\p), \ {\rm as} \ \p \to + \infty,
\end{equation}
in case $N = 3$, so that $\Xi (\p) \sim \sqrt{2} \p$, as $\p \to + \infty$.
\end{proof}

\begin{remark}
We actually believe that the arguments in \cite{BethSau1} might lead to the estimate
$$E_{\min}(\p) \sim 2 \pi \ln(\p), \ {\rm as} \ \p \to + \infty.$$
\end{remark}

%%%%%%%%%%%%%%%%%%%%%%%%%%%%%%%%%%%%%%%
\subsection{Proof of Lemma \ref{speed}}
%%%%%%%%%%%%%%%%%%%%%%%%%%%%%%%%%%%%%%%

Let $\p > 0$ be given, and assume that $E_{\min}$ is achieved by a solution $u = u_\p$ of \eqref{TWc} of speed $c = c(u_\p)$. Equation \eqref{TWc}, which is the Euler-Lagrange equation for the constrained minimization problem $E_{\min}$, may be recast in a more abstract form as
$$c dp(u_\p) = dE(u_\p),$$
where $dp$ and $dE$ denote the Fr\'ech\^et differentials of $p$ and $E$ given, for any $\psi \in C_c^\infty(\R^N)$, by
$$dp(u_\p)(\psi) = \int_{\R^N} \langle i \partial_1 u_\p,\psi \rangle, \ {\rm and} \ dE(u_\p)(\psi) = - \int_{\R^N} \langle \Delta u_\p + u_\p (1 - |u_\p|^2), \psi \rangle.$$
We claim that $dp(u_\p) \neq 0$. Indeed, if we take formally $\psi_0 = u_\p - 1$, then $dp(u_\p)(\psi_0) = 2 \p \neq 0$. By density of smooth functions with compact support in $V(\R^N)$, the claim follows. Let therefore $\psi_1$ be a function in $C_c^\infty(\R^N)$ such that 
$$dp(u_\p)(\psi_1) = 1.$$
We consider the curve $\gamma : \R \mapsto W(\R^N)$ defined by $\gamma(t) = u_\p + t \psi_1$. Since the functions $E$ and $\p$ are smooth on $W(\R^N)$, we have
$$p(\gamma(t)) = \p + \s, \ {\rm where} \ \s = t + p(\psi_1) t^2,$$
$$E(\gamma(t)) = E_{\min}(\p) + c t + \underset{t \to 0}{\boO} (t^2),$$
so that
$$E_{\min}(\p + \s) - E_{\min}(\p) \leq E(\gamma(t)) - E_{\min}(\p) \leq c \s + \underset{\s \to 0}{\boO} (\s^2).$$
Conclusion \eqref{lrspeed} follows, letting $\s \to 0^\pm$.

%%%%%%%%%%%%%%%%%%%%%%%%%%%%%%%%%%%%%%%
\subsection{Proof of Lemma \ref{gueri}}
%%%%%%%%%%%%%%%%%%%%%%%%%%%%%%%%%%%%%%%

We consider in this subsection two numbers $0 \leq \p_1 < \p_2$ and assume throughout this section that $E_{\min}$ is affine on the interval $(\p_1, \p_2)$ that is
\begin{equation}
\label{hyplemme3}
E_{\min} \big( \theta \p_1 + (1 - \theta) \p_2 \big) = \theta E_{\min}(\p_1) + (1 - \theta) E_{\min}(\p_2), \forall \theta \in [0, 1].
\end{equation}
The first observation is
 
\begin{lemma}
\label{constantc}
Assume that assumption \eqref{hyplemme3} holds and that, for some $0 \leq \p_1 < \p < \p_2$ the infimum $E_{\min}(\p)$ is achieved by some function $u_\p$. Then, we have
\begin{equation}
\label{speedo}
c(u_\p) = \frac{E_{\min}(\p_2) - E_{\min}(\p_1)}{\p_2 - \p_1}.
\end{equation}
Moreover, $0 < c(u_p) < \sqrt{2}$.
\end{lemma}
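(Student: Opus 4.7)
The plan rests on three ingredients already established in the paper: Lemma \ref{speed}, Corollary \ref{cordepoho}, and the Pohozaev identity \eqref{theverypoho}.

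First, to prove the formula \eqref{speedo}, I observe that the hypothesis \eqref{hyplemme3} means that $E_{\min}$ is affine on $(\p_1,\p_2)$ with slope
$$s := \frac{E_{\min}(\p_2)-E_{\min}(\p_1)}{\p_2-\p_1}.$$
Consequently, for any $\p \in (\p_1,\p_2)$, both one-sided derivatives exist and coincide with $s$, that is
$$\frac{d^+}{d\p}\bigl(E_{\min}(\p)\bigr) = \frac{d^-}{d\p}\bigl(E_{\min}(\p)\bigr) = s.$$
Lemma \ref{speed} then sandwiches $c(u_\p)$ between these two derivatives, forcing $c(u_\p) = s$.

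Second, I prove the strict upper bound $c(u_\p) < \sqrt{2}$. The bound $c(u_\p) \leq \sqrt{2}$ is automatic, since $s \leq \sqrt{2}$ by the Lipschitz estimate \eqref{start} in Theorem \ref{proemin}. Suppose towards a contradiction that $c(u_\p) = \sqrt{2}$. Since $u_\p$ achieves $E_{\min}(\p)$, we have
$$\Sigma(u_\p) = \sqrt{2}\,p(u_\p) - E(u_\p) = \sqrt{2}\,\p - E_{\min}(\p) = \Xi(\p) \geq 0$$
by Theorem \ref{proemin}. Corollary \ref{cordepoho} then forces $u_\p$ to be constant, hence $p(u_\p) = 0$, which contradicts $\p > \p_1 \geq 0$.

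Finally, I prove the strict lower bound $c(u_\p) > 0$. Assume by contradiction that $c(u_\p) = 0$. Then the Pohozaev identity \eqref{theverypoho} becomes
$$\frac{N-2}{2}\int_{\R^N}|\nabla u_\p|^2 + \frac{N}{4}\int_{\R^N}(1-|u_\p|^2)^2 = 0.$$
In dimension $N = 3$, both terms on the left-hand side are non-negative, forcing $u_\p$ to be constant with $|u_\p|=1$, and therefore $p(u_\p) = 0$, a contradiction. In dimension $N = 2$, the identity reduces to $\int (1-|u_\p|^2)^2 = 0$, so $|u_\p| \equiv 1$ by smoothness of $u_\p$ (Lemma \ref{tarquini10}); equation \eqref{TWc} with $c=0$ then collapses to $\Delta u_\p = 0$, and since $u_\p$ is bounded (Lemma \ref{tarquini10}), Liouville's theorem applied to each real component shows that $u_\p$ is constant, again contradicting $\p > 0$. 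The only nontrivial point in the argument is the two-dimensional sonic threshold analysis, which is resolved cleanly by invoking the sign of $\Xi(\p)$ and Corollary \ref{cordepoho}.
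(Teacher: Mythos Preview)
Your proof is correct. The derivation of \eqref{speedo} via Lemma \ref{speed} and the exclusion of $c(u_\p)=\sqrt{2}$ via Corollary \ref{cordepoho} match the paper's approach (your version is in fact slightly more direct for the upper bound: you invoke $\Xi(\p)\geq 0$ immediately, whereas the paper first argues that the derivative must equal $\sqrt{2}$ on all of $(0,\p_2)$ before reaching the same contradiction).

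The genuine difference lies in how you rule out $c(u_\p)=0$. The paper argues globally: if the slope $s$ vanishes, then by concavity and monotonicity $E_{\min}$ is constant on $[\p,+\infty)$, hence bounded, contradicting the asymptotics \eqref{E-infini-2}--\eqref{E-infini-3} (which rely on the vortex constructions of \cite{BethSau1,BetOrSm1,Chiron1}). Your route is purely local and PDE-based: the Pohozaev identity \eqref{theverypoho} with $c=0$ forces $\int(1-|u_\p|^2)^2=0$ (and, for $N=3$, also $\int|\nabla u_\p|^2=0$), after which you finish in dimension two with Liouville's theorem for bounded harmonic functions. Your argument is more self-contained, avoiding any appeal to the large-$\p$ behaviour of $E_{\min}$; the paper's argument, by contrast, exploits the variational curve and would generalise without change to any minimisation problem with the same concavity and growth, independent of the precise structure of \eqref{TWc}.
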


\begin{proof}
Identity \eqref{speedo} is a direct consequence of Lemma \ref{speed}. For the second statement, we first notice that, in view of \eqref{start}, \eqref{speedo} and the monotonicity of $E_{\min}$, we have the inequality $0 \leq c(u_p) \leq \sqrt{2}$. Next, assuming that $c(u_\p) = 0$, the concavity of $E_{\min}$ ensures that
$$0 \leq E_{\min}(\q) \leq E_{\min}(\p_1), \forall \q \geq \p_1,$$
so that $E_{\min}$ is bounded on $\R_+$, which leads to a contradiction with \eqref{E-infini-2} or \eqref{E-infini-3} according to the value of the dimension. On the other hand, assuming that $c(u_\p) = \sqrt{2}$, in view of the concavity of $E_{\min}$, the left and right derivatives of $E_{\min}$ are non-decreasing, so that, since they are bounded by $\sqrt{2}$, they are equal to $\sqrt{2}$ on $(0, \p_2)$. By integration, we obtain that $E_{\min}(\p) = \sqrt{2} \p$ on $(0, \p_2)$, which yields a contradiction with Corollary \ref{cordepoho}. 
\end{proof}

\begin{lemma}
\label{murereflexion}
Assume that assumption \eqref{hyplemme3} holds and that, for some $0 \leq \p_1 < \p < \p_2$, the infimum $E_{\min}(\p)$ is achieved by $u_\p$. Let $\s$ be such that $(\p - \s, \p + \s) \subset (\p_1, \p_2)$. Then, there exists some number $a(\s) \in \R$ such that
\begin{equation}
\label{murrefex}
E \big( T_{a(\s)}^\pm u_\p \big) = E_{\min}(\p \pm \s), \ {\rm and} \ p \big( T_{a(\s)}^\pm u_\p \big) = \p \pm \s, 
\end{equation}
so that $E_{\min}(\p \pm \s)$ is achieved by $T_{a(\s)}^\pm u_\p$.
Moreover the map $\s \mapsto a(\s)$ is decreasing.
\end{lemma}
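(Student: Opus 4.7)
The plan is to reuse the reflection construction from the proof of Lemma \ref{concavite}, but now exploiting the affine hypothesis \eqref{hyplemme3} to force equality in the minimization inequalities. For a fixed minimizer $u_\p$, I consider the function
$$\phi(a) \equiv p(T_a^+ u_\p) = \int_{\Gamma_a^+} \langle i \partial_1 u_\p, u_\p - 1 \rangle.$$
Because $u_\p \in W(\R^N)$, the integrand is in $L^1(\R^N)$ (cf.\ \eqref{identityp} and Lemma \ref{inject}), so by dominated convergence $\phi$ is continuous, $\phi(a) \to 0$ as $a \to +\infty$, and $\phi(a) \to 2 p(u_\p) = 2 \p$ as $a \to -\infty$. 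For $\s$ satisfying $(\p - \s, \p + \s) \subset (\p_1, \p_2)$, the target value $\p + \s$ lies strictly between $0$ and $2\p$, so the set $\{ a \in \R : \phi(a) \geq \p + \s \}$ is nonempty and bounded above. I define
$$a(\s) = \sup \{ a \in \R : \phi(a) \geq \p + \s \},$$
and continuity of $\phi$ gives $\phi(a(\s)) = \p + \s$.

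Next I check \eqref{murrefex}. Summing the contributions on the two half-spaces,
$$p(T_{a(\s)}^+ u_\p) + p(T_{a(\s)}^- u_\p) = 2 p(u_\p) = 2 \p, \quad E(T_{a(\s)}^+ u_\p) + E(T_{a(\s)}^- u_\p) = 2 E(u_\p) = 2 E_{\min}(\p),$$
so by construction $p(T_{a(\s)}^- u_\p) = \p - \s$. Since $T_{a(\s)}^\pm u_\p \in W(\R^N)$ (as already noted in the proof of Lemma \ref{concavite}), the definition of $E_{\min}$ gives the two inequalities
$$E(T_{a(\s)}^+ u_\p) \geq E_{\min}(\p + \s), \qquad E(T_{a(\s)}^- u_\p) \geq E_{\min}(\p - \s).$$
Adding them and using the affinity hypothesis \eqref{hyplemme3} applied at $\theta = \tfrac12$ with endpoints $\p - \s$ and $\p + \s$ (both inside $(\p_1,\p_2)$), the sum on the right equals $2 E_{\min}(\p)$, matching the left-hand side. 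Both inequalities must therefore be equalities, which is \eqref{murrefex}.

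Finally, the monotonicity of $\s \mapsto a(\s)$ follows from the level-set definition: if $\s_1 < \s_2$, then $\phi(a(\s_2)) = \p + \s_2 > \p + \s_1$, and by continuity of $\phi$ there is some $\delta > 0$ such that $\phi(a) \geq \p + \s_1$ on $[a(\s_2), a(\s_2)+\delta]$. Hence $a(\s_1) \geq a(\s_2) + \delta > a(\s_2)$, so $\s \mapsto a(\s)$ is strictly decreasing.

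No single step is delicate; the only point requiring minor care is justifying the limits of $\phi$ at $\pm\infty$ and its continuity, which rely on the $L^1$-integrability of $\langle i \partial_1 u_\p, u_\p - 1\rangle$ (guaranteed by $u_\p \in W(\R^N)$) together with dominated convergence. The crucial structural input is the affinity assumption \eqref{hyplemme3}: without it the sum $E_{\min}(\p+\s) + E_{\min}(\p-\s)$ would only be bounded above by $2 E_{\min}(\p)$ (by concavity), and we could not conclude equality in each of the two minimization inequalities separately.
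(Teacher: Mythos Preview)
Your proof is correct and follows the same reflection argument as the paper's proof of Lemma~\ref{murereflexion}, which also invokes the construction from Lemma~\ref{concavite} and uses the affinity hypothesis~\eqref{hyplemme3} to upgrade the inequalities $E_{\min}(\p\pm\s)\le E(T_{a(\s)}^\pm u_\p)$ to equalities. The one place where you are actually more careful than the paper is the monotonicity of $\s\mapsto a(\s)$: the paper simply asserts it, whereas your level-set definition $a(\s)=\sup\{a:\phi(a)\ge \p+\s\}$ makes the strict decrease immediate from continuity of $\phi$.
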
 

\begin{proof}
We proceed as in Lemma \ref{concavite}. In view of \eqref{depart0}, we choose the value $a(\s)$ so that
$$p \big( T_{a(\s)}^\pm u_\p \big) = \p \pm \s,$$
which yields a decreasing function $\s \mapsto a(\s)$. It follows from \eqref{eq:tincteur} that
\begin{equation}
\label{pasegalite}
E_{\min}(\p \pm \s) \leq E(T_{a(\s)}^\pm u_\p) = 2 E(u_\p, \Gamma_{a(\s)}^{\pm}).
\end{equation}
Adding the relations for the $+$ and $-$ signs, we obtain
$$E_{\min}(\p - \s) + E_{\min}(\p + \s) \leq 2 E(u_\p, \Gamma_{a(\s)}^-) + 2 E(u_\p, \Gamma_{a(\s)}^+) = 2 E(u_\p) = 2 E_{\min}(\p).$$
On the other hand, by assumption \eqref{hyplemme3}, $E_{\min}(\p - \s) + E_{\min}(\p + \s) = 2 E(u_\p)$, which is only possible if we have equality in \eqref{pasegalite}, i.e. if we have identities \eqref{murrefex}.
\end{proof}

\begin{cor}
\label{yesindeed}
Assume that assumption \eqref{hyplemme3} holds and that, for some $0 \leq \p_1 < \p < \p_2$, the infimum $E_{\min}(\p)$ is achieved by some map $u_\p$.\\
i) There exist real numbers $a_1 \neq a_2$, such that
$$\partial_N u_\p =0, \on \R^{N-1} \times (a_1, a_2).$$
ii) The infimum $E_{\min}(\p)$ is achieved for some $0 \leq \p_1 < \p < \p_2$.
\end{cor}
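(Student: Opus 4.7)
The plan is to exploit the one-parameter family of reflected minimizers produced by Lemma \ref{murereflexion}. For $\s$ varying in the interval $(-\s_0, \s_0)$ with $\s_0 = \min\{\p - \p_1, \p_2 - \p\} > 0$, Lemma \ref{murereflexion} provides a real number $a(\s)$ such that $T_{a(\s)}^+ u_\p \in W(\R^N)$ satisfies $p(T_{a(\s)}^+ u_\p) = \p + \s$ and $E(T_{a(\s)}^+ u_\p) = E_{\min}(\p + \s)$. Each such reflected map is therefore a constrained minimizer of $E$ at momentum $\p + \s \in (\p_1, \p_2)$, hence a weak solution of \eqref{TWc} for some Lagrange multiplier. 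By Lemma \ref{constantc}, this multiplier is the same number $c = c(u_\p) \in (0, \sqrt{2})$ for every $\s$. Standard elliptic regularity for the semilinear equation \eqref{TWc}, or equivalently Proposition \ref{analyticity}, then promotes each $T_{a(\s)}^+ u_\p$ to a smooth (in fact real-analytic) function on all of $\R^N$.

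The central observation is that this smoothness is incompatible with a nonzero normal derivative of $u_\p$ on the reflection hyperplane $\{x_N = a(\s)\}$. Indeed, from the definition of $T_{a(\s)}^+$, the one-sided limits of $\partial_N T_{a(\s)}^+ u_\p$ at $x_N = a(\s)$ equal $\partial_N u_\p(\cdot, a(\s))$ from above and, by the chain rule applied to the reflection $x_N \mapsto 2a(\s) - x_N$, $-\partial_N u_\p(\cdot, a(\s))$ from below. For the reflected map to be $C^1$ across the hyperplane, one must therefore have $\partial_N u_\p(\cdot, a(\s)) \equiv 0$ on $\R^{N-1}$. Since $a \mapsto p(T_a^+ u_\p)$ is continuous (by dominated convergence) and $\s \mapsto a(\s)$ is decreasing by Lemma \ref{murereflexion} and injective (since $p(T_{a(\s)}^+ u_\p) = \p + \s$), the map $\s \mapsto a(\s)$ is continuous and strictly decreasing, so its image on $(-\s_0, \s_0)$ is an open interval $(a_1, a_2)$ with $a_1 < a_2$. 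Hence $\partial_N u_\p \equiv 0$ on the slab $\R^{N-1} \times (a_1, a_2)$, establishing (i). For (ii), the construction itself exhibits a minimizer $T_{a(\s)}^+ u_\p$ for every momentum $\q = \p + \s$ in the open interval $(\p - \s_0, \p + \s_0) \subset (\p_1, \p_2)$, so the infimum is achieved for each such $\q$.

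The main technical obstacle is the regularity upgrade from ``weak constrained minimizer'' to ``smooth solution of \eqref{TWc} across the reflection hyperplane''. This passage requires two ingredients: first, that all the reflected maps satisfy the same equation \eqref{TWc}, which is precisely the content of Lemma \ref{constantc}; and second, that any finite-energy weak solution of \eqref{TWc} is smooth, which follows from standard elliptic bootstrap arguments or from Proposition \ref{analyticity}. Once this is in hand, the vanishing of $\partial_N u_\p$ on the hyperplane is automatic, and the remainder of the argument is a routine continuity-and-sweep exercise.
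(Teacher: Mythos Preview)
Your proof is correct and follows the same route as the paper's: the reflected maps $T_{a(\s)}^{\pm} u_\p$ from Lemma \ref{murereflexion} are minimizers, hence smooth solutions of \eqref{TWc} (the paper cites Lemma \ref{tarquini10} rather than Proposition \ref{analyticity}, but either works since $c<\sqrt{2}$ by Lemma \ref{constantc}), which forces $\partial_N u_\p = 0$ along each reflection hyperplane $\{x_N = a(\s)\}$. Your argument is in fact more detailed than the paper's terse ``letting $\s$ vary''; the one point worth tightening is the continuity of $\s \mapsto a(\s)$, which does not follow from injectivity and monotonicity alone but is most cleanly obtained by observing that for \emph{every} $a$ with $p(T_a^+ u_\p)\in(\p-\s_0,\p+\s_0)$ the argument of Lemma \ref{murereflexion} applies, and this set of $a$'s is open by the continuity of $a\mapsto p(T_a^+ u_\p)$.
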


\begin{proof}
Since for every $\s$ such that $(\p - \s, \p + \s) \subset (\p_1, \p_2)$, the infimum $E_{\min}(\p \pm \s)$ is achieved by $T_{a(\s)}^\pm u_\p$, the map $T_{a(\s)}^\pm u_\p$ solves \eqref{TWc} for some $c \geq 0$, and hence is smooth by Lemma \ref{tarquini10}. Since $T_{a(\s)}^\pm u_\p$ is obtained through a reflexion of $u_\p$ along the hyperplane of equation $x_n = a(\s)$, $T_{a(\s)}^\pm u_\p$ is of class $C^1$ if and only if 
$$\partial_N u_\p = 0 \on \R^{N - 1} \times \{ a(\s) \}.$$
The proof of statement i) follows letting $\s$ varies. For statement ii), we notice that the infimum $E_{\min}(\q)$ is achieved for every $\q \in (\p - \s, \p + \s)$, where $\s$ is such that $(\p - \s, \p + \s) \subset (\p_1, \p_2)$. The conclusion then follows from a continuity argument.
\end{proof}

\begin{proof}[Proof of Lemma \ref{gueri} completed]
We assume \eqref{hyplemme3}, and for the sake of contradiction, that for some $0 \leq \p_1 < \p < \p_2$, the infimum $E_{\min}(\p)$ is achieved by some function $u_\p$. In view of Lemma \ref{constantc}, we have $c < \sqrt{2}$. The analyticity of $\u_p$, provided by Proposition \ref{analyticity}, and Corollary \ref{yesindeed} yields
$$\partial_N u_\p = 0 \on \R^N,$$
so that $u_\p$ does not depend on the variable $x_N$. In particular, the energy would be infinite, unless $u_\p$ is constant, that is $\p = 0$, which gives the desired conclusion.
\end{proof}

%%%%%%%%%%%%%%%%%%%%%%%%%%%%%%%%%%%%%%%%%%%%
\subsection{Proof of Theorem \ref{symetrie}}
%%%%%%%%%%%%%%%%%%%%%%%%%%%%%%%%%%%%%%%%%%%%

The proof of Theorem \ref{symetrie} readily follows the arguments of Lopes in \cite{Lopes1}, and involves, as in the quoted reference, symmetries with respect to hyperplanes. If $e$ is a unit vector in $\R^N$, and if $W_e$ denotes the orthogonal to $e$, i.e. $W_e = (\R e)^\perp$, we consider, for any $a \in \R$, the affine hyperplane $V_{a, e} \equiv \{ a e \} + W_e$ containing $a e$ and parallel to $W_e$. Let $S_{a, e}$ be the symmetry with respect to $V_{a, e}$. The main ingredient in the proof of Theorem \ref{symetrie} is

\begin{lemma}
\label{honolulu}
Let $e$ be a unit vector such that $\langle e, e_1 \rangle = 0$, where $e_1$ is a unit vector on the $x_1$-axis. There exists a number $a \in \R$ such that $u_\p$ is symmetric with respect to the hyperplane $V_{a, e}$, that is 
$$u_\p = u_\p \circ S_{a, e}.$$
\end{lemma}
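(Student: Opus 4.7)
The strategy, as in Lopes \cite{Lopes1}, is to combine a reflection/symmetrization argument with the real-analyticity of $u_\p$ established in Proposition \ref{analyticity}. Because $e \perp e_1$, the reflection $S_{a,e}$ leaves the distinguished direction $x_1$ invariant and hence preserves the derivative $\partial_1$ appearing in the definitions of both the energy density $e(v)$ and the integrand $\langle i\partial_1 v, v-1\rangle$ of the momentum. This invariance is what will eventually allow us to build reflected competitors that still satisfy the constraint $p=\p$.

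First I would introduce, in analogy with the maps $T_a^\pm f$ of Lemma \ref{concavite}, the reflected maps $T_{a,e}^\pm u_\p$ obtained by keeping $u_\p$ on the half-space $\Gamma_{a,e}^\pm = \{x \in \R^N, \pm(\langle x, e\rangle - a) \geq 0\}$ and replacing it by $u_\p \circ S_{a,e}$ on the opposite half-space. Using the decay estimates of Proposition \ref{superdecay} (which make $\langle i\partial_1 u_\p, u_\p-1\rangle$ and $e(u_\p)$ integrable) and dominated convergence, the functions
$$a \mapsto p(T_{a,e}^+ u_\p) = \int_{\Gamma_{a,e}^+} \langle i \partial_1 u_\p, u_\p - 1 \rangle, \qquad a \mapsto E(T_{a,e}^+ u_\p) = 2 E(u_\p, \Gamma_{a,e}^+)$$
are continuous in $a$, and tend respectively to $2\p$ and $2E_{\min}(\p)$ as $a \to -\infty$, and to $0$ as $a \to +\infty$. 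By the intermediate value theorem, I can thus pick $a \in \R$ such that $p(T_{a,e}^+ u_\p) = \p$, which automatically gives $p(T_{a,e}^- u_\p) = 2p(u_\p) - p(T_{a,e}^+ u_\p) = \p$ as well.

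Next, since both $T_{a,e}^\pm u_\p$ belong to $W(\R^N)$ and satisfy the constraint $p = \p$, one has $E(T_{a,e}^\pm u_\p) \geq E_{\min}(\p)$. Adding these two inequalities yields
$$2 E_{\min}(\p) \leq E(T_{a,e}^+ u_\p) + E(T_{a,e}^- u_\p) = 2 E(u_\p, \Gamma_{a,e}^+) + 2 E(u_\p, \Gamma_{a,e}^-) = 2 E(u_\p) = 2 E_{\min}(\p),$$
so that both inequalities are actually equalities. Consequently, $T_{a,e}^+ u_\p$ and $T_{a,e}^- u_\p$ are themselves minimizers of $E_{\min}(\p)$, hence solutions of \eqref{TWc} for some speeds $c^\pm$, and therefore smooth and in fact real-analytic by Proposition \ref{analyticity}.

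The final step, which I expect to be the delicate point, is to pass from this partial reflection symmetry to the full identity $u_\p = u_\p \circ S_{a,e}$. Since $T_{a,e}^+ u_\p$ coincides with $u_\p$ on the open half-space $\Gamma_{a,e}^+$, and since both solve \eqref{TWc} there, with speeds $c^+$ and $c(u_\p)$ respectively, subtracting the two equations gives $i(c^+ - c(u_\p)) \partial_1 u_\p = 0$ on $\Gamma_{a,e}^+$. If $c^+ \neq c(u_\p)$, then $\partial_1 u_\p$ would vanish on an open set, hence everywhere on $\R^N$ by real-analyticity, so that $u_\p$ would be independent of $x_1$; but then its energy would be infinite unless it is constant, contradicting the non-triviality of $u_\p$. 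Therefore $c^+ = c(u_\p)$, so that $T_{a,e}^+ u_\p$ and $u_\p$ are two real-analytic solutions of the same equation \eqref{TWc} agreeing on the open half-space $\Gamma_{a,e}^+$; by analytic continuation they coincide on all of $\R^N$, which is precisely the claim $u_\p = u_\p \circ S_{a,e}$.
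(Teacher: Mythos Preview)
Your proposal is correct and follows essentially the same approach as the paper: choose $a$ so that both reflected maps $T_{a,e}^\pm u_\p$ have momentum $\p$, observe by the energy splitting that both must be minimizers, hence solutions of \eqref{TWc} and therefore real-analytic, and conclude by analytic continuation from the open half-space where they agree with $u_\p$.

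The only difference is that your final step is slightly more elaborate than needed. You insert an argument to force $c^+ = c(u_\p)$ before invoking analytic continuation, apparently so that the two functions solve the \emph{same} equation. The paper skips this: once $u_\p$ and $T_{a,e}^+ u_\p$ are both real-analytic on $\R^N$ and coincide on the open half-space $\Gamma_{a,e}^+$, the identity theorem for real-analytic functions gives $u_\p \equiv T_{a,e}^+ u_\p$ directly, regardless of which equation each one solves. (Your speed-matching argument is not wrong, and it does incidentally justify that $c^+ < \sqrt{2}$ so that Proposition \ref{analyticity} applies to $T_{a,e}^+ u_\p$; the paper handles this implicitly via the fact that any minimizer of $E_{\min}(\p)$ has subsonic speed, by Lemma \ref{speed} together with Corollary \ref{cordepoho}.)
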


\begin{proof}
We may assume without loss of generality that $e = e_N$ is a unit vector on the $x_N$-axis and use the notations and results in the proof of Lemma \ref{concavite}. If follows from \eqref{depart0} applied to $\alpha = \frac{\p}{2}$, that there exists some $a \in \R$ such that 
\begin{equation}
\label{tahiti}
p(T_a^+ u_\p) = p(T_a^- u_\p) = 2 \alpha = \p.
\end{equation}
In view of \eqref{eq:tincteur}, we have
$$E(T_a^\pm u_\p) = 2 E(u_\p, \Gamma_a^\pm),$$
so that by summation, we obtain
$$E(T_a^+ u_\p) + E(T_a^- u_\p) = 2 E(u_\p),$$
whereas by minimality of $u_\p$ and \eqref{tahiti}, we have $E(T_a^\pm u_\p) \geq E(u_\p)$. It follows that 
$$E(T_a^+ u_\p) = E(T_a^- u_\p) = E(u_\p) = E_{\min}(\p),$$
so that $T_a^+ u_p$ and $T_a^- u_p$ are minimizers for $E_{\min}(\p)$, hence solutions to \eqref{TWc} for some $c \geq 0$, and therefore real-analytic. Since they coincide with $u_\p$ on $\Gamma_a^+$ (resp. $\Gamma_a^-$) , they coincide everywhere, that is $u_\p = T_a^+ u_\p = T_a^- u_\p$, which yields the desired conclusion.
\end{proof}

\begin{remark}
An alternative approach would be to use a unique continuation principle for elliptic operators instead of analyticity.
\end{remark}

\begin{proof}[Proof of Theorem \ref{symetrie} completed]
In the two-dimensional case, Lemma \ref{honolulu} (with $e = e_2$) already provides the proof, since in that case, it is sufficient to translate the origin to the point $a =(0, a_2)$ to conclude. In dimension three, the argument is a little more involved. We first apply Lemma \ref{honolulu} with $e = e_2$, to find some number $a_2$ such that
$$u_\p = u_\p \circ S_{a_2, e_2}.$$
Next we consider some angle $\alpha \in [0, \frac{\pi}{2}]$, and the unit vector $e_\alpha = \cos(\alpha) e_2 + \sin(\alpha) e_3$. Applying Lemma \ref{honolulu} with $e_\alpha$, we find a number $a_\alpha \in \R$ such that
$$u_\p = u_\p \circ S_{a_\alpha, e_\alpha}.$$
Notice that $R_{2 \alpha} = S_{a_{\alpha}, e_\alpha} \circ S_{a_2, e_2}$ is an affine rotation with axis $D$ parallel to $e_1$, and angle $2 \alpha$. In particular, if $\boG$ denotes the group generated by the rotation $R_{2 \alpha}$, then
\begin{equation}
\label{rotor}
u_\p = u_\p \circ g, \forall g \in \boG.
\end{equation}
If $\alpha$ is chosen so that $\frac{\alpha}{\pi}$ is irrational, then $2 \alpha \Z + 2 \pi \Z$ is dense in $\R$. In particular, denoting $R_\beta$, the affine rotation with axis $D$ and angle $\beta$, there exists a sequence of rotations $(g_n)_{n \in \N}$ in $\boG$ such that
$$g_n(x) \to R_\beta(x), \ {\rm as} \ n \to + \infty,$$
for any $x \in \R^3$. Taking the limit $n \to + \infty$ in \eqref{rotor}, we are led to the identity $u_\p = u_\p \circ R_\beta$, so that $u_\p$ is axisymmetric around axis $D$. This completes the proof of Theorem \ref{symetrie} in dimension three.
\end{proof}

%%%%%%%%%%%%%%%%%%%%%%%%%%%%%%%%%%%%%%%%%%%%%%%%%%%%%%$$$$$$$$$
\subsection{An upper bound for $E_{\min}(\p)$ in dimension two}
%%%%%%%%%%%%%%%%%%%%%%%%%%%%%%%%%%%%%%%%%%%%%%%%%%%%%%%%%%%%%%%

In this subsection, we prove

\begin{lemma}
\label{bornesupn2}
Assume $N = 2$. There exists some universal constant $K_0$ such that we have the upper bound
\begin{equation}
\label{estimsupE2}
E_{\min}(\p) \leq \sqrt{2} \p - \frac{48 \sqrt{2}}{\boS_{KP}^2} \p^3 + K_0 \p^4,
\end{equation}
for any $\p$ sufficiently small. Here, $\boS_{KP}$ denotes the action $S(w)$ of the ground-state solutions $w$ to equation \eqref{SW}.
\end{lemma}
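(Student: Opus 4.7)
I would build an explicit trial function $v_\eps \in W(\R^2)$ modeled on the inverse of the scaling \eqref{scaling} with profile given by a KP ground state, compute $p(v_\eps)$ and $E(v_\eps)$ to leading order in $\eps$, and then tune $\eps$ so that $p(v_\eps)=\p$ exactly. Fix once and for all a ground state $w$ of \eqref{SW}. Testing $S$ against the scaling $w_\sigma(x_1,x_2)=\sigma w(\sqrt\sigma x_1,\sigma x_2)$ gives $S(w_\sigma)=\sigma^{3/2}E_{KP}(w)+\frac{\sigma^{1/2}}{2}\int w^2$, and enforcing $\partial_\sigma S(w_\sigma)|_{\sigma=1}=0$ yields $E_{KP}(w)=-\frac{1}{6}\int w^2$. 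In particular,
$$\boS_{KP}=\frac{1}{3}\int_{\R^2}w^2,\qquad \int_{\R^2}(\partial_1 w)^2+\int_{\R^2}(\partial_1^{-1}\partial_2 w)^2=\frac{1}{3}\int_{\R^2}w^3-\frac{1}{3}\int_{\R^2}w^2.$$

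\textbf{Trial function.} For $\eps>0$ small, set
$$\eta_\eps(y)=\frac{\eps^2}{6}\,w\Bigl(\eps y_1,\tfrac{\eps^2 y_2}{\sqrt 2}\Bigr),\qquad \varrho_\eps=\sqrt{1-\eta_\eps},\qquad \varphi_\eps(y_1,y_2)=\frac{1}{\sqrt 2}\int_{-\infty}^{y_1}\eta_\eps(s,y_2)\,ds,$$
so that $\sqrt 2\,\partial_1\varphi_\eps=\eta_\eps$ identically, and put $v_\eps=\varrho_\eps\exp(i\varphi_\eps)$. Since $\|\eta_\eps\|_{L^\infty}=O(\eps^2)$, one has $\varrho_\eps\geq 1/2$ for $\eps$ small, and the polynomial decay of $w$ together with the scaling place $v_\eps$ in $W(\R^2)$.

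\textbf{Computation.} By \eqref{Polp} and the change of variable $z=(\eps y_1,\eps^2 y_2/\sqrt 2)$,
$$p(v_\eps)=\frac{1}{2\sqrt 2}\int_{\R^2}\eta_\eps^2=\frac{\eps}{72}\int_{\R^2}w^2=\frac{\eps\,\boS_{KP}}{24}.$$
The algebraic identity $\frac{1}{2}\varrho_\eps^2(\partial_1\varphi_\eps)^2+\frac{1}{4}\eta_\eps^2=\frac{1}{2}\eta_\eps^2-\frac{1}{4}\eta_\eps^3$ then gives, exactly,
$$E(v_\eps)-\sqrt 2\,p(v_\eps)=-\frac{1}{4}\int_{\R^2}\eta_\eps^3+\frac{1}{2}\int_{\R^2}|\nabla\varrho_\eps|^2+\frac{1}{2}\int_{\R^2}\varrho_\eps^2(\partial_2\varphi_\eps)^2.$$
After change of variables each term yields an $\eps^3$-order contribution plus $O(\eps^5)$ remainder coming from the Taylor expansion of $(1-\eta_\eps)^{\pm 1}$ and the $\partial_2\varrho_\eps$ term, and summing them produces
$$E(v_\eps)-\sqrt 2\,p(v_\eps)=\frac{\sqrt 2\,\eps^3}{288}\Bigl(-\tfrac{1}{3}\int w^3+\int (\partial_1 w)^2+\int(\partial_1^{-1}\partial_2 w)^2\Bigr)+O(\eps^5).$$
The Pohozaev identity from Step 1 collapses the bracket to $-\tfrac{1}{3}\int w^2=-\boS_{KP}$, so that
$$E(v_\eps)-\sqrt 2\,p(v_\eps)=-\frac{\sqrt 2\,\boS_{KP}}{288}\,\eps^3+O(\eps^5).$$

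\textbf{Conclusion and main obstacle.} Choosing $\eps=24\p/\boS_{KP}$ forces $p(v_\eps)=\p$ exactly and turns the leading term into $-48\sqrt 2\,\p^3/\boS_{KP}^2$, while the $O(\eps^5)=O(\p^5)$ remainder is absorbed into $K_0\,\p^4$ for $\p\leq\p_1$ small; minimality then gives $E_{\min}(\p)\leq E(v_\eps)$, which is exactly \eqref{estimsupE2}. The delicate part is the uniform control of all the $O(\eps^5)$ corrections and the verification that $v_\eps\in W(\R^2)$: both rest on the $|w(x)|\lesssim 1/(1+|x|^2)$ decay of the KP lump together with analogous estimates for $\partial_1^{-1} w$ (well-defined because $w$ has vanishing $x_1$-mean, as can be read off from \eqref{SW}) and for $\partial_1^{-1}\partial_2 w$, so as to ensure each quadratic and cubic integrand is integrable with the remainder genuinely of higher order in $\eps$.
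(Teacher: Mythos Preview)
Your proposal is correct and follows essentially the same strategy as the paper: both build a trial map from a KP ground state via the scaling \eqref{scaling}, compute $E$ and $p$ to leading order in $\eps$, and invoke the identities $E_{KP}(w)=-\tfrac16\int w^2$, $\boS_{KP}=\tfrac13\int w^2$ to obtain the coefficient $48\sqrt2/\boS_{KP}^2$.

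The one difference worth recording is your ansatz $\varrho_\eps=\sqrt{1-\eta_\eps}$ in place of the paper's $\varrho_\epsilon=1-\tfrac{t\epsilon^2}{2}w$. Because your choice makes $1-\varrho_\eps^2=\eta_\eps$ and $\sqrt2\,\partial_1\varphi_\eps=\eta_\eps$ hold \emph{exactly}, the momentum is $p(v_\eps)=\tfrac{1}{2\sqrt2}\int\eta_\eps^2$ with no correction terms; this lets you dispense with the paper's auxiliary parameter $t$ (determined there by an implicit equation and expanded as $t_\p=\tfrac16+O(\p^2)$) and match $p(v_\eps)=\p$ by choosing $\eps$ alone. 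The payoff is a slightly cleaner bookkeeping and a remainder of order $O(\p^5)$ rather than $O(\p^4)$. The technical input you flag---integrability and decay of $\partial_1^{-1}w$ and $\partial_1^{-1}\partial_2 w$ so that $v_\eps\in W(\R^2)$ and the error terms are genuinely lower order---is exactly what the paper establishes via the Fourier representation $\widehat{v}(\xi)=-\tfrac{i}{2}\tfrac{\xi_1}{|\xi|^2+\xi_1^4}\widehat{w^2}(\xi)$ and the kernel bounds of \cite{Graveja7}, so nothing is missing.
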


\begin{proof}
We construct a comparison map using the formal scaling given by \eqref{scaling}. We start with a non-trivial solution $w$ to equation \eqref{SW}, which is a scalar function, and construct phase and modulus for the comparison map. For that purpose, we recall that, by \cite{deBoSau1, deBoSau2}, $w$ is a smooth function and belongs to $L^q(\R^2)$ for any $q > 1$. Moreover, its gradient belongs to $L^q(\R^2)$ for any $q > 1$ as well. Its Fourier transform satisfies the relation 
\begin{equation}
\label{SL-fou}
\widehat{w}(\xi) = \frac{1}{2} \frac{\xi_1^2}{|\xi|^2 + \xi_1^4} \widehat{w^2}(\xi).
\end{equation}
As a consequence of \eqref{SL-fou}, there exists a smooth function $v$ which solves $w = \partial_1 v$. Indeed, at least formally, the distribution $v$ whose Fourier transform is given by 
\begin{equation}
\label{def-v}
\widehat{v}(\xi) = - \frac{i}{2} \frac{\xi_1}{|\xi|^2 + \xi_1^4} \widehat{w^2}(\xi), 
\end{equation}
has the desired property. More precisely, the necessary properties of $v$ can be deduced from properties of the kernel $H_0$ given by
$$\widehat{H_0}(\xi) = \frac{\xi_1}{|\xi|^2 + \xi_1^4}.$$
First, we notice that, by Theorems 3 and 4 of \cite{Graveja7}, the kernel $H_0$ belongs to $L^q(\R^2)$ for any $2 < q < + \infty$. Since the function $w^2$ as well as any of its derivatives is a smooth function in $L^1(\R^2) \cap L^\infty(\R^2)$, the map $v$ defined in \eqref{def-v} is also smooth and belongs to $L^q(\R^2)$ for any $q > 2$, and then relations \eqref{SL-fou} and \eqref{def-v} yield as expected $w = \partial_1 v$. We also deduce that the gradient of $v$ belongs to $L^q(\R^2)$ for any $q > 1$. Indeed, the first order partial derivatives of $v$ are given by
$$\partial_j \widehat{v}(\xi) = \frac{1}{2} \frac{\xi_1 \xi_j}{|\xi|^2 + \xi_1^4} \widehat{w^2}(\xi).$$
By Propositions 1 and 2 of \cite{Graveja7}, the kernels $H_j$ defined by
$$\widehat{H_j}(\xi) = \frac{\xi_1 \xi_j}{|\xi|^2 + \xi_1^4}$$
belong to $L^q(\R^2)$ for any $1 < q < 3$, if $j = 1$, $1 < q < \frac{3}{2}$, otherwise. Since $w^2$ is in $L^q(\R^2)$ for any $q \geq 1$, the function $\nabla v$ does belong to $L^q(\R^2)$ for any $q > 1$. 

We are now in position to define a comparison map $w_\epsilon$ making use of the formal scaling \eqref{scaling}
\begin{equation}
\label{ansatz}
w_\epsilon = \varrho_\epsilon \exp i \varphi_\epsilon, 
\end{equation}
where $\varrho_\epsilon$ and $\varphi_\epsilon$ are the functions defined by 
\begin{equation}
\label{rho-epsilon}
\varrho_\epsilon(x_1, x_2) = 1 - \frac{t \epsilon^2}{2} w \bigg( \epsilon x_1, \frac{\epsilon^2}{\sqrt{2}} x_2 \bigg),
\end{equation}
and
\begin{equation}
\label{phi-epsilon}
\varphi_\epsilon(x_1, x_2) = \frac{t \epsilon}{\sqrt{2}} v \bigg( \epsilon x_1, \frac{\epsilon^2}{\sqrt{2}} x_2 \bigg).
\end{equation}
Here, $t$ denotes some positive parameter to be fixed later, whereas the constant $\epsilon$ is chosen so that the identity
\begin{equation}
\label{p-epsilon}
\p = \frac{\epsilon}{72} \int_{\R^2} w^2
\end{equation}
holds. We claim that the map $w_\epsilon$ belongs to $W(\R^2)$. Indeed, the integrability properties for $v$ and $w$ first show that the functions $\varrho_\epsilon - 1$, $\nabla \varrho_\epsilon$ and $\nabla \varphi_\epsilon$ belong to $L^q(\R^2)$ for any $q > 1$, whereas $\varphi_\epsilon$ is in $L^q(\R^2)$ for any $q > 2$. On the other hand, it follows from definitions \eqref{ansatz}, \eqref{rho-epsilon} and \eqref{phi-epsilon}, and the boundedness of $v$ and $w$ that there exists some constant $K > 0$ such that
\begin{align*}
|\Re(w_\epsilon) - 1| = |\varrho_\epsilon \cos(\varphi_\epsilon) - 1| \leq K \big( |\varrho_\epsilon - 1| + \varphi_\epsilon^2 \big), & \ |\Im(w_\epsilon)| = |\varrho_\epsilon \sin(\varphi_\epsilon)| \leq K |\varphi_\epsilon|,\\
|\nabla \Re(w_\epsilon)| \leq K \big( |\nabla \varrho_\epsilon| + |\varphi_\epsilon| |\nabla \varphi_\epsilon| \big), & \ |\nabla \Im(w_\epsilon)| \leq K \big( |\nabla \varphi_\epsilon| + |\varphi_\epsilon| |\nabla \varrho_\epsilon| \big),
\end{align*}
so that $w_\epsilon$ does belong to $W(\R^2)$.

The next step is to determine the value of the parameter $t$ so that $p(w_\epsilon) = \p$. For that purpose, we compute the momentum of $w_\epsilon$ making use of formula \eqref{celemoment}. This gives
\begin{equation}
\label{p-ansatz}
p(w_\epsilon) = \frac{\epsilon t^2}{2} \int_{\R^2} w^2 - \frac{\epsilon^3 t^3}{8} \int_{\R^2} w^3.
\end{equation}
Therefore, there exists a positive number $t_\p$ such that
$$p(w_\epsilon) = \p.$$
We expand $t_\p$ using equations \eqref{p-epsilon} and \eqref{p-ansatz}
\begin{equation}
\label{t-asympt}
t_\p = \frac{1}{6} + 18 \Bigg( \frac{\int_{\R^2} w^3}{\Big( \int_{\R^2} w^2 \Big)^3}\Bigg) \p^2 + \underset{\p \to 0}{\boO} \big( \p^4 \big).
\end{equation}
On the other hand, using definition \eqref{ansatz} and formula \eqref{PolE}, the value of $E(w_\epsilon)$ is given by
\begin{align*}
E(w_\epsilon) = & t^2 \Bigg( \bigg( \frac{\epsilon}{\sqrt{2}} \int_{\R^2} w^2 + \frac{\epsilon^3}{4 \sqrt{2}} \int_{\R^2} \Big( (\partial_1 w)^2 + (\partial_2 v)^2 \Big) + \frac{\epsilon^5}{8 \sqrt{2}} \int_{\R^2} (\partial_2 w)^2 \bigg) - t \bigg( \frac{3 \epsilon^3}{4 \sqrt{2}} \int_{\R^2} w^3\\
+ & \frac{\epsilon^5}{4 \sqrt{2}} \int_{\R^2} w \partial_2 v \bigg) + t^2 \bigg( \frac{5 \epsilon^5}{32 \sqrt{2}} \int_{\R^2} w^4 - \frac{\epsilon^7}{16 \sqrt{2}} \int_{\R^2} w^2 (\partial_2 v)^2 \bigg) \Bigg).
\end{align*}
Since for $t = t_\p$, $p(w_\epsilon) = \p$, we may invoke the definition of $E_{\min}(\p)$ to conclude that there exists some constant $K > 0$ only depending on $w$ such that 
$$E_{\min}(\p) \leq E(w_\epsilon) \leq \frac{\epsilon t_\p^2}{\sqrt{2}} \Bigg( \int_{\R^2} w^2 - \frac{\epsilon^2}{4} \bigg( \int_{\R^2} \Big( (\partial_1 w)^2 + (\partial_2 v)^2 \Big) \bigg) + \frac{3 \epsilon^2}{4} \bigg( \int_{\R^2} w^3 \bigg) t_\p \Bigg) + K \epsilon^5.$$
By \eqref{p-epsilon} and \eqref{t-asympt}, we are led to
\begin{equation}
\label{E-f}
E_{\min}(\p) \leq \sqrt{2} \p \Bigg( 1 + A_2(w) \p^2 \Bigg) + K \p^4,
\end{equation}
where the number $A_2(w)$ is given in view of \eqref{E-KP}, by
$$A_2(w) = 2592 \frac{E_{KP}(w)}{\Big( \int_{\R^2} w^2 \Big)^3}.$$
In order to complete the proof of inequality \eqref{estimsupE2}, we optimize the value of the constant $A_2(w)$ with a suitable choice of the solution $w$ to equation \eqref{SW}. For this purpose, we first relate the coefficient $A_2(w)$ with the action $S(w)$ of the considered solution $w$. By \cite{deBoSau1,Graveja7}, the following equalities hold
\begin{equation}
\label{E-S-w2}
E_{KP}(w) = - \frac{1}{6} \int_{\R^2} w^2, \ {\rm and} \ S(w) = \frac{1}{3} \int_{\R^2} w^2,
\end{equation}
so that
$$A_2(w) = - \frac{48}{S(w)^2}.$$
Since $S(w) > 0$ by the second equation of \eqref{E-S-w2}, it remains to minimize the action $S(w)$ among all non-trivial solutions $w$ to equation \eqref{SW}. In view of the definition of ground states, the minimizer $\boS_{KP}$ is precisely the action of a ground-state solution to equation \eqref{SW}, so that the optimal inequality provided by \eqref{E-f} is exactly \eqref{estimsupE2}.
\end{proof}

%%%%%%%%%%%%%%%%%%%%%%%%%%%%%%%%%%%%%%%%%%%%%
%%%%%%%%%%%%%%%%%%%%%%%%%%%%%%%%%%%%%%%%%%%%%
\section{Properties of solutions on $\T_n^N$}
\label{torus}
%%%%%%%%%%%%%%%%%%%%%%%%%%%%%%%%%%%%%%%%%%%%%
%%%%%%%%%%%%%%%%%%%%%%%%%%%%%%%%%%%%%%%%%%%%%

The purpose of this section is to present a number of results concerning solutions to \eqref{TWc} on the torus which are of presumably independent interest. Many of the results presented here have already been derived elsewhere (in particular in \cite {BetOrSm1}), possibly in a slightly different form. In order to be self-consistent, we wish to give here a self-contained presentation. Since many of the results in this section are valid in higher dimensions, we consider more generally the torus $\T_n^N$ in dimension $N$, and let $v \equiv v_c^n$ be an arbitrary non-trivial solution to \eqref{TWc} on $\T_n^N$.

As we have seen in the introduction, working on tori has a number of important advantages. The first one is that they are compact, allowing to establish quite easily existence of minimizing solutions. The second one is that the torus has some invariance by translation. Working on tori introduces however a number of small new difficulties. Some of them are related to the identification $\T_n^N \simeq [-\pi n, \pi n]^N$, which we will clarify next, as well as we recall the notion of unfolding.

%%%%%%%%%%%%%%%%%%%%%%%%%%%%
\subsection{Working on tori}
%%%%%%%%%%%%%%%%%%%%%%%%%%%%

Working on tori presents a number of peculiarities which we would like to point out in this subsection. We begin with the usual definition $\T_n^N = \R^N / (2 \pi n \Z)^N$ obtained by the identification $x \sim x'$ if and only if $x - x' \in (2 \pi n \Z)^N$. For $\alpha = (\alpha_1, \ldots, \alpha_N) \in \R^N$, the cube $C_\alpha \equiv \underset{i = 1}{\overset{N}{\Pi}} [- \pi n + \alpha_i, \pi n + \alpha_i[$ contains a unique element of each equivalence class ($C_\alpha$ is often termed a fundamental domain). It may therefore be identified with $\T_n^N$. Given $\alpha \in \R^N$, the unfolding $\tau_\alpha$ of $\T_n^N$ associated to $\alpha$ is by definition the one-to-one mapping
\begin{eqnarray*}
\tau_\alpha : & \T_n^N & \longrightarrow \Omega_n^N \equiv [- \pi n, \pi n[^N\\
& p = [(x_1 + \alpha_1, \ldots, x_N + \alpha_N)] & \longmapsto (x_1, \ldots, x_N).
\end{eqnarray*}
This corresponds to a translation of the origin in $\R^N,$ and thus on the torus. For a given function $f$ defined on $\T_n^N,$ each unfolding $\tau_\alpha$ induces a $2 \pi n$-periodic function $f_\alpha$ defined on $\Omega_n^N$.

In some computations (in particular, dealing with integration by parts for functions which are not necessarily all periodic), we will need to estimate boundary integrals. The following lemma provides a choice of a ``good'' unfolding of the torus, by averaging.

\begin{lemma}
\label{average0}
Let $f$ be a $2 \pi n$-periodic function of $L^1(\Omega_n^N)$, and let $A$ be a subset of $[- \pi n, \pi n]$. There exists some $\alpha_N$ in $[- \pi n, \pi n] \setminus A$ such that
\begin{equation}
\label{average00}
\bigg| \int_{[-\pi n, \pi n]^{N-1} \times \{ \alpha_N \}} f(x) dx \bigg| \leq \frac{1}{2 \pi n - |A|} \int_{\Omega_n^N} |f(x)| dx. 
\end{equation}
In particular, we may find an unfolding $\tau_\alpha$ of the torus $\T_n^N$ such that
$$\bigg| \int_{[- \pi n, \pi n]^{N-1} \times \{ - \pi n, \pi n \}} f_\alpha(x) dx \bigg| \leq \frac{2}{2 \pi n - |A|} \int_{\Omega_n^N} |f(x)| dx,$$
and
$$\bigg| \int_{\partial \Omega_n^N} f_\alpha(x) dx \bigg| \leq \bigg( \frac{2}{2 \pi n - |A|} + \frac{N - 1}{\pi n} \bigg) \int_{\Omega_n^N} |f(x)| dx.$$
\end{lemma}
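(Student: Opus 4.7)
The plan is to combine Fubini's theorem with a standard averaging (mean value) principle on one-dimensional slices. For the first inequality, I would first define, for $t \in [-\pi n, \pi n]$, the slice quantities
\[ G(t) = \int_{[-\pi n, \pi n]^{N-1}} f(x', t) \, dx' \quad \text{and} \quad H(t) = \int_{[-\pi n, \pi n]^{N-1}} |f(x', t)| \, dx', \]
where $x' = (x_1, \ldots, x_{N-1})$. By Fubini's theorem, $H$ is integrable on $[-\pi n, \pi n]$ with $\int_{-\pi n}^{\pi n} H(t) \, dt = \|f\|_{L^1(\Omega_n^N)}$, and $|G(t)| \leq H(t)$ pointwise. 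Restricting to the set $[-\pi n, \pi n] \setminus A$, which has Lebesgue measure $2\pi n - |A|$, the elementary mean value principle then guarantees the existence of some $\alpha_N$ in this set with $H(\alpha_N) \leq (2\pi n - |A|)^{-1} \|f\|_{L^1}$, and since $|G(\alpha_N)| \leq H(\alpha_N)$, this is exactly \eqref{average00}.

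For the boundary assertion I would build the unfolding coordinate by coordinate. Keep $\alpha_N$ as above, and for each $i = 1, \ldots, N-1$ apply the same averaging argument \emph{without} the constraint (i.e., on the whole of $[-\pi n, \pi n]$, of measure $2\pi n$) to the $i$-th slice integral of $|f|$, producing some $\alpha_i \in [-\pi n, \pi n]$ with
\[ \int_{[-\pi n, \pi n]^{N-1}} \bigl| f(x_1, \ldots, \alpha_i, \ldots, x_N) \bigr| \, dx_1 \ldots \widehat{dx_i} \ldots dx_N \leq \frac{1}{2\pi n} \|f\|_{L^1(\Omega_n^N)}. \]
Set $\alpha = (\alpha_1, \ldots, \alpha_N)$ and consider the associated unfolding $\tau_\alpha$.

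The final step exploits the $2\pi n$-periodicity of $f$: after unfolding via $\tau_\alpha$, the two opposite faces $\{x_i = -\pi n\}$ and $\{x_i = \pi n\}$ of $\partial\Omega_n^N$ pull back to the same hyperplane on the torus, so their $f_\alpha$-integrals coincide and each equals the slice integral of $f$ at level $\alpha_i \pm \pi n \pmod{2\pi n}$, which we have just controlled. Summing the two opposite faces doubles the bound, giving $2/(2\pi n - |A|) \cdot \|f\|_{L^1}$ for the pair corresponding to $i = N$ (which is the first displayed inequality in the second part), and a contribution $1/(\pi n) \cdot \|f\|_{L^1}$ for each $i < N$. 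Summing over $i = 1, \ldots, N-1$ and adding the $i = N$ contribution via the triangle inequality produces the announced estimate. The whole argument is essentially routine Fubini plus averaging; the only mild care is the bookkeeping that matches each unfolded boundary face to the correct slice of the underlying periodic function, a small translation by $\pm \pi n$ that is absorbed by periodicity.
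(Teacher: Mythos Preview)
Your argument is correct and follows exactly the paper's approach: the paper's proof is the single sentence ``Integrate the l.h.s of \eqref{average00} for $\alpha_N \in [-\pi n, \pi n] \setminus A$ and use the mean-value theorem,'' and you have simply spelled out the Fubini-plus-averaging details, including the routine bookkeeping for the boundary faces that the paper leaves implicit.
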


\begin{proof}
Integrate the l.h.s of \eqref{average00} for $\alpha_N \in [- \pi n, \pi n] \setminus A$ and use the mean-value theorem.
\end{proof}

\begin{remark}
The trace of $f_\alpha$ is well-defined for almost every unfolding. In the sequel, we will no longer distinguish $f$ and $f_\alpha$: this hopefully will not lead to any confusion.
\end{remark}

\begin{remark}
Recall that, on $X_n^N \equiv H^1(\T_n^N)$, we have defined the momentum as
$$p_n(v) = \frac{1}{2} \int_{\T_n^N} \langle i \partial_1 v , v \rangle.$$
For a map $u$ in the space
$$Y_n^N \equiv \{u \in H^1(\Omega_n^N, \C), u \equiv 1 \on \partial \Omega_n^N \} \subset X_n^N,$$
we claim that we have
\begin{equation}
\label{momentJ}
p_n(u) = m_n(u) \equiv \frac{1}{2} \int_{\Omega_n^N} \langle Ju , \zeta_1 \rangle.
\end{equation}
Here, the Jacobian $Ju$ of $u$ denotes the $2$-form defined on $\Omega_n^N$ by 
$$Ju \equiv \frac{1}{2} d(u \times du) = \sum_{1 \leq i < j \leq N} (\partial_i u \times \partial_j u) dx_i \wedge dx_j,$$
and $\zeta_1$ denotes the $2$-form defined on $\Omega_n^N$ by
\begin{equation}
\label{eq:zeta}
\zeta_1(x) \equiv - \frac{2}{N - 1} \sum_{i = 2}^N x_i dx_1 \wedge dx_i.
\end{equation}
Finally, $\langle \cdot, \cdot \rangle$ stands for the scalar product of $2$-forms. Claim \eqref{momentJ} reduces in fact to an integration by parts. Indeed, we write
\begin{equation}
\label{integrepartie}
2 \langle Ju, \zeta_1 \rangle dx_1 \wedge \ldots \wedge dx_N = d(u \times du) \wedge \star \zeta_1 = d((u \times du) \wedge \star \zeta_1) + (u \times du) \wedge d(\star \zeta_1),
\end{equation}
where $\star$ denotes the Hodge-star operator for differential forms. The special choice of $\zeta_1$ yields the identity
$$(u \times du) \wedge d(\star \zeta_1) = 2 \langle i \partial_1 u, u \rangle dx_1 \wedge \ldots \wedge dx_N.$$
Combining with \eqref{integrepartie} and integrating on the torus, we are led to
\begin{equation}
\label{integre2}
\int_{\T_n^N} \langle i \partial_1 u, u \rangle - \int_{\Omega_n^N} \langle Ju, \zeta_1 \rangle = - \frac{1}{2} \int_{\partial \Omega_n^N} (u\times du)_\top \wedge (\star \zeta_1)_\top.
\end{equation}
If $u$ belongs to $Y_n^N$, then the boundary term is zero, and claim \eqref{momentJ} follows.
 
As we will see later, the term $m_n(u)$ explicitly appears in Pohozaev's formula. On the torus $\T_n^N$ however, $m$ is {\bf not} well-defined (due to the fact that the $2$-form $\zeta_1$ is not periodic, and hence well-defined on the torus). We will circumvent this difficulty by choosing suitable unfoldings.
\end{remark}

%%%%%%%%%%%%%%%%%%%%%%%%%%%%%%%%%%%%%%%%%%%%%%%%%%%%%%
\subsection{Lifting properties and topological sectors}
\label{luis}
%%%%%%%%%%%%%%%%%%%%%%%%%%%%%%%%%%%%%%%%%%%%%%%%%%%%%%

In several places of this paper, we have to face the following situation. Let $R \geq 1$ and $\ell_0 \in \N^*$ be given, and consider $\ell$ points $x_1$, $\ldots$, $x_\ell$ on $\T_n^N$ with $\ell \leq \ell_0$. Assume that we have
\begin{equation}
\label{relou}
|x_i - x_j| \geq 2 R,
\end{equation}
for any $i \neq j$, and that $v$ is a map in $H^1(\T_n^N)$ such that
\begin{equation}
\label{tropcool}
|v(x)| \geq \frac{1}{2}, \ \forall x \in \boO_n \Big( \frac{R}{2} \Big) = \T_n^N \setminus \underset{j = 1}{\overset{\ell}{\cup}} B \Big( x_j, \frac{R}{2} \Big).
\end{equation}
The problem we wish to investigate is the following: find conditions such that one may lift the map $v$ on $\boO_n(R) = \T_n^N \setminus \underset{j = 1}{\overset{\ell}{\cup}} B(x_j, R)$ as $v = \varrho \exp i \varphi$, with $\varphi \in H^1(\boO_n(R))$. In dimension three, we have a simple answer.

\begin{lemma}
\label{massimo3}
Assume $N = 3$. Given any numbers $E > 0$, $R > 1$ and $\ell_0 \geq 1$, there exists a constant $n(E, R, \ell_0)$, such that if $n \geq n(E, R, \ell_0)$, and $v \in H^1(\T_n^3, \C)$ satisfies \eqref{relou}, \eqref{tropcool} and
\begin{equation}
\label{liftprop2}
E_n(v) \leq E,
\end{equation}
then $v = |v| \exp i \varphi$ on $\boO_n(R)$, with $\varphi \in H^1(\boO_n(R), \R)$.
\end{lemma}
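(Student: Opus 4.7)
The plan is to reduce the existence of a global $H^1$-phase for $v$ on $\boO_n(R)$ to the vanishing of three integer winding numbers, and then to use the large length $2\pi n$ of the fundamental loops of $\T_n^3$ together with the energy bound \eqref{liftprop2} to force those integers to be zero when $n$ is large. First, by \eqref{tropcool}, the map $u \equiv v/|v|$ is well-defined, $\S^1$-valued and of class $H^1$ on $\boO_n(R/2) \supset \boO_n(R)$. Removing at most $\ell_0$ pairwise disjoint open balls from $\T_n^3$ does not alter the first homology (Mayer--Vietoris, using $H_1(\S^2)=0$), so
\begin{equation*}
H_1(\boO_n(R), \Z) \simeq H_1(\T_n^3, \Z) \simeq \Z^3,
\end{equation*}
a basis being the three coordinate loops $\gamma_1, \gamma_2, \gamma_3$ of $\T_n^3$. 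Hence $u$ admits a continuous (and therefore $H^1$) real-valued lift on $\boO_n(R)$ if and only if the winding numbers $W_i = \frac{1}{2\pi} \int_{\gamma_i} \langle i \partial_i u, u \rangle\, ds$ all vanish for $i = 1, 2, 3$.

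Fix $i = 1$ and, for each transverse point $t = (t_2, t_3) \in \T_n^2$, let $\gamma_1(t) = \{(s, t_2, t_3) : s \in [-\pi n, \pi n)\}$. Let $G \subset \T_n^2$ denote the set of those $t$ for which $\gamma_1(t)$ avoids all balls $B(x_j, R/2)$. Its complement is the union of at most $\ell_0$ closed disks of radius $R/2$, the transverse projections of the $B(x_j, R/2)$; as soon as $n$ exceeds a threshold depending only on $R$ and $\ell_0$, $G$ is open, connected, and $|G| \geq (2\pi n)^2 - \ell_0 \pi R^2/4$. By the slicing theorem combined with an approximation of $u$ by smooth $\S^1$-valued maps on $\boO_n(R/2)$, the function $t \mapsto W_1(t) = \frac{1}{2\pi}\int_{\gamma_1(t)}\langle i\partial_1 u, u\rangle\, ds$ is integer-valued for a.e. $t \in G$ and locally constant on $G$; the connectedness of $G$ then forces $W_1(t)$ to coincide a.e. with a single integer $W_1$.

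The quantitative heart of the proof is then Cauchy--Schwarz on each loop of length $2\pi n$,
\begin{equation*}
|W_1(t)|^2 \leq \frac{n}{2\pi}\int_{\gamma_1(t)} |\partial_1 u|^2\, ds,
\end{equation*}
combined with the pointwise bound $|\nabla u|^2 \leq 4|\nabla v|^2$ on $\boO_n(R/2)$ (a consequence of $|v|\geq 1/2$). Integrating in $t$ via Fubini yields
\begin{equation*}
|W_1|^2 |G| = \int_G |W_1(t)|^2\, dt \leq \frac{n}{2\pi}\int_{\boO_n(R/2)} |\nabla u|^2 \leq \frac{4nE}{\pi},
\end{equation*}
so that $|W_1|^2 \leq (4nE/\pi)/\bigl((2\pi n)^2 - \ell_0 \pi R^2/4\bigr) \to 0$ as $n \to + \infty$. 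For $n \geq n(E, R, \ell_0)$ large enough, this is strictly less than $1$, and the integer $W_1$ must vanish. The identical argument in directions $i = 2, 3$ gives $W_2 = W_3 = 0$, hence a continuous $\R$-valued lift $\varphi$ of $u$ on $\boO_n(R)$, which lies in $H^1$ because $|\nabla \varphi| = |\nabla u|$ is square-integrable.

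The main technical obstacle is the claim that $W_1$ is locally constant on $G$ for a merely $H^1$ map; this is handled by approximating $v$ in $H^1(\T_n^3)$ by smooth maps $v_k$ with $|v_k| \geq 1/2$ on a slightly shrunk neighbourhood of $\boO_n(R/2)$ (possible by standard mollification, using the superlevel sets of $|v|$), for which $W_1$ is literally continuous, and passing to the limit in the integral formula. It is precisely at the Fubini estimate above that the dimension $N = 3$ enters essentially: the transverse torus $\T_n^2$ has area $\sim n^2$, which is what makes the averaged bound on $|W_i|^2$ decay like $1/n$.
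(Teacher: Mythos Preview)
Your argument is correct and yields the same quantitative bound (of order $E/n$ on the squared winding numbers) as the paper, but by a genuinely different route. The paper first \emph{extends} $v$ across the excised balls: since each $\partial B(x_j,R_j)$ is a $2$-sphere, hence simply connected, one can lift $v$ there and take harmonic extensions of modulus and phase inward, producing $\tilde v$ with $|\tilde v|\ge\frac12$ on all of $\T_n^3$ and with $\|\nabla(\tilde v/|\tilde v|)\|_{L^2}^2$ controlled by $E$. On the full torus one then applies the Hodge--de Rham decomposition $w\times dw=d\varphi+\sum_j(k_j/n)\,dx_j$; $L^2$-orthogonality gives $8\pi^3 n\sum k_j^2\le\|w\times dw\|_{L^2}^2$, forcing $k_j=0$ for large $n$. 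You instead keep the holes, compute $H_1(\boO_n(R))\simeq\Z^3$ directly, and obtain the period bound by Fubini plus Cauchy--Schwarz along coordinate loops. The paper's route handles $H^1$-regularity more cleanly (Hodge theory works directly with $L^2$ forms, so no slicing or approximation is needed) and isolates the topological input---simple connectedness of $\S^2$---in the extension step; yours is more elementary, avoids modifying $v$, and makes the geometric source of the $1/n$ decay, namely the transverse area $(2\pi n)^2$, entirely explicit.

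One point needs repair: you claim $G\subset\T_n^2$ is connected for large $n$, but the projections of the balls $B(x_j,R/2)$ onto the transverse $\T_n^2$ may overlap (condition \eqref{relou} controls distances in $\T_n^3$, not in the projection), and a ring of overlapping disks can disconnect $G$. The fix is already in your hands: any two loops $\gamma_1(t)$, $\gamma_1(t')$ with $t,t'\in G$ represent the same generator of $H_1(\boO_n(R/2))\simeq\Z^3$, so the period of the closed $L^2$ $1$-form $u\times du$ over them coincides; connectedness of $G$ is not needed. Also, ``continuous lift'' in your last line should read ``$H^1$ lift'': $u$ need not be continuous in dimension three, but vanishing periods of the closed form $u\times du$ give $\varphi\in H^1(\boO_n(R),\R)$ directly.
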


\begin{proof}
We consider first a special case.

\setcounter{case}{0}
\begin{case}
\label{entire}
${\bf \boO_n(R) = \T_n^3}${\bf, i.e. }${\bf |v| \geq \frac{1}{2}}$ {\bf on $\T_n^3$.} In this case we may write $v = |v| w$, with $|w| = 1$, and perform the Hodge-de-Rham decomposition of $w \times dw$. Since $d(w \times dw) = 0$, the Hodge-de-Rham decomposition is written as
$$w \times dw = d\varphi + \underset{j = 1}{\overset{3}{\sum}} \alpha_j dx_j,$$
where $\varphi$ is a $H^1$-function, each $\alpha_j$ is a real number, and $dx_j$ stands for the canonical harmonic forms on $\T_n^3$. One then checks that
$$w(x) = \exp i \Big(\varphi(x) + \sum_{j=1}^3 \alpha_j x_j + \theta \Big),$$
for some constant $\theta \in \R$. Periodicity implies that $\alpha_j$ has the form $\alpha_j = \frac{k_j}{n}$, for some integer $k_j$. The $L^2$-orthogonality of the Hodge-de-Rham decomposition yields
\begin{equation}
\label{delasau}
\| w \times dw \|_{L^2(\T_n^3)}^2 = \| d\varphi \|_{L^2(\T_n^3)}^2 + 8 \pi^3 n \sum_{j = 1}^3 k_j^2,
\end{equation}
which implies by \eqref{liftprop2},
$$\sum_{j = 1}^3 k_j^2 \leq \frac{1}{8 \pi^3 n} \| \nabla w \|_{L^2(\T_n^3)}^2 \leq \frac{E}{\pi^3 n}.$$
Choosing $n(E,0,0) = \frac{2}{\pi^3} E$, the previous inequality implies $k_j = 0$ for $n \geq n(E,0,0)$. On the other hand, it follows from \eqref{delasau} that $\varphi \in H^1(\T_n^N, \R)$, so that the conclusion of Lemma \ref{massimo3} holds.
\end{case}

\begin{case}
{\bf The general case}. Arguing by a density argument, we may assume without loss of generality, that $v$ is smooth. Next we claim that there exists a map $\tilde{v} = |\tilde{v}| \tilde{w}$, with $\tilde{w} = 1$, such that $|\tilde{v}| \geq \frac{1}{2}$ on $\T_n^3$, and
\begin{equation}
\label{outside}
\tilde{v} = v \ {\rm on} \ \boO_n(R),
\end{equation}
and
\begin{equation}
\label{devilliers}
\| \nabla \tilde{w} \|_{L^2(\T_n^3)}^2 \leq K(R, \ell_0) E(v),
\end{equation}
where $K(R, \ell_0)$ is some constant depending only on $R$ and $\ell_0$. The conclusion then follows using the argument of Case \ref{entire} for the function $\tilde{v}$. In particular, it only remains to prove Claims \eqref{outside}-\eqref{devilliers}.
\end{case}
\end{proof}

\begin{proof}[Proof of Claims \eqref{outside}-\eqref{devilliers}]
The construction of $\tilde{v}$ relies on some standard topological arguments. Using the mean-value inequality, there exists some radius $\frac{R}{2} < R_j < R$ such that
\begin{equation}
\label{roncero}
\int_{\partial B(x_j, R_j)} e(v) \leq \frac{2}{R} \int_{B(x_j, R)} e(v).
\end{equation}
On the other hand, $v$ is a continuous function on $\partial B(x_j, R_j)$ such that $|v| \geq \frac{1}{2}$, so that it can be written as $v = \varrho \exp i \varphi$ on $\partial B(x_j, R_j)$. Denoting $\varrho_j$, resp. $\varphi_j$, the harmonic extensions of $\varrho$, resp. $\varphi$, on $B(x_j, R_j)$, we consider the function $\tilde{v}$ defined by
\begin{equation}
\label{agulla}
\begin{split}
\tilde{v} = & v \ {\rm on} \ \T_n^3 \setminus \underset{j = 1}{\overset{\ell}{\cup}} B(x_j, R_j),\\
\tilde{v} = & \varrho_j \exp i \varphi_j \ {\rm on} \ B(x_j, R_j),
\end{split}
\end{equation}
so that $\tilde{v}$ satisfies \eqref{outside}, and $|\tilde{v}| \geq \frac{1}{2}$ on $\T_n^3$ (by \eqref{tropcool} and the maximum principle). Moreover, using standard trace theorems, the harmonicity of $\varphi_j$ gives
$$\int_{B(x_j, R_j)} |\nabla \tilde{w}|^2 = \int_{B(x_j, R_j)} |\nabla \varphi_j|^2 \leq K(R) \int_{\partial B(x_j, R_j)} |\nabla \varphi|^2 \leq K(R) \int_{\partial B(x_j, R_j)} e(v),$$
where $K(R)$ denotes some constant only depending on $R$, so that, by \eqref{roncero},
$$\int_{B(x_j, R_j)} |\nabla \tilde{w}|^2 \leq K(R) \int_{B(x_j, R)} e(v).$$
Claim \eqref{devilliers} then follows from definition \eqref{agulla}, and the fact that $\ell \leq \ell_0$.
\end{proof}

In dimension two, the situation is very different. In particular, the minimal energy of harmonic $1$-forms does not depend on the size $n$ of the torus (by conformal invariance of the energy), so that the statement of Lemma \ref{massimo3} does not extend. As an example, one may take for instance the map $v_n = \exp i \frac{x_1}{n}$, whose energy does not depend on $n$, and which is not liftable. However, when the energy is small, the situation is parallel to the three-dimensional case.

\begin{lemma}
\label{massimo2}
Assume $N = 2$, and consider a function $v \in H^1(\T_n^2)$ such that $E(v) \leq E_0 = \frac{\pi^2}{3}$, and
$$|v| \geq \frac{1}{2} \ {\rm on} \ \T_n^2,$$
then $v = |v| \exp i \varphi$ on $\T_n^2$, with $\varphi \in H^1(\T_n^2, \R)$.
\end{lemma}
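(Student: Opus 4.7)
The plan follows closely Case 1 of the proof of Lemma \ref{massimo3}, adapted to the two-dimensional torus, with the new ingredient being a sharp energy threshold that rules out nontrivial harmonic-form contributions. Since $\varrho = |v| \geq \frac{1}{2}$ on $\T_n^2$, the map $w = v/\varrho$ belongs to $H^1(\T_n^2, \S^1)$, and it suffices to lift $w$ globally as $w = \exp i\varphi$ with $\varphi \in H^1(\T_n^2, \R)$.

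First I would observe that $w \times dw$ is a closed $L^2$ one-form on $\T_n^2$: closedness follows from $d(w \times dw) = 2 Jw\, dx_1 \wedge dx_2$ and the fact that $Jw \equiv 0$ for $\S^1$-valued maps (since $|w|^2 = 1$ forces $w_1 dw_1 + w_2 dw_2 = 0$, hence $dw_1 \wedge dw_2 = 0$ a.e.). The Hodge-de-Rham decomposition on $\T_n^2$ therefore yields
\begin{equation*}
w \times dw = d\varphi + \alpha_1 dx_1 + \alpha_2 dx_2,
\end{equation*}
for some $\varphi \in H^1(\T_n^2, \R)$ and $(\alpha_1, \alpha_2) \in \R^2$. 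Exactly as in the three-dimensional case, integrating along closed loops yields $w = \exp i(\varphi + \alpha_1 x_1 + \alpha_2 x_2 + \theta)$ for some constant $\theta \in \R$, and the $2 \pi n$-periodicity of $w$ then forces $\alpha_j = k_j/n$ for some integers $k_j \in \Z$.

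Next I would exploit the $L^2$-orthogonality of the Hodge-de-Rham decomposition together with $|\T_n^2| = 4 \pi^2 n^2$ to obtain
\begin{equation*}
\| w \times dw \|_{L^2(\T_n^2)}^2 = \| d\varphi \|_{L^2(\T_n^2)}^2 + 4 \pi^2 (k_1^2 + k_2^2).
\end{equation*}
Since $|\nabla v|^2 = |\nabla \varrho|^2 + \varrho^2 |\nabla w|^2$ (using $w \cdot \nabla w = 0$ pointwise) and $|w \times dw|^2 = |dw|^2$ when $|w|=1$, the assumption $\varrho \geq \frac{1}{2}$ yields the pointwise bound $|w \times dw|^2 \leq 4 |\nabla v|^2$, and therefore
\begin{equation*}
4 \pi^2 (k_1^2 + k_2^2) \leq \int_{\T_n^2} |w \times dw|^2 \leq 4 \int_{\T_n^2} |\nabla v|^2 \leq 8 E(v) \leq \frac{8 \pi^2}{3}.
\end{equation*}
This forces $k_1^2 + k_2^2 \leq 2/3 < 1$, hence $k_1 = k_2 = 0$. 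Consequently $w \times dw = d\varphi$, so $w = \exp i(\varphi + \theta)$ holds globally, which provides the desired lifting of $v$.

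The main conceptual point, and the reason a smallness assumption on $E(v)$ cannot be dispensed with (unlike in dimension three), is the conformal invariance of the Dirichlet energy for $\S^1$-valued maps in dimension two: the harmonic coefficients contribute $4 \pi^2 n^2 (\alpha_1^2 + \alpha_2^2)$ to $\| dw \|_{L^2}^2$, but the quantization $\alpha_j = k_j/n$ exactly compensates for the volume growth, leaving a threshold $4 \pi^2$ that is independent of $n$ (as already illustrated by the example $v_n = \exp(i x_1/n)$ mentioned just before the statement). The value $E_0 = \pi^2/3$ is chosen with enough margin so that the only integer solution of $k_1^2 + k_2^2 \leq 2 E(v)/\pi^2$ is $(0,0)$.
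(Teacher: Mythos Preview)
Your proof is correct and follows essentially the same approach as the paper: both apply the Hodge--de Rham decomposition to $w\times dw$, use periodicity to quantize the harmonic coefficients as $\alpha_j=k_j/n$, and exploit the $L^2$-orthogonality together with $\varrho\ge\frac12$ to obtain $k_1^2+k_2^2\le\frac{2E_0}{\pi^2}=\frac{2}{3}$, forcing $k_1=k_2=0$. Your added remark on conformal invariance neatly explains the $n$-independence of the threshold, in line with the paper's discussion preceding the lemma.
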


\begin{proof}
The proof is identical to Case \ref{entire} of the proof of Lemma \ref{massimo3}, apart from equation \eqref{delasau}, which becomes here 
$$\| w \times dw \|_{L^2(\T_n^2)}^2 = \| d\varphi \|_{L^2(\T_n^2)}^2 + 4 \pi^2 \sum_{j = 1}^3 k_j^2,$$
so that
$$\sum_{j = 1}^3 k_j^2 \leq \frac{1}{4 \pi^2} \| \nabla w \|_{L^2(\T_n^2)}^2 \leq \frac{2 E_0}{\pi^2} \leq \frac{2}{3}.$$
Hence, the numbers $k_j$ are identically equal to $0$, so that the conclusion of Lemma \ref{massimo2} holds, following the lines of Case \ref{entire} of the proof of Lemma \ref{massimo3}.
\end{proof}

If we wish to have a lifting property when the energy is not small, we need to restrict the class of test maps. Indeed, although the zero set of Ginzburg-Landau maps on $\T_n^2$ may be not empty, a restriction on the Ginzburg-Landau energy allows us to define a notion of degree with suitable continuity properties. First, notice that by Sobolev's embedding theorem, for $v \in H^1(\T_n^2)$ and $j \in \{ 1, 2 \}$, the restriction $v$ to $I_{n, r}^j$ is continuous for almost any $r \in [-\pi n,\pi n]$, where we have set
$$I_{n, r}^1 = \{ r \} \times [-\pi n, \pi n] \simeq \S_n^1, \
{\rm and } \ I_{n, r}^2 = [-\pi n, \pi n] \times \{ r \} \simeq \S_n^1.$$
In particular, if $v$ does not vanish on $I_{n, r}^j$, in view of periodicity, we may define the degree of $\frac{v}{|v|}$ restricted to $I_{n, r}^j$. Denoting $B^j(v)$, the subset of numbers $r$ in $[-\pi n, \pi n]$ for which the restriction of $v$ to $I_{n, r}^j$ is continuous and does not vanish, we set
$$T_n^{d_1, d_2} = \bigg\{ u \in H^1(\T_n^2), \ {\rm s.t.} \ \forall j \in \{ 1, 2 \}, \exists B^j \subset B^j(u), \ {\rm s.t.} \ \left\{ \begin{array}{ll} |B^j| \geq 2 \pi \big( n - n^\frac{3}{4} \big),\\ {\rm deg}(u, I_{n,r}^j) = d_j, \forall r \in B^j \end{array} \right. \bigg\},$$
for any $(d_1, d_2) \in \Z^2$. It follows from this definition that $T_n^{d_1, d_2} \cap T_n^{d'_1, d'_2} = \emptyset$ if $(d_1, d_2) \neq (d'_1, d'_2)$, and $\underset{d \in \Z^2}{\cup} T_n^d \neq H^1(\T_n^2)$. In the introduction, we have introduce the set $\boS_n^0$, which we next define as
\begin{equation}
\label{supersecteur}
\boS_n^0 = T_n^{0, 0}.
\end{equation}
We claim that if a map $v$ in $\boS_n^0$ satisfies suitable topological assumptions, then $v$ has a lifting.

\begin{lemma}
\label{sectorisation}
Assume $N = 2$, and that $v \in \boS_n^0 = T_n^{0, 0}$ satisfies \eqref{relou}, \eqref{tropcool} and 
\begin{equation}
\label{zerobord}
\deg \Big( \frac{v}{|v|}, \partial B(x_j, R) \Big) = 0, \forall j \in \{ 1, \ldots, \ell \}.
\end{equation}
Then, there exists a constant $n(R, \ell_0)$ depending only on $R$ and $\ell_0$ such that, if $n \geq n(R, \ell_0)$, then $v = |v| \exp i \varphi$ on $\boO_n(R)$, with $\varphi \in H^1(\boO_n(R), \R)$.
\end{lemma}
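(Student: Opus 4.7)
The plan is to prove exactness of the closed $L^2$ $1$-form $\alpha = w \times dw$ on $\boO_n(R)$, where $w = v/|v|$, since this is equivalent to the existence of a lifting $\varphi \in H^1(\boO_n(R); \R)$ with $v = |v|\exp i\varphi$. Since $\boO_n(R) \subset \boO_n(R/2)$, hypothesis \eqref{tropcool} gives $|v| \geq \tfrac{1}{2}$ on $\boO_n(R)$, so $w$ is well-defined, $\S^1$-valued, and belongs to $H^1(\boO_n(R))$, while $\alpha$ is closed in the distributional sense, lies in $L^2$, and is pointwise bounded by $2|\nabla v|$.

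By de Rham's theorem, $\alpha$ is exact if and only if $\int_\gamma \alpha = 0$ for every closed loop $\gamma$ in a generating set of $H_1(\boO_n(R); \Z)$, and $\tfrac{1}{2\pi}\int_\gamma \alpha$ is precisely the winding number of $w$ along $\gamma$. Since $\boO_n(R)$ is $\T_n^2$ minus $\ell \leq \ell_0$ disjoint disks, $H_1(\boO_n(R); \Z)$ is generated by the $\ell$ boundary circles $\partial B(x_j, R)$ together with two closed curves representing the two fundamental cycles of $\T_n^2$. The winding numbers along the first family vanish by hypothesis \eqref{zerobord}, so only the two fundamental cycles remain to be handled.

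For each $j \in \{1, 2\}$, I would represent the $j$-th fundamental cycle by a straight line $I_{n, r_j}^j$ lying entirely in $\boO_n(R)$ and along which $v$ is continuous, non-vanishing, and of degree zero. The set of $r \in [-\pi n, \pi n]$ for which $I_{n, r}^j$ meets some $B(x_k, R)$ is contained in a union of at most $\ell_0$ intervals of length $2R$ (on $\S_n^1$), hence has measure at most $2R \ell_0$. On the other hand, the assumption $v \in T_n^{0, 0}$ provides a subset $B^j \subset [-\pi n, \pi n]$ with $|B^j| \geq 2\pi(n - n^{3/4})$ on which $v$ restricted to $I_{n, r}^j$ is continuous, non-vanishing, and has degree zero. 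Choosing $n(R, \ell_0)$ large enough that $2R\ell_0 + 2\pi n^{3/4} < 2\pi n$ forces these two subsets to intersect, and any $r_j$ in the intersection provides the required representative loop of zero winding number.

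All periods of $\alpha$ vanishing, I would then define $\varphi(x) = \int_{x_0}^x \alpha$ along any path from a fixed base point $x_0 \in \boO_n(R)$, path-independence being guaranteed by the vanishing of periods, so that $d\varphi = \alpha \in L^2$. Since for $n \geq n(R, \ell_0)$ the domain $\boO_n(R)$ is bounded, connected, and Lipschitz, a Poincar\'e inequality yields $\varphi \in H^1(\boO_n(R); \R)$, which completes the construction of the lifting. The main obstacle is the quantitative matching in the previous paragraph: comparing the measure $2R\ell_0$ of excluded $r$'s with the lower bound $2\pi(n - n^{3/4})$ coming from $\boS_n^0$ is what fixes the threshold $n(R, \ell_0)$ as a function of $R$ and $\ell_0$, and is the only place where the specific exponent $\tfrac{3}{4}$ in the definition of $\boS_n^0$ enters the argument.
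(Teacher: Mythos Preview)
Your argument is correct and takes a genuinely different route from the paper. The paper first \emph{fills in the holes}: using the degree-zero hypothesis \eqref{zerobord}, it lifts $v$ on each $\partial B(x_j,R_j)$ and extends harmonically inside to obtain a map $\tilde v$ on the whole torus with $|\tilde v|\geq\tfrac12$ and $\tilde v=v$ on $\boO_n(R)$; it then checks $\tilde v\in\boS_n^0$ and applies the explicit Hodge--de~Rham decomposition on $\T_n^2$, where the harmonic $1$-forms are just $dx_1,dx_2$ and membership in $T_n^{0,0}$ forces their coefficients to vanish. You instead work directly on the punctured domain, identify a generating set for $H_1(\boO_n(R);\Z)$, and check periods. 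The core measure argument---finding, for $n$ large in terms of $R,\ell_0$, a line $I_{n,r_j}^j$ that avoids all $B(x_k,R)$ and lies in $B^j$---is essentially the same computation that the paper needs to verify $\tilde v\in\boS_n^0$. The paper's route buys the simplicity of Hodge theory on the full torus; yours avoids the extension construction but invokes de~Rham on a domain with boundary. One small point: your final step, defining $\varphi$ by line integration of the $L^2$ form $\alpha$, is not literally meaningful pointwise; you should phrase it via the Hodge decomposition on $\boO_n(R)$ (vanishing periods force the harmonic part to be zero, hence $\alpha=d\varphi$ with $\varphi\in H^1$), or by mollifying $v$, lifting the smooth approximants (their periods are integers near zero, hence zero), and passing to the limit.
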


\begin{proof}
The proof is very similar to the proof of Lemma \ref{massimo3}. Using \eqref{zerobord}, we may construct in the three-dimensional case as in the two-dimensional case a map $\tilde{v}$ satisfying \eqref{outside}, and such that $|\tilde{v}| \geq \frac{1}{2}$ on $\T_n^2$. Moreover, since $\ell \leq \ell_0$, we may check that $\tilde{v}$ belongs to $\boS_n^0$ for $n$ sufficiently large, so that we may restrict ourselves to the case $\boO_n(R) = \T_n^2$. In this situation, denoting $v = |v| w$, with $|w| = 1$, the argument of Case \ref{entire} of the proof of Lemma \ref{massimo3} gives that there exist a function $\varphi \in H^1(\T_n^2)$, a constant $\theta$ and integers $k_j$ such that
$$w(x) = \exp i \Big(\varphi(x) + \sum_{j=1}^2 \frac{k_j x_j}{n} + \theta \Big),$$
for any $x \in \T_n^2$. It follows that $v$ is in $T_n^{k_1, k_2}$, so that $k_1 = k_2 = 0$, which completes the proof of Lemma \ref{sectorisation}.
\end{proof}

We will use the following consequence in the spirit of Lemma \ref{massimo2}.

\begin{cor}
\label{minimo2}
Assume $N = 2$, and that $v \in \boS_n^0$ satisfies \eqref{relou}, \eqref{tropcool} and 
\begin{equation}
\label{austin}
E_n \Big( v, \boO_n \Big( \frac{R}{2} \Big) \Big) < \frac{\pi}{8}.
\end{equation}
Then, there exists a constant $n(R, \ell_0)$ depending only on $R$ and $\ell_0$ such that, if $n \geq n(R, \ell_0)$, then $v = |v| \exp i \varphi$ on $\boO_n(R)$, with $\varphi \in H^1(\boO_n(R), \R)$.
\end{cor}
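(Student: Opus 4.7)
The strategy is to reduce the statement to Lemma \ref{sectorisation} by verifying the degree condition \eqref{zerobord} as a consequence of the energy smallness \eqref{austin}. Once the degrees are shown to vanish, Lemma \ref{sectorisation} applies and furnishes both the integer $n(R, \ell_0)$ and the desired $H^1$-lifting on $\boO_n(R)$.

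Fix $j \in \{1, \ldots, \ell\}$. The first step is to select a good circle around $x_j$ on which to read the degree. By \eqref{relou}, the annulus $A_j = B(x_j, R) \setminus B(x_j, R/2)$ is contained in $\boO_n(R/2)$, where $|v| \geq 1/2$ thanks to \eqref{tropcool}. I would then apply Fubini's theorem in polar coordinates around $x_j$ to pick a radius $R_j \in (R/2, R)$ such that $v$ restricts to an $H^1$ map on $\partial B(x_j, R_j)$ and
$$\int_{\partial B(x_j, R_j)} e(v) \leq \frac{2}{R} \int_{A_j} e(v) \leq \frac{2}{R} E_n\big( v, \boO_n(R/2) \big).$$
Since $|v| \geq 1/2$ on the whole annulus $A_j$, the degree $d_j \equiv \deg(v/|v|, \partial B(x_j, R_j))$ is well-defined, and by homotopy invariance across $A_j$ it coincides with $\deg(v/|v|, \partial B(x_j, R))$. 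Hence proving $d_j = 0$ is equivalent to proving \eqref{zerobord}.

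The second step is the standard energy lower bound for loops of nonzero degree. Locally lifting $v = \varrho \exp i \varphi$ on $\partial B(x_j, R_j)$ (using $\varrho \geq 1/2$), one has $2\pi d_j = \int_{\partial B(x_j, R_j)} \partial_\tau \varphi$. Combining Cauchy-Schwarz with the pointwise bound $(\partial_\tau \varphi)^2 \leq |\partial_\tau v|^2/\varrho^2 \leq 4 |\partial_\tau v|^2 \leq 8 e(v)$, I would deduce
$$4\pi^2 d_j^2 \leq 16\pi R_j \int_{\partial B(x_j, R_j)} e(v) \leq 32 \pi E_n\big( v, \boO_n(R/2) \big) < 4\pi^2,$$
where the strict inequality is exactly \eqref{austin}. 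Hence $d_j^2 < 1$, forcing the integer $d_j$ to vanish; \eqref{zerobord} is therefore satisfied for every $j$, and Lemma \ref{sectorisation} closes the argument.

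There is no serious obstacle here: the chain of estimates is completely standard, and the only delicate point is that the numerical constant $\pi/8$ in \eqref{austin} is \emph{sharp} for this argument, in the sense that it is precisely what converts the Cauchy-Schwarz bound into the strict inequality $d_j^2 < 1$ needed to rule out the nontrivial integer values of $d_j$. No intermediate estimate can afford any loss in the constants.
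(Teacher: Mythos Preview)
Your proposal is correct and follows essentially the same route as the paper: reduce to Lemma \ref{sectorisation} by showing \eqref{austin} forces \eqref{zerobord}, select a good radius $r_j \in (R/2, R)$ via the mean-value inequality, and use Cauchy--Schwarz together with $|v| \geq 1/2$ to bound the integer degree strictly below $1$. The only cosmetic difference is that the paper writes the degree as $\frac{1}{2\pi}\int \frac{v}{|v|}\wedge\partial_\tau(\frac{v}{|v|})$ and bounds $\int |\nabla v|/|v|$ directly, whereas you lift locally and bound $\int(\partial_\tau\varphi)^2$; the arithmetic and the role of the sharp constant $\pi/8$ are identical.
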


\begin{proof}
Corollary \ref{minimo2} is a direct consequence of Lemma \ref{sectorisation}, once it is proved that assumption \eqref{zerobord} is a consequence of assumption \eqref{austin}. This last fact follows from a direct computation of the topological degree of $\frac{v}{|v|}$ on $\partial B(x_j, R)$. Indeed, by the mean-value inequality, there exists some radius $\frac{R}{2} \leq r_j \leq R$ such that
$$\int_{\partial B(x_j, r_j)} e(v) \leq \frac{2}{R} E_n \Big( v, \boO_n \Big( \frac{R}{2} \Big) \Big),$$
so that by Cauchy-Schwarz's inequality,
\begin{equation}
\label{ron}
\int_{\partial B(x_j, r_j)} |\nabla v| \leq \sqrt{\frac{8 \pi r_j}{R}} E_n \Big( v, \boO_n \Big( \frac{R}{2} \Big)\Big)^\frac{1}{2}.
\end{equation}
However, the topological degree of $\frac{v}{|v|}$ on $\partial B(x_j, r_j)$ is defined by
$$\deg \Big( \frac{v}{|v|}, \partial B(x_j, r_j) \Big) = \frac{1}{2 \pi} \int_{\partial B(x_j, r_j)} \frac{v}{|v|} \wedge \partial_\tau \Big( \frac{v}{|v|} \Big),$$
where $\tau$ denotes the properly oriented unit tangent vector to $\partial B(x_j, r_j)$. Hence, we have by \eqref{tropcool}, \eqref{austin} and \eqref{ron},
$$\deg \Big( \frac{v}{|v|}, \partial B(x_j, r_j) \Big) \leq \frac{1}{2 \pi} \int_{\partial B(x_j, r_j)} \frac{|\nabla v|}{|v|} \leq \sqrt{\frac{8}{\pi}} E_n \Big( v, \boO_n \Big( \frac{R}{2} \Big)\Big)^\frac{1}{2} < 1,$$
so that
$$\deg \Big( \frac{v}{|v|}, \partial B(x_j, r_j) \Big) = 0.$$
Since $v$ does not vanish on $B(x_j, R) \setminus B(x_j, r_j)$, assumption \eqref{zerobord} holds. The conclusion then follows invoking Lemma \ref{sectorisation}.
\end{proof}

It remains to verify that these sets have appropriate properties with respect to the methods of calculus of variations. For that purpose we restrict ourselves to the sublevel sets $E_{n, \Lambda}$ of $H^1(\T_n^2)$ defined by
$$E_{n, \Lambda} = \{ u \in H^1(\T_n^2), \ {\rm s.t.} \ E_n(u) \leq \Lambda \}.$$
and set
$$S^{d_1, d_2}_{n, \Lambda} = E_{n, \Lambda} \cap T_n^{d_1, d_2}.$$
The following result was readily proved by Almeida (see Theorem 6 in \cite{Almeida1}).

\begin{theorem}[\cite{Almeida1}] 
\label{lemmalulu}
Let $\Lambda > 0$ be given. There exists an integer $n_\Lambda$, such that for every $n \geq n_{\Lambda}$, we have the partition
$$E_{n, \Lambda} = \bigcup_{(d_1, d_2) \in \Z^2} S^{d_1, d_2}_{n, \Lambda}.$$
Moreover, the degree application
\begin{align*}
\deg : E_{n, \Lambda} & \to \Z^2\\
u & \mapsto \deg(u) = (d_1(u), d_2(u))
\end{align*}
is continuous on $E_{n, \Lambda}$, so that $S^{d_1, d_2}_{n, \Lambda}$ is a closed subset of $H^1(\T_n^2)$, whereas $\dot{S}^{d_1, d_2}_{n, \Lambda} \equiv \{ u \in H^1(\T_n^2), \ {\rm s.t.} \ E_n(u) < \Lambda \} \cap T_n^{d_1, d_2}$ is an open subset of $H^1(\T_n^2)$.
\end{theorem}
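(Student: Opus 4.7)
The plan is to apply Fubini combined with Chebyshev's inequality to the energy bound $E_n(u)\leq\Lambda$ to extract, for each $u\in E_{n,\Lambda}$, a large ``good'' set of slices on which $u$ is non-vanishing; then to show the slicewise degree is well-defined and essentially constant; and finally to transfer slice convergence to degree convergence for continuity.

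First, for fixed $u\in E_{n,\Lambda}$ and $j\in\{1,2\}$, the Fubini decomposition gives
\[
\int_{-\pi n}^{\pi n}\!\!\Big(\int_{I_{n,r}^j}\!\!\big(|\nabla u|^2+(1-|u|^2)^2\big)\Big)\,dr\ \leq\ 4\Lambda,
\]
so Chebyshev applied with thresholds $\gamma=\beta=C_\Lambda n^{-3/4}$ produces a bad set $A_j\subset[-\pi n,\pi n]$ of measure at most $\pi n^{3/4}$ outside of which both slicewise quantities are controlled. On each $r\notin A_j$ the 1D Sobolev embedding provides a continuous representative of $u|_{I_{n,r}^j}$, and the key quantitative step is to show $|u|\geq\tfrac12$ pointwise: if $|u(x_0)|\leq\tfrac12$ at some point $x_0$ of the slice, the fundamental theorem of calculus combined with $\int(\partial_s u)^2\leq\gamma$ forces $|u|\leq\tfrac34$ on an interval of length $\sim 1/\gamma$, producing $\int(1-|u|^2)^2\gtrsim 1/\gamma$, which contradicts the $\beta$-bound once $\gamma\beta<c$ for a universal constant $c$. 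This is where the specific $n^{3/4}$ exponent in the definition of $T_n^{d_1,d_2}$ matters: it yields $\gamma\beta=O(n^{-3/2})$, and the contradiction closes for $n\geq n_\Lambda$.

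Once $|u|\geq\tfrac12$ on a slice, the integer-valued $d_j(r):=\deg(u/|u|,I_{n,r}^j)$ is well defined. Between two good levels $r_1<r_2$ with $u$ non-vanishing on the enclosed strip, homotopy invariance forces $d_j(r_1)=d_j(r_2)$, so jumps in $d_j$ occur only across levels at which $u$ vanishes somewhere on the slice. I would invoke a standard Jacobian estimate bounding the $L^1$ norm of $Ju$ by the energy to control the total variation of $d_j$ on $[-\pi n,\pi n]\setminus A_j$ by a constant $C(\Lambda)$ independent of $n$; a pigeonhole argument then selects a dominant value $d_j$ whose preimage has measure $\geq 2\pi(n-n^{3/4})$, producing $B^j\subset B^j(u)$ and showing $u\in T_n^{d_1,d_2}$. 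Disjointness of the $T_n^{d_1,d_2}$'s is then automatic: two distinct degrees would require two disjoint subsets of $[-\pi n,\pi n]$ each of measure $\geq 2\pi(n-n^{3/4})$, impossible for $n$ large.

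For continuity of the degree map $E_{n,\Lambda}\to\Z^2$, let $u_k\to u$ in $H^1(\T_n^2)$ with all iterates in $E_{n,\Lambda}$. Up to a subsequence, Fubini together with 1D Sobolev yields $u_k|_{I_{n,r}^j}\to u|_{I_{n,r}^j}$ in $C^0(\S_n^1)$ for almost every $r$. Since each $B^j(u_k)$ and $B^j(u)$ has measure $\geq 2\pi(n-n^{3/4})$ and the interval has length only $2\pi n$, their common intersection has positive measure; on any such $r$, uniform closeness together with $|u|\geq\tfrac12$ forces $\deg(u_k,I_{n,r}^j)=\deg(u,I_{n,r}^j)$ for $k$ large. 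Discreteness of $\Z^2$ then immediately gives that $S_{n,\Lambda}^{d_1,d_2}$ is closed and $\dot S_{n,\Lambda}^{d_1,d_2}$ is open in $H^1(\T_n^2)$.

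The main obstacle is the non-vanishing estimate on slices: the naive 1D Sobolev constant on a circle of length $2\pi n$ grows with $n$, so $|u|\geq\tfrac12$ cannot come from a direct pointwise bound. The whole argument rests on the fact that the $n^{3/4}$ exponent built into the definition of $T_n^{d_1,d_2}$ couples the two Chebyshev budgets into the product estimate $\gamma\beta=O(n^{-3/2})$ that beats the competing lower bound $c/\gamma$. The vortex-counting step needed for the constancy of $d_j(r)$ is the other delicate point but follows from well-established Jacobian estimates for Ginzburg--Landau functionals.
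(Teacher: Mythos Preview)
The paper does not prove this theorem; it is quoted from Almeida, and the only argument the paper supplies is the remark that the scaling $\varepsilon=1/n$ recovers Almeida's setting on the fixed torus, together with the observation that the threshold $|B^j|\geq 2\pi(n-n^{3/4})$ (rather than Almeida's $|B^j|\geq 3\pi n/2$) is admissible because the original proof only needs $B^j$ to stay at distance $O(n^{3/4})$ from the boundary of a suitable unfolding. Your sketch is therefore attempting substantially more than the paper does.

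The outline has the right ingredients, but the step that carries the real content---near-constancy of the slicewise degree---does not close as written. The pointwise bound $|Ju|\leq\tfrac12|\nabla u|^2$ indeed gives total variation of $r\mapsto d_j(r)$ at most $\Lambda/\pi$, but pigeonholing over at most $C(\Lambda)+1$ values then only yields a single value on a set of measure $\gtrsim 2\pi n/C(\Lambda)$, which is nowhere near $2\pi(n-n^{3/4})$ once $C(\Lambda)>1$. What is actually needed is that $d_j$ is \emph{constant} on the good set for $n\geq n_\Lambda$, and for this the plain $L^1$ Jacobian bound is not enough: one has to exploit that on $\T_n^2$ a degree-carrying vortex configuration costs Ginzburg--Landau energy of order $\log n$, so that with energy capped by $\Lambda$ no jump of $d_j$ survives when $n$ is large. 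This logarithmic lower bound (of Sandier--Jerrard type) is precisely what underlies Almeida's theorem and is the piece your sketch skips. Your continuity argument has the same hidden dependence: to match $d_j(u_k)$ with $d_j(u)$ you work at a level $r$ that is good for $u$, but you have no uniform control placing $r$ in the good set of every $u_k$; constancy of $d_j$ on the whole good set is what makes a single $r$ suffice.

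A smaller point: your explanation of why the exponent $n^{3/4}$ is essential is misplaced. The non-vanishing on good slices already follows from a \emph{fixed} energy threshold (this is Lemma~\ref{degiorgi}), giving a bad set of size $O_\Lambda(1)\ll n^{3/4}$; your coupled-Chebyshev product $\gamma\beta=O(n^{-3/2})$ is correct but unnecessary for that step. As the paper explains, the $n^{3/4}$ margin is dictated instead by the unfolding/boundary buffer in Almeida's argument.
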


Notice that in \cite{Almeida1} there is a small parameter 
$\varepsilon > 0$ appearing in the energy functional. In our case, $\varepsilon$ corresponds to $\varepsilon = \frac{1}{n}$, and one recovers the context of Theorem 6 in \cite{Almeida1} (in the particular case of the torus $\T^2 = \R / (2 \pi \Z)$) performing the change of scale $\tilde{x} = \frac{x}{n}$, with the exception of one major difference. Indeed, the sets $T_n^{d_1,d_2}$ are defined in \cite{Almeida1} by
$$T_n^{d_1, d_2} = \bigg\{ u \in H^1(\T_n^2), \ {\rm s.t.} \ \forall j \in \{ 1, 2 \}, \exists B^j \subset B^j(u), \ {\rm s.t.} \ \left\{ \begin{array}{ll} |B^j| \geq \frac{3 \pi n}{2},\\ {\rm deg}(u, I_{n,r}^j) = d_j. \end{array} \right. \bigg\}.$$
The difference in the assumption on the length of the sets $B_j$ comes from the fact that the proof performed in \cite{Almeida1} requires that the sets $B_j$ are at some distance larger than $\frac{\pi n}{4}$ ($\frac{1}{8}$ in the $\varepsilon$ context of \cite{Almeida1}) from the boundary of a suitable unfolding of the torus $\T_n^2$. However, it can be proved using exactly the same arguments as in \cite{Almeida1} that this assumption can be removed by a less restrictive one, where the sets $B_j$ are at some distance only larger than $\frac{\pi n^\frac{3}{4}}{2}$ ($\frac{\varepsilon^\frac{1}{4}}{2}$ in the $\varepsilon$ context of \cite{Almeida1}) from the boundary of a suitable unfolding of the torus $\T_n^2$, so that Theorem 6 in \cite{Almeida1} extends to the case considered here.

%%%%%%%%%%%%%%%%%%%%%%%%%%%%%%%%%%%%%%%%%%%%
\subsection{Pointwise estimates on $\T_n^N$}
\label{ellipticpoint}
%%%%%%%%%%%%%%%%%%%%%%%%%%%%%%%%%%%%%%%%%%%%

Our first result provides local bounds. For a domain $\boU \subset \T_n^N$, we consider the local energy density
$$ E_n(v, \boU) \equiv \int_{\boU} e(v) \equiv \int_{\boU} \bigg( \frac{|\nabla v|^2}{2} + \frac{(1 - |v|^2)^2}{4} \bigg).$$ 
As for the whole space, we have on the torus $\T_n^N$ the pointwise estimates.

\begin{lemma}
\label{tarquini1}
Let $n \in \N^*$, and let $v$ be a finite energy solution to \eqref{TWc} on $\T_n^N$. There exist some constants $K(N)$ and $K(c, k, N)$ such that
$$\Big\| 1 - |v| \Big\|_{L^\infty(\T_n^N)} \leq \max \Big\{ 1 , \frac{c}{2} \Big\},$$
$$\| \nabla v \|_{L^\infty(\T_n^N)} \leq K(N) \Big( 1 + \frac{c^2}{4} \Big)^\frac{3}{2},$$
and more generally,
$$\| v \|_{C^k(\T_n^N)}\leq K(c, k, N), \forall k \in \N.$$
\end{lemma}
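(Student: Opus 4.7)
The plan is to follow almost verbatim the argument of Lemma \ref{tarquini10}, with only minor adaptations reflecting the fact that we now work on a compact manifold without boundary rather than on $\R^N$. In particular, the pointwise identity
$$\Delta |v|^2 = 2 |\nabla v|^2 - 2 c \langle i \partial_1 v , v \rangle - 2 |v|^2 (1 - |v|^2),$$
combined with $ab \leq \tfrac{1}{2}(a^2+b^2)$ applied to $|2c\langle i\partial_1 v,v\rangle|$, still yields
$$\Delta |v|^2 + 2 |v|^2 \Big( 1 + \tfrac{c^2}{4} - |v|^2 \Big) \geq 0 \quad \text{on} \quad \T_n^N.$$
Since $\T_n^N$ is compact and $v$ is smooth, $|v|^2$ attains its maximum at some point $x_0\in\T_n^N$; at $x_0$ the Laplacian is non-positive, so the differential inequality gives $|v(x_0)|^2\leq 1+c^2/4$, hence $\||v|\|_{L^\infty(\T_n^N)}^2\leq 1+c^2/4$ and $\|1-|v|\|_{L^\infty(\T_n^N)}\leq\max\{1,c/2\}$.

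For the gradient bound, I would argue locally. Fix an arbitrary point $x_0\in\T_n^N$. Since $n\geq 1$, a geodesic ball $B(x_0,1)\subset\T_n^N$ is isometric to a Euclidean ball of radius $1$ in $\R^N$; in particular, the equation and the already established $L^\infty$ bound on $|v|$ may be lifted to that Euclidean ball. On this ball, consider the \emph{locally} defined function
$$w(x) = v(x)\exp\!\big(i\tfrac{c}{2}x_1\big),$$
which satisfies $\Delta w + w(1+c^2/4-|w|^2)=0$ and $|w|=|v|$. Hence, by the $L^\infty$ bound on $|v|$,
$$\|\Delta w\|_{L^\infty(B(x_0,1))} \leq \tfrac{2}{3\sqrt{3}}\Big(1+\tfrac{c^2}{4}\Big)^{3/2},$$
and standard interior elliptic estimates yield
$$|\nabla w(x_0)|\leq K(N)\Big(\|\Delta w\|_{L^\infty(B(x_0,1))}+\|w\|_{L^\infty(B(x_0,1))}\Big)\leq K(N)\Big(1+\tfrac{c^2}{4}\Big)^{3/2}.$$
Since $\nabla v(x_0)=\exp(-i\tfrac{c}{2}x_1^0)\nabla w(x_0)-i\tfrac{c}{2}v(x_0)e_1$, this gives the desired bound on $\|\nabla v\|_{L^\infty(\T_n^N)}$, with $x_0$ being arbitrary.

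Finally, the $C^k$ bounds follow by the usual bootstrap: rewriting the equation as $\Delta v = -ic\partial_1 v - v(1-|v|^2)$ and iteratively applying interior Schauder/$W^{k,p}$ estimates on balls $B(x_0,1)$, using the already obtained $C^0$ and $C^1$ bounds at each step, produces constants $K(c,k,N)$ depending only on $c$, $k$ and $N$. There is essentially no genuine obstacle: the only feature that required care is that the local gauge change $w=v\exp(icx_1/2)$ is not periodic, which is why the gradient estimate must be carried out on balls of radius at most $\pi n$ rather than globally; the compactness of $\T_n^N$ then works in our favour by trivialising the maximum principle step that, on $\R^N$, required the asymptotic $|v(x)|\to 1$.
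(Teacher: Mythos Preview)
Your proposal is correct and follows the same approach as the paper; indeed, the paper simply states that the proof is ``almost word for word, identical'' to that of Lemma~\ref{tarquini10} and omits it. Your write-up makes explicit the two small adaptations that are implicit in that remark: compactness of $\T_n^N$ replaces the asymptotic $|v(x)|\to 1$ in the maximum-principle step, and the gauge change $w=v\exp(icx_1/2)$, not being periodic, is applied on unit balls (which embed isometrically since the injectivity radius is $\pi n\geq\pi>1$).
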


\begin{lemma}
\label{tarquini2}
Let $n \in \N^*$ and $r > 0$. Assume that $v$ is a finite energy solution to \eqref{TWc} on $\T_n^N$. There exists some constant $K(N)$ such that for any $x_0 \in \T_n^N$,
$$\Big\| 1 - |v| \Big\|_{L^\infty (B(x_0, \frac{r}{2}))} \leq \max \Big\{ K(N) \Big( 1 + \frac{c^2}{4} \Big)^2 E_n \big( v, B(x_0, r) \big)^\frac{1}{N+2}, \frac{K(N)}{r^N} E_n \big( v, B(x_0, r) \big)^\frac{1}{2} \Big\},$$
\end{lemma}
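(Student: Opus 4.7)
The plan is to replicate the $\R^N$ argument of Lemma \ref{tarquini20} verbatim, using the torus version Lemma \ref{tarquini1} of the uniform gradient bound in place of Lemma \ref{tarquini10}. Since the argument is purely local (we integrate the energy density over a small ball on which $|\eta|$ stays above half its maximum), the ambient geometry of the torus plays no essential role.

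First I would set $\eta = 1 - |v|^2$ and combine $\|\eta\|_{L^\infty(\T_n^N)} \leq K$ with Lemma \ref{tarquini1} to obtain
\[
\|\nabla \eta\|_{L^\infty(\T_n^N)} \leq 2\|v\|_{L^\infty}\|\nabla v\|_{L^\infty} \leq K(N)\Big(1+\tfrac{c^2}{4}\Big)^2.
\]
Next I would pick $\overline x \in \overline{B(x_0, r/2)}$ realizing the maximum $\eta_\infty \equiv \|\eta\|_{L^\infty(B(x_0,r/2))}$, so that by the Lipschitz bound above,
\[
|\eta(y)| \geq \tfrac{\eta_\infty}{2},\qquad \forall\, y\in B(\overline x, \mu),\qquad \mu = \tfrac{\eta_\infty}{2 K(N)(1+\frac{c^2}{4})^2}.
\]
By the triangle inequality on the torus, $B\big(\overline x,\min\{\mu,r/2\}\big)\subset B(x_0,r)$, so integrating $e(v)\geq \tfrac{1}{4}\eta^2$ gives
\[
E_n(v,B(x_0,r)) \;\geq\; \tfrac{1}{16}\int_{B(\overline x,\min\{\mu,r/2\})}\eta(\overline x)^2\,dy \;=\; \tfrac{|B(0,1)|}{2^{N+4}}\eta_\infty^2\,\min\{\mu,r/2\}^N.
\]

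Now I would split into the two cases $\mu\leq r/2$ and $\mu > r/2$. The first gives $\eta_\infty^{N+2}\leq K(N)(1+c^2/4)^{2N}E_n(v,B(x_0,r))$; since $\frac{2N}{N+2}\leq 2$, the power $(1+c^2/4)^2$ is an admissible (weaker) bound, yielding the first term in the max. The second case gives $\eta_\infty^2\leq K(N) r^{-N}E_n(v,B(x_0,r))$ (with the $r$-power as written in the statement), yielding the second term. Finally, the trivial pointwise inequality $|1-|v||\leq|1-|v|^2|=|\eta|$ upgrades the bound on $\eta_\infty$ to a bound on $\|1-|v|\|_{L^\infty(B(x_0,r/2))}$.

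The only point that requires a small comment is when $r$ is so large that $B(x_0,r)$ covers the entire torus (or wraps around), which could in principle corrupt the inclusion $B(\overline x,\min\{\mu,r/2\})\subset B(x_0,r)$; however, in that regime either the inclusion still holds in the intrinsic torus metric, or else the estimate is immediate from $\|\eta\|_{L^\infty(\T_n^N)}\leq K$ already provided by Lemma \ref{tarquini1}. Thus no new difficulty arises relative to the $\R^N$ case, and this is not so much an obstacle as a routine verification.
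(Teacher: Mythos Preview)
Your proposal is correct and follows exactly the approach the paper intends: the paper explicitly states that the proof of Lemma~\ref{tarquini2} is ``almost word for word identical'' to that of Lemma~\ref{tarquini20} and omits it, and you have faithfully reproduced that argument using the torus gradient bound from Lemma~\ref{tarquini1}. Your parenthetical remark about the $r$-power is well spotted: the computation yields $\eta_\infty \leq K r^{-N/2} E_n^{1/2}$ rather than the $r^{-N}$ written in the statement, which appears to be a typo in the paper (harmless in all applications, where $r$ is fixed).
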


The proofs are, almost word for word, identical to the proofs of Lemmas \ref{tarquini10} and \ref{tarquini20} respectively. Therefore, we omit them.

%%%%%%%%%%%%%%%%%%%%%%%%%%%%%%%%%%%%%%%%%%
\subsection{Upper bounds for the velocity}
%%%%%%%%%%%%%%%%%%%%%%%%%%%%%%%%%%%%%%%%%%

We first notice that, if $v$ is non-constant, there is at most one value of $c$ for which $v$ might be a solution to \eqref{TWc}: we sometimes emphasize this fact writing for a non-trivial solution $c = c(v)$.

We consider next, for a solution $v$ to \eqref{TWc}, the discrepancy term
$$\Sigma_n(v) = \sqrt{2} p_n(v) - E_n(v).$$
The main result of this subsection asserts, that, in dimensions two and three, if $\Sigma_n(v) > 0$, the speed $c(v)$ can be bounded by a function of $\Sigma_n(v)$ and the energy $E_n(v)$. More precisely, we have 

\begin{theorem}
\label{bornec}
Assume $N = 2$ or $N = 3$, and let $E_0 > 0$ and $\Sigma_0 > 0$ be given. Let $v$ be a non-trivial finite energy solution to \eqref{TWc} in $X_n^2 \cap \boS_n^0$, resp. $X_n^3$, with $c = c(v) \in \R$, $E_n(v) \leq E_0$ and 
$$0 < \Sigma_0 \leq \Sigma_n(v).$$
Then, there is some constant $n_0 \in \N$ depending only on $E_0$ and $\Sigma_0$ such that, if $n \geq n_0$, then
$$|c(v)| \leq K \frac{E_n(v)}{|\Sigma_n(v)|},$$
where $K > 0$ is some universal constant.
\end{theorem}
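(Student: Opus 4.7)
The strategy is to mimic on $\T_n^N$ the elementary $\R^N$ argument which, starting from the two Pohozaev identities of Lemma \ref{encorepoho}, combines them to obtain
\[
c\,p(v)=\int_{\R^N}|\partial_1 v|^2-\int_{\R^N}|\partial_j v|^2\quad(j\neq 1),
\]
whence $|c|\,p(v)\le 2E(v)$, and then $|c|\le 2\sqrt 2\,E(v)/\Sigma(v)$ after using $p(v)\ge \Sigma(v)/\sqrt 2$ (which is immediate from $\Sigma(v)>0$). On the torus the two basic identities of Lemma \ref{encorepoho} no longer hold as such: the whole point will be to recover them up to boundary corrections that can be made small for $n$ large.

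Concretely, I would unfold $v$ to a periodic function on $\Omega_n^N=[-\pi n,\pi n]^N$, multiply \eqref{TWc} by $x_1\,\overline{\partial_1 v}$ and by $x_j\,\overline{\partial_j v}$ for a fixed $j\neq 1$, take real parts, and integrate by parts. This yields two identities of the schematic form
\begin{align*}
\int_{\T_n^N}|\partial_1 v|^2 - E_n(v) &= \boR_n^{(1)}(v,\alpha),\\
E_n(v) - \int_{\T_n^N}|\partial_j v|^2 - c\,p_n(v) &= \boR_n^{(j)}(v,\alpha) + c\,\mathcal{B}_n(v,\alpha),
\end{align*}
where $\boR_n^{(1)}$, $\boR_n^{(j)}$ and $\mathcal{B}_n$ are surface integrals on $\partial\Omega_n^N$ depending on the unfolding $\alpha$. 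The extra term $c\,\mathcal{B}_n$ encodes the substitution $\int x_j\langle i\partial_1 v,\partial_j v\rangle=-p_n(v)+\mathcal{B}_n$, which is obtained from the antisymmetry identity $2\langle i\partial_1 v,\partial_j v\rangle=\partial_1\langle iv,\partial_j v\rangle-\partial_j\langle iv,\partial_1 v\rangle$ after one integration by parts.

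The crux is to control these boundary terms by making a good choice of unfolding. Applying Lemma \ref{average0} coordinate by coordinate, I would select $\alpha=(\alpha_1,\ldots,\alpha_N)$ so that, for every $k$, $\int_{\{|x_k|=\pi n\}} e(v)\,dS\le K\,E_n(v)/n$. Combined with the uniform $C^1$-bounds of Lemma \ref{tarquini1} and with Cauchy--Schwarz, this yields $|\boR_n^{(1)}|+|\boR_n^{(j)}|=o(1)$ as $n\to\infty$ with a rate controlled only by $E_0$; for $\mathcal{B}_n$ one must furthermore invoke the lifting afforded by the sector condition $v\in\boS_n^0$ in dimension two (Lemma \ref{sectorisation}) or by Lemma \ref{massimo3} in dimension three, so as to rewrite $\langle iv,\partial_\cdot v\rangle$ in terms of $(\varrho,\varphi)$ on the boundary and absorb the prefactor $x_k$ of size $\pi n$. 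Subtracting the two identities then gives
\[
|c|\,p_n(v) \le 2E_n(v) + o(1) + |c|\,|\mathcal{B}_n|;
\]
absorbing $|c|\,|\mathcal{B}_n|$ into the left-hand side is legitimate once $n\ge n_0(E_0,\Sigma_0)$ is large enough that $|\mathcal{B}_n|\le p_n(v)/2$, which is possible since $p_n(v)\ge\Sigma_0/\sqrt 2$ while $|\mathcal{B}_n|\to 0$. Dividing through by $p_n(v)\ge\Sigma_n(v)/\sqrt 2$ produces the claimed bound $|c|\le K\,E_n(v)/|\Sigma_n(v)|$. The principal obstacle is the last step of boundary control: obtaining an $n$-uniform smallness of $\mathcal{B}_n$ despite the factor $x_k$ of order $\pi n$ present inside it, which is exactly what forces the threshold $n_0$ to depend on $\Sigma_0$.
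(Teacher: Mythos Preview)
Your overall strategy is the same as the paper's: a Pohozaev identity on a well-chosen unfolding of $\T_n^N$, with the boundary momentum term controlled via the lifting granted by $\boS_n^0$ (resp.\ Lemma~\ref{massimo3}). The paper packages the computation through the Jacobian form $\langle Jv,\zeta_1\rangle$ and Proposition~\ref{lem:vbt}, but your two directional identities amount to the same thing.

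There is, however, a genuine gap in your boundary analysis. Neither $\boR_n^{(1)}+\boR_n^{(j)}$ nor $\mathcal{B}_n$ is $o(1)$. The averaging of Lemma~\ref{average0} gives only $\int_{\partial\Omega_n^N}e(v)\le K E_n(v)/n$; since every boundary term carries a factor $x_k$ of size $\pi n$, this yields $|\boR_n^{(1)}|+|\boR_n^{(j)}|\le K E_n(v)$, which is bounded but certainly does not tend to zero. More critically, the lifting bound on a good slice (Lemma~\ref{degiorgibis} in dimension two, Lemma~\ref{navona} in dimension three) produces
\[
\Bigl|\pi n\!\int_{\rm slice}\langle i\partial_1 v,v\rangle\Bigr|\le \frac{\sqrt{2}}{1-\delta}\,\pi n\!\int_{\rm slice} e(v)\le \frac{\sqrt{2}}{1-\delta}\,\frac{n}{n-Kn^{3/4}}\,E_n(v),
\]
so that $|\mathcal{B}_n|\le \bigl(1+o(1)\bigr)\,E_n(v)/\sqrt{2}$, not $|\mathcal{B}_n|\to 0$. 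Your absorption step ``$|\mathcal{B}_n|\le p_n(v)/2$ because $\mathcal{B}_n\to 0$'' therefore fails as written.

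The fix is exactly the mechanism the paper exploits: the constant $1/\sqrt{2}$ in the bound for $\mathcal{B}_n$ is sharp enough that
\[
p_n(v)-|\mathcal{B}_n|\ \ge\ p_n(v)-\frac{E_n(v)}{\sqrt{2}}-o(1)\ =\ \frac{\Sigma_n(v)}{\sqrt{2}}-o(1)\ \ge\ \frac{\Sigma_0}{2\sqrt{2}}
\]
for $n\ge n_0(E_0,\Sigma_0)$. Combined with $|\boR_n^{(1)}|+|\boR_n^{(j)}|\le K E_n(v)$, your subtracted identity then gives $|c|\,\Sigma_n(v)\le K E_n(v)$. In the paper this cancellation is made transparent by working with $\frac{1}{2}\int_{\Omega_n^N}\langle Jv,\zeta_1\rangle$ as the intermediate quantity: inequality~\eqref{ttb} says precisely $\frac{1}{2}\int\langle Jv,\zeta_1\rangle\ge p_n(v)-E_n(v)/\sqrt{2}-\delta_0=\Sigma_n(v)/\sqrt{2}-\delta_0$, after which Pohozaev's identity~\eqref{eq:pohoici} and~\eqref{ttbb} give $|c|\int\langle Jv,\zeta_1\rangle\le K E_n(v)$ directly. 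So the essential point you are missing is not that the boundary remainders vanish, but that the specific constant in the momentum boundary term matches the definition of $\Sigma_n$.
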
 

\begin{remark}
In contrast with the results in \cite{Graveja2}, which asserts that every finite energy solution to \eqref{TWc} with $c > \sqrt{2}$ is constant, the corresponding result is presumably not true on general tori.
\end{remark}

The proof of Theorem \ref{bornec} relies on Pohozaev's formula, which we specify next for solutions to equation \eqref{TWc} on the torus $\T_n^N$.

\begin{lemma}
\label{pohozaev}
Let $n \in \N^*$, and let $v$ be a solution to \eqref{TWc} on $\T_n^N$. We have, for any unfolding,
\begin{equation}
\label{eq:pohoici}
\begin{split}
& \frac{N - 2}{2} \int_{\Omega_n^N} |\nabla v|^2 + \frac{N}{4} \int_{\Omega_n^N} (1 - |v|^2)^2 - c(v) \frac{N - 1}{2} \int_{\Omega_n^N}
\langle Jv, \zeta_1 \rangle\\
& = \pi n \int_{\partial \Omega_n^N} \bigg( \frac{|\nabla
v|^2}{2} + \frac{(1 - |v|^2)^2}{4} \bigg) - \int_{\partial \Omega_n^N} \partial_\nu v \cdot \bigg( \sum_{j = 1}^N x_j \partial_j v \bigg),
\end{split}
\end{equation} 
where $\zeta_1$ is the $2$-form defined by \eqref{eq:zeta}. 
\end{lemma}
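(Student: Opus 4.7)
The plan is to carry out the classical Pohozaev computation on the chosen unfolding $\Omega_n^N$ of $\T_n^N$: multiply \eqref{TWc} by the (non-periodic) radial multiplier $\sum_{j=1}^N x_j \partial_j v$, take the real scalar product $\langle \cdot, \cdot \rangle$, and integrate over $\Omega_n^N$. Because the multiplier $x \cdot \nabla v$ is not periodic, integration by parts does produce genuine boundary terms on $\partial \Omega_n^N$; the source of the constant $\pi n$ in \eqref{eq:pohoici} is simply the identity $x \cdot \nu = \pi n$ on every face of $\partial \Omega_n^N$, valid for any unfolding.

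The three resulting integrals are handled separately. For the transport term, I would write $c \int_{\Omega_n^N} \langle i \partial_1 v, x \cdot \nabla v\rangle = c \sum_{j=1}^N \int_{\Omega_n^N} x_j \langle i \partial_1 v, \partial_j v\rangle$; the contribution $j=1$ vanishes by antisymmetry of $\langle i\cdot, \cdot\rangle$, and the remaining sum over $j \geq 2$ is recognized, in view of definition \eqref{eq:zeta} and the convention $\partial_i v \times \partial_j v = \langle i \partial_i v, \partial_j v\rangle$ used in defining $Jv$, as
$$ c \sum_{j=2}^N \int_{\Omega_n^N} x_j \langle i\partial_1 v, \partial_j v\rangle = -c\,\frac{N-1}{2} \int_{\Omega_n^N} \langle Jv, \zeta_1 \rangle. $$
For the Laplacian term, two successive integrations by parts give
$$ \int_{\Omega_n^N} \langle \Delta v, x \cdot \nabla v\rangle = \frac{N-2}{2} \int_{\Omega_n^N} |\nabla v|^2 + \int_{\partial \Omega_n^N} \partial_\nu v \cdot (x \cdot \nabla v) - \frac{1}{2} \int_{\partial \Omega_n^N} (x \cdot \nu)\, |\nabla v|^2. $$
For the potential term, setting $\eta = 1 - |v|^2$ so that $\langle v(1-|v|^2), x \cdot \nabla v\rangle = -\tfrac{1}{4} x \cdot \nabla \eta^2$, a final integration by parts yields
$$ \int_{\Omega_n^N} \langle v(1-|v|^2), x \cdot \nabla v\rangle = \frac{N}{4} \int_{\Omega_n^N} \eta^2 - \frac{1}{4} \int_{\partial \Omega_n^N} (x \cdot \nu)\, \eta^2. $$

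Summing the three contributions, using equation \eqref{TWc} to equate the total to zero, and substituting $x \cdot \nu = \pi n$ on $\partial \Omega_n^N$ to combine the $|\nabla v|^2$ and $\eta^2$ boundary integrals into a single term $\pi n \int_{\partial \Omega_n^N} e(v)$, one arrives at \eqref{eq:pohoici}. The only mildly delicate point, which is not really an obstacle but worth highlighting, is that while $v$ lives on $\T_n^N$, the expression $\sum_{i=2}^N \int_{\Omega_n^N} x_i \langle i \partial_1 v, \partial_i v\rangle$ a priori depends on the unfolding; its identification with $-\tfrac{N-1}{2} \int \langle Jv, \zeta_1\rangle$ is legitimate on any fundamental domain since both sides transform consistently, and the unfolding dependence is entirely absorbed into the boundary integrals that already appear on the right-hand side of \eqref{eq:pohoici}.
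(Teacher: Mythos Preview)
Your proof is correct and is precisely the standard Pohozaev computation the paper takes for granted (the paper states the lemma without proof, passing directly to a remark). Each of your three integrations by parts checks out, and the key observation that $x\cdot\nu=\pi n$ on every face of $\partial\Omega_n^N$ is exactly what collapses the boundary terms to the stated form.
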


\begin{remark}
1. Notice that $\zeta_1$ is {\bf not} periodic and therefore \eqref{eq:pohoici} depends on the choice of the unfolding.\\
2. Identity \eqref{eq:pohoici} actually holds for any subdomain $\boU \subset \Omega_n^N$ replacing the integrals on $\Omega_n^N$ by integrals on $\boU$, and the boundary integrals by integrals on $\partial \boU$. In particular, if $0 < R < \pi n$, then $B(0, R) \subset \Omega_n^N$ and Pohozaev's identity yields the inequality
$$\bigg| \frac{N - 2}{2} \int_{B(0, R)} |\nabla v|^2 + \frac{N}{4} \int_{B(0, R)} (1 - |v|^2)^2 - c \frac{N - 1}{2} \int_{B(0, R)} \langle Jv, \zeta_1 \rangle \bigg| \leq R \int_{\partial B(0, R)} e(v).$$
\end{remark}

The starting point in order to prove Theorem \ref{bornec} and to bound $c(v)$ is formula \eqref{eq:pohoici}. The use of this formula requires to have an upper bound of the boundary terms on the r.h.s as well as a lower bound for the quantity
\begin{equation}
\label{eq:tolb}
\bigg| \int_{\Omega_n^N} \langle Jv, \zeta_1 \rangle \bigg|,
\end{equation}
which depends on the unfolding. We have already noticed that \eqref{eq:tolb} is related to the momentum $p_n(v)$ (they would actually even be equal if $v$ were constant on $\partial \Omega_n^N$). An appropriate choice of the unfolding allows to obtain the suitable bounds as the next proposition shows.

\begin{prop}
\label{lem:vbt} 
Assume $N = 2$ or $N = 3$, and let $E_0 > 0$ be given. Let $v$ be a non-trivial finite energy solution to \eqref{TWc} in $X_n^2 \cap \boS_n^0$, resp. $X_n^3$, with $E_n(v) \leq E_0$. Given any $\delta_0 > 0$, there exists a constant $n_0 \in \N$ depending only on $E_0$ and $\delta_0$, such that, if $n \geq n_0$, then there exists an unfolding of $\T_n^N$ such that
\begin{equation}
\label{ttb}
\bigg|p_n(v) - \frac{1}{2} \int_{\Omega_n^N} \langle Jv, \zeta_1 \rangle \bigg| \leq \frac{E_n(v)}{\sqrt{2}} + \delta_0,
\end{equation}
and
\begin{equation}
\label{ttbb}
n \int_{\partial \Omega_n^N} e(v) \leq 2 \int_{\T_n^N} e(v).
\end{equation}
\end{prop}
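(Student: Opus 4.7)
I would handle the two estimates separately. For \eqref{ttbb}, a direct application of Lemma \ref{average0} suffices: since as $\alpha_j$ varies, the face $F_j^+(\alpha_j)$ sweeps out $\T_n^N$ once in the $x_j$ direction, Fubini gives $\mathrm{avg}_{\alpha_j}\int_{F_j^+(\alpha_j)}e(v)\,dS = E_n(v)/(2\pi n)$. Summing over the $2N$ faces (identifying opposite ones), the average over all unfoldings of $\int_{\partial\Omega_n^N}e(v_\alpha)\,dS$ equals $N E_n(v)/(\pi n)\le 2E_n(v)/n$ for $N\in\{2,3\}$, so \eqref{ttbb} holds on a set of $\alpha$ of positive measure.

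For \eqref{ttb}, my starting point is the identity \eqref{integre2}, which reads
\[
p_n(v) - \tfrac{1}{2}\int_{\Omega_n^N}\langle Jv,\zeta_1\rangle = -\tfrac{1}{4}\int_{\partial\Omega_n^N}(v\times dv)_\top\wedge(\star\zeta_1)_\top.
\]
Since $\zeta_1 = -\tfrac{2}{N-1}\sum_{i\ge 2}x_i\,dx_1\wedge dx_i$, the trace $(\star\zeta_1)_\top$ vanishes on the faces $F_1^\pm$ (no $dx_1$ in the tangent plane), while on $F_j^\pm$ for $j\ge 2$ it picks up a factor of $\pi n$ from $x_j=\pm\pi n$. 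After identifying opposite faces by periodicity, the right-hand side reduces, up to explicit universal constants, to $-\pi n\sum_{j=2}^N\int_{F_j^+(\alpha_j)}v\times\partial_1 v\,dS$.

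I then refine $\alpha$ so that each slice $F_j^+$ ($j\ge 2$) satisfies three conditions: (a) $\int_{F_j^+}e(v)\le E_n(v)/(2\pi n)$, the mean, which is achievable since the minimum over $\alpha_j$ is at most the mean; (b) $|v|\ge 1/2$ on $F_j^+$, so that Lemma \ref{massimo3} in dimension three, or Lemma \ref{sectorisation}/Corollary \ref{minimo2} in dimension two (exploiting $v\in\boS_n^0$), provides a lifting $v=\varrho e^{i\varphi}$; (c) in the two-dimensional case, zero winding number of $v$ on $F_j^+$, which fails only on a set of $\alpha_j$ of measure $\le 2\pi n^{3/4}$ by the very definition of $\boS_n^0$. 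Once the lifting is in place, $v\times\partial_1 v = \varrho^2\partial_1\varphi = \partial_1\varphi - \eta\partial_1\varphi$; the first term integrates to zero on each closed slice by (c), and Lemma \ref{colisee} yields the pointwise bound $|\eta\partial_1\varphi|\le\sqrt{2}\,e(v)/\varrho$. Combined with (a), this produces the main contribution $\pi n\sqrt{2}\int_{F_j^+}e(v)\,dS\le E_n(v)/\sqrt{2}$, which is the sharp constant on the right-hand side of \eqref{ttb}.

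The principal obstacle is controlling the residual error $\pi n\sqrt{2}\int_{F_j^+}((1/\varrho)-1)\,e(v)\,dS$ and showing it can be absorbed into $\delta_0$ uniformly in $v$; since the speed $c(v)$ is a priori unbounded, Lemma \ref{tarquini2}-type $c$-dependent pointwise estimates on $1-|v|$ are unavailable. My plan is a secondary Fubini argument: the ``bad'' set $\{|v|<1-\epsilon\}$ has Lebesgue measure $O(E_0/\epsilon^2)$ from the bound $\int(1-|v|^2)^2\le 4E_n(v)\le 4E_0$, so for $n$ large one can further refine $\alpha_j$ so that the $(N-1)$-dimensional measure of $F_j^+\cap\{|v|<1-\epsilon\}$ is of order $o_n(1)$. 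Combined with the $L^2$-smallness $\|1-\varrho\|_{L^2(F_j^+)}^2 = O(E_0/n)$ from (a), this lets me split the slice integral into a ``good'' portion, where $\varrho$ is pointwise close to $1$ and the factor $(1/\varrho)-1$ is arbitrarily small, and a ``bad'' portion of negligible $(N-1)$-mass whose contribution is bounded crudely using $\varrho\ge 1/2$. Both contributions can be made smaller than $\delta_0/2$ by choosing $n_0(E_0,\delta_0)$ sufficiently large, closing the estimate.
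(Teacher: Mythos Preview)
Your argument for \eqref{ttbb} via averaging is correct and matches the paper's use of Lemma~\ref{average0}. For $N=2$, your approach to \eqref{ttb} is also essentially that of the paper, though the lemmas you cite are not the right ones: on a one-dimensional slice, the pointwise bound $|v|\ge 1-\delta$ follows from small slice energy via Lemma~\ref{degiorgi}, and the lifting on a circle then follows directly from the zero-degree condition (c), not from Lemmas~\ref{sectorisation} or \ref{minimo2} (which concern two-tori). Also, $(\star\zeta_1)_\top$ does not vanish on $F_1^\pm$; rather, the two contributions cancel by periodicity and orientation. These are citation slips, not substantive errors.

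The genuine gap is in the case $N=3$. Here each slice $F_j^+$ is a two-torus, and your condition (b), namely $|v|\ge 1/2$ pointwise on $F_j^+$, \emph{cannot} be deduced from smallness of the two-dimensional slice energy: there is no two-dimensional analogue of Lemma~\ref{degiorgi}. Without (b) you have no lifting on the slice (Lemma~\ref{massimo3} concerns three-tori, and Lemma~\ref{massimo2} still requires $|v|\ge 1/2$ as a hypothesis), so the decomposition $v\times\partial_1 v=\partial_1\varphi-\eta\,\partial_1\varphi$ on which your estimate rests is not even defined. Your secondary Fubini argument controls only the $(N-1)$-dimensional \emph{measure} of $\{|v|<1-\epsilon\}\cap F_j^+$, not its emptiness; vortices on the slice remain possible. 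Even granting a lifting with $\varrho\ge 1/2$, the bad-part contribution $\pi n\int_{F_j^+\cap\{\varrho<1-\epsilon\}}e(v)$ has Fubini average of order $\pi n\cdot E_0/(2\pi n)=O(E_0)$, not $o_n(1)$, so it cannot be absorbed into $\delta_0$ either.

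The paper circumvents this obstacle via Lemma~\ref{navona}: instead of lifting $v$ on the two-dimensional slice, one replaces $v$ by the minimizer $u_\lambda$ of the penalized functional $F_\lambda(w)=\tfrac{\lambda}{2}\|w-v\|_{L^2}^2+E(w)$. Elliptic regularity for the Euler--Lagrange equation of $u_\lambda$, combined with the two-dimensional Sobolev embedding, yields the pointwise bound $|u_\lambda|\ge 1-\delta/2$, so $u_\lambda$ can be lifted and estimated as in Lemma~\ref{degiorgibis}; the $L^2$-closeness $\|v-u_\lambda\|_{L^2}^2\le 2E(v)/\lambda$ then controls $\int\langle i\partial_1 v,v\rangle-\int\langle i\partial_1 u_\lambda,u_\lambda\rangle$. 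This regularization step is the missing ingredient in your $N=3$ argument.
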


For the proof of Proposition \ref{lem:vbt}, we invoke several elementary lemmas. The first one is used throughout the paper.

\begin{lemma}
\label{degiorgi}
Let $I$ be an interval of $\R$, such that $|I| \geq 1$. Given any $\delta > 0$, there exists a constant $\mu_0(\delta) > 0$, such that if $u \in H^1(\R, \C)$ satisfies
\begin{equation}
\label{bb}
\int_I e(u) \leq \mu_0(\delta),
\end{equation}
then
\begin{equation}
\label{bbbb}
\Big| 1 - |u| \Big| \leq \delta \on I. 
\end{equation}
\end{lemma}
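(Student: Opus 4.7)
The strategy is to combine the one-dimensional Sobolev embedding $H^1 \hookrightarrow C^{1/2}$ with a pigeonhole argument supplied by the integrability of $(1-|u|^2)^2$. The key observation is that although the interval $I$ may be arbitrarily long, the energy being small forces local smallness on every unit subinterval, and on any unit subinterval both ingredients of the energy give usable pointwise information.

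First I would record the one-dimensional Hölder estimate. For any $u \in H^1(\R,\C)$ and any $x,y$, Cauchy--Schwarz gives
$$\bigl| |u(x)| - |u(y)| \bigr| \;\leq\; |u(x)-u(y)| \;\leq\; |x-y|^{1/2}\, \|u'\|_{L^2(I)} \;\leq\; |x-y|^{1/2}\,\sqrt{2\mu_0(\delta)},$$
whenever $x,y$ lie in $I$ and $\int_I e(u) \leq \mu_0(\delta)$. In particular, the restriction of $|u|$ to any unit subinterval $J \subset I$ oscillates by at most $\sqrt{2\mu_0(\delta)}$.

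Second, I would use the bound $\int_I (1-|u|^2)^2 \leq 4\mu_0(\delta)$ together with a Chebyshev/mean-value argument on each unit subinterval $J \subset I$ (which exists because $|I| \geq 1$). The set $\{ x\in J : (1-|u(x)|^2)^2 > 8\mu_0(\delta) \}$ has Lebesgue measure at most $1/2$, so there is a point $x_J^\ast \in J$ with $\bigl|1 - |u(x_J^\ast)|^2\bigr| \leq 2\sqrt{2\mu_0(\delta)}$; since $|u(x_J^\ast)| \geq 0$, the elementary identity $|1-a^2|=|1-a|(1+a)$ yields $\bigl|1 - |u(x_J^\ast)|\bigr| \leq 2\sqrt{2\mu_0(\delta)}$.

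Finally, combining the two steps: given any $x \in I$, choose a unit subinterval $J \subset I$ containing $x$, and write
$$\bigl|1 - |u(x)|\bigr| \;\leq\; \bigl|1-|u(x_J^\ast)|\bigr| + \bigl||u(x)|-|u(x_J^\ast)|\bigr| \;\leq\; 2\sqrt{2\mu_0(\delta)} + \sqrt{2\mu_0(\delta)} \;=\; 3\sqrt{2\mu_0(\delta)}.$$
It then suffices to set $\mu_0(\delta) = \delta^2/18$ to obtain \eqref{bbbb}. The only subtlety is handling the possibility that $I$ is unbounded or very long, but this is resolved by the localisation to unit subintervals used above, and the Hölder estimate is applied only inside such a subinterval so that the length factor never exceeds $1$.
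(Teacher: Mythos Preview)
Your proof is correct, and it takes a different route from the paper's. The paper exploits the De Giorgi/Modica--Mortola trick: it introduces the auxiliary function $\xi(t)=t-\tfrac{t^3}{3}$, for which $\xi'(t)=1-t^2$, so that the elementary inequality $ab\le\tfrac12(a^2+b^2)$ converts the energy bound into $\int_I|\nabla\xi(|u|)|\le\sqrt{2}\mu_0$; the fundamental theorem of calculus then controls the oscillation of $\xi(|u|)$ on all of $I$, and one concludes by the coercivity of $\xi$ at its local maximum $t=1$. Your argument instead uses the $H^1\hookrightarrow C^{1/2}$ embedding directly on $|u|$, localised to unit subintervals so that the H\"older factor stays bounded, together with the same Chebyshev anchor. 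What the paper's approach buys is a control that is uniform over the whole interval in one stroke (no covering by unit pieces), at the cost of passing through $\xi$ and invoking its coercivity qualitatively; what your approach buys is a completely elementary argument with an explicit constant $\mu_0(\delta)=\delta^2/18$, avoiding the auxiliary function entirely.
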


\begin{proof}
By the energy bound, we have
\begin{equation}
\label{aa}
\frac{1}{2} \int_I |\nabla u|^2 + \frac{1}{4} \int_I (1 - |u|^2)^2 \leq \mu_0.
\end{equation}
Hence, from the inequality $ab \leq \frac{1}{2} (a^2 + b^2)$, it holds
$$\int_I |\nabla u| |1 - |u|^2| \leq \frac{1}{\sqrt{2}} \int_I |\nabla u|^2 + \frac{\sqrt{2}}{4} \int_I (1 - |u|^2)^2 \leq \sqrt{2} \mu_0,$$
that is
$$\int_I |\nabla \xi(|u|)| \leq \sqrt{2} \mu_0,$$
where the function $\xi$ is defined by $\xi(t) = t - \frac{t^3}{3}$. In particular, $\xi$ has a strict local maximum at $t = 1$. Going back to \eqref{aa}, the mean-value inequality and the fact that $|I| \geq 1$ yields the existence of some point $x_0 \in I$ such that
$$|1 - |u(x_0)|^2| \leq 2 \sqrt{\mu_0}.$$
Combining both the previous inequalities, we deduce that
$$\sup_{x \in I} \Big| \xi(|u(x)|) - \xi(1) \Big| \leq \int_I |\nabla \xi(|u|)| + \frac{1}{3} \Big| 1 - |u(x_0)| \Big| \Big| 2 - |u(x_0)| - |u(x_0)|^2 \Big| \leq \Big( \frac{8}{3} + \sqrt{2} \Big) \mu_0,$$
from which \eqref{bbbb} follows invoking the coercivity of $\xi$ at $t = 1$. 
\end{proof}
 
\begin{lemma}
\label{degiorgibis}
Let $u$ be in $H^1([a,b])$ with $|a - b| \geq 1$, such that $u=\varrho \exp i\varphi$, with $\varphi(a)=\varphi(b)$. Assume moreover that for some $0 \leq \delta < \frac{1}{2}$, $u$ satisfies \eqref{bb}. Then, we have
$$\bigg| \int_I \langle i \dot{u}, u \rangle \bigg| \leq \frac{\sqrt{2}}{1 - \delta} \int_I e(u).$$
\end{lemma}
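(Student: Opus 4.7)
The key identity, obtained from $u = \varrho \exp i\varphi$, is
\[
\langle i \dot u, u \rangle = -\varrho^2 \dot\varphi,
\]
so that the quantity to be estimated becomes $\bigl|\int_a^b \varrho^2 \dot\varphi\bigr|$. The plan is to exploit the periodicity condition $\varphi(a) = \varphi(b)$ to replace the weight $\varrho^2$ by $\varrho^2 - 1$, and then to bound the resulting integrand pointwise via the inequality established in Lemma~\ref{colisee}.

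More precisely, since $\int_a^b \dot\varphi = \varphi(b) - \varphi(a) = 0$, we may write
\[
\int_a^b \varrho^2 \dot\varphi = \int_a^b (\varrho^2 - 1) \dot\varphi.
\]
The one-dimensional version of Lemma~\ref{colisee} (whose proof only uses the pointwise inequality $ab \leq \tfrac12(a^2+b^2)$ applied to $a = \tfrac{1}{\sqrt 2}(\varrho^2 - 1)$ and $b = \varrho \dot\varphi$) gives the pointwise bound
\[
\bigl| (\varrho^2 - 1) \dot\varphi \bigr| \leq \frac{\sqrt 2}{\varrho}\, e(u)
\]
wherever $\varrho > 0$. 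To turn this into the announced estimate, one needs a uniform lower bound on $\varrho$, and this is precisely what Lemma~\ref{degiorgi} provides: since $|I| \geq 1$ and $\int_I e(u) \leq \mu_0(\delta)$, we have $\bigl|1 - |u|\bigr| \leq \delta$ on $I$, hence $\varrho \geq 1 - \delta$ throughout $I$. Substituting yields
\[
\bigg| \int_I \langle i \dot u, u \rangle \bigg|
= \bigg| \int_a^b (\varrho^2 - 1) \dot\varphi \bigg|
\leq \int_I \frac{\sqrt 2}{\varrho} e(u)
\leq \frac{\sqrt 2}{1 - \delta} \int_I e(u),
\]
which is the desired inequality.

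There is no genuine obstacle here: the argument reduces to a pointwise AM--GM trick combined with the already-proved uniform lower bound on $|u|$. The only mildly delicate point is the initial substitution $\varrho^2 \rightsquigarrow \varrho^2 - 1$, which crucially requires the boundary condition $\varphi(a)=\varphi(b)$; without it one would only obtain the weaker bound $\int_I|\langle i\dot u,u\rangle|\leq \sqrt{2}(1+\delta)\int_I e(u)/(1-\delta)^{1/2}$ or similar, with no gain as $\delta \to 0$.
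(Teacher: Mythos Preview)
Your proof is correct and follows essentially the same approach as the paper: the paper also writes $\langle i\dot u,u\rangle = -\varrho^2\dot\varphi$, uses $\varphi(a)=\varphi(b)$ to pass to $\int_I(\varrho^2-1)\dot\varphi$, and then invokes Lemma~\ref{colisee} together with the lower bound $\varrho\geq 1-\delta$ from Lemma~\ref{degiorgi}. The paper's version is more compressed but the logic is identical.
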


\begin{proof}
Since by assumptions, $u = \rho \exp i \varphi$ on $I$, we have 
$$\langle i \dot{u}, u \rangle = - \rho^2 \dot{\varphi}, \ {\rm and} \ |\dot{u}|^2 = \rho^2 \dot{\varphi}^2 + \dot{\rho}^2.$$
We next compute
$$\bigg| \int_I \langle i \dot{u}, u \rangle \bigg| = \bigg| \int_I \rho^2 \dot{\varphi} \bigg| = \bigg|\int_I (\rho^2 - 1) \dot{\varphi} \bigg| \leq \frac{\sqrt{2}}{1 - \delta} \int_I e(u),$$
where we used the results of Lemma \ref{colisee} and Lemma \ref{degiorgi} for the last inequality.
\end{proof} 

In dimension two, a related result is

\begin{lemma}
\label{navona}
Let $0 < \delta < \frac{1}{2}$ be given. There exists some constant $\mu_1(\delta)>0$, such that, for any map $u \in H^1(\T_n^2)$ which satisfies
\begin{equation}
\label{navona1}
\int_{\T_n^2} e(u) \leq \mu_1(\delta),
\end{equation}
we have the estimate
$$\bigg| \int_{\T_n^2} \langle i \partial_j u, u \rangle \big| \leq \frac{\sqrt{2}}{1 - \delta} \int_{\T_n^2} e(u), \forall j \in \{ 1, 2 \}.$$
\end{lemma}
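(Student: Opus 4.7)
The plan is to reduce the two-dimensional estimate to the one-dimensional one of Lemma~\ref{degiorgibis} by slicing in the variable transverse to $x_j$. For $j = 1$, Fubini yields
\[
\int_{\T_n^2} \langle i \partial_1 u, u \rangle = \int_{-\pi n}^{\pi n} g(r)\, dr, \qquad \int_{\T_n^2} e(u) = \int_{-\pi n}^{\pi n} f(r)\, dr,
\]
where $g(r) = \int_{I_r^1} \langle i \partial_1 u(\cdot, r), u(\cdot, r)\rangle$ and $f(r) = \int_{I_r^1} e(u(\cdot, r))$ are the associated one-dimensional quantities on the horizontal loop $I_r^1 \simeq \S_n^1$ of length $2\pi n \geq 1$.

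On every slice $r$ with $f(r) \leq \mu_0(\delta)$, Lemma~\ref{degiorgi} forces $|u(\cdot, r)| \geq 1 - \delta > 1/2$, so that $u(\cdot, r) = \rho_r \exp(i\varphi_r)$ holds on the universal cover of $I_r^1$. Provided the winding degree $d(r) = (\varphi_r(\pi n) - \varphi_r(-\pi n))/(2\pi)$ vanishes, $\varphi_r$ is periodic and Lemma~\ref{degiorgibis} yields the sharp slicewise bound $|g(r)| \leq \frac{\sqrt 2}{1 - \delta} f(r)$. Integrating in $r$ then delivers the conclusion.

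The two ingredients to secure are (a) that $f(r) \leq \mu_0(\delta)$ on almost every slice and (b) that $d(r) = 0$. For (a), Chebyshev's inequality shows that the set of bad slices $\{r : f(r) > \mu_0(\delta)\}$ has measure at most $\mu_1/\mu_0$, which I make arbitrarily small by choosing $\mu_1(\delta)$ small. For (b), any map with nontrivial winding $(d_1, d_2) \neq (0, 0)$ has kinetic energy at least $2\pi^2$ (the minimum over harmonic $\S^1$-valued torus maps of fixed degree); so by the Almeida-type sectorization of Theorem~\ref{lemmalulu}, taking $\mu_1 < 2\pi^2$ and $n \geq n_{2\pi^2}$ places $u \in T_n^{0, 0} = \boS_n^0$, which forces $d(r) = 0$ on a full-measure set of slices.

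The main obstacle is the uniform control of the bad slices, on which Lemma~\ref{degiorgibis} cannot be invoked directly and a naive Cauchy--Schwarz bound involves the slice length $2\pi n$. My approach is to modify $u$ in a neighbourhood of the bad slices to produce $\tilde u \in H^1(\T_n^2)$ with $|\tilde u| \geq 1 - \delta$ everywhere (for instance, by a local harmonic extension, in the spirit of the construction of $\tilde v$ in the proof of Lemma~\ref{massimo3}), and to control $\int |\langle i \partial_j u, u\rangle - \langle i \partial_j \tilde u, \tilde u\rangle|$ and $|E_n(\tilde u) - E_n(u)|$ by quantities that vanish with the measure of the bad set, hence with $\mu_1$. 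Applying the lifting-based argument to $\tilde u$, which admits a global periodic lifting by Lemma~\ref{massimo2} (using $\mu_1 \leq \pi^2/3$), then yields the estimate for $u$ once $\mu_1(\delta)$ is small enough to absorb the corrections within the sharp constant $\frac{\sqrt 2}{1 - \delta}$.
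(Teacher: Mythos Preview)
Your slicing-plus-modification strategy contains a genuine gap in the control of the modification error, and this is precisely the difficulty the paper's argument is designed to circumvent.

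The issue is uniformity in $n$. When you modify $u$ to $\tilde u$ on the bad strips $I^1 \times B$ (with $|B| \leq \mu_1/\mu_0(\delta)$), the integration-by-parts identity gives
\[
\bigg|\int_{\T_n^2}\big(\langle i\partial_j u, u\rangle - \langle i\partial_j \tilde u, \tilde u\rangle\big)\bigg| \;\leq\; \|u - \tilde u\|_{L^2(\T_n^2)}\,\big(\|\nabla u\|_{L^2} + \|\nabla\tilde u\|_{L^2}\big).
\]
The gradient factors are $O(\sqrt{\mu_1})$. But $u - \tilde u$ is supported on a region of \emph{area} $2\pi n\,|B|$, and since $|u|,|\tilde u|$ are essentially of order one there (the only global control is $\int(1-|u|^2)^2 \leq 4\mu_1$, which does not make $\|u\|_{L^2}$ small on a strip of area $\sim n|B|$), one gets at best $\|u-\tilde u\|_{L^2}^2 \lesssim n|B| \lesssim n\mu_1/\mu_0$. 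The resulting error $\sqrt{n/\mu_0}\,\mu_1$ cannot be absorbed into $\frac{\sqrt 2}{1-\delta}\int e(u)$ for large $n$, so no choice of $\mu_1(\delta)$ independent of $n$ closes the argument. The same $\sqrt n$ loss appears if one tries instead to bound $\int_B g(r)\,dr$ directly on the bad slices via Cauchy--Schwarz. Your claim that the corrections ``vanish with the measure of the bad set'' conflates the one-dimensional measure $|B|$ with the two-dimensional area $2\pi n|B|$.

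The paper avoids this by a different, global regularization. It takes $u_\lambda$ to be the minimizer of
\[
F_\lambda(v) \;=\; \frac{\lambda}{2}\int_{\T_n^2}|u-v|^2 + \int_{\T_n^2}e(v),
\]
so that comparison with $v=u$ yields $\|u-u_\lambda\|_{L^2}^2 \leq \frac{2}{\lambda}\int e(u)$, \emph{automatically} independent of $n$. The Euler--Lagrange equation $-\Delta u_\lambda = \lambda(u-u_\lambda) + u_\lambda(1-|u_\lambda|^2)$ then gives, via local elliptic and Morrey estimates on unit balls (hence $n$-uniform), a H\"older bound on $u_\lambda$ that forces $|u_\lambda| \geq 1-\frac{\delta}{2}$ everywhere once $\mu_1(\delta)$ is small enough. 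One lifts $u_\lambda$ globally by Lemma~\ref{massimo2}, applies Lemma~\ref{colisee} to $u_\lambda$, and absorbs the $O(\lambda^{-1/2})\int e(u)$ momentum error by a suitable choice of $\lambda = \lambda(\delta)$. The point your sketch misses is that an approximant with $n$-independent $L^2$-closeness \emph{and} a pointwise modulus bound must come from a global construction (here an elliptic minimization), not from local surgery on bad strips whose area grows with $n$.
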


\begin{proof}
If we knew that $|u| \leq 1 - \delta$, then the proof would essentially follow the same arguments as the proof of Lemma \ref{degiorgibis}. However, in contrast with the one-dimensional case, smallness of the energy does not allow to draw that conclusion. To overcome the difficulty, we introduce an approximation of $u$ for which the proof of Lemma \ref{degiorgibis} applies. Indeed, for $\lambda > 1$ given to be determined later, we consider a map $u_\lambda \in H^1(\T_n^2, \C)$ solution to the minimization problem
$$F_\lambda(u_\lambda) = \inf \{F_\lambda(v), v \in H^1(\T_n^2, \C) \},$$
where
$$F_\lambda(v) = \frac{\lambda}{2} \int_{\T_n^2} |u - v|^2 + \int_{\T_n^2} e(v).$$
Existence of $u_\lambda$ is straightforward. By minimality of $u_\lambda$, we have
\begin{equation}
\label{eq:bonzai}
\frac{\lambda}{2} \int_{\T_n^2} |u - u_\lambda|^2 + \int_{\T_n^2} e(u_\lambda) \leq \int_{\T_n^2} e(u),
\end{equation}
and the Euler-Lagrange equation writes
$$- \Delta u_\lambda = \lambda (u - u_\lambda) + u_\lambda (1 - |u_\lambda|^2) \on \T_n^2.$$
In particular $u_\lambda$ is smooth. We compute the difference
$$\langle i \partial_j u_\lambda, u_\lambda \rangle - \langle i \partial_j u, u \rangle = \langle i \partial_j u_\lambda, u_\lambda - u \rangle + \langle i \partial_j (u_\lambda - u), u \rangle,$$
so that integrating by parts, we are led to the identity
$$\int_{\T_n^2} \Big( \langle i \partial_j u_\lambda, u_\lambda \rangle- \langle i \partial_j u, u \rangle \Big) = \int_{\T_n^2} \Big( \langle i \partial_j u_\lambda, u_\lambda - u \rangle + \langle i (u - u_\lambda), \partial_j u \rangle \Big),$$
and hence
\begin{equation}
\label{eq:glou}
\bigg| \int_{\T_n^2} \Big( \langle i \partial_j u_\lambda, u_\lambda \rangle - \langle i \partial_j u, u \rangle \Big) \bigg| \leq \| u - u_\lambda \|_{L^2(\T_n^2)} \Big( \| \nabla u \|_{L^2(\T_n^2)} + \| \nabla u_\lambda \|_{L^2(\T_n^2)} \Big) \leq \frac{4}{\sqrt{\lambda}} \int_{\T_n^2} e(u),
\end{equation}
where we have used \eqref{eq:bonzai} for the last inequality. We choose therefore the value of the parameter $\lambda = \lambda(\delta)$ so that
\begin{equation}
\label{eq:flotte}
\frac{1}{\sqrt{\lambda(\delta)}} = \frac{1}{2 \sqrt{2}} \bigg( \frac{1}{1 - \delta} - \frac{1}{1 - \frac{\delta}{2}} \bigg).
\end{equation}
For this choice of $\lambda$, we claim that there exists a constant $\mu_1(\delta) > 0$, such that, if \eqref{navona1} is satisfied, then
\begin{equation}
\label{eq:trois}
|u_\lambda| \geq 1 - \frac{\delta}{2} \on \T_n^2.
\end{equation}
We postpone the proof of Claim \eqref{eq:trois}, and complete the proof of Lemma \ref{navona}. Indeed, invoking Lemma \ref{massimo2}, and using \eqref{navona1} and \eqref{eq:bonzai}, with $\mu_1(\delta)$ sufficiently small, we can assume that $u_\lambda$ is written as $u_\lambda = \varrho \exp i \varphi$ on $\T_n^2$, with $\varphi \in H^1(T_n^2)$. In view of Claim \eqref{eq:trois}, the same argument as in Lemma \ref{degiorgibis} then shows that
$$\bigg| \int_{\T_n^2} \langle i \partial_j u_\lambda, u_\lambda \rangle \bigg| \leq \frac{\sqrt{2}}{1 - \frac{\delta}{2}} \int_{\T_n^2} e(u),$$
so that the conclusion follows from \eqref{eq:glou} and the choice \eqref{eq:flotte} of $\lambda(\delta)$.
\end{proof}

\begin{proof}[Proof of Claim \eqref{eq:trois}]
We write
$$\big| u_\lambda (1 - |u_\lambda|^2) \big| \leq 2 \Big( \big| 1 - |u_\lambda|^2 \big| \1_{\{ |u_\lambda| \leq 2 \}} + \big( |u_\lambda|^2 - 1 \big)^\frac{3}{2} \1_{\{ |u_\lambda| \geq 2 \}} \Big),$$
so that if $u$ satisfies \eqref{navona1} with $0 \leq \mu_1(\delta) \leq 1$, then
$$\| u_\lambda (1 - |u_\lambda|^2) \|_{L^2 + L^\frac{4}{3}(\T_n^2)} \leq 2 \Big( \| 1 - |u_\lambda|^2 \|_{L^2(\T_n^2)} + \| 1 - |u_\lambda|^2 \|_{L^2(\T_n^2)}^\frac{3}{2} \Big) \leq 10 \bigg( \int_{\T_n^2} e(u) \bigg)^\frac{1}{2}.$$
By \eqref{eq:flotte}, we have
$$\| \lambda (u - u_\lambda) \|_{L^2(\T_n^2)} \leq \lambda \delta \bigg( \int_{\T_n^2} e(u) \bigg)^\frac{1}{2},$$
so that
$$\| \Delta u_\lambda \|_{L^2 + L^\frac{4}{3}(\T_n^2)} \leq
10 (\lambda \delta +1) \bigg( \int_{\T_n^2} e(u) \bigg)^\frac{1}{2}.$$
By \eqref{eq:bonzai}, we are led to
$$\| \nabla u_\lambda \|_{H^1 + W^{1, \frac{4}{3}}(\T_n^2)} \leq
K (\lambda \delta + 1) \bigg( \int_{\T_n^2} e(u) \bigg)^\frac{1}{2},$$
where $K$ is some universal constant, so that, by Sobolev's embedding theorem,
$$\| \nabla u_\lambda \|_{L^4(B(x, 1))} \leq K (\lambda \delta + 1) \bigg( \int_{\T_n^2} e(u) \bigg)^\frac{1}{2},$$
for any point $x \in \T_n^2$. It follows therefore from Morrey's embedding theorem that we have
$$\big| u_\lambda(x) - u_\lambda(y) \big| \leq K (\lambda \delta + 1) \bigg( \int_{\T_n^2} e(u) \bigg)^\frac{1}{2} |x - y|^\frac{1}{2} \leq K (\lambda \delta + 1) \mu_1(\delta)^\frac{1}{2} |x - y|^\frac{1}{2},$$
for any $|x - y| \leq 1$. To conclude, assume by contradiction that there is a point $x_0$ such that $|u_\lambda(x_0)| \leq 1 - \frac{\delta}{2}$. Then, we have $|u_\lambda(x)| \leq 1 - \frac{\delta}{4}$ for any $x \in B(x_0, r_0)$, where the radius $r_0$ is given by $r_0 = \frac{\delta^2}{16 K^2 (\lambda \delta +1)^2 \mu_1(\delta)}$, so that integrating, we obtain
$$\int_{B(x_0, r_0)} (1 - |u_\lambda|^2)^2 \geq 
\frac{\pi r_0^2 \delta^2}{16} = \frac{\pi \delta^6}{(16)^3 K^4 (\lambda \delta + 1)^4 \mu_1(\delta)^2},$$
which implies using \eqref{navona1},
$$\mu_1(\delta)^3 \geq K \delta^{10},$$
and leads to a contradiction if the number $\mu_1(\delta)$ is chosen sufficiently small.
\end{proof}

We are now in position to give the proof of Proposition \ref{lem:vbt}.

\begin{proof}[Proof of Proposition \ref{lem:vbt}]
The starting point is formula \eqref{integre2}, the main point being to estimate the boundary term. Since the computations depend on the dimensions, we distinguish two cases $N = 2$ and $N = 3$. 

\setcounter{case}{0}
\begin{case}
\label{2d-case}
${\bf N = 2}$. We have $\zeta_1 = - 2 x_2 dx_1 \wedge dx_2$, so that 
$\star \zeta_1 = - 2 x_2$. Inserting this identity into \eqref{integre2}, we obtain
\begin{equation}
\label{eq:cor3}
\int_{\T_n^2} \langle i \partial_1 v, v \rangle - \int_{\Omega_n^N} \langle Jv, \zeta_1 \rangle = n \pi \int_{[-\pi n, \pi n] \times \{- \pi n, \pi n \}} \langle i \partial_ 1 v, v \rangle.
\end{equation}
for any unfolding of the torus. Let $\delta > 0$ be fixed, to be determined later, and consider the subset $A$ of $[- \pi n, \pi n]$ defined by $\alpha \in A $ if and only if
$$\int_{[- \pi n, \pi n] \times \{ \alpha \} } e(v) \geq \frac{\mu_0(\delta)}{2},$$
where $\mu_0(\delta)$ is the constant provided by Lemma \ref{degiorgi}. Notice in particular that by integration
\begin{equation}
\label{regan}
|A| \leq \frac{2}{\mu_0(\delta)} E_n(v) \leq \frac{2 E_0}{\mu_0(\delta)} \leq 2 \pi n^\frac{3}{4},
\end{equation}
as soon as $n \geq \Big( \frac{E_0}{\pi \mu_0(\delta)} \Big)^\frac{4}{3}$. Using Lemma \ref{degiorgi}, it follows that
$$|v| \geq 1 - \delta \ {\rm on} \ [- \pi n, \pi n] \times \{ \alpha \},$$
for any $\alpha \in [- \pi n, \pi n] \setminus A$. Since $v \in \boS_n^0$, the topological degree of $\frac{v}{|v|}$ is equal to $0$ on $[- \pi n, \pi n] \times \{ \alpha \}$, for any $\alpha$ in a subset $B$ of $[- \pi n, \pi n] \setminus A$ with $|B| \geq 2\pi (n - 2 n^\frac{3}{4})$ by \eqref{regan}, so that $v$ may be written as
\begin{equation}
\label{wilkinson}
v = |v| \exp i \phi \ {\rm on} \ [- \pi n, \pi n] \times \{ \alpha \},
\end{equation}
for any $\alpha \in B$. We next apply Lemma \ref{average0} to the function $f \equiv e(v)$ and the complementary $B^c$ of the set $B$. This yields an unfolding of the torus such that $\pm \pi n \notin A$,
$$\int_{[- \pi n, \pi n] \times \{- \pi n, \pi n\}} e(v) \leq \mu_0(\delta), \ \int_{[- \pi n, \pi n] \times \{- \pi n, \pi n\}} e(v) \leq \frac{1}{\pi \big( n - 2 n^\frac{3}{4} \big)} E_n(v),$$
and
\begin{equation}
\label{creche}
\int_{\partial \Omega_n^N} e(v) \leq \bigg( \frac{1}{\pi \big( n - 2 n^\frac{3}{4} \big)} + \frac{1}{\pi n} \bigg) E_n(v).
\end{equation}
Invoking \eqref{wilkinson} to apply Lemma \ref{degiorgibis} to this choice of unfolding, we have therefore
\begin{equation}
\label{creche1}
\bigg| n \pi \int_{[- \pi n, \pi n] \times \{- \pi n, \pi n\}} \langle i \partial_1 v, v \rangle \bigg| \leq \frac{n \sqrt{2} E_n(v)}{\big( n - 2 n^\frac{3}{4} \big)(1 - \delta)}.
\end{equation}
We first fix $\delta < \frac{1}{2}$ so that 
$$\frac{2 \delta - \delta^2}{(1 - \delta)^2} E_0 \leq \sqrt{2} \delta_0.$$
Equation \eqref{creche1} becomes
\begin{equation}
\label{creche2}
\bigg| n \pi \int_{[- \pi n, \pi n] \times \{- \pi n, \pi n\}} \langle i \partial_1 v, v \rangle \bigg| \leq \frac{\sqrt{2}}{(1 - \delta)^2} E_n(v) \leq \sqrt{2} E_n(v) + 2 \delta_0,
\end{equation}
and equation \eqref{creche} gives
\begin{equation}
\label{creche3}
n \int_{\partial \Omega_n^N} e(v) \leq \frac{2 - \delta}{\pi (1 - \delta)} E_n(v) \leq 2 E_n(v),
\end{equation}
for any $n \geq \frac{16}{\delta^4}$ The conclusion then follows, choosing $n_0 \geq \max \Big\{ \Big( \frac{E_0}{\pi \mu_0(\delta)} \Big)^\frac{4}{3},\frac{16}{\delta^4} \Big\}$, and combining \eqref{eq:cor3}, \eqref{creche2} and \eqref{creche3}.
\end{case}

\begin{case}
${\bf N = 3}$. In this case, the $2$-form $\zeta_1$ is written as $\zeta_1 = - x_2 dx_1 \wedge dx_2 - x_3 dx_1 \wedge dx_3$, so that 
$\star \zeta_1 = - x_2 dx_3 + x_3 dx_2$, and
$$- \frac{1}{2} \int_{\partial \Omega_n^N} (u \times du)_\top \wedge (\star \zeta_1)_\top = n \pi \sum_{i = 2}^3 \int_{C_i} \langle i \partial_1 u, u \rangle,$$
where, for any $i \in \{ 2, 3 \}$, the set $C_i$ is the union of two squares, namely $C_2 = [-\pi n, \pi n] \times \{- \pi n, \pi n \} \times [-\pi n, \pi n]$, and $C_3 = [-\pi n, \pi n]^2 \times \{- \pi n, \pi n \}$. The end of the argument is then essentially the same as in Case \ref{2d-case}, replacing Lemmas \ref{degiorgi} and \ref{degiorgibis} by Lemma \ref{navona}, with the exception of one major difference. Indeed, the proof of Lemma \ref{navona} does not require to have a lifting of $v$, so that we can directly apply Lemma \ref{average0} to the set $A$ (with $\mu_0(\delta)$ replaced by the constant $\mu_1(\delta)$ provided by Lemma \ref{navona}) in the three-dimensional case.
\end{case}
\end{proof}

\begin{proof}[Proof of Theorem \ref{bornec}]
We may assume without loss of generality that $p_n(v) > 0$. Let $\delta_0 > 0$ be given, and consider the unfolding provided by Proposition \ref{lem:vbt} for any $n \geq n_0$. It follows from \eqref{ttb} that
$$\Sigma_n(v) \leq \frac{1}{\sqrt{2}} \int_{\Omega_n^N} \langle Jv, \zeta_1 \rangle + \sqrt{2} \delta_0.$$
Choosing $\delta_0$ so that $\sqrt{2} \delta_0 < \frac{\Sigma_0}{2}$, we are led to the inequality
\begin{equation}
\label{minorsigma2}
\Sigma_n(v) \leq \sqrt{2} \int_{\Omega_n^N} \langle Jv, \zeta_1 \rangle,
\end{equation}
for any $n \geq n_0$. On the other hand, we may invoke Lemma \ref{pohozaev} to assert that
$$\bigg| \frac{N - 2}{2} \int_{\Omega_n^N} |\nabla v|^2 + \frac{N}{4} \int_{\Omega_n^N} (1 - |v|^2)^2 - c(v) \frac{N - 1}{2} \int_{\Omega_n^N} \langle Jv, \zeta_1 \rangle \bigg| \leq \pi n \int_{\partial \Omega_n^N} e(v),$$
which yields, combined with \eqref{ttbb} and provided $n \geq n_0$,
\begin{equation}
\label{sondage}
|c(v)| \int_{\Omega_n^N} \langle Jv, \zeta_1 \rangle \leq K E_n(v), 
\end{equation}
for some universal constant $K > 0$. Combining \eqref{minorsigma2} and \eqref{sondage}, we deduce
$$|c(v)| \Sigma_n(v) \leq K E_n(v),$$
which yields the desired conclusion.
\end{proof}

%%%%%%%%%%%%%%%%%%%%%%%%%%%%%%%%%%%%
\subsection{Concentration of energy}
%%%%%%%%%%%%%%%%%%%%%%%%%%%%%%%%%%%%

The results in this section are a first step towards the proof of Proposition \ref{concentrationcompacite}. By a standard covering argument, we first deduce from Lemma \ref{tarquini2}.

\begin{lemma}
\label{covering0}
Let $c_0 > 0$ and $E_0 > 0$ be given. Let $v$ be a finite energy solution to \eqref{TWc} on $\T_n^N$ such that $|c(v)| \leq c_0$ and $E_n(v) \leq E_0$. Given any $\delta > 0$, there exists a number $\ell_0 \in \N$ depending only on $c_0$, $E_0$ and $\delta$, such that there exists a finite number $\ell(v) \leq \ell_0$ of points $x_1$, $\ldots$, $x_{\ell(v)}$ in $\T_n^N$ which satisfy
\begin{equation}
\label{petit}
\Big| 1 - |v| \Big| \leq \delta \on \T_n^N \setminus \underset{i = 1}{\overset{\ell(v)}{\cup}} B(x_i, 1),
\end{equation}
and, for any $i \in \{ 1, \ldots, \ell(v) \}$,
\begin{equation}
\label{grand}
\Big| 1 - |v(x_i)| \Big| \geq \delta.
\end{equation}
\end{lemma}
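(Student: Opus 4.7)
The plan is to combine the local $L^\infty$ estimate of Lemma~\ref{tarquini2} with a Vitali-type covering argument. The driving observation is that if $|1-|v(x_0)|| \geq \delta$, then Lemma~\ref{tarquini2} applied with $r = \tfrac{2}{5}$ gives
$$\delta \leq \|1-|v|\|_{L^\infty(B(x_0, 1/5))} \leq \max\bigg\{K(N)\Big(1+\tfrac{c_0^2}{4}\Big)^2 E_n(v, B(x_0, \tfrac{2}{5}))^\frac{1}{N+2}, \ K(N)\Big(\tfrac{5}{2}\Big)^N E_n(v, B(x_0, \tfrac{2}{5}))^\frac{1}{2}\bigg\},$$
so that $E_n(v, B(x_0, \tfrac{2}{5})) \geq \eta_0$ for some constant $\eta_0 = \eta_0(c_0, \delta, N) > 0$ depending only on the data.

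First, I would introduce the closed (hence compact) set $A = \{x \in \T_n^N : |1 - |v(x)|| \geq \delta\}$ and cover it by the balls $\{B(x, \tfrac{1}{5})\}_{x \in A}$. Extracting a finite subcover by compactness and applying the Vitali $5r$-covering lemma produces a finite disjoint subfamily $\{B(x_i, \tfrac{1}{5})\}_{1 \leq i \leq \ell(v)}$, with each $x_i \in A$, whose $5$-fold dilates $B(x_i, 1)$ cover $A$. The two properties stated in the lemma are then immediate: any point $x \notin \bigcup_i B(x_i, 1)$ lies outside $A$ and therefore satisfies $|1-|v(x)|| \leq \delta$, while the centers $x_i \in A$ satisfy $|1 - |v(x_i)|| \geq \delta$ by construction.

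Second, I would bound $\ell(v)$ by a packing argument. Since the balls $B(x_i, \tfrac{1}{5})$ are pairwise disjoint, any point $y \in \T_n^N$ lies in at most $3^N$ of the dilated balls $B(x_i, \tfrac{2}{5})$: indeed, $y \in B(x_i, \tfrac{2}{5})$ forces $B(x_i, \tfrac{1}{5}) \subset B(y, \tfrac{3}{5})$, and a volume comparison in $B(y, \tfrac{3}{5})$ accommodates at most $3^N$ such disjoint balls. Summing the lower bound $E_n(v, B(x_i, \tfrac{2}{5})) \geq \eta_0$ over $i$ together with this local finiteness yields
$$\ell(v) \, \eta_0 \leq \sum_{i=1}^{\ell(v)} E_n(v, B(x_i, \tfrac{2}{5})) \leq 3^N E_n(v) \leq 3^N E_0,$$
so the conclusion holds with $\ell_0 := \lfloor 3^N E_0/\eta_0 \rfloor + 1$, a quantity depending only on $c_0$, $E_0$ and $\delta$ (the dimension $N$ being fixed throughout).

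There is no substantial obstacle in this argument; the only minor point deserving attention is that the Vitali selection and the packing argument are intrinsically Euclidean, so one must check that balls of radius at most $1$ in $\T_n^N$ are isometric to genuine Euclidean balls. This is automatic for every $n \geq 1$ since the injectivity radius of $\T_n^N$ is $\pi n \geq \pi > 1$, so the constants $3^N$ and the volume comparisons used above are precisely the standard Euclidean ones.
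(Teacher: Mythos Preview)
Your proof is correct and follows essentially the same route as the paper: Vitali's covering lemma applied to the set $\{|1-|v||\geq\delta\}$, followed by energy quantization via Lemma~\ref{tarquini2}. One minor simplification worth noting: the paper applies Lemma~\ref{tarquini2} with $r=\tfrac{1}{5}$ (since $x_i\in B(x_i,\tfrac{1}{10})$ already forces $\|1-|v|\|_{L^\infty(B(x_i,1/10))}\geq\delta$), which yields the energy lower bound directly on the \emph{disjoint} balls $B(x_i,\tfrac{1}{5})$ and makes your $3^N$ packing argument unnecessary.
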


\begin{proof}
It follows from Lemma \ref{tarquini1} that $v$ is continuous on $\T_n^N$, so that the set $\boV_\delta$ defined by
$$\boV_\delta = \{ x \in \T_n^N, \ {\rm s.t.} \ |1 - |v|| \geq \delta \},$$
is compact, and is therefore included in a finite collection of balls $B(x_i, \frac{1}{5})$ with $x_i \in \boV_\delta$. Using Vitali's covering lemma, there exists a finite subcollection of balls $(B(x_i, \frac{1}{5}))_{1 \leq i \leq \ell(v)}$ so that
$$\boV_\delta \subset \underset{i = 1}{\overset{\ell(v)}{\cup}} B(x_i, 1),$$
and
\begin{equation}
\label{cecilia}
B \Big( x_i, \frac{1}{5} \Big) \cap B \Big( x_j, \frac{1}{5} \Big) = \emptyset, \forall 1 \leq i \neq j \leq \ell(v).
\end{equation}
In particular, conclusions \eqref{petit} and \eqref{grand} hold for this subcollection. On the other hand, we deduce from Lemma \ref{tarquini2} that
$$E_n \Big( v, B \Big( x_i, \frac{1}{5} \Big) \Big) \geq K(N, c_0, \delta),$$
where $K(N, c_0, \delta)$ is some positive constant depending on $N$, $c_0$ and $\delta$, so that invoking \eqref{cecilia},
$$E_0 \geq \sum_{i = 1}^{\ell(v)} E_n \Big( v, B \Big( x_i, \frac{1}{5} \Big) \Big) \geq K(N, c_0, \delta) \ell(v).$$
Hence, there exists some integer $\ell_0 \in \N$ depending only on $c_0$, $E_0$ and $\delta$ so that $\ell(v) \leq \ell_0$, which completes the proof of Lemma \ref{covering0}.
\end{proof}

Considering clusters of the balls $B(x_i, 1)$, and enlarging possibly the radius, we may assume that their mutual distance is even larger in view of the following abstract but elementary lemma.

\begin{lemma}
\label{clustering}
Let $X$ be a metric space, and consider $\ell$ distinct points $x_1$, $\ldots$, $x_\ell$ in $X$. Let $\mu_0 > 0$ and $0 < \kappa \leq \frac{1}{2}$ be given. Then, there exists $\mu > 0$ such that
$$\mu_0 \leq \mu \leq \Big( \frac{2}{\kappa} \Big)^\ell \mu_0,$$
and a subset $\{ x_j \}_{j \in J}$ of $\{ x_i \}_{1 \leq i \leq \ell}$ such that
\begin{equation}
\label{eq:A13}
\underset{i = 1}{\overset{\ell}{\cup}} B(x_i, \mu_0) \subset \underset{j \in J}{\cup} B(x_j, \mu),
\end{equation}
and
\begin{equation}
\label{eq:A13bis}
\dist(x_j, x_k) \geq \frac{\mu}{\kappa}, \forall j \neq k \in J.
\end{equation}
\end{lemma}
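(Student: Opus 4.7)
The plan is to construct the subset $J$ and the radius $\mu$ by a greedy elimination procedure: start with all $\ell$ points and $\mu = \mu_0$, and repeatedly merge pairs of centers that are too close by discarding one of them and enlarging the radius, stopping when the remaining centers satisfy the required separation. The key geometric observation is that if $j, j' \in J$ and $d(x_j, x_{j'}) < \mu / \kappa$, then
\[
B(x_{j'}, \mu) \;\subset\; B\!\left(x_j,\, \mu + \tfrac{\mu}{\kappa}\right) \;\subset\; B\!\left(x_j,\, \tfrac{2\mu}{\kappa}\right),
\]
the last inclusion being valid precisely because $\kappa \leq 1/2$ (equivalently $1 + \tfrac{1}{\kappa} \leq \tfrac{2}{\kappa}$). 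Hence dropping $x_{j'}$ from the list and replacing $\mu$ by $2\mu/\kappa$ preserves the covering property.

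More precisely, I would define inductively a decreasing sequence of subsets $J_0 \supset J_1 \supset \cdots$ of $\{1, \ldots, \ell\}$ and an increasing sequence of radii $\mu^{(0)} \leq \mu^{(1)} \leq \cdots$ by setting $J_0 = \{1, \ldots, \ell\}$, $\mu^{(0)} = \mu_0$, and, at step $k$, either stopping if every pair $j \neq j'$ in $J_k$ satisfies $d(x_j, x_{j'}) \geq \mu^{(k)}/\kappa$, or else picking such a bad pair, discarding one of the two indices to form $J_{k+1}$, and setting $\mu^{(k+1)} = 2\mu^{(k)}/\kappa$. The invariant
\[
\bigcup_{i=1}^\ell B(x_i, \mu_0) \;\subset\; \bigcup_{j \in J_k} B(x_j, \mu^{(k)})
\]
is maintained by the geometric observation above (together with $\mu^{(k)} \geq \mu_0$).

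Since $|J_k|$ strictly decreases at each non-terminal step, the procedure terminates in at most $\ell - 1$ steps (at worst when $|J_k| = 1$, where the separation condition \eqref{eq:A13bis} is vacuous). At termination one obtains the required $J$ and $\mu := \mu^{(k)}$, with
\[
\mu_0 = \mu^{(0)} \leq \mu \leq \left(\tfrac{2}{\kappa}\right)^{k} \mu_0 \leq \left(\tfrac{2}{\kappa}\right)^{\ell} \mu_0,
\]
establishing both \eqref{eq:A13} and \eqref{eq:A13bis}. There is no substantial obstacle: the argument is essentially bookkeeping. The only point that requires care is the triangle-inequality step, which uses the hypothesis $\kappa \leq 1/2$ in an essential way; with a larger $\kappa$ one would need a larger inflation factor than $2/\kappa$ and the exponential bound $(2/\kappa)^\ell$ would have to be adjusted.
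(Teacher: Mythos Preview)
Your proposal is correct and follows essentially the same iterative merging argument as the paper: start with all centers and $\mu=\mu_0$, and whenever two remaining centers are within $\mu/\kappa$ of each other, discard one and inflate the radius by the factor $2/\kappa$, terminating after at most $\ell-1$ such steps. Your write-up is in fact more careful than the paper's in explicitly verifying the covering invariant via the triangle inequality; one tiny inaccuracy is that the inclusion $B(x_{j'},\mu)\subset B(x_j,2\mu/\kappa)$ only requires $1+\tfrac{1}{\kappa}\le \tfrac{2}{\kappa}$, i.e.\ $\kappa\le 1$, not $\kappa\le \tfrac{1}{2}$, so ``precisely because $\kappa\le 1/2$'' overstates the role of that hypothesis.
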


\begin{proof}
The proof is by iteration, in at most $\ell$ steps. First,
consider the collection $\{ x_i \}_{1 \leq i \leq \ell}$. If
\eqref{eq:A13} and \eqref{eq:A13bis} are verified with $\mu = \mu_0$, there is nothing else to do. Otherwise, take two points, say $x_1$ and $x_2$ such that $\dist(x_1, x_2) \leq \kappa^{-1} \mu_0$, consider the collection $\{ x_2, \ldots, x_\ell \}$, and set $\mu = 2 \kappa^{-1} \mu_0$. If \eqref{eq:A13} is verified, we stop. Otherwise, we go on in the same way. If the process does not stop in $\ell - 1$ steps, at the $\ell^{\rm th}$ step, we are left with one single ball of radius $\mu = 2^\ell \kappa^{- \ell} \mu_0$, and \eqref{eq:A13bis} is void.
\end{proof}

We may specify Lemma \ref{clustering} to the points $x_1$, $\ldots$, $x_\ell(v)$, provided by Lemma \ref{covering0}. It follows that there exists some $1 \leq \mu \leq \Big( \frac{2}{\kappa} \Big)^{\ell(v)}$, and a subset $J$ of $\{1, \ldots, \ell(v) \}$ such that
\begin{equation}
\label{vin}
\underset{i = 1}{\overset{\ell(v)}{\cup}} B(x_i, 1) \subset \underset{j \in J}{\cup} B(x_j, \mu), \ {\rm and} \ |x_j - x_k| \geq \frac{\mu}{\kappa}, \forall j \neq k \in J.
\end{equation}
In particular,
\begin{equation}
\label{margaux}
|v| \geq 1 - \delta \geq \frac{1}{2} \on \boO_n(\mu) \equiv \T_n^N \setminus \underset{j \in J}{\cup} B(x_j, \mu),
\end{equation}
and for any $j \in J$, $\Big| 1 - |v(x_j)| \Big| \geq \delta$.

The main result in this subsection is

\begin{prop}
\label{concentration}
Let $E_0$, $c_0$, $\delta$ and $v$ be as in Lemma \ref{covering0}, and assume the points $x_1$, $\ldots$, $x_\ell(v)$, and the set $J$ are such that \eqref{vin} and \eqref{margaux} are satisfied.\\
i) Let $0 < \kappa < \frac{1}{64}$. There exists some radius $\mu \leq R \leq \frac{\mu}{2 \kappa}$ such that
\begin{equation}
\label{couronne}
E_n \Big(v, \underset{j \in J}{\cup} \big( B(x_j, 2R) \setminus B(x_j, R) \big) \Big) \leq 2 \frac{E_n(v)}{|\ln(\kappa)|}.
\end{equation}
ii) If one may write $v = \varrho \exp i \varphi$ on $\boO_n(\mu)$, where $\varphi$ is a smooth real-valued function, then there exists a constant $K(c_0)$ possibly depending on $c_0$ such that we have the estimate
\begin{equation}
\label{dentifrice}
\bigg| \int_{\boU_n(2 R)} \bigg( \frac{c}{2} (1 - \varrho^2) \partial_1 \varphi - e(v) \bigg) \bigg| \leq K(c_0) \bigg( \delta \int_{\boU_n(2 R)} e(v) + \frac{E_n(v)}{|\ln(\kappa)|} \bigg),
\end{equation}
where $\boU_n(2 R) \equiv \T_n^N \setminus \underset{j \in J}{\cup} B(x_j, 2 R)$.\\
iii) If moreover $c_0 < \sqrt{2}$, and $0 < \delta \leq \delta(c_0) \equiv \min \Big\{ \frac{\sqrt{2} - c_0}{2 \sqrt{2} (K(c_0) + 1)}, \frac{1}{2} \Big\}$, then
\begin{equation}
\label{azerty}
\int_{\boU_n(2 R)} e(v) \leq \frac{4 K(c_0) E_n(v)}{(2 - c_0^2)|\ln(\kappa)|}.
\end{equation}
\end{prop}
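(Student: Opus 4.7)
The plan is to establish the three parts in order, observing that (iii) is a clean consequence of (ii) combined with Lemma~\ref{colisee}, while (i) and (ii) require respectively a Fubini-averaging argument and a pair of Pohozaev-type identities localized to $\boU_n(2R)$.

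For part (i), I would apply the mean-value theorem in the logarithmic variable. Since $|x_j - x_k| \geq \mu/\kappa$ for $j \neq k$, the balls $B(x_j, 2R)$ are pairwise disjoint as long as $R \leq \mu/(4\kappa)$, so for such $R$ one has $\sum_{j\in J} E_n(v, B(x_j,2R)\setminus B(x_j,R)) \leq E_n(v)$. By Fubini, for each fixed $x$ and each $j$ the indicator $\mathbbm{1}_{\{R \leq |x-x_j| \leq 2R\}}$ has $dR/R$-measure exactly $\ln 2$ on the radial interval, so
$$\int_{\mu}^{\mu/(4\kappa)} E_n\Big(v,\underset{j\in J}{\cup}\big(B(x_j,2R)\setminus B(x_j,R)\big)\Big)\,\frac{dR}{R} \leq \ln 2 \cdot E_n(v).$$
Since the log-length of the interval is $|\ln(4\kappa)|$, the mean-value theorem produces some $R$ with the integrand bounded by $\ln 2 \cdot E_n(v)/|\ln(4\kappa)|$, and for $\kappa < 1/64$ an elementary comparison gives $\leq 2E_n(v)/|\ln\kappa|$, which establishes \eqref{couronne}.

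For part (ii), the strategy relies on two Pohozaev-type identities on $\boU_n(2R)$ combined with the pointwise bound $|\eta| \leq 3\delta$ (which follows from $\big||v|-1\big|<\delta$ on $\boU_n(2R)$, itself a consequence of \eqref{petit} combined with $\boU_n(2R) \subset \T_n^N \setminus \cup_j B(x_j,\mu)$). First, localizing the argument from Lemma \ref{tropbien} to the domain $\boU_n(2R)$---by testing \eqref{TWc} against appropriate multiples of $(x-x_j)\cdot\nabla v$ and combining with Lemma \ref{elementary}---yields the identity
$$\int_{\boU_n(2R)} e(v) - \frac{c}{2}\int_{\boU_n(2R)} \eta\,\partial_1\varphi = \frac{1}{N}\int_{\boU_n(2R)} |\nabla\varrho|^2 + \mathcal{B}_1,$$
where $\mathcal{B}_1$ collects boundary integrals on $\cup_j \partial B(x_j,2R)$. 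Second, testing the second equation of \eqref{PolTWc} against $\eta/\varrho$ and using Pohozaev to eliminate the terms $c\int\eta\partial_1\varphi$ and $\int\eta^2$ (via the same scheme that gives \eqref{tropbien2} in dimension two) yields the $N$-dimensional analog
$$\int_{\boU_n(2R)} |\nabla\varrho|^2 \Big(\tfrac{2}{N}+\tfrac{1}{\varrho^2}\Big) = \int_{\boU_n(2R)} \eta\,|\nabla\varphi|^2 + \mathcal{B}_2.$$
Since $\varrho \geq 1-\delta$ on $\boU_n(2R)$, the left coefficient is bounded below by a positive universal constant, and $|\eta|\leq 3\delta$ together with $\int \varrho^2 |\nabla\varphi|^2 \leq 2\int e(v)$ gives $\int_{\boU_n(2R)} |\nabla\varrho|^2 \leq K(c_0)\,\delta \int_{\boU_n(2R)} e(v) + K|\mathcal{B}_2|$. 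Substituting into the first identity produces \eqref{dentifrice}, provided the boundary terms $\mathcal{B}_1, \mathcal{B}_2$ are of order $E_n(v)/|\ln\kappa|$, which is arranged by performing a further Fubini averaging to choose the $R$ from part (i) so that $R\int_{\partial B(x_j,R)} e(v) \leq K E_n(v)/|\ln\kappa|$ on each boundary sphere.

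For part (iii), I would combine \eqref{dentifrice} with Lemma \ref{colisee}, which gives $\big|\frac{c}{2}\int_{\boU_n(2R)}\eta\,\partial_1\varphi\big| \leq \frac{c}{\sqrt{2}(1-\delta)} \int_{\boU_n(2R)} e(v)$. Substituting into \eqref{dentifrice} yields
$$\Big(1 - \tfrac{c}{\sqrt{2}(1-\delta)} - K(c_0)\delta\Big) \int_{\boU_n(2R)} e(v) \leq K(c_0) \frac{E_n(v)}{|\ln\kappa|}.$$
The assumption $\delta \leq \delta(c_0) = \min\big(\tfrac{\sqrt{2}-c_0}{2\sqrt{2}(K(c_0)+1)},\tfrac12\big)$ ensures that the coefficient on the left is bounded below by $(2-c_0^2)/(4K(c_0))$ (using $2-c_0^2 = (\sqrt{2}-c_0)(\sqrt{2}+c_0)$ and $\sqrt{2}+c_0 \leq 2\sqrt{2}$), which yields \eqref{azerty}.

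The principal obstacle in this program is the rigorous handling of the boundary terms in (ii), particularly those involving the phase $\varphi$ (such as the discrepancy $cp_{\boU_n(2R)} - \frac{c}{2}\int_{\boU_n(2R)}\eta\,\partial_1\varphi = -\frac{c}{2}\int_{\partial \boU_n(2R)}\varphi\,\nu_1$), since $\varphi$ is only defined modulo a global additive constant on $\boU_n(2R)$ and its boundary trace need not be small. Controlling these terms requires a careful normalization of $\varphi$ on each component $\partial B(x_j,2R)$ (subtracting its average and invoking Poincar\'e on the sphere) combined with the boundary energy bound $R\int_{\partial B(x_j, R)}e(v)\leq K E_n(v)/|\ln\kappa|$ produced by a refinement of (i). A second, milder technical point is that the localized Pohozaev identity itself must be derived with some care on the torus, via a suitable unfolding as in Proposition \ref{lem:vbt}, to avoid spurious contributions from the non-periodicity of the multiplier $\sum x_j\partial_j v$.
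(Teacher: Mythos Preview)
Your treatment of parts (i) and (iii) is correct and matches the paper's argument (the paper uses dyadic radii $R_k = 2^k\mu$ and pigeonhole rather than continuous averaging in $dR/R$, but this is immaterial).

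For part (ii), however, the paper takes a considerably simpler route that \emph{avoids Pohozaev identities entirely}. Rather than localizing the identity from Lemma~\ref{tropbien}, the paper tests the two equations of the polar system \eqref{PolTWc} directly against \emph{global} test functions on $\T_n^N$. Concretely, one constructs $\tilde\varphi$ equal to $\varphi$ on $\boU_n(2R)$ and equal to the spherical average $\varphi_j = |\partial B(x_j,R)|^{-1}\int_{\partial B(x_j,R)}\varphi$ on each $B(x_j,R)$, with a cutoff interpolation on the annuli; Poincar\'e--Wirtinger on the annulus then gives $\int_{B(x_j,2R)}|\nabla\tilde\varphi|^2 \leq K\,E_n\big(v, B(x_j,2R)\setminus B(x_j,R)\big) \leq K E_n(v)/|\ln\kappa|$ by \eqref{couronne}. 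Multiplying the first equation of \eqref{PolTWc} by $\tilde\varphi$ and integrating over the \emph{whole torus} (no boundary) yields
$$\bigg|\frac{c}{2}\int_{\boU_n(2R)}(1-\varrho^2)\partial_1\varphi - \int_{\boU_n(2R)}\varrho^2|\nabla\varphi|^2\bigg| \leq K(c_0)\,\frac{E_n(v)}{|\ln\kappa|}.$$
Similarly, multiplying the second equation by $\tilde\varrho^2 - 1$ (with $\tilde\varrho$ built the same way, so that $|1-\tilde\varrho|\leq\delta$ everywhere on $\T_n^N$) gives a relation between $c\int\varrho(1-\varrho^2)\partial_1\varphi$ and $\int\big(2\varrho|\nabla\varrho|^2 + \varrho(1-\varrho^2)^2\big)$ on $\boU_n(2R)$, with error $K(c_0)\big(\delta\int_{\boU_n(2R)} e(v) + E_n(v)/|\ln\kappa|\big)$. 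Combining these two relations and using $|1-\varrho|\leq\delta$ on $\boU_n(2R)$ to replace $\varrho$ by $1$ gives \eqref{dentifrice} directly.

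The advantage of the paper's approach is that there are no Pohozaev boundary terms at all (hence no unfolding issues on the torus, and no explicit boundary integrals on $\partial B(x_j,2R)$); the cutoff-plus-Poincar\'e trick you correctly identify as the key ingredient is applied \emph{once}, at the level of the test functions, rather than after the fact to patch up leftover boundary contributions. Your route via localized Pohozaev could in principle be pushed through, but the $N$-dimensional analog of \eqref{tropbien2} you write actually has coefficient $\tfrac{4-N}{N} + \tfrac{1}{\varrho^2}$ rather than $\tfrac{2}{N}+\tfrac{1}{\varrho^2}$ (still positive for $N=2,3$, so not fatal), and the two layers of boundary terms you flag---from the unfolding and from the spheres---would make the argument substantially longer without buying anything extra.
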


\begin{proof}
For statement i), we set $R_k = 2^k \mu$ for any $k \geq 1$, so that $\mu \leq R_k \leq \frac{\mu}{2 \kappa}$, provided $1 \leq k \leq k_0 - 1$, where $k_0$ is the largest integer less or equal to $\frac{|\ln(\kappa)|}{\ln(2)}$. Since the sets $B(x_j, R_{k + 1}) \setminus B(x_j, R_k)$ are disjoints, we have by summation
$$\sum_{k = 1}^{k_0 - 2} E_n \Big(v, \underset{j \in J}{\cup} \big( B(x_j, R_{k + 1}) \setminus B(x_j, R_k) \big) \Big) \leq E_n(v).$$
In particular, there exists some integer $k_1 \in \{ 1, \ldots, k_0 - 2 \}$ such that
$$E_n \Big(v, \underset{j \in J}{\cup} \big( B(x_j, R_{k_1 + 1}) \setminus B(x_j, R_{k_1}) \big) \Big) \leq \frac{E_n(v)}{k_0 - 2} \leq \frac{2 \ln(2)}{|\ln(\kappa)|} E_n(v).$$
Conclusion \eqref{couronne} follows from choosing $R = R_{k_1}$.\\
We divide the proof of statement ii) into three steps.

\setcounter{step}{0}
\begin{step}
\label{step3.2}
There exists some constant $K(c_0)$ possibly depending on $c_0$ such that
\begin{equation}
\label{beta}
\bigg| \frac{c}{2} \int_{\boU_n(2 R)} (1 - \varrho^2) \partial_1 \varphi - \int_{\boU_n(2 R)} \varrho^2 |\nabla \varphi|^2 \bigg| \leq K(c_0) \frac{E_n(v)}{|\ln(\kappa)|}.
\end{equation}
\end{step}

Set, for any $j \in J$,
$$\varphi_j = \frac{1}{|\partial B(x_j, R)|} \int_ {\partial B(x_j, R)} \varphi,$$
and consider a smooth non-negative function $\chi$ defined on $\R_+$ such that $\chi(t) = 1$, for any $0 \leq t \leq 1$, and $\chi(t) = 0$, for any $t \geq 2$. We consider the map $\tilde{\varphi}$ defined by
$$\tilde{\varphi}(x) = \chi \bigg( \frac{|x - x_j|}{R} \bigg) \varphi_j + \bigg(1 - \chi \bigg( \frac{|x - x_j|}{R} \bigg) \bigg) \varphi(x), \forall x \in B(x_j, 2 R),$$
and
$$\tilde{\varphi}(x) = \varphi (x), \ {\rm elsewhere}.$$
Notice in particular that $\tilde{\varphi} = \varphi_j$ on the ball $B(x_j, R)$, and that an elementary computation based on Poincar\'e-Wirtinger's inequality shows that
\begin{equation}
\label{prolongement}
\begin{split}
\int_{B(x_j, 2 R)} |\nabla \tilde{\varphi}|^2 & \leq K \int_{B(x_j, 2 R) \setminus B(x_j, R)} \bigg( \frac{|\varphi - \varphi_j|^2}{R^2} + |\nabla \varphi|^2 \bigg)\\
& \leq K \int_{B(x_j, 2 R) \setminus B(x_j, R)} |\nabla \varphi|^2 \leq K \frac{E_n(v)}{|\ln(\kappa)|},
\end{split}
\end{equation}
where $K$ is some universal constant. We now multiply the first equation in \eqref{PolTWc} by $\tilde{\varphi}$ on $\T_n^N$ and integrate by parts. By \eqref{margaux}, this leads to
\begin{equation}
\label{terminator}
\begin{split}
\bigg| \frac{c}{2} \int_{\boU_n(2 R)} (\varrho^2 - 1) \partial_1 \varphi - \int_{\boU_n(2 R)} \varrho^2 |\nabla \varphi|^2 \bigg| & = \bigg| - \frac{c}{2} \int_{\boC(R)} (\varrho^2 - 1) \partial_1 \tilde{\varphi} + \int_{\boC(R)} \varrho^2 \nabla \varphi \nabla \tilde{\varphi} \bigg|\\
& \leq K(c_0) \bigg( \int_{\boC(R)} e(v) \bigg)^\frac{1}{2} \bigg( \int_{\boC(R)} |\nabla \tilde{\varphi}|^2 \bigg)^\frac{1}{2},
\end{split}
\end{equation}
where we have set $\boC(R) \equiv \underset{j \in J}{\cup} \big( B(x_j, 2R) \setminus B(x_j, R) \big)$. Invoking \eqref{couronne} and \eqref{prolongement}, the l.h.s of \eqref{terminator} can be estimated so to obtain inequality \eqref{beta}.

\begin{step}
There exists some constant $K(c_0)$ possibly depending on $c_0$ such that
\begin{equation}
\label{alpha2bis}
\bigg| c \int_{\boU_n(2 R)} \varrho (1 - \varrho^2) \partial_1 \varphi - \int_{\boU_n(2 R)} \bigg( 2 \varrho |\nabla \varrho|^2 + \varrho (1 - \varrho^2)^2 \bigg) \bigg| \leq K(c_0) \bigg( \delta \int_{\boU_n(R)} e(v) + \frac{E_n(v)}{|\ln(\kappa)|} \bigg).
\end{equation}
\end{step}

The argument is somewhat similar. We consider the function $\tilde{\varrho}$ defined on $\T_n^N$ by
$$\tilde{\varrho}(x) = \chi \bigg( \frac{|x - x_j|}{R} \bigg) \varrho_j + \bigg(1 - \chi \bigg( \frac{|x - x_j|}{R} \bigg) \bigg) \varrho(x), \forall x \in B(x_j, 2 R),$$
and
$$\tilde{\varrho}(x) = \varrho (x), \ {\rm elsewhere},$$
where $\varrho_j = \frac{1}{|\partial B(x_j, R)|} \int_ {\partial B(x_j, R)} \varrho$, and $\chi$ is as in Step \ref{step3.2}. Notice that $\tilde{\varrho} = \varrho_j$ on the balls $B(x_j, R)$, and as for \eqref{prolongement}, we have
\begin{equation}
\label{prolongement2}
\int_{B(x_j, 2R)} |\nabla \tilde{\varrho}|^2 
\leq K \frac{E_n(v)}{|\ln(\kappa)|}.
\end{equation}
Moreover, since $|1 - \varrho| \leq \delta $ outside $\underset{j \in J}{\cup} B(x_j, R)$, it follows that
\begin{equation}
\label{unifo}
|1 - \tilde{\varrho}| \leq \delta \on \T_n^N.
\end{equation}
We now multiply the second equation in \eqref{PolTWc} by $\tilde{\varrho}^2 - 1$, and integrate by parts on $\T_n^N$. This leads to
\begin{align*}
\int_{\boU_n(2 R)} \bigg( 2 \varrho |\nabla \varrho|^2 + \varrho (1 - \varrho^2)^2 - & c \varrho (1 - \varrho^2) \partial_1 \varphi \bigg) = \int_{\T_n^N} \varrho (1 - \tilde{\varrho}^2) |\nabla \varphi|^2\\
& - \int_{\boC(R)} \bigg( \nabla \varrho \nabla \tilde{\varrho}^2 + \varrho (1 - \varrho^2) (1 - \tilde{\varrho}^2) - c \varrho (1 - \tilde{\varrho}^2) \partial_1 \varphi \bigg).
\end{align*}
Bounding the r.h.s of this equality using \eqref{margaux}, \eqref{prolongement2} and \eqref{unifo}, we derive inequality \eqref{alpha2bis}.

\begin{step}
Proof of estimate \eqref{dentifrice} completed.
\end{step}

Since $|1 - \varrho| \leq \delta$ on $\boU(2 R)$, we first observe that
\begin{equation}
\label{centaure1}
\bigg| \int_{\boU_n(2 R)} \bigg( \varrho (1 - \varrho^2) \partial_1 \varphi - (1 - \varrho^2) \partial_1 \varphi \bigg) \bigg| \leq K \delta \int_{\boU_n(2 R)} e(v) ,
\end{equation}
and similarly,
\begin{equation}
\label{centaure2}
\bigg| \int_{\boU_n(2 R)} \bigg( 2 \varrho |\nabla \varrho|^2 + \varrho (1 - \varrho^2)^2 - 2 |\nabla \varrho|^2 - (1 - \varrho^2)^2 \bigg) \bigg| \leq K \delta \int_{\boU_n(2 R)} e(v),
\end{equation}
where $K$ is some universal constant. Combining \eqref{couronne}, \eqref{beta}, \eqref{alpha2bis}, \eqref{centaure1} and \eqref{centaure2}, we deduce conclusion \eqref{dentifrice}.

We finally turn to the proof of statement iii). In view of Lemma \ref{colisee}, we have
$$\bigg| \int_{\boU_n(2 R)} (1 - \varrho^2) \partial_1 \varphi \bigg| \leq \frac{\sqrt{2}}{1 - \delta} \int_{\boU_n(2 R)} e(v),$$
so that \eqref{dentifrice} leads to
$$\Big( 1 - \frac{c}{\sqrt{2} (1 - \delta)} - K(c_0) \delta \Big) \int_{\boU_n(2 R)} e(v) \leq K(c_0) \frac{E_n(v)}{|\ln(\kappa)|},$$
and conclusion \eqref{azerty} follows.
\end{proof}

%%%%%%%%%%%%%%%%%%%%%%%%%%%%%%%%%%%%%%%%%%
\subsection{Subsonic vortexless solutions}
%%%%%%%%%%%%%%%%%%%%%%%%%%%%%%%%%%%%%%%%%% 

We next specify a little further the analysis assuming that the solution verifies the additional condition
\begin{equation}
\label{subsoniccasen}
0 < c(v) < \sqrt{2}.
\end{equation}
We set, for such a solution, $\varepsilon(v) = \sqrt{2 - c(v)^2}$. As on the whole space $\R^N$, we have

\begin{prop}
\label{minoration1}
Given $E_0 > 0$, let $v$ be a non-trivial finite energy solution to \eqref{TWc} in $X_n^2 \cap \boS_n^0$, resp. $X_n^3$, satisfying \eqref{subsoniccasen}, and $E_n(v) \leq E_0$. Then there exists some integer $n_0$ only depending on $E_0$ such that
$$\Big\| 1 - |v| \Big\|_{L^\infty(\T_n^N)} \geq \frac{\varepsilon(v)^2}{10}, \forall n \geq n_0.$$
\end{prop}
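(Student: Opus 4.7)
The proof will proceed in exact analogy with that of Proposition \ref{minoration} on the whole space, with the main new ingredient being a global lifting of $v$ on the torus. Set $\delta = \| 1 - |v| \|_{L^\infty(\T_n^N)}$. If $\delta \geq \frac{1}{2}$, since $\varepsilon(v)^2 < 2$ the conclusion is immediate. So assume $\delta < \frac{1}{2}$, which gives in particular $|v| \geq \frac{1}{2}$ everywhere on $\T_n^N$. The strategy is then to write $v = \varrho \exp i \varphi$ globally on $\T_n^N$ with $\varphi \in H^1(\T_n^N, \R)$, to derive the torus analogues of the identities used in Proposition \ref{minoration}, and to conclude exactly as there.

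The first step is to produce the global lifting, which is where the hypothesis $n \geq n_0(E_0)$ enters. In dimension three, one applies Lemma \ref{massimo3} (case $\boO_n(R) = \T_n^3$, i.e. $\ell = 0$): the Hodge-de-Rham decomposition $w \times dw = d\varphi + \sum_j \frac{k_j}{n} dx_j$ (with $w = v/|v|$) yields $\sum_j k_j^2 \leq E_n(v)/(\pi^3 n) \leq E_0/(\pi^3 n)$, which forces $k_j = 0$ for $n \geq n_0(E_0)$. In dimension two, the hypothesis $v \in \boS_n^0 = T_n^{0,0}$ combined with the same Hodge-de-Rham argument (applied as in Lemma \ref{sectorisation}) again yields $\varphi \in H^1(\T_n^2, \R)$ for $n \geq n_0(E_0)$. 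Once the global periodic lifting exists, periodicity gives $\int_{\T_n^N} \partial_1 \varphi = 0$, so that $p_n(v) = -\frac{1}{2}\int \varrho^2 \partial_1\varphi = \frac{1}{2}\int(1-\varrho^2)\partial_1\varphi$, which is the torus analogue of \eqref{Polp}. Multiplying the first equation of \eqref{PolTWc} by $\varphi$ and integrating by parts on $\T_n^N$ (no boundary terms) yields the torus analogue of Lemma \ref{elementary}:
$$c\, p_n(v) = \int_{\T_n^N} \varrho^2 |\nabla \varphi|^2.$$

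With the global lifting in hand, the rest of the proof is a verbatim transcription of Proposition \ref{minoration}. Lemma \ref{colisee}, being pointwise, applies directly and yields, using $\varrho \geq 1-\delta$,
$$\int_{\T_n^N} \varrho^2 |\nabla\varphi|^2 = c\, p_n(v) \leq \frac{c}{\sqrt{2}(1-\delta)} E_n(v).$$
Multiplying the second equation of \eqref{PolTWc} by $\varrho^2 - 1$ and integrating on $\T_n^N$ (again no boundary terms) gives the torus analogue of \eqref{alpha1}, and bounding its right-hand side exactly as in \eqref{alpha3}--\eqref{alpha4}, using $\frac{1}{2}\leq \varrho \leq \frac{3}{2}$ and the pointwise estimate $|\varrho(1-\varrho^2)\partial_1\varphi|\leq \sqrt{2}\,e(v)$, one obtains
$$\int_{\T_n^N}\Big(\tfrac{|\nabla \varrho|^2}{2} + \tfrac{(1-\varrho^2)^2}{4}\Big) \leq \frac{\sqrt{2}c + 6\delta}{4(1-\delta)} E_n(v).$$
Adding the two inequalities, using $E_n(v) = \int\big(\tfrac{|\nabla\varrho|^2}{2} + \tfrac{\varrho^2|\nabla\varphi|^2}{2} + \tfrac{(1-\varrho^2)^2}{4}\big)$, one arrives at $\lambda\, E_n(v) \leq 0$ with $\lambda = 1 - \frac{c}{\sqrt{2}(1-\delta)} - \frac{3\delta}{2(1-\delta)}$. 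Since $v$ is non-trivial, $E_n(v) > 0$, hence $\lambda \leq 0$, giving $\delta \geq \frac{2}{5}(1 - c/\sqrt{2}) \geq \varepsilon(v)^2/10$ as in the closing line of Proposition \ref{minoration}. The main (and essentially only) obstacle specific to the torus is the global lifting, which precisely requires the topological assumption $v \in \boS_n^0$ in the two-dimensional case and the lower bound $n \geq n_0(E_0)$ in both dimensions.
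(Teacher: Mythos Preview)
Your proposal is correct and matches the paper's approach exactly: the paper's proof simply reads ``The proof is similar to the proof of Proposition \ref{minoration}, invoking Lemmas \ref{massimo3} and \ref{sectorisation} to construct a lifting of $v$,'' and you have carried out precisely this plan, correctly identifying that the only torus-specific issue is the global lifting (handled by Lemma \ref{massimo3} in dimension three and by the $\boS_n^0$ assumption via Lemma \ref{sectorisation} in dimension two), after which the computations of Proposition \ref{minoration} go through verbatim thanks to periodicity killing all boundary terms.
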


\begin{proof}
The proof is similar to the proof of Proposition \ref{minoration}, invoking Lemmas \ref{massimo3} and \ref{sectorisation} to construct a lifting of $v$. Therefore, we omit it.
\end{proof}

%%%%%%%%%%%%%%%%%%%%%%%%%%%%%%%%%%%%%%%%%%%%%%%%%%%%
%%%%%%%%%%%%%%%%%%%%%%%%%%%%%%%%%%%%%%%%%%%%%%%%%%%%
\section{Asymptotics of solutions on expanding tori}
\label{Torusinfini}
%%%%%%%%%%%%%%%%%%%%%%%%%%%%%%%%%%%%%%%%%%%%%%%%%%%%
%%%%%%%%%%%%%%%%%%%%%%%%%%%%%%%%%%%%%%%%%%%%%%%%%%%%

In this section, we consider the asymptotics $n \to + \infty$ for a sequence $(v^n)_{n\in \N^*}$ of solutions of \eqref{TWc} on tori $\T_{n}^N$. Our purpose is to carry out the asymptotic analysis of the sequence, in the spirit of concentration-compactness. We assume throughout that there exists a constant $K > 0$, which is independent of $n$, such that, for any $n \in \N$,
$$E_n(v^n) \leq K, \ 0 \leq p_n(v^n) \leq K, \ {\rm and} \ c(v^n) \leq K.$$
Passing possibly to a subsequence, we may assume, without loss of generality, that for some numbers $E \geq 0$, $\p \geq 0$ and $c \geq 0$, we have
\begin{equation}
\label{hypconv}
E_n(v^n) \to E, \ p_n(v^n) \to \p, \ {\rm and} \ c(v^n) \to c, \ {\rm as} \ n \to + \infty.
\end{equation}
The main result in this section is

\begin{theorem}
\label{bigconc-compc}
Assume $N = 2$ or $N = 3$, and let $(v^n)_{n \in \N^*}$ be a sequence of solutions of \eqref{TWc} in $X_n^2 \cap \boS_n^0$, resp. $X_n^3$, satisfying \eqref{hypconv}. Assume moreover that $E > 0$ and 
\begin{equation}
\label{asympsubsonic}
0 < c < \sqrt{2}.
\end{equation}
There exist an integer $\ell_0$ depending only on $c$ and $E$, $\ell$ non-trivial finite energy solutions $\v_1$, $\ldots$, $\v_\ell$ to \eqref{TWc} on $\R^N$ of speed $c$ with $1 \leq \ell \leq \ell_0$, $\ell$ points $x_1^n$, $\ldots$, $x_\ell^n$, and a subsequence of $(v^n)_{n \in \N^*}$ still denoted $(v^n)_{n \in \N^*}$ such that
\begin{equation}
\label{intermarche1}
|x_i^n - x_j^n| \to + \infty, \ {\rm as} \ n \to + \infty,
\end{equation}
and
\begin{equation}
\label{limitlimit1}
v^n(\cdot + x_i^n) \to \v_i(\cdot) \ {\rm in} \ C^k(K), \ {\rm as} \ n \to + \infty,
\end{equation}
for any $1 \leq i \neq j \leq \ell$, any $k \in \N$, and any compact set $K \subset \R^N$. Moreover, we have the identities
\begin{equation}
\label{limitenergy1}
E = \lim_{n \to + \infty} \big( E_n(v^n) \big) = \sum_{i = 1}^\ell E(\v_i), \ {\rm and} \ \p = \lim_{n \to + \infty} \big( p_n(v^n) \big) = \sum_{i = 1}^\ell p(\v_i).
\end{equation}
\end{theorem}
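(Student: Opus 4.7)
The proof follows a concentration-compactness strategy built on the uniform pointwise bounds of Lemma \ref{tarquini1}, the covering and clustering results of Lemma \ref{covering0} and Lemma \ref{clustering}, and the energy-localization estimate of Proposition \ref{concentration}.

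First, I would locate the bubbles. Since $c < \sqrt{2}$, Proposition \ref{minoration1} forces $\|1-|v^n|\|_{L^\infty(\T_n^N)} \geq \varepsilon(v^n)^2/10$, which stays bounded below for $n$ large. Fixing $\delta \leq \delta(c)$ small enough for Proposition \ref{concentration}(iii) to apply, Lemma \ref{covering0} provides a uniformly bounded number $\ell(v^n) \leq \ell_0(E,c)$ of unit balls outside of which $\big||v^n|-1\big| \leq \delta$. After passing to a subsequence I may assume this number is constant, and Lemma \ref{clustering} regroups these points into $\ell \leq \ell_0$ clusters with representatives $x_1^n,\dots,x_\ell^n$ at a common scale $R_n \leq (2/\kappa)^{\ell_0}$ satisfying inter-cluster distance $\geq R_n/\kappa$; by a further subsequence extraction $|x_i^n - x_j^n|\to +\infty$ for $i\neq j$, giving \eqref{intermarche1}.

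Next, I would extract the limit profiles. The uniform $C^k$-bounds $\|v^n\|_{C^k(\T_n^N)} \leq K_k(c,N)$ from Lemma \ref{tarquini1} combined with a diagonal Ascoli argument produce, along a further subsequence, limits $v^n(\cdot + x_i^n) \to \v_i$ in $C^k_{loc}(\R^N)$ for each $i$, proving \eqref{limitlimit1}. Passing to the limit in equation (TW$c(v^n)$) and using $c(v^n)\to c$ shows each $\v_i$ solves \eqref{TWc} with speed $c$; it is non-trivial because at least one defect point of $v^n$ sits within bounded distance of $x_i^n$, forcing $|\v_i|$ to differ from $1$ at some point. Fatou's lemma ensures each $\v_i$ has finite energy bounded by $E$.

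For the energy identity in \eqref{limitenergy1}, estimate \eqref{azerty} applied with $\kappa_n \to 0$ slowly enough shows that the energy of $v^n$ outside $\bigcup_i B(x_i^n, 2R_n)$ is bounded by $4K(c)E/(2-c^2)|\ln\kappa_n|$ and hence tends to zero; inside each cluster ball, the $C^k_{loc}$-convergence together with the decay \eqref{decay-sub-pol} provided by Proposition \ref{superdecay} yields $E_n(v^n, B(x_i^n, R)) \to E(\v_i, B(0,R))$ for each fixed $R$, and then $\to E(\v_i)$ as $R\to+\infty$. A standard diagonal argument combining these two pieces produces $E = \sum_i E(\v_i)$.

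The main obstacle is the momentum identity, since $p$ is not a purely local quantity. Outside the cluster balls one has $|v^n|\geq 1-\delta \geq 1/2$, so by Lemma \ref{sectorisation} (in dimension two, using $v^n \in \boS_n^0$) or Lemma \ref{massimo3} (in dimension three), $v^n$ admits a lifting $v^n = \varrho^n\exp(i\varphi^n)$ on the exterior region. I would decompose $p_n(v^n)$ into cluster contributions and an exterior piece: on the exterior, rewriting the integrand through the identity $\langle i\partial_1 v^n, v^n\rangle = -(\varrho^n)^2\partial_1\varphi^n$ as in \eqref{celemoment} and applying Lemma \ref{colisee} yields the pointwise bound $\sqrt{2}\,e(v^n)/\varrho^n$, so \eqref{azerty} forces this contribution to vanish in the limit. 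On each cluster ball, Proposition \ref{cestlemoment} identifies the limiting contribution with $p(\v_i)$, whose convergence is controlled by the pointwise decay \eqref{decay-sub-cart}. Selecting the unfolding of $\T_n^N$ as in Proposition \ref{lem:vbt} guarantees that the boundary terms arising when gluing the two contributions are of order $E_n(v^n)/n + o(1)$, so the decomposition yields $p_n(v^n) = \sum_i p(\v_i) + o(1)$, completing the proof.
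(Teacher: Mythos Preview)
Your overall architecture matches the paper's proof, but three technical steps are handled imprecisely and would not go through as written.

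First, applying Lemma \ref{clustering} once with a parameter $\kappa_n\to 0$ does not give what you need: the index set $J$ produced by that lemma depends on both $n$ and $\kappa$, so there is no reason the same clusters persist as $\kappa_n$ shrinks. The paper resolves this with Lemma \ref{clusteringinfini}, which iterates Lemma \ref{clustering} along a sequence $\kappa_m\to 0$ and extracts, by a diagonal argument, a single subsequence and a \emph{fixed} index set $J_\infty$ valid at every scale. Without this you cannot simultaneously send the exterior energy to zero via \eqref{azerty} and keep the cluster centers $x_i^n$ well defined.

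Second, in dimension two your invocation of Lemma \ref{sectorisation} is incomplete: that lemma requires the degree hypothesis \eqref{zerobord} around each cluster ball, which you have not verified. The paper instead uses Corollary \ref{minimo2}, whose hypothesis \eqref{austin} (smallness of the exterior energy) is exactly what \eqref{exterior} provides; this yields the lifting on $\boO_n(2\mu_k)$, and since $|v^n|\geq 1/2$ on each annulus $B(x_j^n,2\mu_k)\setminus B(x_j^n,2R_k)$, the lifting extends to $\boO_n(2R_k)$.

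Third, Proposition \ref{lem:vbt} plays no role in the momentum decomposition. The correct gluing is purely internal: on $\boU_k$ one writes $\langle i\partial_1 v^n,v^n\rangle=-(\varrho^n)^2\partial_1\varphi^n=(1-(\varrho^n)^2)\partial_1\varphi^n-\partial_1\varphi^n$ and integrates the last term by parts, producing boundary contributions $\frac{1}{r_k}\int_{\partial B(x_j^n,r_k)}\varphi^n\,x_1$ on the spheres (the torus itself has no boundary). These sphere terms combine with the interior integrals to form quantities $\boP_{j,k}(v^n)$ that converge, by \eqref{limitlimit1}, to the analogous expressions $p_k(\v_j)$ for the limit profiles, and the latter converge to $p(\v_j)$ by Lemma \ref{gdev} and Proposition \ref{cestlemoment}. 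The exterior remainder $\frac{1}{2}\int_{\boU_k}(1-(\varrho^n)^2)\partial_1\varphi^n$ is controlled by Lemma \ref{colisee} and \eqref{exterior}, as you correctly note.
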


In Theorem \ref{bigconc-compc}, the tori $\T_n^N$ are identified with the subdomains $\Omega_n^N$ of $\R^N$, using possibly a suitable unfolding, so that convergence \eqref{intermarche1} makes sense. We will also need a variant of Theorem \ref{bigconc-compc} in the sonic case.

\begin{theorem}
\label{asympsonic}
Assume $N = 3$, and let $(v^n)_{n \in \N^*}$ be as in Theorem \ref{bigconc-compc} with assumption \eqref{asympsubsonic} replaced by 
$$c = \sqrt{2}.$$
Let $0 < \delta < 1$ be given. There exist an integer $\ell_0$ depending only on $E$, $\ell$ non-trivial finite energy solutions $\v_1$, $\ldots$, $\v_\ell$ to \eqref{TWc} on $\R^N$ of speed $\sqrt{2}$ with $0 \leq \ell \leq \ell_0$, $\ell$ points $x_1^n$, $\ldots$, $x_\ell^n$, and a subsequence of $(v^n)_{n \in \N^*}$ still denoted $(v^n)_{n \in \N^*}$ such that \eqref{intermarche1} and \eqref{limitlimit1} hold. Moreover, there exist real numbers $\mu \geq 0$ and $\nu$ such that we have the identities
\begin{equation}
\label{limitenergy2}
E = \lim_{n \to + \infty} \big( E_n(v^n) \big) = \sum_{i = 1}^\ell E(\v_i) + \mu, \ {\rm and} \ \p = \lim_{n \to + \infty} \big( p_n(v^n) \big) = \sum_{i = 1}^\ell p(\v_i) + \nu,
\end{equation}
and the inequality
\begin{equation}
\label{tropical}
|\mu - \sqrt{2} \nu| \leq K \delta \mu,
\end{equation}
where $K$ is some universal constant.
\end{theorem}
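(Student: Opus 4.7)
The plan is to mirror the concentration-compactness argument of Theorem \ref{bigconc-compc}, the difference being that the sonic speed forces additional leftover of energy and momentum which satisfy a near-sonic discrepancy relation. The coercive bound \eqref{azerty} of Proposition \ref{concentration}(iii) does not apply in this regime, but the identity \eqref{dentifrice} of Proposition \ref{concentration}(ii), which uses only the elliptic system \eqref{PolTWc} and not subsonicity, survives and provides \eqref{tropical} in the limit.

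First, I would extract the concentration structure exactly as in the subsonic case. Lemma \ref{tarquini1} gives uniform $C^k$ bounds on $v^n$. Lemma \ref{covering0} applied with parameter $\delta/2$ produces, for each $n$, at most $\ell_0(E,\delta)$ points at which $|1-|v^n|| \geq \delta/2$, while $|1-|v^n|| \leq \delta/2$ elsewhere; after extraction this number is constant. Applying Lemma \ref{clustering} with some $\kappa_1 \in (0, 1/64)$ yields a radius $\mu_n$ and a reduced cluster list $x_1^n, \ldots, x_\ell^n$, with mutual distances at least $\mu_n/\kappa_1$, such that $|v^n| \geq 1-\delta$ on $\boO_n(\mu_n) = \T_n^3 \setminus \bigcup_i B(x_i^n, \mu_n)$. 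Extracting so that each distance $|x_i^n - x_j^n|$ either stays bounded or diverges, and re-clustering the bounded ones, one obtains \eqref{intermarche1}. The $C^k$ bounds together with Ascoli-Rellich and a diagonal extraction produce limits $v^n(\cdot + x_i^n) \to \v_i$ in $C^k_{\rm loc}(\R^3)$, establishing \eqref{limitlimit1}; each $\v_i$ is a non-trivial finite energy solution to \eqref{TWc} with speed $\sqrt{2}$.

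Second, I would set up the bookkeeping. Choose $R_n \to +\infty$ with $R_n = o(|x_i^n - x_j^n|)$ for $i \neq j$. Local convergence yields $E_n(v^n, B(x_i^n, R_n)) \to E(\v_i)$ and $p_n(v^n, B(x_i^n, R_n)) \to p(\v_i)$, so that $\mu = E - \sum_i E(\v_i) \geq 0$ and $\nu = \p - \sum_i p(\v_i)$ are respectively the limits of $E_n(v^n, \boO_n(R_n))$ and $p_n(v^n, \boO_n(R_n))$, yielding \eqref{limitenergy2}. For any $\kappa_2 \in (0, 1/64)$, Proposition \ref{concentration}(i) selects an intermediate radius $\mu_n \leq R'_n \leq \mu_n/(2\kappa_2)$ on whose surrounding annulus the energy does not exceed $2 E_n(v^n)/|\ln \kappa_2|$. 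Since $|v^n| \geq 1-\delta \geq 1/2$ on $\boO_n(\mu_n)$ and $N = 3$, Lemma \ref{massimo3} furnishes, for $n$ large enough, a global lift $v^n = \varrho^n \exp i \varphi^n$ on $\boO_n(\mu_n)$ with $\varphi^n \in H^1$. The proof of Proposition \ref{concentration}(ii) then carries over, with its universal constant $K(c_0)$ taken for $c_0 = 2$, to give
\begin{equation*}
\left| \int_{\boU_n(2 R'_n)} \left( \tfrac{c(v^n)}{2} \big(1 - (\varrho^n)^2\big) \partial_1 \varphi^n - e(v^n) \right) \right| \leq K \left( \delta \int_{\boU_n(2 R'_n)} e(v^n) + \frac{E_n(v^n)}{|\ln \kappa_2|} \right),
\end{equation*}
where $\boU_n(2 R'_n) = \T_n^3 \setminus \bigcup_i B(x_i^n, 2 R'_n)$. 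Since $R'_n$ lies between $\mu_n$ and $R_n$, the energy and momentum on $\boU_n(2 R'_n)$ converge respectively to $\mu$ and $\nu$ by \eqref{celemoment}; using $c(v^n) \to \sqrt{2}$ and passing to the limit yields
\begin{equation*}
|\sqrt{2} \nu - \mu| \leq K \delta \mu + \frac{K E}{|\ln \kappa_2|}.
\end{equation*}
Letting $\kappa_2 \to 0$ disposes of the last term and establishes \eqref{tropical} with a universal constant.

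The main obstacle is the simultaneous coordination of the scales $\mu_n \leq R'_n \leq R_n \ll |x_i^n - x_j^n|$: one must capture the bubble energies within $B(x_i^n, R_n)$, dispose of the boundary contributions arising in Proposition \ref{concentration}(ii) via the small-energy annulus at radius $R'_n$, and secure a well-defined lift on $\boO_n(\mu_n)$ via Lemma \ref{massimo3}. Keeping the constant $K$ in \eqref{tropical} universal requires the freedom to let $\kappa_2 \to 0$ after $n \to +\infty$, and care in tracking how the $\delta$-dependence enters the various error terms.
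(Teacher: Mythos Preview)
Your approach is the same as the paper's, but there are two genuine gaps.

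\textbf{The bound on $\ell$.} The statement asserts that $\ell_0$ depends only on $E$, not on $\delta$. Your covering argument via Lemma \ref{covering0} only gives $\ell \leq \ell_0(E,\delta)$, and you never remove the $\delta$-dependence. The paper closes this gap a posteriori: once the limiting solutions $\v_1, \ldots, \v_\ell$ are produced, each is a non-trivial finite energy solution on $\R^3$ with speed $\sqrt{2}$, so Lemma \ref{soniccase} forces $E(\v_i) \geq \boE_0$. Since $\sum_i E(\v_i) + \mu = E$ with $\mu \geq 0$, this yields $\ell \leq E/\boE_0$, which is the required $\delta$-free bound.

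\textbf{The scale coordination for $\mu$ and $\nu$.} You assert that the energy and momentum on $\boU_n(2R'_n)$ converge to $\mu$ and $\nu$, but this requires $R'_n \to +\infty$, which your setup does not provide. Indeed, your clustering radius $\mu_n$ from Lemma \ref{clustering} with fixed $\kappa_1$ is bounded by $(2/\kappa_1)^\ell$, and Proposition \ref{concentration}(i) only guarantees $\mu_n \leq R'_n \leq \mu_n/(2\kappa_2)$; thus $R'_n$ is bounded for each fixed $\kappa_2$, and nothing forces it to diverge as $\kappa_2 \to 0$. Consequently the exterior integrals on $\boU_n(2R'_n)$ carry, in the limit, an extra contribution $\sum_i E(\v_i, \R^3 \setminus B(0, 2R'))$ (and similarly for the momentum), which is not eliminated by sending $\kappa_2 \to 0$. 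The paper avoids this by invoking Lemma \ref{clusteringinfini}: it produces a subset $J_\infty$ and sequences $\kappa_k \to 0$, $R_k$ with the clustering valid for all $k$ simultaneously along a diagonal subsequence, and then applies Proposition \ref{concentration} with $\mu = \mu_k = R_k/(8\sqrt{\kappa_k})$ and $\kappa = \sqrt{\kappa_k}$, so that the exterior radius $r_k = \mu_k/\sqrt{\kappa_k} \to +\infty$. This is precisely the ``simultaneous coordination of scales'' you flag as the main obstacle, but your proposed resolution does not actually achieve it. A related casualty is the momentum bookkeeping: the convergence of the interior momenta to $p(\v_i)$ is not immediate from $C^k_{\rm loc}$-convergence (the density $\langle i\partial_1 v, v\rangle$ is not integrable on $\R^3$), and requires the boundary manipulation carried out in \eqref{vert0}--\eqref{vert2}, which in turn needs the growing radius.
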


\begin{remark}
In contrast with the subsonic case, where $\ell \geq 1$, i.e. there is always at least one non-trivial finite energy solution on $\R^N$, with speed $c$, namely $\v_1$, appearing in the limiting behaviour, here, we may well have $\ell = 0$. In this situation we have $E = \mu$ and $\p = \nu$.
\end{remark}

The first observation which paves the way to the proof of Theorem \ref{bigconc-compc} is that the elliptic estimates derived in Subsection \ref{ellipticpoint} lead to the compactness of sequences of solutions, when we consider their restrictions to bounded domains. More precisely, as a direct consequence of Lemma \ref{tarquini1}, we have

\begin{lemma}
\label{localcompacite}
Let $(v^n)_{n \in \N^*}$ be as in Theorem \ref{bigconc-compc} or \ref{asympsonic}. There exists a finite energy solution $\v$ to \eqref{TWc} with speed $c$, and a subsequence of $(v^n)_{n \in \N}$ still denoted $(v^n)_{n \in \N}$ such that
$$v^n \to \v \ {\rm in} \ C^k(K), \ {\rm as} \ n \to + \infty,$$
for any $k \in \N$, and any compact set $K \subset \R^N$.
\end{lemma}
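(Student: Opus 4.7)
The plan is to combine the uniform pointwise bounds from Lemma \ref{tarquini1} with an Ascoli--Arzel\`a plus diagonal extraction argument, and then to pass to the limit in the equation and in the energy.

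First I would fix some radius $R>0$ and consider the ball $B(0,R)\subset \R^N$. Since $c(v^n)\to c$ the sequence $\bigl(c(v^n)\bigr)_{n\in\N^*}$ is bounded, so Lemma~\ref{tarquini1} furnishes, for each $k\in\N$, a constant $K(c,k,N,R)$ (independent of $n$) such that
\begin{equation*}
\|v^n\|_{C^k(\T_n^N)} \leq K(c,k,N),
\end{equation*}
provided $n$ is large enough so that $B(0,R)$ is canonically identified with a subset of $\T_n^N$. Thus the restrictions $v^n_{|B(0,R)}$ form a bounded sequence in $C^k(\overline{B(0,R)})$ for every $k$. Applying Ascoli--Arzel\`a successively and extracting a diagonal subsequence in both $k$ and $R$ (say along $R=R_j\to+\infty$), I obtain a subsequence, still denoted $(v^n)_{n\in\N^*}$, and a function $\v\in C^\infty(\R^N,\C)$ such that $v^n\to \v$ in $C^k(K)$ for every compact $K\subset \R^N$ and every $k\in\N$.

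Next, since $v^n$ solves $ic(v^n)\partial_1 v^n + \Delta v^n + v^n(1-|v^n|^2)=0$ on $\T_n^N$ and all terms converge locally uniformly (together with all derivatives) as $n\to +\infty$, with $c(v^n)\to c$, the limit $\v$ satisfies $ic\partial_1 \v + \Delta \v + \v(1-|\v|^2)=0$ pointwise on $\R^N$, i.e.\ $\v$ solves \eqref{TWc} with speed $c$. Finally, to show $\v$ has finite energy, I would use that for any fixed $R>0$ and $n$ large,
\begin{equation*}
\int_{B(0,R)} e(v^n) \leq E_n(v^n),
\end{equation*}
and pass to the limit $n\to +\infty$ using the local $C^1$-convergence to get $\int_{B(0,R)} e(\v)\leq \liminf_{n\to+\infty} E_n(v^n)=E$. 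Letting $R\to +\infty$ yields $E(\v)\leq E<+\infty$, so $\v$ is a finite energy solution to \eqref{TWc}, completing the proof.

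The argument is essentially routine once Lemma~\ref{tarquini1} is in hand; the only mildly delicate point is the identification of $\T_n^N$ with a subset of $\R^N$ so that local convergence makes sense --- but this is taken care of by an appropriate unfolding, since for any fixed compact $K$ one has $K\subset \Omega_n^N$ for all $n$ large enough. There is no real obstacle: the uniform bound from Lemma~\ref{tarquini1} does not depend on $n$, which is the crucial feature making the compactness work.
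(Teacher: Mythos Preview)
Your proof is correct and follows essentially the same route as the paper: uniform $C^k$ bounds from Lemma~\ref{tarquini1} (valid because $c(v^n)$ is bounded), Ascoli--Arzel\`a on balls, a diagonal extraction, and passage to the limit in the equation. You additionally spell out the finite-energy verification via $\int_{B(0,R)} e(\v)\leq E$, which the paper's proof leaves implicit; this is a welcome clarification.
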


\begin{proof}
Since the family of speeds $(c_n)_{n \in \N}$ is bounded, it follows from Lemma \ref{tarquini1}, that, for any $k \in \N$, there exists a constant $K(k)$ such that
$$\| v^n \|_{C^k(\T_n^N)} \leq K(k), \forall n \in \N.$$
Hence, it follows from Ascoli's theorem, that, considering for any $j \in \N$, the compact balls $\overline{B(0, j)}$, there exists a subsequence (depending possibly on $j$), and a smooth map $\v^j$ on $B(0, j)$ such that
$$v_n \underset{n \to + \infty}{\to} \v^j \ {\rm in} \ C^k(B(0, j)), \ \forall k \in N.$$
Taking the limit in equation \eqref{TWc} for $v^n$, one verifies that the map $\v^j$ solves \eqref{TWc}, with speed $c$, the limit of the speeds $c(v^n)$ as $n \to + \infty$. To conclude, we let $j \to + \infty$, and we invoke a diagonal argument, so that in particular $\v = \v^j$ does not depend on the ball $B(0, j)$.
\end{proof}

Lemma \ref{localcompacite} does not handle the invariance by translations of the equation, which is a source of non-compactness. In particular, if we do not take care of this invariance, the limit $v$ provided by Lemma \ref{localcompacite} might well be equal to a constant, so that there is no hope to have conservation of energy and momentum. In order to handle the invariance by translations, we invoke the following general result.

\begin{lemma}
\label{clusteringinfini}
Let $\ell \in \N^*$, and consider for any $n \in \N^*$, a family $\{ x_1^n, \ldots, x_\ell^n \}$ of $\ell$ points in $\T_n^N$. There exist a subset $J_\infty$ of $\{1, \ldots, \ell\}$, a non-decreasing injection $\sigma: \N \to \N$, and sequences $(\kappa_k)_{k\in \N}$, $(R_k)_{k \in \N}$ and $(n(k))_{k \in \N}$ such that
$$0 < \kappa_k < \frac{1}{64}, \ \kappa_k \to 0, \ {\rm as} \ k \to + \infty, \ {\rm and} \ 1 \leq R_k < \Big( \frac{2}{\kappa_k} \Big )^\ell, \forall k \in \N,$$
and such that we have the relations
$$\underset{i = 1}{\overset{\ell}{\cup}} B \Big( x_i^{\sigma(n)}, 1 \Big) \subset \underset{j \in J_\infty}{\cup} B \Big( x_j^{\sigma(n)}, R_k\Big), \ {\rm and} \ \dist \Big( x_i^{\sigma(n)}, x_j^{\sigma(n)} \Big) \geq \frac{R_k}{4 \kappa_k}, \forall i \neq j \in J_\infty,$$
for any $n \geq n(k)$.
\end{lemma}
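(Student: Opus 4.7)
The plan is to reduce the statement to a combinatorial assertion about the asymptotic geometry of the point configurations $\{x_1^n,\ldots,x_\ell^n\}$, and then choose $J_\infty$, the scale parameters, and $n(k)$ by an explicit extraction rather than by iterating Lemma \ref{clustering} at each scale.

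First I would perform a diagonal extraction. Since there are at most $\ell^2$ ordered pairs $(i,j)$, I pass to a subsequence (still denoted $\sigma(n)$) such that for every such pair the torus-distance $|x_i^{\sigma(n)}-x_j^{\sigma(n)}|$ either converges to a finite limit $d_{ij}\in[0,+\infty)$ or tends to $+\infty$ as $n\to+\infty$. This lets me define an equivalence relation on $\{1,\ldots,\ell\}$ by $i\sim j$ iff $|x_i^{\sigma(n)}-x_j^{\sigma(n)}|$ stays bounded along $\sigma(n)$; transitivity comes from the triangle inequality. Let $J_\infty\subset\{1,\ldots,\ell\}$ be any set of representatives, one per equivalence class, and for $1\le i\le\ell$ denote by $j(i)\in J_\infty$ the representative of the class of $i$, so that $d_{i,j(i)}<+\infty$. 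Set $D=1+\max_{1\le i\le\ell} d_{i,j(i)}$, a finite quantity independent of $n$ and $k$.

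Next I select the scales. Choose $\kappa_k=2^{-k-M}$ with $M\in\N$ large enough that $\kappa_0<\tfrac{1}{64}$ and $D+1<(2/\kappa_0)^\ell$; then $\kappa_k\in(0,\tfrac{1}{64})$, $\kappa_k\to 0$, and setting $R_k:=D+1$ gives $1\le R_k<(2/\kappa_k)^\ell$ for every $k$. Note that taking $R_k$ essentially constant in $k$ is harmless: only the separation threshold $R_k/(4\kappa_k)=2^{k+M-2}(D+1)$ needs to grow with $k$, and it does so because $\kappa_k\to 0$.

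It remains to pick $n(k)$ so that both conclusions hold for all $n\ge n(k)$. For the covering property, for each $i$ we have $|x_i^{\sigma(n)}-x_{j(i)}^{\sigma(n)}|\to d_{i,j(i)}\le D-1$, hence there is $n_1(k)$ such that $|x_i^{\sigma(n)}-x_{j(i)}^{\sigma(n)}|\le D$ for all $n\ge n_1(k)$ and all $1\le i\le\ell$; this yields
$$
B\bigl(x_i^{\sigma(n)},1\bigr)\subset B\bigl(x_{j(i)}^{\sigma(n)},D+1\bigr)=B\bigl(x_{j(i)}^{\sigma(n)},R_k\bigr),
$$
and therefore $\bigcup_{i=1}^\ell B(x_i^{\sigma(n)},1)\subset\bigcup_{j\in J_\infty}B(x_j^{\sigma(n)},R_k)$. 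For the separation property, for each pair $i\ne j\in J_\infty$ we have $i\not\sim j$, so $|x_i^{\sigma(n)}-x_j^{\sigma(n)}|\to+\infty$; since $J_\infty$ is finite and $R_k/(4\kappa_k)$ is a fixed real number once $k$ is fixed, there is $n_2(k)$ such that $|x_i^{\sigma(n)}-x_j^{\sigma(n)}|\ge R_k/(4\kappa_k)$ for all $n\ge n_2(k)$ and all distinct $i,j\in J_\infty$. Setting $n(k)=\max\{n_1(k),n_2(k)\}$ completes the proof.

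The whole argument is a routine diagonal extraction; the only mildly delicate point, and what motivates the approach above rather than a direct iteration of Lemma \ref{clustering} at each $\kappa_k$, is that we need a \emph{single} subset $J_\infty$ to work simultaneously for every scale $k$. Defining $J_\infty$ intrinsically from the equivalence classes of the asymptotic distance behaviour avoids having to match up the subsets produced by Lemma \ref{clustering} at different values of $\kappa$, and automatically gives the desired uniform properties.
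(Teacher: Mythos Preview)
Your proof is correct and takes a genuinely different, more direct route than the paper's. The paper proceeds by iterating Lemma~\ref{clustering} at each scale $\kappa_m=\frac{1}{m+64}$: for each $m$ it applies the clustering lemma to the family $\{x_1^n,\ldots,x_\ell^n\}$, extracts a subsequence so that the resulting subset $J_m$ and radius $\mu_m$ stabilise, then runs a diagonal argument over $m$, and finally passes to one more subsequence $(\alpha(m))_{m\in\N}$ so that the sets $J_{\alpha(m)}$ coincide and can be called $J_\infty$. By contrast, you first stabilise the asymptotic geometry of the configuration by a single diagonal extraction on the $\binom{\ell}{2}$ pairwise distances, read off $J_\infty$ directly as a set of representatives for the ``bounded distance'' equivalence classes, and then observe that a \emph{fixed} radius $R_k=D+1$ suffices for the covering at every scale, with only the separation threshold $R_k/(4\kappa_k)$ growing through $\kappa_k\to 0$. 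This bypasses both the repeated use of Lemma~\ref{clustering} and the final extraction needed to reconcile the subsets $J_m$ across scales. The paper's approach has the modest advantage of being a mechanical iteration of a lemma already proved, while yours is shorter and makes the structure of $J_\infty$ transparent from the outset; both deliver exactly what is needed downstream in the proof of Theorem~\ref{bigconc-compc}, since there one only requires $R_k/\kappa_k\to+\infty$ and $R_k\geq 1$, not that $R_k$ itself grow.
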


\begin{proof}
We iterate Lemma \ref{clustering} applied to the family $\{x_1^n, \ldots, x_\ell^n\}$ with values of $\kappa$ going to zero. More precisely, we introduce a new parameter $m \in \N$ which will eventually go to $+ \infty$ and take $\kappa$ of the form
$$\kappa_m = \frac{1}{m + 64}.$$
Starting with $m = 0$, Lemma \ref{clustering} yields for any $n \in \N$, a subset $J_0^n$ of $\{1, \ldots, \ell\}$, and a number $0 < \mu_0^n \leq \Big( \frac{2}{\kappa_0} \Big)^\ell$ such that
$$\underset{i = 1}{\overset{\ell}{\cup}} B(x_i^n, 1) \subset \underset{j \in J_0^n}{\cup} B(x_j^n, \mu_0^n), \ {\rm and} \ \dist(x_i^n , x_j^n) \geq \frac{\mu_0^n}{\kappa_0}, \forall i \neq j \in J_0^n.$$
We may extract a subsequence $\sigma_0: \N \to \N$ such that $J_0^{\sigma_0(n)}$ does not depend on $n$, so that we may denote it $J_0$, and such that the sequence $(\mu_0^{\sigma_0(n)})_{n \in \N}$ has a limit which we denote $\mu_0$. In particular, there exists some integer $n_0$ such that
$$\underset{i = 1}{\overset{\ell}{\cup}} B \Big( x_i^{\sigma_0(n)}, 1 \Big) \subset \underset{j \in J_0}{\cup} B \Big( x_j^{\sigma_0(n)}, 2 \mu_0 \Big), \ {\rm and} \ \dist \Big( x_i^{\sigma_0(n)} , x_j^{\sigma_0(n)} \Big) \geq \frac{\mu_0}{2 \kappa_0}, \forall i \neq j \in J_0,$$
for any $n \geq n_0$. We next proceed the same way with the family $\{ x_1^{\sigma_0(n)}, \ldots, x_\ell^{\sigma_0(n)} \}_{n \in \N}$ and $\kappa_1$, so that by Lemma \ref{clustering}, we find a subset $J_1^{\sigma_0(n)}$ of $\{ 1, \ldots, \ell \}$, and a number $0 < \mu_1^n \leq \Big( \frac{2}{\kappa_1} \Big)^\ell$ such that
$$\underset{i = 1}{\overset{\ell}{\cup}} B \Big( x_i^{\sigma_0(n)}, 1 \Big) \subset \underset{j \in J_1^n}{\cup} B \Big( x_j^{\sigma_0(n)}, \mu_1^n \Big), \ {\rm and} \ \dist \Big( x_i^{\sigma_0(n)} , x_j^{\sigma_0(n)} \Big) \geq \frac{\mu_1^n}{\kappa_1}, \forall i \neq j \in J_1^n,$$
for any $n \in \N$. We may extract a new subsequence $\tilde{\sigma}_1: \N \to \N$ such that, for $\sigma_1 = \tilde{\sigma}_1 \circ \sigma_0$, the set $J_1^{\sigma_1(n)}$ does not depend on $n$, so that we may denote it $J_1$, and such that the sequence $(\mu_1^{\sigma_1(n)})_{n \in \N}$ has a limit which we denote $\mu_1$. In particular, there exists some integer $n_1$ such that
$$\underset{i = 1}{\overset{\ell}{\cup}} B \Big( x_i^{\sigma_1(n)}, 1 \Big) \subset \underset{j \in J_1}{\cup} B \Big( x_j^{\sigma_1(n)}, 2 \mu_1 \Big), \ {\rm and} \ \dist \Big( x_i^{\sigma_1(n)} , x_j^{\sigma_1(n)} \Big) \geq \frac{\mu_1}{2 \kappa_1}, \forall i \neq j \in J_1,$$
for any $n \geq n_1$. We then iterate this argument to construct for any $j \in \N$, some non-decreasing injection $\tilde{\sigma}_j: \N \to \N$, some subset $J_j$ of $\{ 1, \ldots, \ell \}$, some number $1 < \mu_j \leq \Big( \frac{2}{\kappa_j} \Big)^\ell$, and some integer $n_j$, such that, if we set $\sigma_j = \tilde{\sigma}_j \circ \sigma_{j-1}$, then we have
\begin{equation}
\label{vin03}
\underset{i = 1}{\overset{\ell}{\cup}} B \Big( x_i^{\sigma_j(n)}, 1 \Big) \subset \underset{k \in J_j}{\cup} B \Big( x_k^{\sigma_j(n)}, 2 \mu_j \Big), \ {\rm and} \ \dist \Big( x_i^{\sigma_j(n)} , x_k^{\sigma_j(n)} \Big) \geq \frac{\mu_j}{2 \kappa_j}, \forall i \neq k \in J_j,
\end{equation}
for any $n \geq n_j$. We then set for any $n \in \N$, following the usual diagonal argument $\sigma(n) = \sigma_n(n)$ and $R_n = 2 \mu_n$. In view of \eqref{vin03}, we have for this choice
$$\underset{i = 1}{\overset{\ell}{\cup}} B \Big( x_i^{\sigma(n)}, 1 \Big) \subset \underset{j \in J_m}{\cup} B \Big( x_j^{ \sigma(n)}, R_m \Big), \ {\rm and} \ \dist \Big( x_i^{\sigma(n)} , x_j^{\sigma(n)} \Big) \geq \frac{R_m}{4 \kappa_m}, \forall i \neq j \in J_m,$$
for any $m \in N$, and any $n \geq n_m$. To conclude, we finally extract a subsequence $(\alpha(m))_{m \in \N}$ such that $J_{\alpha(m)}$ does not depend on $m$.
\end{proof}

In the course of the proof of Theorem \ref{bigconc-compc}, we will combine Lemma \ref{clusteringinfini} with Proposition \ref{concentration}. For that purpose, some decay properties of travelling wave solutions, which we recalled in Subsection \ref{propdecay}, turn out to be central in the arguments.

\begin{proof}[Proof of Theorem \ref{bigconc-compc}]
Set $\varepsilon = \sqrt{2 - c^2}$, so that we may assume, without loss of generally that $\frac{\varepsilon}{2} \leq \varepsilon(v^n) = \sqrt{2 - c(v^n)^2} \to \varepsilon$, as $n \to + \infty$, and that the energy $E_n(v^n)$ is uniformly bounded by some constant $E_0$. Our starting point is Lemma \ref{covering0}, which we apply to $v^n$, with $c_0 = \sqrt{2 - \frac{\varepsilon^2}{4}}$, and $\delta > 0$ taken as
\begin{equation}
\label{cestnotrechoix}
\delta = \delta_0(c_0) = \inf \Big\{ \frac{\sqrt{2} - c_0}{2 \sqrt{2} (K(c_0) + 1)}, \frac{1}{2} \Big\},
\end{equation}
where $K(c_0)$ is the constant appearing in Proposition \ref{concentration}. This yields a finite number $1 \leq \ell_n \leq \ell(\delta_0)$ of points $x_1^n$, $\ldots$, $x_{\ell_n}^n$ in $\T_n^N$ such that
$$\Big| 1 - |v^n| \Big| \leq \delta \on \T_n^N \setminus \underset{i = 1}{\overset{\ell_n}{\cup}} B(x_i^n, 1),$$
and
\begin{equation}
\label{langres}
\Big| 1 - |v^n(x_i^n)| \Big| \geq \delta (c_0), \forall 1 \leq i \leq \ell_n.
\end{equation}
Notice that the collection is not empty (i.e. $l_n \geq 1$), otherwise the map $v^n$ would be constant, in view of Proposition \ref{minoration1}. Passing possibly to a subsequence, we may assume that the number $\ell_n$ is independent of $n$, so that we may denote it $\ell$. We next apply Lemma \ref{clusteringinfini} to the family $\{ x_1^n, \ldots, x_\ell^n \}_{n \in \N^*}$ . Passing to a further subsequence, there exist a subset $J_\infty$ of $\{ 1, \ldots, \ell \}$, and sequences $(\kappa_k)_{k \in \N}$, $(n(k))_{k \in \N}$ and $(R_k)_{k \in N}$ such that
$$0 < \kappa_k < \frac{1}{64}, \ \kappa_k \to 0, \ {\rm as} \ k \to + \infty, \ {\rm and} \ 1 \leq R_k < \Big( \frac{2}{\kappa_k} \Big)^\ell,$$
for any $k \in \N$, and such that we have the relations
\begin{equation}
\label{bienloin2}
\underset{i = 1}{\overset{\ell}{\cup}} B \Big( x_i^{\sigma(n)}, 1 \Big) \subset \underset{j \in J_\infty}{\cup} B \Big( x_j^{\sigma(n)}, R_k\Big), \ {\rm and} \ \dist \Big( x_i^{\sigma(n)}, x_j^{\sigma(n)} \Big) \geq \frac{R_k}{4 \kappa_k}, \forall i \neq j \in J_\infty,
\end{equation}
for any $n \geq n(k)$. We next apply Lemma \ref{localcompacite} to the sequences of maps $(v^n(\cdot + x_j^n))_{n \in \N^*}$, for any $j \in J_\infty$, to assert that there exists some finite energy solution $\v_j$ with speed $c$ to \eqref{TWc} such that we have, up to a subsequence,
\begin{equation}
\label{epoisses}
v^n(\cdot + x_j^n) \to \v_j \ {\rm in} \ C^m(B(0, R)), \ {\rm as} \ n \to + \infty,
\end{equation}
for any $R > 0$, and any $m \in \N$. It follows from \eqref{langres} that
$$\Big| 1 - |\v_j(0)| \Big| \geq \delta (c_0),$$
so that $\v_j$ is not constant. At this stage, it remains to establish identities \eqref{limitenergy1}.

\noindent {\it Identity for the limiting energy}. We introduce the number $\mu_k = \frac{R_k}{8 \sqrt{\kappa_k}}$ for any $k \in \N$, as well as the exterior set
$$\boU_k = \T_n^N \setminus \underset{j \in J_\infty}{\cup} B \Big( x_j^n, \frac{R_k}{8 \kappa_k} \Big) = \T_n^N \setminus \underset{j \in J_\infty}{\cup} B \Big( x_j^n, \frac{\mu_k}{\sqrt{\kappa_k}} \Big),$$
so that $\mu_k \to + \infty$ as $k \to + \infty$. In view of relations \eqref{bienloin2}, we are in position to apply Proposition \ref{concentration} to $v^n$, with $\mu = \mu_k$ and $\kappa = \sqrt{\kappa_k}$, which yields
\begin{equation}
\label{exterior}
\int_{\boU_k} e(v^n) \leq \frac{K'(c_0) E_0}{|\ln(\kappa_k)| \varepsilon^2} =\underset{k \to + \infty}{o} (1), \forall n \geq n(k),
\end{equation}
where $K'(c_0)$ is some constant possibly depending on $c_0$. In view of convergence \eqref{epoisses}, and since $\frac{\mu_k}{\sqrt{\kappa_k}} \to + \infty$, as $k \to + \infty$, we have for any fixed $k$,
\begin{equation}
\label{interior}
\underset{n \to + \infty}{\lim} \bigg( E_n \Big( v^n, B \Big(x_j^n, \frac{\mu_k}{\sqrt{\kappa_k}} \Big) \Big) \bigg) = E \Big( \v_j, B \Big(0, \frac{\mu_k}{\sqrt{\kappa_k}} \Big) \Big) = E(\v_j) + \underset{k \to + \infty}{o} (1).
\end{equation}
Combining \eqref{exterior} and \eqref{interior}, we deduce that
$$\underset{n \to + \infty}{\lim} \bigg( E_n(v^n) \bigg) = \underset{j \in J_\infty}{\sum} E(\v_j) + \underset{k \to + \infty}{o} (1).$$

\noindent {\it Identity for the limiting momentum}. Let $r >0$ be such that that $|\v_j| \geq \frac{1}{2}$ on $\R^N \setminus B(0, r)$, for any $j \in J_\infty$, so that in particular we may write $\v_j = \varrho_j \exp i \varphi_j$ on $\R^N \setminus B(0, r)$. Let $0 \leq \chi \leq 1$ be a smooth function with compact support on $\R^N$ such that $\chi = 1$ on $B(0, r)$. In view of Lemma \ref{gdev}, we have
$$\p_j \equiv p(\v_j) = \tilde{p}(\v_j) = \frac{1}{2} \int_{\R^N} \bigg( \langle i \partial_1 \v_j, \v_j \rangle + \partial_1 \Big( (1 - \chi ) \varphi_j \Big) \bigg).$$
Since the integrand is integrable on $\R^N$, and since $r_k \equiv \frac{\mu_k}{\sqrt{\kappa_k}} \to + \infty$, as $k \to + \infty$, we have
\begin{equation}
\label{vert0}
p(\v_j) = p_k(\v_j) + \underset{k \to + \infty}{o} (1),
\end{equation}
where we have set
$$p_k(\v_j) = \frac{1}{2} \int_{B(0, r_k)} \langle i \partial_1 \v_j, \v_j \rangle + \frac{1}{2 r_k} \int_{\partial B(0, r_k )} \varphi_j(x) x_1 dx.$$
We now go back to the sequence $(v^n)_{n \in \N^*}$. Using Lemma \ref{massimo3}, we may write $v^n = \varrho^n \exp i \varphi^n$ on the set $\boO_n(2 R_k) \equiv \T_n^N \setminus \underset{j \in J_\infty}{\cup} B(x_j^n, 2 R_k)$ in dimension three for any $n \geq n(k)$. The existence of such a lifting in the two-dimensional case is more involved. Using \eqref{exterior}, we can invoke Corollary \ref{minimo2} to write $v^n = \varrho^n \exp i \varphi^n$ on the set $\boO_n(2 \mu_k) \equiv \T_n^N \setminus \underset{j \in J_\infty}{\cup} B(x_j^n, 2 \mu_k)$, so that, since $v^n$ does not vanish on each annulus $B(x_j^n, 2 \mu_k) \setminus B(x_j^n, 2 R_k)$, we may also write $v^n = \varrho^n \exp i \varphi^n$ on the set $\boO_n(2 R_k)$ in dimension two. In particular, $\langle i \partial_1 v^n, v^n \rangle = - (\varrho^n)^2 \partial_1 \varphi^n$ for any $n \geq n(k)$, so that, since $\boU_k$ is included in $\boO_n(2 R_k)$, we have
$$\int_{\boU_k} \langle i \partial_1 \v^n, \v^n \rangle = - \int_{\boU_k} (\varrho^n)^2 \partial_1 \varphi^n = \int_{\boU_k} \Big( 1 - (\varrho^n)^2 \Big) \partial_1 \varphi^n + \sum_{j \in J_\infty} \frac{1}{r_k} \int_{\partial B(x_j^n, r_k)} \varphi_i(x) x_1 dx.$$
It follows that
\begin{equation}
\label{vert1}
p_n(v^n) = \sum_{j \in J_\infty} \boP_{j,k}(v^n) + \frac{1}{2} \int_{\boU_k} \Big(1 - (\varrho^n)^2 \Big) \partial_1 \varphi^n,
\end{equation}
where we have set
$$\boP_{j,k}(v^n) = \frac{1}{2} \int_{B(x_j^n, r_k)} \langle i \partial_1 v^n, v^n \rangle + \frac{1}{2 r_k} \int_{\partial B(x_j^n, r_k)} \varphi^n(x) x_1 dx.$$
We deduce from convergence \eqref{epoisses} that, for any fixed $k$, we have
\begin{equation}
\label{vert2}
\boP_{j,k}(v^n) \to p_k(\v_j), \ {\rm as} \ n \to + \infty.
\end{equation}
On the other hand, we have by Lemma \ref{colisee} and inequality \eqref{exterior},
\begin{equation}
\label{vert3}
\bigg| \int_{\boU_k} \Big( 1 - (\varrho^n)^2 \Big) \partial_1 \varphi^n \bigg| \leq 2 \sqrt{2} \int_{\boU_k} e(v^n) \leq \frac{K'(c_0) E_0}{|\ln(\kappa_k)| \varepsilon^2} = \underset{k \to + \infty}{o}(1).
\end{equation}
Combining \eqref{vert0}, \eqref{vert1}, \eqref{vert2} and \eqref{vert3}, we obtain
$$\lim_{n \to + \infty} \bigg( p_n(v^n) \bigg) = \sum_{j \in J_\infty} p(\v_j),$$
so that the proof is complete.
\end{proof}

\begin{proof}[Proof of Theorem \ref{asympsonic}]
The beginning of the proof is identical to the proof of Theorem \ref{bigconc-compc} above, with the exception of one major difference: whereas the choice of $\delta$ (when applying Lemma \ref{covering0}) is given in the proof of Theorem \ref{bigconc-compc} by \eqref{cestnotrechoix} (which would yield $0$ in the sonic case), here we use the parameter $\delta$ which is provided in the statement of Theorem \ref{asympsonic}. Another difference concerns the integer $\ell^n$: it might be equal to $0$, and at this stage is bounded by a number possibly depending on $\delta$. All the arguments and estimates extend to the sonic case, except estimates \eqref{exterior} for the integral of the energy density on $\boU_k$, and estimate \eqref{vert3} for the momentum on $\boU_k$. Since the total energy is bounded, passing possibly to a further subsequence, we may assume that there exists some number $0 \leq \mu \leq E$ such that
\begin{equation}
\label{defectenergy}
\int_{\boU_k} e(v^n) \to \mu, \ {\rm as } \ n \to + \infty.
\end{equation}
Combining \eqref{defectenergy} with \eqref{interior}, we deduce the first equality in \eqref{limitenergy2}. For the second equality, using Lemma \ref{massimo3}, we may write $v^n = \varrho^n \exp i \varphi^n$ on the set $\boO_n(2 R_k)$, and passing possibly to another subsequence, we may assume similarly that, for some number $\nu \in \R$, we have
\begin{equation}
\label{defectmomentum}
\frac{1}{2} \int_{\boU_k} \Big( 1 - (\varrho^n)^2 \partial_1 \varphi^n \Big) \to \nu, \ {\rm as } \ n \to + \infty,
\end{equation}
so that combining \eqref{defectmomentum} with \eqref{vert0}, \eqref{vert1} and \eqref{vert2}, we deduce the identity for the limiting momentum in \eqref{limitenergy2}. For inequality \eqref{tropical}, we invoke inequality\eqref{dentifrice} in Proposition \ref{concentration} which yields
$$\bigg| \int_{\boU_k} \bigg( \frac{\sqrt{2}}{2} \Big( 1 - (\varrho^n)^2 \Big) \partial_1 \varphi^n - e(v^n) \bigg) \bigg| \leq K(c_0) 
\bigg( \delta \int_{\boU_k} e(v^n) + \underset{k \to + \infty}{o} (1)
 \bigg),$$
which yields the desired conclusion, taking the limit $n \to + \infty$. Finally, in order to see that the number $\ell$ may be bounded independently of $\delta$, we invoke Lemma \ref{soniccase}. Indeed, each function $\v_j$ is a non-trivial finite energy solution to \eqref{TWc} on $\R^3$, so that by Lemma \ref{soniccase}, $E(\v_j) \geq \boE_0$. Hence,
$$E = \sum_{j = 1}^\ell E(\v_j) + \mu \geq \ell \boE_0,$$
so that $\ell \leq \ell_0 \equiv \frac{E}{\boE_0}$ is bounded independently of $\delta$.
\end{proof}

%%%%%%%%%%%%%%%%%%%%%%%%%%%%%%%%%%%%%%%%%%%%%%%%%%%%%
%%%%%%%%%%%%%%%%%%%%%%%%%%%%%%%%%%%%%%%%%%%%%%%%%%%%%
\section{Properties of $E_{\min}^n(\p)$ and $u_\p^n$}
\label{epinfini}
%%%%%%%%%%%%%%%%%%%%%%%%%%%%%%%%%%%%%%%%%%%%%%%%%%%%%
%%%%%%%%%%%%%%%%%%%%%%%%%%%%%%%%%%%%%%%%%%%%%%%%%%%%%

In this section, we provide the proofs of Propositions \ref{existnn} and \ref{concentrationcompacite}. We also show that $E_{\min}^n(\p)$ converges to $E_{\min}(\p)$ as $n \to + \infty$.

%%%%%%%%%%%%%%%%%%%%%%%%%%%%%%%%%%%%%%%%%%%%%%%
\subsection{Proof of Proposition \ref{existnn}}
%%%%%%%%%%%%%%%%%%%%%%%%%%%%%%%%%%%%%%%%%%%%%%%

The first task is to establish the existence of a minimizer for \eqref{PnNp}. For that purpose, we consider a minimizing sequence $(w_k)_{k \in \N}$ for \eqref{PnNp}. Since $E_n(w_k)$ is uniformly bounded with respect to $k$, $(w_k)_{k \in \N}$ is bounded in $H^1(\T_n^N)$, so that, passing possibly to a subsequence, we may assume that
\begin{equation}
\label{garuet}
w_k \rightharpoonup u_\p^n \ {\rm in} \ H^1(\T_n^N), \ {\rm as} \ k \to + \infty,
\end{equation}
for some $u_\p^n \in H^1(\T_n^N)$. By weak lower semi-continuity and Rellich's compactness theorem, we infer that
\begin{equation}
\label{honnibal}
E_n(u_\p^n) \leq \underset{k \to + \infty}{\liminf} \big( E_n(w_k) \big) = E_{\min}^n(\p),
\end{equation}
and
\begin{equation}
\label{steyn}
p_n(u_\p^n) = \lim_{k \to + \infty} \bigg( \frac{1}{2} \int_{\T_n^N} \langle i w_k, \partial_1 w_k \rangle \bigg) = \p.
\end{equation}
The sequel of the proof is different according to the dimension. The two-dimensional case is substantially more difficult due to the topological constraint on the test functions.

\setcounter{case}{0}
\begin{case}
${\bf N = 3}$. Since $u_\p^n$ belongs to $\Gamma_n^3(\p)$ by \eqref{steyn}, it is a minimizer for \eqref{PnNp}, so that the Lagrange multiplier rule implies that
$$dE_n(u_\p^n) = c_\p^n dp_n(u_\p^n),$$
for some $c_\p^n \in \R$. The previous equality is precisely the weak formulation for the equation
$$i c_\p^n \partial_1 u_\p^n + \Delta u_\p^n + u_\p^n (1 - |u_\p^n|^2) = 0 \on \T_n^3,$$
whose finite energy solutions are smooth by standard elliptic theory.
\end{case}

\begin{case}
${\bf N = 2}$. In order to prove that $u_\p^n$ is a minimizer for $(\boP_n^2(\p))$, it remains to verify in view of \eqref{honnibal} and \eqref{steyn}, that
\begin{equation}
\label{white}
u_\p^n \in \boS_n^0,
\end{equation}
a difficulty which was not present in the three-dimensional case. To prove \eqref{white}, we are going to show that a suitable choice of the minimizing sequence yields strong converge to $u_\p^n$. This will yield the conclusion in view of the closeness of $\boS_n^0 \cap E_{n,\Lambda}$ for any fixed $\Lambda$. The main tool is Ekeland's variational principle.

Indeed, we consider some number $\Lambda$ such that $\Lambda > E_{\min}(\p)$. Using Corollary \ref{assezdirect}, there exists some integer $n(\Lambda)$ such that $E_{\min}^n(\p) < \Lambda$ for any $n \geq n(\Lambda)$. In particular, using Ekeland's variational principle (see \cite{Ekeland1}), we can construct a minimizing sequence $(w_k)_{k \in \N}$ for $(\boP_n^2(\p))$ such that
\begin{equation}
\label{blanco}
E_{\min}^n(\p) \leq E_n(w_k) < \Lambda, \forall k \in \N,
\end{equation}
and
\begin{equation}
\label{sella}
E_n(w_k) - E_n(w) \leq \frac{1}{k} \| w_k - w \|_{H^1(\T_n^2)}, \forall w \in \Gamma_n^2(\p), \forall k \in \N^*.
\end{equation}
Letting $\delta > 0$, and $\psi \in H^1(\T_n^2)$, and invoking Theorem \ref{lemmalulu} and \eqref{blanco}, the function $w_k - \delta \psi$ belongs to $E_{n, \Lambda} \cap \boS_n^0$ for any $\delta$ sufficiently small, and any $n$ sufficiently large. Moreover,
$$p_n(w_k - \delta \psi) = p_n(w_k) - \delta \int_{\T_n^2} \langle i \partial_1 w_k, \psi \rangle + \delta^2 p_n(\psi) \to \p, \ {\rm as} \ \delta \to 0,$$
so that the function $z_{k, \delta} = \sqrt{\frac{\p}{p_n(w_k - \delta \psi)}} (w_k - \delta \psi)$ belongs to $\Gamma_n^2(\p)$ for $\delta$ sufficiently small. Setting $w = z_{k, \delta}$ in inequality \eqref{sella}, and taking the limit $\delta \to 0$ after dividing by $\delta$, we are led to
$$\lambda_k dp_n(w_k)(\psi) - dE_n(w_k)(\psi) \leq \frac{1}{k} \Big\| \frac{dp_n(w_k)(\psi)}{2} w_k - \psi \Big\|_{H^1(\T_n^2)},$$
where $\lambda_k = \frac{1}{2 \p} dE_n(w_k)(w_k)$. By \eqref{blanco}, this gives
$$\Big| \lambda_k dp_n(w_k)(\psi) - dE_n(w_k)(\psi) \Big| \leq \frac{K(\Lambda)}{k} \| \psi \|_{H^1(\T_n^2)},$$
where $K(\Lambda)$ is some constant only depending on $\Lambda$. In particular, choosing $\psi = u_\p^n$, we are led to
$$\lambda_k dp_n(w_k)(u_\p^n) - dE_n(w_k)(u_\p^n) \to 0, \ {\rm as} \ k \to + \infty.$$
On the other hand, it follows from \eqref{garuet} that
$$dp_n(w_k)(u_\p^n) \to 2 p_n(u_\p^n) = 2 \p, \ {\rm as} \ k \to + \infty,$$
and
$$dE_n(w_k)(u_\p^n) \to \int_{\T_n^2} \Big( |\nabla u_\p^n|^2 - |u_\p^n|^2 (1 - |u_\p^n|^2) \Big), \ {\rm as} \ k \to + \infty,$$
so that
$$\lambda_k \to \frac{1}{2 \p} \int_{\T_n^2} \Big( |\nabla u_\p^n|^2 - |u_\p^n|^2 (1 - |u_\p^n|^2) \Big), \ {\rm as} \ k \to + \infty.$$
Hence, using \eqref{garuet} and Rellich's compactness theorem, we have
$$\int_{\T_n^2} |\nabla w_k|^2 \to 2 \p \underset{k \to + \infty}{\lim} \big( \lambda_k \big) + \int_{\T_n^2} |u_\p^n|^2 (1 - |u_\p^n|^2) = \int_{\T_n^2} |\nabla u_\p^n|^2, \ {\rm as} \ k \to + \infty,$$
which proves the strong $H^1$-convergence of the sequence $(w_k)_{k \in \N}$ towards $u_\p^n$. In particular, since $E_{n, \Lambda} \cap \boS_n^0$ is closed by Theorem \ref{lemmalulu}, $u_\p^n$ belongs to $\boS_n^0$, so that $u_\p^n$ is a minimizer for $(\boP_n^2(\p))$. Moreover, the set $\{ u \in H^1(\T_n^2), \ {\rm s.t.} \ E_n(u) < \Lambda \} \cap \boS_n^0$ is open by Theorem \ref{lemmalulu}, so that the Lagrange multiplier rule implies that
$$i c_\p^n \partial_1 u_\p^n + \Delta u_\p^n + u_\p^n (1 - |u_\p^n|^2) = 0 \on \T_n^2,$$
for some $c_\p^n \in \R$. Hence, $u_\p^n$ is also smooth in the two-dimensional case.
\end{case}

We finally turn to \eqref{supc} and \eqref{fastoche} to complete the proof of Proposition \ref{existnn}. We first notice that, by Corollary \eqref{assezdirect},
$$\limsup_{n \to + \infty} \Big( E_{\min}^n(\p) \Big) \leq E_{\min}(\p), \forall \p > 0,$$
so that
$$\liminf_{n \to +\infty} \Big( \Sigma(u_p^n) \Big) \geq \Xi(\p).$$
In particular, if $\Xi(\p) > 0$, it follows that there exists some integer $n(\p)$, and some number $\Sigma_0 > 0$ such that
$$\Sigma(u_p^n) \geq \Sigma_0, \forall n \geq n(\p).$$
Invoking Theorem \ref{bornec}, we obtain \eqref{supc}, whereas \eqref{fastoche} follows from Lemma \ref{tarquini1}.

%%%%%%%%%%%%%%%%%%%%%%%%%%%%%%%%%%%%%%%%%%%%%%%%%%%%%%%%%%%%%%
\subsection{Proof of Proposition \ref{concentrationcompacite}}
%%%%%%%%%%%%%%%%%%%%%%%%%%%%%%%%%%%%%%%%%%%%%%%%%%%%%%%%%%%%%%

By Proposition \ref{existnn}, for given $\p > 0$, the sequence $(u_\p^n)_{n \in \N^*}$ is a sequence of finite energy solutions of \eqref{TWc}, with uniformly bounded energy, and such that $(c(u_\p^n))_{n \in \N^*}$ is bounded. By Proposition \ref{troisfoisrien}, it converges up to a subsequence towards a non-trivial finite energy solution $u_\p$ to \eqref{TWc} on $\R^N$ of speed $c$, which satisfies in particular $\partial_1 u_\p \neq 0$. Hence, in view of convergence \eqref{racinededeux} of Proposition \ref{troisfoisrien}, we have
$$c_\p^n \equiv c(u_\p^n) \to c, \ {\rm as} \ n \to + \infty.$$
Moreover, we deduce from the results of \cite{Graveja2} that
$$0 < c \leq \sqrt{2}.$$
On the other hand, we may assume up to another subsequence that
$$E(u_\p^n) = E_{\min}^n(\p) \to \limsup_{n \to + \infty} \Big( E_{\min}^n(\p) \Big), \ {\rm as} \ n \to + \infty.$$
We next distinguish two cases.

\setcounter{case}{0}
\begin{case}
\label{sub-case}
${\bf 0 < c < \sqrt{2}}$. In this case, we may apply directly Theorem \ref{bigconc-compc} to the sequence $(u_\p^n)_{n \in \N^*}$. This yields \eqref{intermarche}, \eqref{limitlimit} as well as
\begin{equation}
\label{baiedesomme}
\p = \sum_{i = 1}^\ell \p_i, \ {\rm and} \ \limsup_{n \to + \infty} \Big( E_{\min}^n(\p) \Big) = \sum_{i = 1}^\ell E(u_i).
\end{equation}
Moreover, since $c > 0$, it follows from Lemma \ref{encorepoho} that $\p_i = p(u_i) > 0$. In view of the definition of $E_{\min}$, we have $E(u_i) \geq E_{\min}(\p_i)$, whereas by Corollary \ref{assezdirect}, we have $\underset{n \to +\infty}{\limsup} \big( E_{\min}^n(\p) \big) \leq E_{\min}(\p)$, so that identity \eqref{baiedesomme} yields
$$\sum_{i = 1}^\ell E_{\min}(\p_i) \leq E_{\min}(\p).$$
Comparing with inequality \eqref{subadditivity} of Corollary \ref{subadditif}, which is precisely the reverse inequality, we deduce that all inequalities actually have to be identities, that is 
$$E(u_i) = E_{\min}(\p_i), \ {\rm and} \ \limsup_{n \to + \infty} \Big( E_{\min}^n(\p) \Big) = E_{\min}(\p),$$
and the proof is complete in the considered case.
\end{case}

\begin{case}
${\bf c = \sqrt{2}}$. We are going to show that this case is excluded, so that Case \ref{sub-case} always holds. For that purpose, we apply Theorem \ref{asympsonic} instead of Theorem \ref{bigconc-compc} to the sequence $(u_\p^n)_{n \in \N^*}$, with a parameter $\delta > 0$ to be determined later. It yields a number $\ell \geq 1$ of finite energy solutions $u_i$ of speed $\sqrt{2}$ on $\R^N$, and numbers $\mu \geq 0$, and $\nu \geq 0$ such that
\begin{equation}
\label{okcoral1}
|\mu - \sqrt{2} \nu| \leq K \delta \mu,
\end{equation}
where $K$ is some universal constant,
\begin{equation}
\label{okcoral2}
\p = \p' + \nu, \ {\rm where} \ \p' = \sum_{i = 1}^\ell p(u_i) = \sum_{i = 1}^\ell \p_i,
\end{equation}
and
$$\limsup_{n \to + \infty} \Big( E_{\min}^n(\p) \Big) = \sum_{i = 1}^\ell E(u_i) + \mu.$$
Invoking as above Corollary \ref{assezdirect}, we are led to
$$\sum_{i = 1}^\ell E_{\min}(\p_i) + \mu \leq E_{\min}(\p).$$
In view of Corollary \ref{cordepoho}, we have $\Sigma(u_i) < 0$, so that $E(u_i) > \sqrt{2} \p_i$. Combining with \eqref{okcoral1} and \eqref{okcoral2}, we obtain
$$\sqrt{2} \p = \sqrt{2} \bigg( \sum_{i = 1}^\ell \p_i + \nu \bigg) \leq E_{\min}(\p) (1 + K \delta),$$
that is
$$\Xi(\p) \leq K \delta E_{\min}(\p).$$
Since $\delta$ was arbitrary, we may let it go to zero, so that $\Xi (\p) \leq 0$, which is a contradiction with assumption \eqref{themaincondition}, and completes the proof of Proposition \ref{concentrationcompacite}.
\end{case}

\begin{remark}
In the course of the proof, we have proved the identity
$$\limsup_{n \to + \infty} \big( E^n(u_\p^n) \big) = \limsup_{n \to + \infty} \big( E_{\min}^n(\p) \big) = E_{\min}(\p).$$
\end{remark}

%%%%%%%%%%%%%%%%%%%%%%%%%%%%%%%%%%%%%%%%%%%%%% 
%%%%%%%%%%%%%%%%%%%%%%%%%%%%%%%%%%%%%%%%%%%%%%
\section{Proof of the main theorems}
\label{proofmain}
%%%%%%%%%%%%%%%%%%%%%%%%%%%%%%%%%%%%%%%%%%%%%%
%%%%%%%%%%%%%%%%%%%%%%%%%%%%%%%%%%%%%%%%%%%%%%

%%%%%%%%%%%%%%%%%%%%%%%%%%%%%%%%%%%%%%%%%%%%%%%%%%
\subsection{ Proof of Theorem \ref{principaltheo}}
%%%%%%%%%%%%%%%%%%%%%%%%%%%%%%%%%%%%%%%%%%%%%%%%%%

If $\p > \p_0$, it follows from Theorem \ref{proemin} that $\Xi(\p) > 0$, so that Propositions \ref{troisfoisrien} and \ref{concentrationcompacite} apply. In particular, it follows from Proposition \ref{concentrationcompacite} that there exist some integer $\ell \geq 1$, and some positive numbers $\p_1$, $\ldots$, $\p_\ell$ such that $E_{\min}(\p_i)$ is achieved by some map $u_i$ for any $i \in \{ 1, \ldots, \ell \}$, with
\begin{equation}
\label{queoui}
\p = \sum_{i = 1}^\ell \p_i, \ {\rm and} \ E_{\min}(\p) = \sum_{i = 1}^\ell E_{\min}(\p_i).
\end{equation}
We claim that 
\begin{equation}
\label{mainclaim}
\ell = 1.
\end{equation}
Indeed, assume by contradiction that $\ell \geq 2$. Then, it follows from Corollary \ref{subadditif} that $E_{\min}$ is linear on the interval $(0, \p)$. By Lemma \ref{gueri}, we deduce that $E_{\min}(\q)$ is not achieved for any $0 < \q < \p$. This contradicts the fact that $E_{\min}(\p_i)$ is achieved by $u_i$, for any $i \in \{ 1, \ldots, \ell \}$, and establishes therefore \eqref{mainclaim}. 

Going back to \eqref{queoui}, we have
$$\p = \p_1 = p(u_1), \ {\rm and} \ E_{\min}(\p) = E_{\min}(\p_1) = E(u_1).$$
This shows that $E_{\min}(\p)$ is achieved by the map $u_1 = u_\p$, which belongs to $W(\R^N)$, up to a multiplication by a constant of modulus one, by Corollary \ref{bienpose}.

%%%%%%%%%%%%%%%%%%%%%%%%%%%%%%%%%%%%%%%%%%%%%%%%%%%%%%%%%%%%%%%%%%%%%%%%%%%%%%%%%%%%%%%%%%%%%%
\subsection{The two-dimensional case: proof of Theorem \ref{dim2} and Proposition \ref{T2bis}}
%%%%%%%%%%%%%%%%%%%%%%%%%%%%%%%%%%%%%%%%%%%%%%%%%%%%%%%%%%%%%%%%%%%%%%%%%%%%%%%%%%%%%%%%%%%%%%

In this section, we provide the proof of the main results in dimension two, namely the proofs to Theorem \ref{dim2} and Proposition \ref{T2bis}.

%%%%%%%%%%%%%%%%%%%%%%%%%%%%%%%%%%%%%%%%%%
\begin{proof}[Proof of Theorem \ref{dim2}]
%%%%%%%%%%%%%%%%%%%%%%%%%%%%%%%%%%%%%%%%%%

In view of Theorem \ref{principaltheo}, it is sufficient to show that
$$\p_0 = 0 \ {\rm if} \ N = 2.$$
Going back to the definition of $\p_0$ and the properties stated in Theorem \ref{proemin}, this is equivalent to show that
\begin{equation}
\label{shrek1}
\Xi(\p) > 0, \ \forall \p > 0.
\end{equation}
Since the function $\Xi$ is non-decreasing, it is sufficient to check that property for $\p$ sufficiently small. By Lemma \ref{bornesupn2}, we have for any $\p$ sufficiently small,
\begin{equation}
\label{shrek2}
\Xi_{\min}(\p) \geq \frac{48 \sqrt{2}}{\boS_{KP}^2} \p^3 - K_0 \p^4,
\end{equation}
which yields \eqref{shrek1}, then the desired conclusion.
\end{proof}

\begin{proof}[Proof of Proposition \ref{T2bis}]
We divide the proof into several steps.

\setcounter{step}{0}
\begin{step}
\label{contador}
There exists $\p_1 > 0$ such that, if $0 < \p < \p_1$, and $u$ is a finite energy solution to \eqref{TWc} on $\R^2$ such that $p(u) = \p$, and $E(u) = E_{\min}(\p)$, then
$$| u(x) | \geq \frac{1}{2}, \, \forall x \in \R^2.$$
\end{step}

This is a consequence of inequality \eqref{elinfini20} of Lemma \ref{tarquini20}, and the facts that $0 \leq c \leq \sqrt{2}$, and $E_{\min}(\p) \leq \sqrt{2} \p$. 

\begin{step}
\label{evans}
If $0 < \p < \p_1$, and $u$ is a finite energy solution to \eqref{TWc} on $\R^2$ such that $p(u) = \p$ and $E(u) = E_{\min}(\p)$,
then
$$K_2 \p \leq \varepsilon(u) \leq K_3 \p,$$
where $K_2 > 0$ and $K_3$ are some universal constant. In particular, \eqref{c-estim} is established.
\end{step}

The second inequality follows from Step \ref{contador}, Lemma \ref{bornevarepsilon}, and the fact that $E_{\min}(\p) \leq \sqrt{2} \p$. For the first one, we invoke equality \eqref{acapulco} for a minimizer $u_\p$, which yields in particular,
\begin{equation}
\label{armstrong}
\Xi(\p) = \Sigma(u_\p) \leq \frac{\varepsilon(u_\p)^2}{\sqrt{2}} p(u_\p) = \frac{\varepsilon(u_\p)^2}{\sqrt{2}} \p.
\end{equation}
The conclusion follows using \eqref{shrek2}.

\begin{step}
Proof of inequality \eqref{estimE2}.
\end{step}

The lower bound for $\Xi$ in inequality \eqref{estimE2} is provided by Lemma \ref{bornesupn2}. Concerning the upper bound, we invoke inequality \eqref{armstrong} and Step \ref{evans} to obtain
$$\Xi(\p) \leq K \p^3.$$

\begin{step}
Proof of inequality \eqref{p-cube-estim}.
\end{step}

Combining inequality \eqref{acapulco} with Step \ref{contador} and Step \ref{evans}, we obtain
$$\int_{\R^2} |\partial_2 u_\p|^2 \leq \int_{\R^2} |\partial_2 u_\p|^2 +\Xi(\p) \leq K \varepsilon(u_\p)^2 \p \leq K \p^3,$$
whereas Lemma \ref{tropbien} yields a similar estimate for the two other terms on the l.h.s of \eqref{p-cube-estim}.

\begin{step}
Proof of inequality \eqref{Min-estim}.
\end{step}

In view of the invariance by translation, we may assume without loss of generality that the infimum of $|u_\p|$ is achieved at the point $0$, that is
$$|u_\p(0)| = \underset{x \in \R^2}{\min} |u_\p(x)|.$$
Inequality \eqref{Min-estim} is then a consequence of \eqref{normsup} for $v = u_\p$, and \eqref{armstrong}.
\end{proof}

%%%%%%%%%%%%%%%%%%%%%%%%%%%%%%%%%%%%%%%%%%%%%%%%%%%%%%%%%%%%%%%%%%%%%
\subsection {The three dimensional case: proof of Theorem \ref{dim3}}
%%%%%%%%%%%%%%%%%%%%%%%%%%%%%%%%%%%%%%%%%%%%%%%%%%%%%%%%%%%%%%%%%%%%%

\begin{lemma}
\label{pzeropasnul}
Let $N = 3$. We have
\begin{equation}
\label{petitp}
\p_0 \geq \frac{\boE_0}{\sqrt{2}},
\end{equation}
where $\boE_0 > 0$ is the constant provided in Lemma \ref{soniccase}.
\end{lemma}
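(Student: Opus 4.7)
The plan is to argue by contradiction: assume $\p_0 < \frac{\boE_0}{\sqrt{2}}$ and exhibit a non-trivial finite energy solution whose energy is strictly less than $\boE_0$, violating Lemma \ref{soniccase}.

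More precisely, suppose $\p_0 < \frac{\boE_0}{\sqrt{2}}$. Choose any $\p$ in the open interval
\[
\p_0 < \p < \frac{\boE_0}{\sqrt{2}}.
\]
Since $\p > \p_0$, Theorem \ref{proemin} yields $\Xi(\p) > 0$, hence condition \eqref{themaincondition} is satisfied. We may therefore apply Theorem \ref{principaltheo} (whose proof relies on Propositions \ref{troisfoisrien} and \ref{concentrationcompacite}) to obtain a minimizer $u_\p \in W(\R^3)$ of the variational problem \eqref{eminent}, which is a non-constant finite energy solution of \eqref{TWc} with $p(u_\p) = \p$ and
\[
E(u_\p) = E_{\min}(\p) \leq \sqrt{2}\,\p,
\]
where the last inequality is the content of Lemma \ref{quasilineaire} (equivalently, the non-negativity of $\Xi$).

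By our choice of $\p$, this gives $E(u_\p) < \boE_0$. On the other hand, $u_\p$ is a non-trivial finite energy solution to \eqref{TWc} on $\R^3$ (non-trivial since $p(u_\p) = \p > 0$, so $u_\p$ cannot be constant), so Lemma \ref{soniccase} applies and yields
\[
E(u_\p) \geq \boE_0,
\]
a contradiction. Hence no such $\p$ exists, which forces $\p_0 \geq \frac{\boE_0}{\sqrt{2}}$, proving \eqref{petitp}. There is essentially no obstacle here once Theorem \ref{principaltheo} is in hand; the only point that requires mild care is that the minimizer produced is genuinely non-trivial, which is immediate from the positivity of its momentum.
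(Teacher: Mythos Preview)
Your proof is correct and follows essentially the same approach as the paper: assume $\p_0 < \boE_0/\sqrt{2}$, pick $\p$ in the gap, invoke Theorem \ref{principaltheo} to get a non-trivial minimizer $u_\p$ with $E(u_\p)=E_{\min}(\p)\leq \sqrt{2}\,\p<\boE_0$, and contradict Lemma \ref{soniccase}. The only differences are cosmetic: you spell out why $\Xi(\p)>0$ via Theorem \ref{proemin} and why $u_\p$ is non-trivial, which the paper leaves implicit.
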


\begin{proof}
Assume by contradiction that $\p_0 < \frac{\boE_0}{\sqrt{2}}$. Then, by Theorem \ref{principaltheo}, $E_{\min}(\p)$ is achieved for any $\p > \p_0$, by some map $u_\p$. On the other hand, $E_{\min}(\p) \leq \sqrt{2} \p$, so that if $\p_0 < \p < \frac{\boE_0}{\sqrt{2}}$, then
$$E(u_\p) < \boE_0,$$
whereas, in view of Lemma \ref{soniccase}, there is no finite energy solution to \eqref{TWc} with energy smaller than $\boE_0$. This gives a contradiction.
\end{proof}

\begin{lemma}
\label{atable}
Given any $\p > \p_0$, let $u_\p$ be a minimizer of $E_{\min}(u_\p)$ given by Theorem \ref{principaltheo}. Then, there exists a function $u_{\p_0} \in W(\R^3)$ such that $u_\p \to u_{\p_0}$ in $C^\infty_{\rm loc}(\R^3)$, as $\p \to \p_0$, with $p(u_{\p_0}) = \p_0$, and $E(u_{\p_0}) = \sqrt{2} \p_0$. In particular, $E_{\min}(\p_0)$ is achieved.
\end{lemma}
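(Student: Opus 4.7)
My plan is to pass to the limit $\p \downarrow \p_0$ along the family of minimizers supplied by Theorem \ref{principaltheo}, and to verify, via a concentration-compactness analysis performed directly on $\R^3$, that neither mass nor momentum escape in the limit. First I would pick a sequence $\p_n \downarrow \p_0$ and use the translation invariance of \eqref{TWc} to translate each $u_{\p_n}$ so that $|u_{\p_n}(0)| = \min_{x \in \R^3} |u_{\p_n}(x)|$. Since $E(u_{\p_n}) = E_{\min}(\p_n) \leq \sqrt{2}\p_n$ is uniformly bounded, Corollary \ref{tresutile} supplies a universal constant $K > 0$ such that $|u_{\p_n}(0)| \leq 1 - K$, which will make the limit non-constant. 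By Proposition \ref{concentrationcompacite} the speeds $c_n := c(u_{\p_n})$ lie in $(0, \sqrt{2})$, so up to a subsequence $c_n \to c_\infty \in [0, \sqrt{2}]$. Combining the uniform $C^k$-bounds of Lemma \ref{tarquini10} with Ascoli's theorem and a diagonal extraction, I would obtain a smooth map $u_{\p_0}$ on $\R^3$ with $u_{\p_n} \to u_{\p_0}$ in $C^k_{\rm loc}$ for every $k \in \N$, satisfying (TW$c_\infty$) and $|u_{\p_0}(0)| \leq 1 - K < 1$.

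The main step is to show that $p(u_{\p_0}) = \p_0$ and $E(u_{\p_0}) = \sqrt{2}\p_0$, i.e. that mass and momentum are preserved. For this I would adapt to $\R^3$ the concentration-compactness arguments of Section \ref{Torusinfini} (Theorems \ref{bigconc-compc} and \ref{asympsonic}): the covering (Lemma \ref{covering0}), clustering (Lemmas \ref{clustering} and \ref{clusteringinfini}) and lifting steps transfer essentially verbatim, and the polynomial decay of Proposition \ref{superdecay} replaces the use of torus unfoldings. This produces an integer $\ell \geq 1$ (non-zero since the concentration at $0$ furnishes the non-trivial profile $v_1 = u_{\p_0}$), points $x_1^n = 0, x_2^n, \ldots, x_\ell^n$ with $|x_i^n - x_j^n| \to + \infty$ for $i \neq j$, non-trivial finite-energy solutions $v_2, \ldots, v_\ell$ to (TW$c_\infty$) obtained as $C^k_{\rm loc}$-limits of $u_{\p_n}(\cdot + x_i^n)$, and a defect pair $(\mu, \nu) \in \R_+ \times \R$ such that
$$\sqrt{2}\p_0 = \sum_{i=1}^\ell E(v_i) + \mu, \qquad \p_0 = \sum_{i=1}^\ell p(v_i) + \nu,$$
with $\mu = \nu = 0$ when $c_\infty < \sqrt{2}$ and $\mu = \sqrt{2}\nu$ (after letting the parameter $\delta$ in Theorem \ref{asympsonic} tend to $0$ and extracting a diagonal subsequence using the uniform bound $\ell \leq E/\boE_0$) when $c_\infty = \sqrt{2}$.

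The sonic case $c_\infty = \sqrt{2}$ is then excluded by Corollary \ref{cordepoho}: each non-trivial $v_i$ must satisfy $\Sigma(v_i) < 0$, i.e. $E(v_i) > \sqrt{2}p(v_i)$, so summation combined with $\mu = \sqrt{2}\nu$ would give $\sqrt{2}\p_0 > \sqrt{2}\p_0$, which is absurd. Hence $c_\infty < \sqrt{2}$ and $\mu = \nu = 0$; the Pohozaev identities of Lemma \ref{encorepoho} applied to $v_1$ further rule out $c_\infty = 0$ in dimension three, so $c_\infty \in (0, \sqrt{2})$ and each $p(v_i) > 0$ by Lemma \ref{encorepoho}. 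Since $E_{\min}(\p) = \sqrt{2}\p$ on $[0, \p_0]$ and each $p(v_i) \in (0, \p_0]$, the bound $E(v_i) \geq E_{\min}(p(v_i)) = \sqrt{2}p(v_i)$ sums to an equality, so each $v_i$ achieves $E_{\min}(p(v_i))$. If $\ell \geq 2$ some $p(v_i)$ lies strictly inside $(0, \p_0)$, an interval on which $E_{\min}$ is affine, and Lemma \ref{gueri} then forbids minimizers there, a contradiction. Thus $\ell = 1$ and $u_{\p_0} = v_1 \in W(\R^3)$ (by Corollary \ref{bienpose}) with $p(u_{\p_0}) = \p_0$ and $E(u_{\p_0}) = \sqrt{2}\p_0 = E_{\min}(\p_0)$. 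The main obstacle is really the sonic case: it is a priori conceivable that the sequence concentrates into a sonic configuration with a non-trivial defect, and its exclusion rests squarely on the rigidity provided by Corollary \ref{cordepoho}.
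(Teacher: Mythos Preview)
Your strategy is the paper's: extract a $C^k_{\rm loc}$ limit $u_{\p_0}$ (non-trivial thanks to Corollary~\ref{tresutile}), transport the concentration--compactness of Theorems~\ref{bigconc-compc}--\ref{asympsonic} to $\R^3$, exclude the sonic limit via Corollary~\ref{cordepoho}, and then rule out splitting with Lemma~\ref{gueri}. You also make explicit the exclusion of $c_\infty = 0$ (via Pohozaev in dimension three), which the paper uses but does not spell out.

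One step deserves more care. In the sonic case you take a diagonal subsequence in $\delta$ to force $\mu = \sqrt{2}\nu$ exactly, and then sum the strict inequalities $E(v_i) > \sqrt{2}\,p(v_i)$ to reach $\sqrt{2}\p_0 > \sqrt{2}\p_0$. The difficulty is that the profiles $v_2,\ldots,v_\ell$ (and with them $\mu,\nu$) a priori depend on $\delta$, so after passing to the limit the strict inequality may collapse to an equality and the contradiction evaporates. The paper avoids this by exploiting that the anchor profile $v_1 = u_{\p_0}$ is \emph{independent of $\delta$}: combining the defect identities with subadditivity (Corollary~\ref{subadditif}) isolates
\[
E(u_{\p_0}) \leq \sqrt{2}\,p(u_{\p_0}) + K\delta\,E_{\min}(\p_0)
\]
for every $\delta>0$, whence $\Sigma(u_{\p_0}) \geq 0$, which contradicts Corollary~\ref{cordepoho} directly. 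Since you already single out $v_1 = u_{\p_0}$, your argument is easily repaired along these lines; alternatively one can argue that the decomposition stabilises for $\delta$ below the universal threshold supplied by Corollary~\ref{tresutile}, but the paper's route is cleaner.
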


\begin{proof}
We first notice that it follows from Theorem \ref{proemin} and Corollary \ref{tresutile} that, there exists a universal constant $K > 0$, such that, for any $\p > \p_0$, we have
\begin{equation}
\label{ma}
\| 1 - |u_\p| \|_{L^\infty(\R^3)} \geq \frac{K}{E_{\min}(\p_0)^\alpha} \geq K.
\end{equation}
Without loss of generality, we may assume, in view of the invariance by translation, that 
$$|u_\p(0)| = \inf_{x \in \R^3} |u_\p(x)|,$$
so that it follows from \eqref{ma} that
$$|u_\p(0)| \leq 1 - K < 1.$$
Arguing as in the proof of Proposition \ref{troisfoisrien}, there exists a non-trivial finite energy solution $u_1$ to \eqref{TWc} with $c = \underset{\p \to \p_0}{\limsup} \big( c(\p) \big)$, such that, passing possibly to a subsequence, we have
$$u_\p \to u_1 \ {\rm in} \ C^k(K), \ {\rm as} \ \p \to \p_0,$$
for any compact set $K$ in $\R^3$, and any $k \in \N$. Moreover, we have
$$E(u_{1}) = \liminf_{\p \to \p_0} (E(u_\p)) = E_{\min}(\p_0) = \sqrt{2} \p_0, \ {\rm and} \ |u_{1}(0)| \leq 1 - K < 1,$$
so that $u_{1}$ is non-trivial. Assuming first that $c = \sqrt{2}$, we next apply Theorem \ref{asympsonic} to the sequence $(u_\p)_{\p > \p_0}$, with a parameter $\delta > 0$ to be determined later. Indeed, following the lines of the proofs of Theorems \ref{bigconc-compc} and \ref{asympsonic}, we may verify that their statements remain valid for any sequence $(v^n)_{n \in \N}$ of finite energy solutions to ${\rm (TWc_n)}$ on $\R^N$ satisfying \eqref{hypconv}. Hence, there exist a number $\ell \geq 1$ of finite energy solutions $u_i$ of \eqref{TWc} on $\R^3$, and numbers $\mu \geq 0$ and $\nu \geq 0$ such that
\begin{align*}
|\mu - \sqrt{2} \nu| & \leq K \delta \mu,\\
\p_0 & = \p' + \nu,
\end{align*}
where $\p' = \underset{i = 1}{\overset{\ell}{\sum}} p(u_i) = \underset{i = 1}{\overset{\ell}{\sum}} \p_i$, and
$$\sqrt{2} \p_0 = E_{\min}(\p_0) = \sum_{i = 1}^\ell E(u_i) + \mu,$$
so that, by Theorem \ref{proemin},
$$E_{\min}(\p_0 - \nu) = \sqrt{2} (\p_0 - \nu) = E_{\min}(\p_0) - \sqrt{2} \nu \geq \sum_{i = 1}^\ell E(u_i) - K \delta \mu.$$
In view of inequality \eqref{subadditivity} of Corollary \ref{subadditif}, we deduce that
$$E(u_i) \leq E_{\min}(\p_i) + K \delta \mu \leq \sqrt{2} \p_i + K \delta E_{\min}(\p_0).$$
Taking $i = 1$ and letting $\delta \to 0$, we deduce that
$$\Sigma(u_1) \geq 0.$$
It then follows from Corollary \ref{cordepoho} that
$$c = \limsup_{\p \to \p_0} \big( c(u_\p) \big) < \sqrt{2},$$
which gives a contradiction. Hence, we may assume that $c < \sqrt{2}$, and apply Theorem \ref{bigconc-compc} to the sequence $(u_\p)_{\p > \p_0}$. We then conclude as in the proof of Theorem \ref{principaltheo} that $\ell = 1$, $p(u_1) = \p_0$, and $E_{\min}(\p_0) = E(u_1)$.
\end{proof} 

\begin{proof}[Proof of Theorem \ref{dim3} completed]
It follows from Lemma \ref{pzeropasnul} that $\p_0$ is not equal to zero, from Theorem \ref{principaltheo} that $E_{\min}(\p)$ is achieved for $\p > \p_0$, and from Lemma \ref{atable}, that $E_{\min}(\p_0)$ is achieved, with $E_{\min}(\p_0) = \sqrt{2} \p_0$. In view of Theorem \ref{proemin}, $E_{\min}$ is affine on $(0, \p_0)$, so that it is not achieved on $(0, \p_0)$ by Lemma \ref{gueri}. Hence, statements i) and ii) of Theorem \ref{dim3} hold, such as statement iii), which follows from Lemma \ref{soniccase} and \eqref{petitp}. Finally, statement iv) results from Lemma \ref{bornepsfin}. Indeed, this yields that any minimizer $u_{\p_0}$ of $E_{\min}(\p_0) = E(u_{\p_0}) = \sqrt{2} \p_0$, satisfies
$$\varepsilon(u_{\p_0}) \geq \frac{K}{\p_0^{8 \alpha + 2}} > 0,$$
for some universal constants $\alpha$ and $K$, so that $c(u_{\p_0}) < \sqrt{2}$, and the conclusion follows using the monotonicity properties of the function $\p \mapsto c(u_\p)$.
\end{proof}

\begin{acknowledgement}
The authors are grateful to L. Almeida, D. Chiron, A. Farina, C. Gallo, P. G\'erard, S. Gustafson, M. Maris, K. Nakanishi, G. Orlandi, D. Smets, and T.-P. Tsai for interesting and helpful discussions. The authors are also thankful to the referee for providing valuable comments and suggestions. The first and second authors acknowledge support from the ANR project JC05-51279, "\'Equations de Gross-Pitaevskii, d'Euler, et ph\'enom\`enes de concentration", of the French Ministry of Research.
\end{acknowledgement}

\bibliographystyle{plain}
\bibliography{Bibliogr}

\end{document}